\pgfplotsset{compat=1.18}
\numberwithin{equation}{section}
\newcommand{\lrfl}[1]{\left\lfloor#1\right\rfloor}
\newcommand{\lrceil}[1]{\left\lceil#1\right\rceil}
\newcommand{\lrang}[1]{\left\langle #1 \right\rangle}
\theoremstyle{plain}
\newtheorem{theorem}{Theorem}[section]
\newtheorem{proposition}[theorem]{Proposition}
\newtheorem{lemma}[theorem]{Lemma}
\theoremstyle{definition}
\newtheorem{definition}[theorem]{Definition}
\newtheorem{notation}[theorem]{Notation}
\theoremstyle{remark}
\newtheorem{remark}[theorem]{Remark}
\newcommand{\C}{\mathbb{C}}
\newcommand{\bM}{\mathbb{M}}
\newcommand{\N}{\mathbb{N}}
\newcommand{\Q}{\mathbb{Q}}
\newcommand{\R}{\mathbb{R}}
\newcommand{\bT}{\mathbb{T}}
\newcommand{\Z}{\mathbb{Z}}
\newcommand{\cF}{\mathcal{F}}
\newcommand{\cH}{\mathcal{H}}
\newcommand{\cK}{\mathcal{K}}
\newcommand{\cM}{\mathcal{M}}
\newcommand{\cO}{\mathcal{O}}
\newcommand{\cU}{\mathcal{U}}
\newcommand{\cW}{\mathcal{W}}
\newcommand{\cX}{\mathcal{X}}
\newcommand{\cY}{\mathcal{Y}}
\newcommand{\cZ}{\mathcal{Z}}
\newcommand{\w}{\mathrm{w}}
\newcommand{\z}{\mathrm{z}}
\DeclareMathOperator{\op}{Op}
\DeclareMathOperator{\supp}{ supp }
\DeclareMathOperator{\spn}{Span}
\DeclareMathOperator{\diam}{diam}
\renewcommand{\phi}{\varphi}
\newcommand{\rS}{\mathscr{S}}
\renewcommand{\phi}{\varphi}
\renewcommand{\pmod}[1]{\ensuremath{\,(\mathrm{mod}\, #1)}}
\DeclareMathOperator{\Sp}{Sp}
\DeclareMathOperator{\SL}{SL}
\DeclareMathOperator{\Tr}{Tr}
\DeclareMathOperator{\ortho}{O}
\newcommand{\Hj}{\cH_j}
\newcommand{\1}{\mathds{1}}
\let\@wraptoccontribs\wraptoccontribs
\title{Characterizing the support of semiclassical measures for higher-dimensional cat maps}
\author{Elena Kim}
\address[Elena Kim]{Massachusetts Institute of Technology, Department of Mathematics, Cambridge, MA 02142, USA}
\email{elenakim@mit.edu}
\address[Theresa C. Anderson]{Carnegie Mellon University, Department of Mathematical Sciences,
Pittsburgh, PA 15213, USA}
\email{tanders2@andrew.cmu.edu}
\address[Robert J. Lemke Oliver]{Tufts University, Department of Mathematics, Medford, MA 02155, USA  and  University of Wisconsin-Madison, Department of Mathematics, Madison, WI 53706, USA}
\email{robert.lemke\textunderscore oliver@tufts.edu}
\email{lemkeoliver@wisc.edu}
\begin{document}

\begin{abstract}
Quantum cat maps are toy models in quantum chaos associated to hyperbolic symplectic matrices $A\in \Sp(2n,\Z)$.
The macroscopic limits of sequences of eigenfunctions of a quantum cat map are characterized by
semiclassical measures on the torus $\R^{2n}/\Z^{2n}$. We show that if the characteristic polynomial of every
power $A^k$ is irreducible over the rationals, then every semiclassical measure has full support.
The proof uses an earlier strategy of Dyatlov--J\'ez\'equel~\cite{dyatlov2021semiclassical} and
the higher-dimensional fractal uncertainty principle of Cohen~\cite{Cohen}.
Our irreducibility condition is generically true, in fact we show that asymptotically for $100\%$ of matrices $A$,
the Galois group of the characteristic polynomial of $A$ is $S_2 \wr S_n$.

When the irreducibility condition does
not hold, we show that a semiclassical measure cannot be supported on a finite union of parallel non-coisotropic subtori.
On the other hand, we give examples of semiclassical measures supported on the union of two transversal symplectic subtori for $n=2$,
inspired by the work of Faure--Nonnenmacher--De Bi\`evre~\cite{FNdB} in the case $n=1$. This is complementary to the examples
by Kelmer~\cite{Kel} of semiclassical measures supported on a single coisotropic subtorus.
\end{abstract}

\maketitle

\section{Introduction}
One of the central topics in quantum chaos is the study of semiclassical measures,
which capture the high frequency limit of the mass of eigenfunctions.
A typical setting is given by Laplacian eigenfunctions on a Riemannian manifold with an Anosov geodesic flow.
In this paper, we work in a simpler setting of quantum cat maps on the $2n$-dimensional torus $\bT^{2n} \coloneqq \R^{2n}/\Z^{2n}$.
Here the Anosov geodesic flow is replaced by an automorphism of $\bT^{2n}$ given by a matrix $A \in \Sp(2n, \Z)$.
In addition to being technically simpler, quantum cat maps make it possible to construct counterexamples to Quantum Unique Ergodicity
and provide more opportunities for numerical experimentation. They have been extensively studied since the 1980s,
see~\S\ref{subsection:previousresults} below for a historical overview.

A quantum cat map is a family of operators $M_{N, \theta}$, with $N \in \N$, $\theta \in \bT^{2n}$, defined to satisfy an exact Egorov's Theorem. Each $M_{N, \theta}$ acts on the \emph{space of quantum states} $\cH_N(\theta)$, a finite-dimensional
Hilbert space.
We refer the reader to~\S\ref{section:preliminaries} for the proper definitions.

The aim of this paper is to describe the limit as $N \rightarrow \infty$ of the mass distribution of
eigenfunctions of $M_{N, \theta}$. This asymptotic behavior is captured by semiclassical measures, detailed in Definition \ref{def:semiclassical_measure}. We will in particular study the possible \emph{supports of semiclassical measures}.
In the related setting of Riemannian manifolds, results on these supports have found applications to control
estimates~\cite{Ji18}, exponential decay for the damped wave equation~\cite{Jin},
and bounds on restrictions of eigenfunctions~\cite{GZ21}.

\subsection{Main results}
Our first result examines when semiclassical measures must have full support.  
Suppose $A \in \Sp(2n, \Z)$ has a non-unit length eigenvalue. Let $\lambda_1, \ldots, \lambda_l$ be the eigenvalues of $A$ with the largest absolute value. As $A$ is symplectic, $\lambda_1^{-1}, \ldots, \lambda_l^{-1}$ are also eigenvalues of $A$. Let $F_+ \subset \C^{2n}$ be the sum of generalized eigenspaces corresponding to $\lambda_1, \ldots, \lambda_l$ and let $F_- \subset \C^{2n}$ be the sum of generalized eigenspaces corresponding to $\lambda_1^{-1}, \ldots, \lambda_l^{-1}$. 
Now set 
$E_\pm \coloneqq \{v \in F_\pm : \overline{v} = v\}$.
For $v \in E_+ \cup E_-$, let $V_v \subset \Q^{2n}$ be the smallest rational subspace such that $\R v \subset V_v \otimes \R$. Finally, define $\bT_v \subset \bT^{2n}$ to be the subtorus given by the projection of $V_v \otimes \R$ onto $\bT^{2n}$. In general, we do not have $A\bT_v = \bT_v$. Then we have the following characterization of the support of semiclassical measures associated to $A$.

\begin{theorem}\label{thm:full_support}
Let $A \in \Sp(2n, \Z)$ such that $A$ has a non-unit length eigenvalue. Further suppose $A|_{E_+}$ and $A|_{E_-}$ are diagonalizable over $\C$. Let $\mu$ be a semiclassical measure associated to $A$. 
\begin{enumerate}
    \item For some $z \in \bT^{2n}$ and $v \in (E_+ \cup E_-) \setminus \{0\}$, $\overline{\{A^k (z + \bT_v) : k \in \Z\}} \subset \supp \mu$. \label{item:K}
    \item  If for all $k \in \N$, the characteristic polynomial of $A^k$ is irreducible over the rationals, then $\bT^{2n} = \overline{\{A^k (z + \bT_v) : k \in \Z\}}$, i.e., $\mu$ has full support.   \label{item:full_support} 
\end{enumerate} 
\end{theorem}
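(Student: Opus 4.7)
The plan is to combine part~(1) with the algebraic rigidity provided by the hypothesis. By part~(1), there exist $z \in \bT^{2n}$ and $v \in (E_+ \cup E_-) \setminus \{0\}$ with $\cK := \overline{\{A^k(z + \bT_v) : k \in \Z\}} \subseteq \supp \mu$, so it suffices to prove $\cK = \bT^{2n}$. Since $\mathrm{char}(A^k)$ is irreducible over $\Q$ for every $k \in \N$, the $\Q[A^k]$-module $\Q^{2n}$ is simple; equivalently, no $A^k$ admits a proper nonzero rational invariant subspace. Two consequences follow. First, if $V_v = \Q^{2n}$ then $\bT_v = \bT^{2n}$ and we are done, so henceforth assume $V_v \subsetneq \Q^{2n}$; then any relation $A^k V_v = V_v$ with $k \neq 0$ would exhibit $V_v$ as a proper nonzero $A^{|k|}$-invariant rational subspace, which is excluded, so $\{A^k V_v : k \in \Z\}$ is an infinite family of distinct rational subspaces. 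Second, the smallest rational subspace $U \subseteq \Q^{2n}$ whose complexification contains $F_+$ is $A$-invariant (since $A F_+ = F_+$) and nonzero, hence $U = \Q^{2n}$; thus $F_+$ projects to a dense subset of $\bT^{2n}$, and likewise for $F_-$.

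The core step is to show $\overline{\bigcup_{k \in \Z} (A^k z + A^k \bT_v)} = \bT^{2n}$. Assume $v \in E_+$ (the case $v \in E_-$ is symmetric under $A \mapsto A^{-1}$). Using the diagonalizability of $A|_{E_+}$, the Lyapunov structure forces, as $k \to +\infty$, the subspace $A^k(V_v \otimes \R) \subset \R^{2n}$ to accumulate in the real Grassmannian on a nonzero subspace of $F_+$: indeed $V_v \otimes \R$ has nontrivial $E_+$-component (it contains $v$), and this component dominates under iteration. Lifting to Hausdorff limits in $\bT^{2n}$, the affine subtori $A^k z + A^k \bT_v$ accumulate on affine translates of closed subsets whose universal covers lie in $F_+$, and combining this with the density statement for $F_+$ from the previous paragraph yields density of these translates in $\bT^{2n}$, hence $\cK = \bT^{2n}$.

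The hardest part of the proof --- and the principal obstacle --- is the density claim above. While $F_+$ itself projects densely, the Lyapunov-dominated Grassmannian limit of $A^k(V_v \otimes \R)$ is typically a proper subspace of $F_+$ whose rational envelope need not equal $\Q^{2n}$. The role of the stronger hypothesis that \emph{every} $A^k$ has irreducible characteristic polynomial, rather than just $A$ itself, is precisely to preclude this trapping: the Galois-theoretic transitivity on each power set $\{\lambda_i^k\}$ prevents any proper $A$-invariant rational envelope from containing the full Grassmannian orbit, so that the accumulation data collected across $k \to \pm \infty$ is rich enough to force density and thus $\cK = \bT^{2n}$.
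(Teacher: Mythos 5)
Your overall route coincides with the paper's (Appendix~\ref{appendix1}): start from part~\eqref{item:K}, and show that irreducibility of the characteristic polynomials of all powers forces $\overline{\{A^k(z+\bT_v):k\in\Z\}}=\bT^{2n}$. Your algebraic observations are correct (irreducibility of $\mathrm{char}(A^k)$ forbids proper nonzero rational $A^k$-invariant subspaces; the rational envelope of $E_+$ is all of $\Q^{2n}$). But the core density step is not proved. You correctly identify that the Grassmannian limits of $A^k(V_v\otimes\R)$ may be proper subspaces of $E_+$, and then assert that ``Galois-theoretic transitivity\dots prevents any proper $A$-invariant rational envelope from containing the full Grassmannian orbit, so that the accumulation data\dots is rich enough to force density.'' That sentence restates the desired conclusion rather than arguing for it: knowing that $E_+$, or the union of all accumulation subspaces, is not contained in a proper rational subspace does not imply that the union of the affine limit sets is dense mod $\Z^{2n}$ --- a line sitting inside a dense subspace need not itself be dense, and density of each individual limit set is governed by \emph{its own} rational envelope, which you never control. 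You also never exhibit a specific limit object whose closure is $\bT^{2n}$.

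The paper closes exactly this gap in Proposition~\ref{prop:K} by a concrete construction. Since $A|_{E_+}$ is diagonalizable, $B:=\Lambda^{-1}A|_{E_+}$ is orthogonal for a suitable inner product, so $H:=\overline{\{B^k:k\in\Z\}}$ is a compact abelian Lie group and $H_0v$ (with $H_0$ the identity component) is a torus of limit directions of the normalized vectors $\Lambda^{-k}A^kv$. If $H_0v$ were contained in a rational hyperplane $W$, then choosing $L$ with $B^L\in H_0$, the minimal rational subspace containing $\{A^{kL}v:k\in\Z\}$ would be a proper nonzero $A^L$-invariant rational subspace, contradicting the irreducibility of $\mathrm{char}(A^L)$ --- this is precisely where the hypothesis on \emph{all} powers enters, through the a priori unknown integer $L$. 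Hence $H_0v$ meets each of the countably many rational hyperplanes in a measure-zero set, so some limit direction $w\in H_0v$ lies in none of them; by Lemma~\ref{lem:T_v} the line $z_\infty+\R w$ is then dense in $\bT^{2n}$, and it is a Hausdorff limit of the sets $A^{k_l}(z+\R v)\subset\supp\mu$. Your proposal is missing this (or any equivalent) mechanism for producing a single totally irrational limit direction, which is the actual content of part~\eqref{item:full_support}.
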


From Lemma \ref{lem:first-power-suffices}, if $n \geq 2$ and the characteristic polynomial of $A$ is irreducible over $\Q$ with the maximal Galois group $S_2 \wr S_n$, then the characteristic polynomial of $A^k$ for all $k$ is irreducible.  For the definition of the Galois group of a polynomial, see the start of Appendix~\ref{appendix2}.

In Appendix~\ref{appendix1}, we characterize $\overline{\{A^k (z + \bT_v) : k \in \Z\}}$ by the smallest $k_0 \in \N$ such that the characteristic polynomial of $A^{k_0}$ is irreducible over the rationals, including the case of Theorem~\ref{thm:full_support}~\eqref{item:full_support}. If the characteristic polynomial of $A^k$ is reducible, there is an obstruction from rational, $A^k$-invariant tori.

In Appendix~\ref{appendix2}, Anderson and Lemke Oliver show that for 100\% of matrices $A \in \Sp(2n, \Z)$ (when ordered by any norm on the space of $2n \times 2n$ matrices), the characteristic polynomial of $A^k$ is irreducible over the rationals for all $k$ and $A$ has a non-unit length eigenvalue. If all powers of $A$ have irreducible characteristic polynomial irreducible over the rationals, then  $A|_{E_\pm}$ must be diagonalizable over $\C$. We conclude that 100\% of $A \in \Sp(2n, \Z)$ have semiclassical measures with full support. 

We now turn our attention to when the irreducibility condition in Theorem~\ref{thm:full_support}~\eqref{item:full_support} fails. From Theorem~\ref{thm:full_support}~\eqref{item:K}, the support of the semiclassical measure must contain a torus. In the case when the support is exactly a torus, the symplectic structure plays a key role.

We now review some basic definitions from  symplectic linear algebra. 
Let $z=(x, \xi)$, $\omega=(y, \eta) \in \R^{2n}$. Define the \emph{standard symplectic form} $\sigma$ on $\R^{2n}$ by $\sigma(z, \omega) \coloneqq \lrang{\xi, y} - \lrang{\eta, x}$.
\begin{definition}
Suppose $V$ is a subspace of $\R^{2n}$. The \emph{symplectic complement} of $V$ is given by
$$V^{\perp \sigma} \coloneqq \{\omega \in \R^{2n} \mid \sigma(\omega, z) =0 \text{ for all } z \in V\}.$$ 
If $V^{\perp \sigma} = V$, we call $V$ \emph{Lagrangian}.
More generally, if $V^{\perp \sigma} \subset V$, we say that $V$ is \emph{coisotropic} and if $V \subset V^{\perp \sigma}$, we say that $V$ is \emph{isotropic}. Finally, if $V \cap V^{\perp \sigma} = \{0\}$, then $V$ is \emph{symplectic}.
\end{definition}
We know that $\dim V + \dim V^{\perp \sigma} = n$. Therefore, if $V$ is Lagrangian, $\dim V= n$ and  if $V$ is coisotropic, $n \leq \dim V \leq 2n$. If $V=\R^{2n}$ or if $V$ is a codimension-one subset of $\R^{2n}$, $V$ is automatically coisotropic.

Kelmer~\cite{Kel} constructed semiclassical measures supported on proper coisotropic tori. Our next result shows such counterexamples do not exist when $V$ is not coisotropic.

\begin{theorem}\label{thm:coisotropic}
Let $A \in \Sp(2n, \Z)$ be hyperbolic  (i.e., it has no unit-length eigenvalues) and diagonalizable over $\C$. Assume that $V \subset \R^{2n}$ is a rational $A$-invariant subspace which is not coisotropic. Let $\Sigma$ be the union of finitely many subtori of $\bT^{2n}$ with tangent space $V$. Then no semiclassical measure for the corresponding quantum cat map can have support contained in $\Sigma$.
\end{theorem}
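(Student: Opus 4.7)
The plan is to argue by contradiction, supposing that a semiclassical measure $\mu$ for $A$ satisfies $\supp\mu\subset\Sigma$. The first step is to use Theorem~\ref{thm:full_support}\eqref{item:K} twice: once for $A$, and once for $A^{-1}$. Since $M_{N,\theta}^{-1}$ quantizes $A^{-1}$ by the exact Egorov property, any eigenfunction sequence of $M_{N,\theta}$ is also one for $M_{N,\theta}^{-1}$, so $\mu$ is a semiclassical measure for both $A$ and $A^{-1}$. Because $F_+(A^{-1})=F_-(A)$, applying Theorem~\ref{thm:full_support}\eqref{item:K} to both produces nonzero $v_+\in E_+$ and $v_-\in E_-$ along with points $z_\pm\in\bT^{2n}$ such that $\overline{\{A^k(z_\pm+\bT_{v_\pm}):k\in\Z\}}\subset\supp\mu\subset\Sigma$. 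Since every component of $\Sigma$ has tangent space $V$, this forces $V_{v_+}+V_{v_-}\subset V$.

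Next I would bring in the symplectic algebra and the non-coisotropy hypothesis. Because $V^{\perp\sigma}\not\subset V$ and $V$ is rational, pick a rational $w\in V^{\perp\sigma}\setminus V$; note that $V^{\perp\sigma}$ is itself rational and $A$-invariant since $A$ is symplectic and $V$ is $A$-invariant. Recall that $F_+$ and $F_-$ are isotropic and paired non-degenerately by $\sigma$, because generalized eigenspaces of $A$ with eigenvalues $\lambda,\lambda'$ satisfying $\lambda\lambda'\neq 1$ are $\sigma$-orthogonal; this, together with having both $v_+\in V\cap E_+$ and $v_-\in V\cap E_-$, gives enough transverse structure within $V$ to work with. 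The claim I would then aim to prove is that $\mu$ must be translation-invariant in the direction $w$ on $\bT^{2n}$; since $w\notin V$, this forces $\supp\mu$ to strictly contain $\Sigma$, contradicting $\supp\mu\subset\Sigma$. To establish this invariance one exploits the Heisenberg translations $T_\omega$ on $\cH_N(\theta)$: they satisfy $M_{N,\theta}T_\omega M_{N,\theta}^{-1}=T_{A\omega}$, and their commutator with $\op(a)$ is governed in leading order by the Poisson bracket $\{\sigma(w,\cdot),a\}$, which vanishes on $V$ since $\sigma(w,v)=0$ for every $v\in V$. Thus $[T_w,\op(a)]$ should be negligible whenever $a$ is microlocalized in a tubular neighborhood of $\Sigma$.

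The main obstacle I expect is this last step: promoting the microlocal commutation relation to an exact translation invariance of the limit measure $\mu$. One has to bookkeep the Heisenberg phase factors of order $e^{2\pi iN\sigma(\cdot,\cdot)}$, account for the fact that $T_w$ preserves $\cH_N(\theta)$ only for $w$ in a prescribed lattice (requiring a rational approximation argument, ideally compatible with the $A$-equivariance $T_w\mapsto T_{Aw}$ and $Aw\in V^{\perp\sigma}$), and convert a near-$\Sigma$ microlocal identity into a global statement about $\mu$. I would expect this step to invoke the higher-dimensional fractal uncertainty principle of Cohen, in the same spirit as in the proof of Theorem~\ref{thm:full_support}\eqref{item:full_support}, possibly combined with a symplectic reduction along $V\cap V^{\perp\sigma}$ that lowers the dimension and reduces matters to the previously handled coisotropic case of Kelmer.
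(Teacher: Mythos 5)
There is a genuine gap, and it sits at the heart of your argument. Your mechanism for showing that $\mu$ is invariant under translation by $w\in V^{\perp\sigma}\setminus V$ rests on the claim that the Poisson bracket $\{\sigma(w,\cdot),a\}$ ``vanishes on $V$ since $\sigma(w,v)=0$ for every $v\in V$.'' This is a miscomputation: the Hamiltonian vector field of $z\mapsto\sigma(w,z)$ is the constant field $\pm w$, so $\{\sigma(w,\cdot),a\}=\pm\partial_w a$, the derivative of $a$ in the direction $w$. The orthogonality $w\in V^{\perp\sigma}$ has nothing to do with this vanishing; for a symbol $a$ cut off to a tubular neighborhood of $\Sigma$ with $w\notin V$, the derivative $\partial_w a$ is large, not negligible. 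Moreover, even with a correct bracket there is no source for the invariance you want: the eigenfunction equation $M_{N,\theta}u=e^{i\varphi}u$ yields $A$-invariance of $\mu$ via~\eqref{eq:exactegorov}, but nothing forces $\lrang{[U_w,\op_{N,\theta}(a)]u,u}$ to vanish, so $\int\{\sigma(w,\cdot),a\}\,d\mu=0$ is unjustified. A secondary issue: applying Theorem~\ref{thm:full_support}~\eqref{item:K} to $A^{-1}$ does not produce a second direction in the opposite eigenspace, since $E_+(A^{-1})\cup E_-(A^{-1})=E_+(A)\cup E_-(A)$; you only ever get ``some $v\in E_+\cup E_-$,'' so the claim $V_{v_+}+V_{v_-}\subset V$ with one vector from each side is not available.

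For comparison, the paper's proof does not pass through Theorem~\ref{thm:full_support} or any translation-invariance of $\mu$, and it deliberately avoids the fractal uncertainty principle here. It runs the word-decomposition machinery of \S\ref{section:groundwork} with symbols $b_1,b_2$ built to be constant along $W^{\perp\sigma}$, where Lemma~\ref{lem:W} (using semi-simplicity of $A$ over $\Q$) produces a nontrivial rational symplectic $A$-invariant $W\subset V^{\perp\sigma}$ with $W\cap V=\{0\}$. Hyperbolicity of $A|_W$ then contracts the supports of the forward- and backward-propagated symbols to $h^{\rho}$-scale in the $W_+$ and $W_-$ directions respectively, and after a linear Darboux change of variables the elementary uncertainty principle of Lemma~\ref{lem:aandbbounds} (a bump-function/Fourier-support estimate, not Cohen's theorem and not Kelmer's work) gives the decay of $\|B_\w\|$ for uncontrolled words, which is what the contradiction requires.
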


First, note that as $\mu$ is $A$-invariant, it  does not impose any meaningful constraints to require $V$ to be $A$-invariant.

As $A$ is integer-valued, the condition of diagonalizability implies that $A$ is semi-simple over $\Q$. In the proof of Theorem~\ref{thm:coisotropic}, we use semi-simplicity to know when we restrict to $V^{\perp \sigma}$, there exists a rational, $A|_{V^{\perp \sigma}}$-invariant complement of $V \cap V^{\perp \sigma}$. If instead of diagonalizability, we assume only that such a complement exists, we suspect the above theorem still holds. This includes the case where $V$ is symplectic: if $V$ is symplectic, then $V^{\perp \sigma}$ is the desired complement. However, in that case, the proof is made more technical by the possibility of generalized eigenvectors.

Theorem~\ref{thm:coisotropic} implies that a semiclassical measure cannot be supported on a single symplectic subtorus. However, in the following result, we give examples of semiclassical measure supported on the union of two symplectic, transversal subtori.

\begin{theorem}\label{thm:counterexample}
There exists hyperbolic $A \in \Sp(4, \Z)$ such that the quantization of $A$ has a sequence of eigenfunctions that that weakly converge to the semiclassical measure 
$$\mu = \frac{1}{2}\left[\delta(x_1, \xi_1) \otimes dx_2 d\xi_2 + dx_1 d\xi_1 \otimes \delta(x_2, \xi_2)\right].$$
\end{theorem}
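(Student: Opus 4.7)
The plan is to take $A=A_0\oplus A_0$ for a hyperbolic $A_0\in\Sp(2,\Z)$ that fixes the origin of $\bT^2$ (for instance Arnold's cat matrix), so $A\in\Sp(4,\Z)$ is hyperbolic. The subtori $T_1=\{(0,0)\}\times\bT^2$ and $T_2=\bT^2\times\{(0,0)\}$ are then $A$-invariant, symplectic, and transversal, and the quantum Hilbert space factors as $\cH_N(\theta)\cong\cH_N^{(0)}(\theta_1)\otimes\cH_N^{(0)}(\theta_2)$ with $M_{N,\theta}=M_{N,\theta_1}^{(0)}\otimes M_{N,\theta_2}^{(0)}$, where $M^{(0)}$ denotes the quantization of $A_0$. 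In particular, $A$ preserves each $T_i$ and acts there by the ergodic cat map $A_0$, so the target measure $\mu$ is manifestly $A$-invariant and is not ruled out by entropy lower bounds.

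To construct the eigenfunctions, I would work along a subsequence of $N$ with a short quantum period for $M_N^{(0)}$, where the scar construction of~\cite{FNdB} applies. On each tensor factor this yields a scar eigenfunction $\psi_N$ of $M_N^{(0)}$ with semiclassical limit $\tfrac12\delta_{(0,0)}+\tfrac12\,dx\,d\xi$, while the Bouzouina--De Bi\`evre quantum ergodicity theorem supplies an equidistributed eigenfunction $\phi_N$ with limit $dx\,d\xi$ and eigenvalue $\beta_N\neq\alpha_N$. Because both $\psi_N\otimes\phi_N$ and $\phi_N\otimes\psi_N$ lie in the $\alpha_N\beta_N$-eigenspace of $M_N$, the symmetric superposition
\[
\Psi_N \;=\; \tfrac{1}{\sqrt 2}\lrp{\psi_N\otimes\phi_N \;+\; \phi_N\otimes\psi_N}
\]
is a genuine eigenfunction. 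Expanding $\langle \Psi_N, \op_N(a_1\otimes a_2)\Psi_N\rangle$ into four bilinear contributions, the two diagonal terms converge to $\mu_\psi(a_i)\mu_\phi(a_j)$, while the two cross terms involving $\langle \psi_N,\op_N(a)\phi_N\rangle$ vanish in the limit by orthogonality $\psi_N\perp\phi_N$ together with an Egorov iteration against the hyperbolic dynamics, a standard decay estimate for off-diagonal matrix elements of cat map eigenfunctions.

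The hard part will be upgrading this naive diagonal sum, which yields $\tfrac14[\delta_{(0,0)}\otimes dx_2d\xi_2 + dx_1d\xi_1\otimes \delta_{(0,0)}] + \tfrac12\,dx_1d\xi_1dx_2d\xi_2$, to the exact measure $\mu$ by removing the parasitic Lebesgue contribution on $\bT^4$. To achieve this I would replace the tensor product of eigenfunctions with an FNdB-type time-averaged projector $\Pi_{\lambda_N}\approx \tfrac1T\sum_{k=0}^{T-1}\lambda_N^{-k}M_N^k$ applied to the subtorus-localized quasi-mode
\[
u_N \;=\; \tfrac{1}{\sqrt 2}\lrp{c_N^{(1)}\otimes\phi_N^{(2)} \;+\; \phi_N^{(1)}\otimes c_N^{(2)}},
\]
where $c_N^{(i)}$ is a Gaussian coherent state at the fixed point $(0,0)$ in the $i$-th factor. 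The Husimi distribution of $u_N$ itself already equals $\mu$, and exploiting that the unstable directions of $A$ at the origin are $(u,0)$ and $(0,u)$, tangent to $T_2$ and $T_1$ respectively, the iterates $M_N^k u_N$ spread only within the two factors. The technical heart of the argument, modeled on the coherent-state spreading analysis of~\cite{FNdB}, is to choose the cutoff $T$ so that $\Pi_{\lambda_N}u_N$ is close in norm to an exact eigenfunction while preserving the Husimi profile $\mu$ up to semiclassical errors, with the cross-term decay above handling the remaining off-diagonal pieces.
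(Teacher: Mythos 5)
Your starting point (taking $A_0\oplus A_0$ and exploiting the short quantum periods of~\cite{FNdB}) is the same as the paper's, and you correctly diagnose that the naive symmetrized tensor product of a scar eigenfunction with an equidistributed one produces the parasitic term $\tfrac12\,dx_1d\xi_1\,dx_2d\xi_2$. The problem is that your proposed fix does not remove it. If you apply the time average $\tfrac1T\sum_k\lambda_N^{-k}M_N^k$ to $c_N^{(1)}\otimes\phi_N^{(2)}$, the second factor is an exact eigenfunction and stays put up to phase, while the first factor becomes the FNdB time average of a coherent state. To get something close in norm to an exact eigenfunction you must average over essentially the full period $T\approx P$, and over that range the iterates $M_N^k c_N$ equidistribute in the first $\bT^2$ for $P/4\lesssim|k|\lesssim P/2$ --- this is precisely the mechanism forcing the FNdB limit to be $\tfrac12\delta+\tfrac12\,dx\,d\xi$ rather than $\delta$ (and it is unavoidable: an eigenfunction sequence converging to $\delta_{(0,0)}\otimes dx_2d\xi_2$ alone is what you want, but your averaged state converges to $(\tfrac12\delta+\tfrac12 L)\otimes L$ per summand). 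So your claim that ``the Husimi profile $\mu$ is preserved'' under the averaging fails, and after symmetrizing you land back on $\tfrac14[\delta\otimes L+L\otimes\delta]+\tfrac12 L\otimes L$. (The assertion that the cross terms $\lrang{\psi_N,\op_N(a)\phi_N}$ decay is also not a standard fact for individual pairs of eigenfunctions, but that is secondary.)

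The missing idea is a \emph{time offset of half a period between the two factors}. The paper's eigenfunction is
$$u=\frac{1}{\sqrt P}\sum_{t=-P/2}^{P/2-1}\Bigl(e^{-i\phi t/P}M_N^tG_N\Bigr)\otimes\Bigl(e^{-i\phi(t+P/2)/P}M_N^{t+P/2}G_N\Bigr),$$
with the \emph{same} Gaussian $G_N$ in both slots but the second factor propagated by the extra time $P/2$. Then for every $t$ (outside an $o(P)$ window near $|t|=P/4$) exactly one factor is in its ``local'' regime and the other in its ``ergodic'' regime, so the sum over $|t|\le P/4$ contributes $\tfrac12\delta(x_1,\xi_1)\otimes dx_2d\xi_2$ and the complementary sum contributes $\tfrac12\,dx_1d\xi_1\otimes\delta(x_2,\xi_2)$; no $L\otimes L$ or $\delta\otimes\delta$ term ever appears. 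Without this offset (or some equivalent device correlating the two factors' local/ergodic phases), no amount of projecting or symmetrizing will yield the target measure $\mu$.
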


\subsection{Previous results}\label{subsection:previousresults}
We briefly review the literature on semiclassical measures. For additional context, see the literature reviews in~\cite{dyatlov2021semiclassical}*{\S 1.3} and~\cite{Dy}*{\S 3}.

Developed in the 1970s and 80s, the Quantum Ergodicity theorem of  Shnirelman~\cite{Shn}, Zelditch~\cite{Ze},  and  Colin de
Verdi\`{e}re~\cite{colindeverdiere}  was the first result to study the mass of eigenfunctions in quantum chaos. The theorem implies that on negatively curved compact manifolds, a density-one sequence of Laplacian eigenfunctions weakly converges to the Liouville measure. This is known as \emph{equidistribution}. 

In 1994, Rudnick and Sarnak proposed the Quantum Unique Ergodicity conjecture~\cite{RS}, which poses that on negatively curved compact manifolds, the whole sequence of eigenfunctions weakly converges to the the Liouville measure. The conjecture has been proven in a few specific cases, some of which are discussed below, but remains open in general.

Much of the work on semiclassical measures has focused on positive lower bounds for their Kolmogorov--Sina\u{\i} entropy, known as \emph{entropy bounds}. In the setting of negatively curved Riemannian manifolds, entropy bounds were established by Bourgain and Lindenstrauss~\cite{BL03}, Anantharaman~\cite{anantharaman}, Anantharaman and Nonnenmacher~\cite{AN}, Anantharaman, Koch, and Nonnenmacher~\cite{AKN}, Rivi\`{e}re~\cites{R2, R1}, and Anantharaman and Silberman~\cite{AS}. Using the methods from \cite{anantharaman}, it is likely possible that Theorem~\ref{thm:coisotropic} can be improved to show that a semiclassical measure cannot be supported on $\Sigma \times K$, where $\Sigma$ is as in the statement of Theorem~\ref{thm:coisotropic} and $K$ has small topological entropy.

Recent work has focused on characterizing the support of semiclassical measures on negatively curved surfaces. In 2018, Dyatlov and Jin proved that on hyperbolic surfaces, semiclassical measures have full support~\cite{DJi}. Dyatlov, Jin, and Nonnenmacher generalized this result to negatively curved surfaces in 2022~\cite{DJN}. In 2024, Athreya, Dyatlov, and Miller characterized the support of semiclassical measures on complex hyperbolic quotients~\cite{ADM24}.

Quantum cat maps were introduced in 1980 by  Hannay and Berry~\cite{HB1980}. Much work has been conducted on 
\emph{arithmetic eigenfunctions}, the joint eigenfunctions of a quantum cat map and the Hecke operators. In 2000, Kurlberg and Rudnick showed that the Quantum Unique Ergodicity conjecture holds for arithmetic eigenfunctions in 2 dimensions~\cite{KR}. See also the work of Gurevich and Hadani~\cite{GH}. In 2010, Kelmer considered higher-dimensional arithmetic eigenfunctions,  proving the Quantum Unique Ergodicity conjecture for quantum cat maps that do not have a coisotropic invariant rational subspace~\cite{Kel}. As discussed in the previous section, for quantum cat maps that do have an isotropic invariant rational subspace, Kelmer found sequences of arithmetic eigenfunctions that weakly converge to semiclassical measures supported on proper coisotropic submanifolds of~$\bT^{2n}$. In 2024, Kurlburg, Ostafe, Rudnick, and Shparlinski showed that in higher dimensions, if there are no coisotropic rational A-invariant subspaces,  Quantum Ergodicity holds without the arithmetic assumption ~\cite{KORS24}.

The benefit of studying arithmetic eigenfunctions is that they satisfy additional symmetries. This idea was further developed by Rivi\`{e}re and Wolf in ~\cite{RW25}.  They  characterized the form of semiclassical measures in the case that their eigenfunctions satisfy extra symmetries (but are not necessarily arithmetic eigenfunctions).

We turn our attention back to general eigenfunctions for quantum cat maps. In 2003, Faure, Nonnenmacher, and De Bi\`{e}vre constructed a counterexample to the Quantum Unique Ergodicity conjecture on $2$-dimensional quantum cat maps~\cite{FNdB}. Specifically, they constructed sequences of eigenfunctions that weakly converge to  $\mu = \frac{1}{2} \mu_L + \frac{1}{2} \delta_0$, where $\mu_L$ is the volume measure.

Brooks in~\cite{Bo10} and Faure and Nonnenmacher in~\cite{FN04} proved `entropy-like' bounds for semiclassical measures of 2-dimensional quantum cat maps. 
In~\cite{AKN}, Anantharaman, Koch, and Nonnenmacher conjectured that in all dimensions, the optimal lower bound for semiclassical measures of quantum cat maps is half the  topological entropy. 
The entropy of Kelmer's example~\cite{Kel}, mentioned above, is half of the topological entropy. 
Rivi\`ere~\cite{Ri11} proved a lower bound in higher dimensions that, in some cases, matches the optimal conjecture. Importantly, the Rivi\`ere bound matches the entropy for the 2-dimensional example in~\cite{FNdB} and our 4-dimensional example in Theorem~\ref{thm:counterexample}. Both examples have half the maximal entropy. Under additional assumptions on the eigenvalues of the matrix, it is possible to recover Theorem~\ref{thm:coisotropic} from entropy bounds.

Theorem~\ref{thm:coisotropic} is not implied by entropy bounds. Specifically,  let $A \in \Sp(2, \Z)$ with eigenvalues $\lambda, \lambda^{-1}$, where $|\lambda|>1$. Using the ordering of coordinates $(x_1, \xi_1, x_2, \xi_2)$, $A \oplus A \in \Sp(4, \Z)$ and the entropy of any semiclassical measure associated to the quantization of $A \oplus A$  is bounded between $\log |\lambda|$ and $2 \log |\lambda|$. If $\mu = \delta(x_1, \xi_1) \otimes dx_2 d\xi_2$, then the entropy of $\mu$ is $\log |\lambda|$, which is within the bounds. However, $\mu$ is supported on a proper symplectic subtori, which is ruled out by Theorem~\ref{thm:coisotropic}.

In 2021, Schwartz showed that semiclassical measures for quantum cat maps given by hyperbolic $A \in \Sp(2, \Z)$ have full support~\cite{Sch}. Soon after, Dyatlov and J\'{e}z\'{e}quel studied higher-dimensional quantum cat maps given by symplectic matrices $A$ that satisfy a spectral gap condition~\cite{dyatlov2021semiclassical}.  The authors proved that the support of any semiclassical measure  given by such a quantum cat map must contain a subtorus which is dependent on the eigenspace of either the smallest or largest eigenvalue. Furthermore, if the characteristic polynomial of $A$ is irreducible over the rationals, then all semiclassical measures have full support.
Theorem~\ref{thm:full_support} removes the spectral gap condition from the result of  ~\cite{dyatlov2021semiclassical}. Thus, we characterize the support of semiclassical measures for a larger class of matrices. For example, Theorem~\ref{thm:full_support} characterizes the semiclassical measures for matrices of the form $B \oplus B$, where $B \in \Sp(2, \Z)$, while ~\cite{dyatlov2021semiclassical} does not.
Both the result of Schwartz~\cite{Sch} and of  Dyatlov and J\'{e}z\'{e}quel~\cite{dyatlov2021semiclassical} rely on the fractal uncertainty principle of Bourgain and Dyatlov~\cite{BD}. 
However, as we are working in higher dimensions, but do not assume a spectral gap, we cannot use the fractal uncertainty principle of~\cite{BD}. To prove Theorem~\ref{thm:coisotropic}, we use a more basic uncertainty principle, which easily generalizes to higher dimensions. On the other hand, to prove Theorem~\ref{thm:full_support}~\eqref{item:K}, we generalize on the work of~\cite{dyatlov2021semiclassical} using Cohen's higher-dimensional fractal uncertainty principle~\cite{Cohen}.

\subsection{Proof outline}\label{subsection:proofoutline}
\text
We first outline the proofs of Theorems~\ref{thm:full_support} and ~\ref{thm:coisotropic}~\eqref{item:K}. Our proof strategy follows~\cite{dyatlov2021semiclassical}, which in turn was inspired by~\cite{DJi},~\cite{Jin}, ~\cite{DJN}, and ~\cite{anantharaman}. Our method diverges from that of~\cite{dyatlov2021semiclassical} in several ways, which are noted as they appear in the argument. 

Both Theorems~\ref{thm:full_support} and~\ref{thm:coisotropic} proceed by contradiction. For $N_j \rightarrow \infty$, let $u_j \in \cH_{N_j}(\theta_j)$ be a normalized sequence of eigenfunctions for $M_{N_j, \theta_j}$ that weakly converges to a semiclassical measure $\mu$. Each space of quantum states $\cH_{N_j}(\theta_j)$ is endowed with an inner product $\lrang{\cdot, \cdot}_{\cH_{N_j}(\theta_j)}$, defined in \S~\ref{subsection:semiclassical_quantization}. In the proof of Theorem~\ref{thm:coisotropic}, we suppose towards a contradiction that  $\supp \mu \subset \Sigma$, where $\Sigma$ is the union of finitely many subtori with non-coisotropic tangent space $V$. For Theorem~\ref{thm:full_support}, we assume that for all $v \in E_+ \cup E_- \setminus \{0\}$ and all $z \in \bT^{2n}$, $z + \R v$ intersects $\bT^{2n} \setminus \supp \mu$. These initial assumptions are different from \cite{dyatlov2021semiclassical}. By Lemma~\ref{lem:T_v}, we know that $\overline{\R v \bmod \Z^{2n}} = \bT_v$. Thus, by the $A$-invariance of $\mu$, after a contradiction is proven, we can conclude $\overline{\{A^k(z + \bT_v) : k \in \Z\}} \subset \supp \mu$. 

In both proofs, we employ a partition of unity $b_1+ b_2 =1$ on $\bT^{2n}$, where $\supp b_1 \cap \supp \mu = \emptyset$. We quantize $b_1 \circ A^{k}$ and $b_2 \circ A^k$ to obtain operators $B_1(k)$ and $B_2(k)$. For $\w=w_0 \cdots w_{m-1} \in \{1, 2\}^m$, set $B_\w = B_{w_{m-1}}(m-1) \cdots B_{w_0}(0)$. We split the words  $\w \in \{1, 2\}^m$ into two parts. Loosely, let $\cY$ be the set of $\w$ with a large fraction of 1's and let $\cX$ be the set of  $\w$ with a small fraction of 1's. Set 
$B_\cY$ to be the sum of $B_\w$ with $\w \in \cY$  and  $B_\cX$ to be the sum of $B_\w$ with $\w \in \cX$. A calculation shows $B_\cY + B_\cX =I$. 

To reach a contradiction, it suffices to prove  $\|B_\cY u_j\|_{\cH_{N_j}(\theta_j) \rightarrow \cH_{N_j}(\theta_j)}$
and $\|B_\cX u_j\|_{\cH_{N_j}(\theta_j) \rightarrow \cH_{N_j}(\theta_j)}$ both converge to 0.
The support condition on $b_1$ gives the decay for $B_\cY$. To study $B_\cX$, we employ the triangle inequality: we bound $\# \cX$ and examine the decay of $\|B_\w\|_{\cH_{N_j}(\theta_j) \rightarrow \cH_{N_j}(\theta_j)}$ for $\w \in \cX$. For the latter, we split $\w$ into two equal parts: $\w= \w_+ \w_-$. Then, we use $\w_\pm$ to construct symbols $b_\pm$ such the proof of the decay of $\|B_\w\|_{\cH_{N_j}(\theta_j) \rightarrow \cH_{N_j}(\theta_j)}$ is reduced to  the decay of $\|\op_h(b_+) \op_h(b_-)\|_{L^2 \rightarrow L^2}$. We show that $\|\op_h(b_+) \op_h(b_-)\|_{L^2 \rightarrow L^2} \rightarrow 0$ via different uncertainty principles for Theorems~\ref{thm:coisotropic} and~\ref{thm:full_support}, which, in turn, are different from the uncertainty principle used in \cite{dyatlov2021semiclassical}.

In the case of Theorem~\ref{thm:coisotropic}, we exploit the fact that $A$ is hyperbolic and $V$ is not coisotropic to show that there exists an $A$-invariant symplectic subspace $W=W_+ + W_-$ such that  $\|A^k \mid_{W_-}\|= \cO(\Lambda^{-k})$ and $\|A^{-k} \mid_{W_+}\|= \cO(\Lambda^{-k})$ for some $\Lambda>1$ as $k \rightarrow \infty$. This gives control over $\supp (b_2 \circ A^{k})$, and therefore $\supp (b_\pm)$, in the directions of $W_+$ and $W_-  $. After using Darboux's theorem to  straighten out $\supp (b_2 \circ A^{k})$, the decay of $\|\op_h(b_+) \op_h(b_-)\|_{L^2 \rightarrow L^2}$ follows by the uncertainty principle Lemma~\ref{lem:aandbbounds}.

Meanwhile, for Theorem~\ref{thm:full_support},  we use Darboux's theorem to straighten out $E_\pm$. Under this straightening, we show that the projection of $\supp b_\pm$ onto $E_\mp$ is porous on lines, in the sense of Definition~\ref{def:porous_on_lines}. Then the decay of $\|\op_h(b_+) \op_h(b_-)\|_{L^2 \rightarrow L^2}$ follows from a generalization of Cohen's higher-dimensional fractal uncertainty principle, Proposition~\ref{prop:fractal_uncertainty}.

We now summarize the argument for Theorem~\ref{thm:counterexample}.
The result generalizes the above-mentioned 2-dimensional counterexample of Faure, Nonnenmacher, and De Bi\`{e}vre in~\cite{FNdB}. 
Their proof relies on the existence of short periods: for any hyperbolic $A \in \Sp(2, \Z)$, there is a sequence $N_j \rightarrow \infty$ such that $M_{N_j, \theta}$ has period $P(N_j) \sim \frac{2\log N_j}{\log \lambda}$. For $N= N_j$ and a Gaussian $G_{N}$, they construct a sequence of eigenfunctions by averaging over functions of the form $M_{N, \theta}^t G_{N}$ for $|t| \leq P(N)/2$. 
For $|t| \leq P(N)/4$, $M_{N, \theta}^t G_{N}$ weakly converges to $\delta(x, \xi)$, while for $P(N)/4  \leq |t| \leq P(N)/2$, $M_{N, \theta}^t G_{N}$ weakly converges to $dx d\xi$. Therefore, by averaging, they obtain the semiclassical measure $\tfrac{1}{2}(\delta(x, \xi) +dx_1 d\xi_1)$. 

To obtain our example in 4 dimensions, for $A \in \Sp(2, \Z)$, we examine $A \oplus A$, which quantizes to $M_{N, \theta} \otimes M_{N, \theta}$.  Our eigenfunction is the average over functions of the form $M_{N, \theta}^t G_{N} \otimes M_{N, \theta}^{t+\frac{P(N)}{2}} G_{N}$ for $|t| \leq P(N)/2$. We show for $|t| \leq P(N)/4$, $M_{N, \theta}^t G_{N} \otimes M_{N, \theta}^{t+\frac{P(N)}{2}} G_{N}$ weakly converges to $\delta(x_1, \xi_1) \otimes dx_2 d\xi_2$ and for $P(N)/4  \leq |t| \leq P(N)/2$, $M_{N, \theta}^t G_{N} \otimes M_{N, \theta}^{t+\frac{P(N)}{2}} G_{N}$ weakly converges to $dx_1 d\xi_2 \otimes \delta(x_2, \xi_2)$. Therefore, we have the semiclassical measure $\mu = \frac{1}{2}\left[\delta(x_1, \xi_1) \otimes dx_2 d\xi_2 + dx_1 d\xi_1 \otimes \delta(x_2, \xi_2)\right]$.

\subsection{Structure of the paper} 
\begin{itemize}
    \item In \S\ref{section:preliminaries}, we review the required preliminaries for this paper. First in \S\S\ref{subsection:semiclassical_quantization}--\ref{subsection:symbols_torus}, we survey semiclassical quantizations and metaplectic transformations. In \S\ref{subsection:uncertainty}, we present the two uncertainty principles needed to prove Theorem~\ref{thm:full_support}~\eqref{item:K} and Theorem~\ref{thm:coisotropic}.
    \item In \S\ref{section:groundwork}, we prove  Theorem~\ref{thm:full_support}~\eqref{item:K} and Theorem~\ref{thm:coisotropic} up to Lemma~\ref{lem:Bw_decay}, which claims that $\|B_\w\|_{\cH_{N_j}(\theta_j) \rightarrow \cH_{N_j}(\theta_j)}$ converges to zero for $\w \in \cX$. 
    \item In \S\ref{section:thm1.3}, we prove Lemma~\ref{lem:Bw_decay} under the conditions of Theorem~\ref{thm:coisotropic}.
    \item In \S\ref{section:thm1.4}, we prove Lemma~\ref{lem:Bw_decay} under the conditions of Theorem~\ref{thm:full_support}.
    \item In \S\ref{section:counterexample}, we prove Theorem~\ref{thm:counterexample}.
    \item In Appendix~\ref{appendix1}, we characterize $\overline{\{A^k(z + \bT_v) : k \in \Z\}} \subset \supp \mu$ and prove Theorem~\ref{thm:full_support}~\eqref{item:full_support}.
    \item In Appendix~\ref{appendix2}, Anderson and Lemke Oliver show that, under any ordering by a norm, for 100\% of matrices $A \in \Sp(2n, \Z)$, the characteristic polynomial of $A^k$ for all $k$ is irreducible over the rationals and the Galois group of the characteristic polynomial of $A^k$ is the wreath product $S_2 \wr S_n$.
\end{itemize}

\section{Preliminaries}\label{section:preliminaries}

\subsection{Semiclassical quantization}\label{subsection:semiclassical_quantization}
We begin with a review of the necessary definitions for this paper. First, recall the semiclassical Weyl quantization. For $a \in \rS(\R^{2n})$ and a semiclassical parameter $h \in (0,1]$, 
\begin{equation}\label{eq:weylquant}
\op_h(a)f(x) \coloneqq \frac{1}{(2\pi h)^n} \int_{\R^{2n}} e^{\frac{i}{h} \lrang{x -y, \xi}} a\left( \frac{x + y}{2}, \xi \right) f(y) dy d\xi, \quad  f \in \rS(\R^n).
\end{equation}

Define the symbol class 
\begin{equation}\label{eq:def_S(1)}
S(1) \coloneqq \left\{a \in C^\infty\left(\R^{2n}\right): \sup_{(x, \xi) \in \R^{2n}} \left|\partial^\alpha_{(x, \xi)} a \right| < \infty \text{ for all } \alpha \in \N^{2n}\right\},
\end{equation}
which naturally induces the seminorms $$\|a\|_{C^m} \coloneqq \max_{|\alpha| \leq m } \sup_{\R^{2n}} |\partial_{(x, \xi)}^\alpha a |, \quad m \in \N_0.$$ From~\cite{z12semiclassical}*{Theorem 4.16}, for $a \in S(1)$, $\op_h(a)$ acts on both $\mathscr{S}(\R^n)$ and  $\mathscr{S}'(\R^n)$.

Let $\omega=(y, \eta) \in \R^{2n}$. We call $U_\omega \coloneqq \op_h(a_\omega)$ a \emph{quantum translation}, where $a_\omega(z)\coloneqq \exp(\frac{i}{h} \sigma(\omega, z))$.
Noting that $a_\omega(z) \in S(1)$, $U_\omega$ is well-defined and acts on $\mathscr{S}(\R^n)$. However, the derivatives of $a_\omega$ are not bounded uniformly in $h$.
From~\cite{z12semiclassical}*{Theorem 4.7},
\begin{equation}
  \label{e:U-omega-def}
U_\omega f(x) = e^{\frac{i}{h} \lrang{\eta, x} - \frac{i}{2h}\lrang{y, \eta} } f(x-y).
\end{equation}
Thus, $U_\omega$ is a unitary operator on $L^2(\R^n)$ that satisfies the following exact Egorov's theorem for all $a \in S(1)$:
\begin{equation}\label{eq:egorov}
U^{-1}_\omega \op_h(a) U_\omega =\op_h(\tilde{a}), \quad  \quad \tilde{a}(z)\coloneqq a(z+\omega).
\end{equation}
From the fact that $U_\omega U_{\omega'} = e^{\frac{i}{2h} \sigma(\omega, \omega')} U_{\omega+\omega'}$, we deduce the following commutator formula,
\begin{equation}\label{eq:commutator}
U_\omega U_{\omega'} = e^{\frac{i}{h} \sigma(\omega, \omega')} U_{\omega'}U_\omega.
\end{equation}

Now let $\Sp(2n, \R)$ be the group of \emph{real symplectic $2n \times 2n$ matrices}. By symplectic, we mean that $A$ preserves the standard symplectic form, i.e., $\sigma(Az, A\omega)=\sigma(z,\omega)$. Note that in the 2-dimensional case, $\Sp(2, \R)=\SL(2, \R)$. For each $A \in \Sp(2n, \R)$, denote by $\cM_A$ the set of all unitary transformations $M :L^2(\R^n) \rightarrow L^2(\R^n)$ satisfying the following exact Egorov's theorem,
\begin{equation}\label{eq:MA}
M^{-1} \op_h(a) M= \op_h(a \circ A) \quad \text{for all } a \in S(1).
\end{equation}
From~\cite{z12semiclassical}*{Theorem 11.9}, we have both the existence of these transformations and uniqueness up to a unit factor.

Then $\cM \coloneqq \bigcup_{A \in \Sp (2n, \R)} \cM_A$ is a subgroup of unitary transformations of $L^2(\R^n)$ called the \emph{metaplectic group} and the map $M \mapsto A$ is a group homomorphism from $\cM$ to $\Sp(2n, \R)$. An element of the metaplectic group is a \emph{metaplectic transformation}. As a corollary of~\eqref{eq:MA}, we obtain the following intertwining of the metaplectic transformations and quantum translations:
\begin{equation}\label{eq:intertwining}
M^{-1} U_\omega M=U_{A^{-1} \omega} \quad \text{for all } M \in \cM_A, \quad \omega \in \R^{2n}.
\end{equation}

We turn our attention to quantizations of functions on the torus $\bT^{2n}$. Each $a \in C^\infty(\bT^{2n})$ can be identified with a $\Z^{2n}$-periodic function on $\R^{2n}$. Note that any $a \in C^\infty(\bT^{2n})$ is also an element of  $S(1)$. Therefore, its Weyl quantization $\op_h(a)$ is an operator on $L^2(\R^n)$.

By~\eqref{eq:egorov}, we have the following commutation relations:
\begin{equation}\label{eq:commutation_relations}
\op_h(a) U_\omega = U_\omega \op_h(a) \quad \text{for all } a \in C^\infty(\bT^{2n}), \quad \omega \in \Z^{2n}.
\end{equation}
These commutation relations motivate the definition of the finite-dimensional spaces $\cH_N(\theta)$, where $\theta \in \bT^{2n}$ and $N \in \N$ are such that $\op_h(a)$ descends onto $\cH_N(\theta)$. From~\cite{Bouzouina-deBievre}*{Proposition 2.1}, to ensure that these spaces are nontrivial, for the rest of the paper we assume
$$
h= (2\pi N)^{-1}.
$$

We call $\cH_N(\theta)$ a \emph{space of quantum states}. Specifically, for each $\theta \in \bT^{2n}$, set 
\begin{equation}\label{eq:quantum_state}
\cH_N(\theta)\coloneqq \left\{f \in \rS'(\R^n): U_\omega f=e^{2 \pi i \sigma(\theta, \omega) + N \pi i Q(\omega)} f  \text{ for all } \omega \in \Z^{2n}\right\},
\end{equation}
where the quadratic form $Q$ on $\R^{2n}$ is defined by $Q(\omega)=\lrang{y,\eta}$ for $\omega=(y, \eta) \in \R^{2n}$. Define $\Z^n_N \coloneqq \{0, \ldots, N-1\}^n$.
The following lemma gives an explicit basis for $\cH_N(\theta)$.
\begin{lemma}[\cite{dyatlov2021semiclassical}*{Lemma 2.5}]\label{lem:basis}
The space $\cH_N(\theta)$ is $N^n$-dimensional with a basis $\{e_j^\theta\}$, defined for 
 $j \in \Z^n_N$ and $\theta=(\theta_x, \theta_\xi) \in \R^{2n}$. In particular,
 $$e_j^\theta(x) \coloneqq N^{-\frac{n}{2}} \sum_{k \in \Z^n} e^{-2 \pi i \lrang{\theta_\xi, k}} \delta\left(x- \frac{Nk+j-\theta_x}{N} \right).$$
\end{lemma}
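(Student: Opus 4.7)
The plan is to analyze the defining condition~\eqref{eq:quantum_state} by testing it on two complementary sublattices of $\Z^{2n}$. Plugging $\omega = (y, 0)$ with $y \in \Z^n$ into the explicit formula~\eqref{e:U-omega-def} and using $h = (2\pi N)^{-1}$ together with $\sigma(\theta, (y, 0)) = \lrang{\theta_\xi, y}$ and $Q(y, 0) = 0$, the condition reduces to the quasi-periodicity relation
\[
f(x - y) = e^{2\pi i \lrang{\theta_\xi, y}} f(x), \qquad y \in \Z^n.
\]
Next, plugging in $\omega = (0, \eta)$ with $\eta \in \Z^n$ yields $e^{2\pi i N \lrang{\eta, x}} f(x) = e^{-2\pi i \lrang{\eta, \theta_x}} f(x)$, and requiring this for every $\eta \in \Z^n$ forces $\supp f$ to lie in the discrete lattice $(\Z^n - \theta_x)/N$. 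Consequently $f = \sum_{m \in \Z^n} c_m \, \delta(x - (m - \theta_x)/N)$ for some coefficients $c_m$.

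Substituting this ansatz back into the quasi-periodicity relation yields $c_{m - Ny} = e^{2\pi i \lrang{\theta_\xi, y}} c_m$ for every $y \in \Z^n$. Writing $m = Nk + j$ with $j \in \Z^n_N$ and $k \in \Z^n$, this becomes $c_{Nk + j} = e^{-2\pi i \lrang{\theta_\xi, k}} c_j$, so every $f \in \cH_N(\theta)$ is determined by the $N^n$ free parameters $\{c_j\}_{j \in \Z^n_N}$ and, after unwinding the normalization, equals $N^{n/2} \sum_{j \in \Z^n_N} c_j \, e_j^\theta$. This simultaneously shows $\dim \cH_N(\theta) \le N^n$ and exhibits the candidate basis.

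To complete the argument I would verify that the two sublattice conditions above actually imply the full condition from~\eqref{eq:quantum_state} for every $\omega = (y, \eta) \in \Z^{2n}$, so that each $e_j^\theta$ genuinely lies in $\cH_N(\theta)$. This reduces to a phase-matching check: the cocycle $U_\omega U_{\omega'} = e^{\frac{i}{2h}\sigma(\omega, \omega')} U_{\omega + \omega'}$ produces a factor $e^{-\pi i N \lrang{\eta, y}}$ when composing a horizontal and a vertical translation, and this is precisely $e^{N \pi i Q(y, \eta)}$ since $Q(y, \eta) = \lrang{y, \eta}$; hence the quadratic phase in~\eqref{eq:quantum_state} is correctly accounted for. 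Linear independence of the $\{e_j^\theta\}_{j \in \Z^n_N}$ is immediate since the supports $(N\Z^n + j - \theta_x)/N$ are pairwise disjoint cosets inside $(\Z^n - \theta_x)/N$. This phase-matching step is not a real obstacle, but it is the only place where the specific normalization $h = (2\pi N)^{-1}$ enters essentially, and it is the main subtlety of the proof.
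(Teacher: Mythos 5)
Your argument is correct and is essentially the standard proof of this fact; note that the paper itself does not prove Lemma~\ref{lem:basis} but imports it from~\cite{dyatlov2021semiclassical}, whose proof follows the same two-sublattice strategy (vertical translations force lattice support, horizontal translations force quasi-periodicity of the coefficients, and the cocycle identity with $\tfrac{1}{2h}=\pi N$ supplies exactly the phase $e^{N\pi i Q(\omega)}$, as you check). The one step you assert without justification is the passage from ``$\supp f$ lies in the discrete set $(\Z^n-\theta_x)/N$'' to ``$f$ is a combination of plain delta functions'': a distribution supported at a point is in general a finite sum of \emph{derivatives} of deltas, and you must rule these out by observing that for each lattice point the functions $g_\eta(x)=e^{2\pi i(N\lrang{\eta,x}+\lrang{\eta,\theta_x})}-1$, $\eta$ ranging over a basis of $\Z^n$, have simple zeros with linearly independent gradients there, so annihilation by all of them forces order zero. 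With that one-line addition (and the routine remark that the quasi-periodic coefficients are bounded, so the resulting series is tempered), the proof is complete.
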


We fix an inner product $\lrang{\cdot, \cdot}_{\cH_N(\theta)}$ on each quantum state $\cH_N(\theta)$ by requiring $\{e_j^\theta\}$ to be an orthonormal basis. It can be shown using translation identities for $e_j^\theta$ (see~\cite{dyatlov2021semiclassical}*{(2.36)}) that although each $\{e_j^\theta\}$ depends on the choice of the representative $\theta_x \in \R^n$, the inner product depends only on $\theta \in \bT^{2n}$. Using the bases $\{e_j^\theta\}$, we can consider the spaces $\cH_N(\theta)$ as fibers of a smooth $N^n$ dimensional vector bundle over $\bT^{2n}$, which we call $\cH_N$. 

Fix $\theta=0$ and consider the operator
\begin{equation}\label{eq:Pi_N_def}
\Pi_{N}(0):\mathscr S(\mathbb R^n)\to \mathcal H_{N}(0), \quad \Pi_{N}(0)f=\sum_{j\in\mathbb Z_{N}^n} \langle f,e_j^0\rangle_{L^2}e_j^0.
\end{equation}

Let $\Pi_{N}(0)^*:\mathcal H_N(0)\to\mathscr S'(\mathbb R^n)$
be the adjoint of~$\Pi_{N}(0)$ with respect to the inner products $\langle\cdot,\cdot\rangle_{L^2}$
and $\langle \cdot,\cdot\rangle_{\mathcal H_N(0)}$, that is
\begin{equation}\label{eq:Pi_adjoint}
\langle \Pi_{ N}(0)f,g\rangle_{\mathcal H_N(0)}=\langle f,\Pi_{N}(0)^*g\rangle_{L^2} \quad\text{for all }
f\in \mathscr S(\mathbb R^n),\ g\in\mathcal H_{N}(0).
\end{equation}
A direct computation shows that $\Pi_{N}(0)^*$ is just the embedding map $\mathcal H_{N}(0)\to \mathscr S'(\mathbb R^n)$,
\begin{equation}\label{eq:adjoint_rule}
\Pi_{N}(0)^* g=g\quad\text{for all }g\in\mathcal H_{N}(0)\subset \mathscr S'(\mathbb R^n).
\end{equation}

Now, define the following symmetrization operator
$$
S_{N}:\mathscr S(\mathbb R^n)\to\mathscr S'(\mathbb R^n),\quad
S_{N}f=\sum_{\omega\in\mathbb Z^{2n}}U_\omega f.
$$
Suppose $N$ is even. Write $\omega=(y,\eta) \in \Z^{2n}$. Then using the Poisson summation formula, we compute
\begin{align*}
    S_{N}f(x)&=\sum_{y,\eta\in\mathbb Z^n}e^{2\pi i N\langle \eta,x\rangle}f(x-y)\\
    &=N^{-n}\sum_{y,\ell\in  \mathbb Z^n}f(x-y)\delta(x-\ell/ N)\\
    &=N^{-n}\sum_{\substack{k,r\in\mathbb Z^n\\j\in\mathbb Z_{N}^n}}f(j/N+r)\delta(x-k-j/ N) \quad \text{where } \ell = Nk+j \text{ and } y= k-r \\
    &= N^{-n/2}\sum_{\substack{r\in\mathbb Z^n \\ j\in\mathbb Z_{N}^n}}
    f(j/N+r)e_j^0(x).
\end{align*}
The reduction to $N$ even is for technical simplicity; in the case where $N$ is odd, we pick up the phase constant $e^{-i \pi N \lrang{y, \eta}}$. 

It follows that
$$
S_{N}f=\sum_{j\in\mathbb Z_{N}^n}\langle f,e_j^0\rangle_{L^2}e_j^0.
$$
Therefore, for $f \in \mathscr{S}(\R^n)$ and even $N$, 
\begin{equation}\label{eq:S_N}
\Pi_N(0)^*\Pi_N(0) f = \sum_{l \in \Z^{2n}} U_l f.
\end{equation}

For fixed $N \in \N$ and $a \in C^\infty(\bT^{2n})$, define the quantization 
$$\op_{N, \theta}(a) \coloneqq \op_h(a)|_{\cH_N(\theta)} : \cH_N(\theta) \rightarrow \cH_N(\theta), \quad \theta \in \bT^{2n},$$ which depends smoothly on $\theta$ since $\cH_N(\theta)$ are fibers of the smooth vector bundle $\cH_N$. 
This restriction holds by the definition of $\cH_N(\theta)$ and the commutation relations given in~\eqref{eq:commutation_relations}.

We set $A \in \Sp(2n, \Z)$ and choose a metaplectic transformation $M \in \cM_A$.  We next want to restrict $M$ to $\cH_N(\theta)$. Recall that for $z=(x, \xi)$, we have $\omega = (y, \eta) \in \Z^{2n}$, $\sigma(z, \omega) = \lrang{\xi, y} - \lrang{\eta, x}$ and $Q(\omega) = \lrang{y, \eta}$.  By~\cite{dyatlov2021semiclassical}*{Lemma 2.9}, there exists a unique $\phi_A \in \Z^{2n}_2$ such that for all $\omega \in\mathbb Z^{2n}$, $Q(A^{-1} \omega) - Q(\omega) = \sigma(\phi_A, \omega) \bmod 2\Z$.  
Using the definition of $\cH_N(\theta)$ and~\eqref{eq:intertwining}, we can verify that $M(\cH_N(\theta)) \subset \cH(A \theta +\frac{ N \phi_A}{2})$ for all $\theta \in \bT^{2n}$.

Denote $M_{N, \theta} \coloneqq M|_{\cH_N(\theta)} : \cH_N(\theta) \rightarrow \cH_N(A \theta +\frac{N \phi_A}{2})$, which depends smoothly on $\theta \in \bT^{2n}$.  We require the domain and range of $M_{N, \theta}$ to be the same, in other words, we must have the following \emph{quantization condition},
\begin{equation}\label{eq:domainrange}
(I-A)\theta =\frac{N \phi_A}{2} \mod \Z^{2}.
\end{equation}

Assuming condition~\eqref{eq:domainrange}, we have the following exact Egorov's theorem for all $a \in C^\infty(\bT^{2n})$,
\begin{equation}\label{eq:exactegorov}
M_{N, \theta}^{-1} \op_{N, \theta} (a) M_{N, \theta} =\op_{N, \theta} (a \circ A).
\end{equation}

In this paper, we describe the behavior of the eigenfunctions of $M_{N, \theta}$ in the semiclassical limit. To do so, we define  semiclassical measures.

\begin{definition}\label{def:semiclassical_measure}
Let $N_j \in \N$, $\theta_j \in \bT^{2n}$ be sequences such that $N_j \rightarrow \infty$ and, for all $j$, the quantization condition 
\eqref{eq:domainrange} holds.  Suppose $u_j \in \cH_{N_j}(\theta_j)$ are eigenfunctions of $M_{N_j, \theta_j}$ of norm 1. We say that the sequence $u_j$ \emph{converges weakly} to a Borel measure $\mu$ on $\bT^{2n}$ if 
$$\lrang{\op_{N_j, \theta_j} (a) u_j, u_j}_{\cH_{N_j}(\theta_j)} \rightarrow \int_{\bT^{2n}} a d\mu \quad \text{ for all } a \in C^\infty(\bT^{2n}). $$
We call such a $\mu$ a \emph{semiclassical measure}.
\end{definition}
By taking $a=1$, we see that every semiclassical measure is a probability measure. From the conjugation condition~\eqref{eq:exactegorov},  for all $k \in \Z$ and Borel sets $\Omega$, $\mu(A^k(\Omega))=\mu(\Omega)$. Otherwise put, $\mu$ is $A$-invariant.

\subsection{Symbol calculus}\label{subsection:symbol_calculus}
For the rest of the paper, we use the following notational conventions. 
\begin{notation}\label{notation}
Suppose $(F, \|\cdot \|_F)$ is a normed vector space and $f_h \in F$ is a family depending on a parameter $h>0$. If $\|f_h\|_F = \cO(h^\alpha)$, we write $f_h = \cO(h^\alpha)_F$. 
\end{notation}

\begin{notation}\label{notation2}
We use $C$ to denote a constant, the value of which may vary in each appearance.  
\end{notation}

We now define the exotic symbol calculus $S_{L, \rho, \rho'}$, which was first introduced in~\cite{dyatlov2021semiclassical}. For a more in-depth presentation, see~\cite{dyatlov2021semiclassical}*{\S 2.1.4}.

\begin{definition}\label{def:symbol_classes}
Let $L \subset \R^{2n}$ be a coisotropic subspace and set $0 \leq \rho' \leq \rho$ such that $\rho + \rho' <1$. We say that an $h$-dependent symbol $a(x, \xi; h) \in C^\infty(\R^{2n})$ lies in $S_{L, \rho, \rho'}(\R^{2n})$ if for any choice of constant vector fields $X_1, \ldots, X_k, Y_1, \ldots, Y_m$ on $\R^{2n}$ with $Y_1, \ldots, Y_m$  tangent to $L$, there exists a constant $C$ such that for all $h \in (0, 1]$, $$\sup_{(x, \xi) \in \R^{2n}} \left|X_1 \cdots X_k Y_1 \cdots Y_m a(x, \xi)\right| \leq Ch^{-\rho k -\rho'm}.$$
\end{definition}

These derivative bounds naturally induce a family of seminorms of $S_{L, \rho, \rho'}$.  

When $\rho = \rho' <1/2$, $S_{L, \rho, \rho'}(\R^{2n})$ becomes the following symbol class, see e.g.~\cite{z12semiclassical}*{\S4.4}:
\begin{equation}\label{eq:S_rho(1)}
S_\rho(1) \coloneqq \left\{a(x, \xi) \in C^\infty(\R^{2n}) : |\partial^\alpha a| \leq C_\alpha h^{-\rho |\alpha|} \text{ for all } \alpha\right\}.
\end{equation}
Using $h= (2 \pi N)^{-1}$, we can replace $h$ in the above definition with $N^{-1}$.

We now quote the following properties of $S_{L, \rho, \rho'}$.
\begin{lemma}[\cite{dyatlov2021semiclassical}*{Lemma 2.3}]\label{lem:symbolproperties}
For $a, b \in S_{L, \rho, \rho'}(\R^{2n})$, the following properties hold uniformly in $N$ and $\theta$.
\begin{enumerate}
\item \label{product} $\op_h(a) \op_h(b) = \op_h(a \# b)$, where $a \# b$ satisfies the following asymptotic expansion as $h \rightarrow 0$ for all $l \in \N$:
$$a \# b(z) = \sum_{k=0}^{l-1} \frac{(-ih)^k}{2^k k!} \left(\sigma(\partial_z, \partial_\omega)^k (a(z)b(\omega))\right) |_{\omega=z} + \cO\left(h^{(1-\rho -\rho')l}\right)_{S_{L, \rho, \rho'}}.$$

\item \label{bounded} $\|\op_h(a)\|_{L^2 \rightarrow L^2}$ is bounded uniformly in $h \in (0,1]$. 

\item \label{sharpgarding} If $a \geq 0$ everywhere,  then there exists $C$ such that $$\lrang{\op_h(a) f, f}_{L^2} \geq - Ch^{1-\rho -\rho'} \|f\|_{L^2}^2 \quad \text{for all } f \in L^2(\R^n), \quad 0 < h \leq 1.$$
\end{enumerate}
\end{lemma}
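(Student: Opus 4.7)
The three parts are the Moyal composition formula, $L^2$ boundedness, and the sharp G{\aa}rding inequality, adapted from the standard Weyl calculus to the anisotropic class $S_{L,\rho,\rho'}$. The common preparation is to work in Darboux coordinates adapted to $L$: because $L$ is coisotropic, one may choose coordinates $(x',x'',\xi',\xi'')\in\R^{k}\times\R^{n-k}\times\R^{k}\times\R^{n-k}$ such that $L=\{x''=0\}$, and then the only non-tangent constant vector fields are the $\partial_{x''_j}$. In these coordinates the Poisson operator $\sigma(\partial_z,\partial_\omega)=\sum_j(\partial_{\xi_j}\partial_{y_j}-\partial_{\eta_j}\partial_{x_j})$ pairs every non-tangent derivative (a $\partial_{x''}$ or $\partial_{y''}$) acting on one factor with a tangent derivative acting on the other factor. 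Consequently, a single application of $\sigma(\partial_z,\partial_\omega)$ costs at most $h^{-\rho}\cdot h^{-\rho'}$, not $h^{-2\rho}$; this observation is the engine of the whole argument.

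For part (1), I would start from the standard oscillatory integral for $\op_h(a)\op_h(b)$, Taylor-expand the symbols about the midpoint $z$ to order $l$, and evaluate the resulting Gaussian integrals via the Weyl composition identity (equivalently, stationary phase). This reproduces the Moyal series. The remainder at order $l$ carries a factor $h^l$ multiplying a Taylor integral of $\sigma(\partial_z,\partial_\omega)^l$ applied to $a(z)b(\omega)$, and by the pairing observation above it is $\cO(h^{l(1-\rho-\rho')})$ in the seminorms of $S_{L,\rho,\rho'}$. Additional differentiations of the remainder needed to verify all seminorms only insert further tangent or non-tangent derivatives, which are bookkept by the same rule.

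For part (2), I would apply Cotlar--Stein almost-orthogonality to an anisotropic partition of unity: bumps of side $h^{\rho'}$ in the tangent directions to $L$ and side $h^\rho$ in the non-tangent direction, so $a=\sum_\alpha a_\alpha$. After anisotropic rescaling matching the bump sizes, each $a_\alpha$ lies in $S(1)$ with seminorms bounded uniformly in $\alpha$ and $h$, hence $\|\op_h(a_\alpha)\|_{L^2\to L^2}\leq C$ by Calder\'on--Vaillancourt. The off-diagonal products $\op_h(a_\alpha)^*\op_h(a_\beta)$ and $\op_h(a_\alpha)\op_h(a_\beta)^*$ decay rapidly in the anisotropic distance $|\alpha-\beta|$ by the composition formula of part (1) together with non-stationary phase on disjoint bumps; Cotlar--Stein then delivers the desired uniform $L^2$-bound.

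For part (3), I would use a Friedrichs symmetrization $\op_h^F(a):=T^*M_a T$, where $T$ is a wave-packet transform built from Gaussians whose covariance matches the anisotropic scales of part (2). Automatically $\op_h^F(a)\geq 0$ when $a\geq 0$. A direct computation then identifies $\op_h^F(a)=\op_h(a+r)$, where $r$ is obtained by Taylor-expanding $a$ against the Gaussian kernel: the first-order term vanishes by parity of the Gaussian, and the quadratic remainder is $\cO(h^{1-\rho-\rho'})$ in $S_{L,\rho,\rho'}$ because each derivative of $a$ is matched by a factor $h^\rho$ or $h^{\rho'}$ from the corresponding Gaussian width. Applying part (2) to $r$ yields the stated bound. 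The main obstacle throughout is the seminorm bookkeeping in part (1): one must ensure at every order of differentiation that tangent and non-tangent derivatives pair correctly through the Poisson bracket, so that the factor $h^{1-\rho-\rho'}$ genuinely appears (rather than $h^{1-2\rho}$); once part (1) is clean, parts (2) and (3) follow the standard Cotlar--Stein and Friedrichs templates with only the anisotropic scale adjusted.
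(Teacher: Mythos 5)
This lemma is quoted in the paper from Dyatlov--J\'ez\'equel~\cite{dyatlov2021semiclassical}*{Lemma 2.3}; the paper itself supplies no proof, so the comparison is with the argument in that reference. Your sketch is sound and identifies the correct mechanism: coisotropy of $L$ (i.e.\ $L^{\perp\sigma}\subset L$) forces every non-tangent direction to be $\sigma$-paired with a tangent one, so each application of $\sigma(\partial_z,\partial_\omega)$ costs at most $h^{-\rho-\rho'}$ rather than $h^{-2\rho}$, and your anisotropic widths in parts (2) and (3) (sides $h^{(1+\rho-\rho')/2}$ and $h^{(1-\rho+\rho')/2}$ in conjugate non-tangent/tangent pairs, so that their product is $h$ and both second-order Taylor terms land at $h^{1-\rho-\rho'}$) are exactly right. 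The route is genuinely different from the source, however: Dyatlov--J\'ez\'equel do not redo Cotlar--Stein or Friedrichs symmetrization by hand. They put $L$ into the normal form you describe by a linear symplectic map, then conjugate by the metaplectic quantization of the anisotropic dilation that equalizes the two derivative costs; the conjugated symbol lies in the standard isotropic class $S_{\tilde\rho}(1)$ with $\tilde\rho=(\rho+\rho')/2<1/2$, after which composition, Calder\'on--Vaillancourt, and sharp G\aa rding are imported verbatim from~\cite{z12semiclassical}. Their reduction buys brevity and avoids re-verifying seminorm bookkeeping at every order of the Moyal expansion (the most delicate point of your part (1)); your direct approach buys nothing extra here but would generalize to settings where no global linear rescaling is available. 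If you carry out your version, the main thing to nail down is that the order-$l$ remainder in (1) is controlled not just pointwise but in every $S_{L,\rho,\rho'}$-seminorm, since Lemma~\ref{lem:manymultiplication} and~\eqref{eq:nonintersecting} later rely on that stronger statement.
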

Note that~\eqref{sharpgarding} is a version of the sharp G\aa rding inequality.

\subsection{Symbols on torus}\label{subsection:symbols_torus}
We can also constrain $S_{L, \rho, \rho'}$ to the torus. 

Let the symbol class $S_{L, \rho, \rho'}(\bT^{2n})$ be comprised of the $\Z^{2n}$-periodic symbols in $S_{L, \rho, \rho'}(\R^{2n})$.
From~\cite{dyatlov2021semiclassical}*{\S\S 2.2.2-3}, Lemma~\ref{lem:symbolproperties} restricts to $a, b \in S_{L, \rho, \rho'}(\bT^{2n})$.  We write the details below.
\begin{lemma}\label{lem:torussymbolproperties} For $a, b \in S_{L, \rho, \rho'}(\bT^{2n})$, the following properties hold uniformly in $N$ and $\theta$.
\begin{enumerate}
\item  $\op_{N, \theta}(a) \op_{N, \theta}(b) =\op_{N, \theta}(a \# b)$ where $a \# b$ satisfies the following asymptotic expansion  for all $l \in \N$:
$$a \# b(z) = \sum_{k=0}^{l-1} \frac{(-i)^k}{2^k k!}(2 \pi N)^{-k} \left(\sigma(\partial_z, \partial_\omega)^k (a(z)b(\omega)) \right)|_{\omega=z} + \cO\left(N^{(\rho +\rho'-1)l}\right)_{S_{L, \rho, \rho'}}.$$
\item \label{norm_boundedness} $\|\op_{N, \theta}(a)\|_{\cH_N(\theta) \rightarrow \cH_N(\theta)}$ is bounded by some $S_{L, \rho, \rho'}$-seminorm of $a$, where the choice of the seminorm depends only on $n, \rho$ and $\rho'$. 
\item \label{sharpgarding2} If $a \geq 0$ everywhere, then $$\lrang{\op_{N, \theta}(a) f, f}_{\cH_N(\theta)} \geq -C_a N^{\rho+\rho'-1} \|f\|_{\cH_N(\theta)}^2 \quad \text{for all} \quad f \in \cH_N(\theta).$$
\item $\op_{N, \theta}(a)^* = \op_{N, \theta}(\overline{a})$.
\end{enumerate}
\end{lemma}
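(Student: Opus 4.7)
The plan is to deduce all four parts from the corresponding statements of Lemma~\ref{lem:symbolproperties} on $\R^{2n}$, using the identification $h = (2\pi N)^{-1}$ and the fact that $\op_{N,\theta}(a)$ is by definition the restriction of $\op_h(a)$ to the invariant subspace $\cH_N(\theta) \subset \rS'(\R^n)$. The structural bridge between the two settings is the basis $\{e_j^\theta\}$ of Lemma~\ref{lem:basis} together with the projection $\Pi_N(0)$ (and its obvious translated analogue $\Pi_N(\theta)$), which provides a surjection $\rS(\R^n) \twoheadrightarrow \cH_N(\theta)$ intertwining $\op_h(a)$ with $\op_{N,\theta}(a)$ whenever $a$ is $\Z^{2n}$-periodic; this intertwining is a direct consequence of the commutation relation~\eqref{eq:commutation_relations}.

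Parts (1) and (4) are the easy ones. For (1), the composition identity $\op_h(a)\op_h(b) = \op_h(a\#b)$ from Lemma~\ref{lem:symbolproperties}(1) holds in $S_{L,\rho,\rho'}(\R^{2n}) \supset S_{L,\rho,\rho'}(\bT^{2n})$, and restricting both sides to $\cH_N(\theta)$ preserves the identity; substituting $h = (2\pi N)^{-1}$ converts the $h^k$ factors into $(2\pi N)^{-k}$ and the remainder into $\cO(N^{(\rho+\rho'-1)l})$. For (4), the adjoint formula $\op_h(a)^* = \op_h(\bar a)$ on $L^2(\R^n)$ is immediate from~\eqref{eq:weylquant}; since the inner product on $\cH_N(\theta)$ is induced by declaring $\{e_j^\theta\}$ to be orthonormal, and this basis is compatible with the formal $L^2$-pairing via~\eqref{eq:Pi_adjoint}--\eqref{eq:S_N}, the adjoint identity descends.

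For the substantive parts (2) and (3), I would push the corresponding $L^2$-bounds through the intertwining $\op_h(a) \circ \Pi_N(0)^* = \Pi_N(0)^* \circ \op_{N,\theta}(a)$ (and its $\theta$-analogue). Concretely, given $u, v \in \cH_N(\theta)$, one picks Schwartz representatives $f, g \in \rS(\R^n)$ with $\Pi_N(\theta)f = u$, $\Pi_N(\theta)g = v$ and uses the symmetrization identity~\eqref{eq:S_N} to write $\langle \op_{N,\theta}(a) u, v\rangle_{\cH_N(\theta)}$ as a sum over $\omega \in \Z^{2n}$ of $L^2$-pairings of the form $\langle \op_h(a) U_\omega f, g\rangle_{L^2}$; the $\Z^{2n}$-periodicity of $a$ and~\eqref{eq:commutation_relations} collapse this sum to a single $L^2$-expression. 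Part (2) then follows from Lemma~\ref{lem:symbolproperties}(2), and part (3) from Lemma~\ref{lem:symbolproperties}(3) after rewriting $h^{1-\rho-\rho'} = (2\pi N)^{\rho+\rho'-1}$.

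The main obstacle is the bookkeeping in this last step: $\cH_N(\theta)$ does \emph{not} embed into $L^2(\R^n)$ (its elements are distributional sums of Dirac masses on the lattice $(j-\theta_x)/N + \Z^n$), so one cannot naively apply $L^2$-bounds to $u \in \cH_N(\theta)$. The key is to check that, after the symmetrization unfolding, the infinite sum over $\Z^{2n}$ truly reduces to a single controlled $L^2$-quantity, and that the choice of Schwartz lift $f$ can be made with $\|f\|_{L^2}$ comparable to $\|u\|_{\cH_N(\theta)}$ uniformly in $N$ and $\theta$. This is a routine but careful computation using the explicit form of $e_j^\theta$; once it is carried out, (2) and (3) follow mechanically.
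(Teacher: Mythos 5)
Your overall strategy --- deduce the torus statements from Lemma~\ref{lem:symbolproperties} by exploiting that $\op_{N,\theta}(a)$ is the restriction of $\op_h(a)$ to the invariant subspace $\cH_N(\theta)$ --- is the right one, and it is how the cited source \cite{dyatlov2021semiclassical}*{\S\S 2.2.2--3} (which this paper invokes in lieu of a proof) proceeds. Parts (1) and (4) are fine as you describe them: for (1) one only needs to note additionally that $a\#b$ and each term of its expansion are again $\Z^{2n}$-periodic (which follows from conjugating $\op_h(a\#b)$ by $U_\omega$, $\omega\in\Z^{2n}$, and the injectivity of the Weyl quantization), so the identity and the remainder estimate restrict to $\cH_N(\theta)$; for (4) the unfolding through $\Pi_N(\theta)^*\Pi_N(\theta)=\sum_\omega U_\omega$ together with \eqref{eq:commutation_relations} does give the compatibility of the adjoints.

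The gap is in parts (2) and (3). Your plan is to choose Schwartz lifts $f,g$ of $u,v$ with $\|f\|_{L^2}\approx\|u\|_{\cH_N(\theta)}$ and to ``collapse'' the sum $\sum_{\omega\in\Z^{2n}}\lrang{\op_h(a)U_\omega f,g}_{L^2}$ to a single $L^2$ pairing. That sum does not collapse: all translates $U_\omega f$ contribute, and while the terms decay rapidly for Schwartz $f,g$, the resulting bound depends on Schwartz seminorms (i.e.\ on the concentration of the lifts), not on $\|\op_h(a)\|_{L^2\to L^2}\,\|f\|_{L^2}\|g\|_{L^2}$. Worse, for part (3) the off-diagonal terms $\omega\neq 0$ carry no sign, so positivity of $\op_h(a)+Ch^{1-\rho-\rho'}$ on $L^2$ cannot be transferred term by term. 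The missing ingredient is the decomposition of $L^2(\R^n)$ as a direct integral $\int^{\oplus}_{\bT^{2n}}\cH_N(\theta)\,d\theta$ (a Zak-transform identity: averaging $\|\Pi_N(\theta)f\|^2_{\cH_N(\theta)}$ over $\theta$ recovers $\|f\|_{L^2}^2$ up to normalization), under which $\op_h(a)$ acts fiberwise as $\op_{N,\theta}(a)$ precisely because of \eqref{eq:commutation_relations}. This gives at once $\|\op_{N,\theta}(a)\|_{\cH_N(\theta)\to\cH_N(\theta)}\le\|\op_h(a)\|_{L^2\to L^2}$ for a.e.\ $\theta$ (operator norm of a decomposable operator equals the essential supremum of the fiber norms) and the transfer of nonnegativity of $\op_h(a)+Ch^{1-\rho-\rho'}$ to a.e.\ fiber; continuity of $\op_{N,\theta}(a)$ in $\theta$ then upgrades both to every $\theta$. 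Without this (or an equivalent averaging over $\theta$), pointwise-in-$\theta$ control of $\cH_N(\theta)$ norms by $L^2$ norms is simply false, since $\Pi_N(\theta)$ involves point evaluations and is unbounded on $L^2$.
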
  
Importantly, Lemma~\ref{lem:torussymbolproperties} implies the following nonintersecting support property:
\begin{equation}\label{eq:nonintersecting}
\op_{N, \theta}(a) \op_{N, \theta}(b) = \cO(N^{-\infty})_{\cH_N(\theta) \rightarrow \cH_N(\theta)} \quad \text{if} \quad a, b \in S_{L, \rho, \rho'}(\bT^{2n}), \quad \supp a \cap \supp b = \emptyset.
\end{equation}

We also need the following lemma on the product of many quantized observables. 
\begin{lemma}[\cite{dyatlov2021semiclassical}*{Lemma 2.8}]\label{lem:manymultiplication}
Assume that $a_1, \ldots, a_M \in S_{L, \rho, \rho'}(\bT^{2n})$, where $M \leq C_0 \log N$, satisfy $\sup_{\bT^{2n}}|a_j| \leq 1$ and each $S_{L, \rho, \rho'}$-seminorm of $a_j$ is bounded uniformly in $j$. Then
\begin{enumerate}
\item \label{manyproduct} The product $a_1 \cdots a_M$ lies in $S_{L, \rho + \varepsilon, \rho'+\varepsilon}(\bT^{2n})$ for all small $\varepsilon>0$.
\item \label{manysymbol} For all $\varepsilon>0$,
$$\op_{N, \theta}(a_1) \cdots \op_{N, \theta}(a_M) = \op_{N, \theta}(a_1 \cdots a_M) + \cO\left(N^{\rho+\rho' -1 + \varepsilon}\right)_{\cH_N(\theta) \rightarrow \cH_N(\theta)},$$
where the implied constant depends only on $\rho, \rho', \varepsilon, C_0$, and the maximum over $j$ of a certain $S_{L, \rho, \rho'}$-seminorm of $a_j$. 
\end{enumerate}
\end{lemma}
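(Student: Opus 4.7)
The proof splits cleanly along the two claims. For claim~\eqref{manyproduct}, the strategy is a direct application of the Leibniz rule combined with the sub-polynomial growth of $M$. Fix constant vector fields $X_1,\dots,X_k$ on $\R^{2n}$ and $Y_1,\dots,Y_m$ tangent to $L$. By Leibniz,
\[
X_1\cdots X_k Y_1\cdots Y_m (a_1\cdots a_M) \;=\; \sum_{(\alpha,\beta)} \prod_{j=1}^M \bigl(D^{\alpha_j,\beta_j} a_j\bigr),
\]
where $(\alpha,\beta)$ runs over partitions of the $k$ general and $m$ tangential derivatives among the $M$ factors. The number of summands is bounded by $M^{k+m}$, and each summand is pointwise bounded by $C h^{-\rho k-\rho' m}$ because $\sup|a_j|\le 1$ and $|D^{\alpha_j,\beta_j}a_j|\le C h^{-\rho|\alpha_j|-\rho'|\beta_j|}$ by hypothesis. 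Since $M\le C_0\log N = \cO(\log h^{-1}) = \cO_\varepsilon(h^{-\varepsilon/(k+m)})$ for any $\varepsilon>0$, we absorb $M^{k+m}$ into $h^{-\varepsilon(k+m)}$, yielding a bound $\le C_\varepsilon h^{-(\rho+\varepsilon)k-(\rho'+\varepsilon)m}$. This is exactly the defining estimate for $S_{L,\rho+\varepsilon,\rho'+\varepsilon}(\bT^{2n})$.

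For claim~\eqref{manysymbol}, I proceed by iterated composition, using Lemma~\ref{lem:torussymbolproperties} applied two factors at a time. Writing $P_k:=\op_{N,\theta}(a_1\cdots a_k)$, the composition formula gives
\[
P_k\,\op_{N,\theta}(a_{k+1}) \;=\; P_{k+1} + \op_{N,\theta}(r_k), \qquad r_k:=(a_1\cdots a_k)\#a_{k+1} - (a_1\cdots a_k)a_{k+1},
\]
where the expansion with $l=1$ gives $r_k = \cO(N^{(\rho+\rho')_\ast - 1})_{S_{L,\rho_\ast,\rho_\ast'}}$, with $\rho_\ast=\rho+\varepsilon$, $\rho_\ast'=\rho'+\varepsilon$ — here I apply the expansion in the class $S_{L,\rho+\varepsilon,\rho'+\varepsilon}$, which by claim~\eqref{manyproduct} contains the intermediate product $a_1\cdots a_k$ with seminorms uniform in $k$ (up to factors of $N^\varepsilon$ already absorbed). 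Converting symbol bounds to operator bounds via Lemma~\ref{lem:torussymbolproperties}\eqref{norm_boundedness}, each remainder satisfies $\|\op_{N,\theta}(r_k)\|_{\cH_N(\theta)\to\cH_N(\theta)}=\cO(N^{\rho+\rho'-1+2\varepsilon})$. Telescoping the $M-1$ compositions and using $M\le C_0\log N\le C_\varepsilon N^\varepsilon$ produces the total error $\cO(N^{\rho+\rho'-1+3\varepsilon})$, and relabeling $\varepsilon$ gives the claim.

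\textbf{Main obstacle.} The delicate point is bookkeeping in the telescoping: at step $k$ the symbol $a_1\cdots a_k$ no longer has seminorms bounded uniformly in $M$ as a member of $S_{L,\rho,\rho'}$, so one cannot directly invoke Lemma~\ref{lem:symbolproperties}\eqref{product} with the original parameters. The remedy is conceptual rather than computational: claim~\eqref{manyproduct} was proved first precisely so that all intermediate products live in the slightly enlarged class $S_{L,\rho+\varepsilon,\rho'+\varepsilon}$ with seminorms independent of $k$. One then runs the composition formula in this enlarged class, where the remainder becomes $\cO(N^{\rho+\rho'-1+2\varepsilon})$ — slightly weaker, but still acceptable after re-absorbing $\varepsilon$. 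All the factors of $\log N$ arising from counting terms and from the $M-1$ telescoped compositions are swallowed by a single $N^\varepsilon$ at the end.
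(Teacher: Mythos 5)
The paper does not prove this lemma itself --- it is quoted from \cite{dyatlov2021semiclassical}*{Lemma 2.8} --- so the relevant comparison is with the proof given there. Your skeleton matches that proof: Leibniz plus absorbing $M^{k+m}\leq (C_0\log N)^{k+m}$ into $N^{\varepsilon(k+m)}$ for part~\eqref{manyproduct}, then an iterated two-factor composition carried out in the enlarged class $S_{L,\rho+\varepsilon,\rho'+\varepsilon}$ for part~\eqref{manysymbol}. Part~\eqref{manyproduct} as you present it is complete, and you correctly identify that the intermediate products must be handled in the enlarged class with seminorms uniform in $k$.

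There is, however, one genuine gap in part~\eqref{manysymbol}. Any telescoping of $\op_{N,\theta}(a_1)\cdots\op_{N,\theta}(a_M)-\op_{N,\theta}(a_1\cdots a_M)$ into $M-1$ local errors leaves each error $\op_{N,\theta}(r_k)$ composed with the remaining factors, so the $k$-th term is bounded by $\|\op_{N,\theta}(r_k)\|\cdot\prod_{j>k+1}\|\op_{N,\theta}(a_j)\|$, not by $\|\op_{N,\theta}(r_k)\|$ alone. The generic norm bound of Lemma~\ref{lem:torussymbolproperties}~\eqref{norm_boundedness} only gives $\|\op_{N,\theta}(a_j)\|\leq C$ with some $C>1$, and $C^{M}=N^{C_0\log C}$ is a \emph{power} of $N$ that can swamp the gain $N^{\rho+\rho'-1}$; this loss is not among "the factors of $\log N$" you propose to swallow with a single $N^{\varepsilon}$. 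The hypothesis $\sup|a_j|\leq 1$, which you use in part~\eqref{manyproduct} but never in part~\eqref{manysymbol}, is exactly what rescues the argument: by Lemma~\ref{lem:b_bounds} (a consequence of sharp G\aa rding) one has $\|\op_{N,\theta}(a_j)\|_{\cH_N(\theta)\to\cH_N(\theta)}\leq 1+CN^{(\rho+\rho'-1)/2}$, so the $M$-fold product is $(1+CN^{-\delta})^{C_0\log N}=1+o(1)$ and the accumulated error is indeed $\cO(M\cdot N^{\rho+\rho'-1+2\varepsilon})$. Inserting this step closes the proof; without it, the telescoping as written does not.
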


To finish our exposition of the properties of $S_{L, \rho, \rho'}(\bT^{2n})$, we take a special case of~\cite{dyatlov2021semiclassical}*{Lemma 2.7}.
\begin{lemma}\label{lem:b_bounds}
Suppose that $a \in S_{L, \rho, \rho'}(\bT^{2n})$ and $|a| \leq 1$ everywhere. Then for all $u \in \cH_N(\theta)$, 
$$\|\op_{N, \theta}(a) u\|_{\cH_N(\theta)} \leq \|u\|_{\cH_N(\theta)} + CN^{\frac{\rho + \rho' -1}{2}} \|u\|_{\cH_N(\theta)},$$
where the constant $C$ depends only on some $S_{L, \rho, \rho'}$ seminorms of $a$.
\end{lemma}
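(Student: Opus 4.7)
\textbf{Proof plan for Lemma~\ref{lem:b_bounds}.} The strategy is the standard one: bound $\|\op_{N,\theta}(a)u\|^2$ by quantizing the nonnegative symbol $1-|a|^2$ and applying the sharp G\aa rding inequality from Lemma~\ref{lem:torussymbolproperties}~\eqref{sharpgarding2}. The square-root at the end is responsible for the exponent $\tfrac{\rho+\rho'-1}{2}$ rather than $\rho+\rho'-1$.

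First I would write
\[
\|\op_{N,\theta}(a)u\|_{\cH_N(\theta)}^2 = \lrang{\op_{N,\theta}(a)^*\op_{N,\theta}(a)u, u}_{\cH_N(\theta)} = \lrang{\op_{N,\theta}(\overline a)\op_{N,\theta}(a)u,u}_{\cH_N(\theta)},
\]
using the adjoint identity $\op_{N,\theta}(a)^*=\op_{N,\theta}(\overline a)$ from Lemma~\ref{lem:torussymbolproperties}~(4). Applying the product expansion in Lemma~\ref{lem:torussymbolproperties}~(1) with $l=1$ gives
\[
\op_{N,\theta}(\overline a)\op_{N,\theta}(a) = \op_{N,\theta}(|a|^2) + R, \qquad R = \cO\!\lrp{N^{\rho+\rho'-1}}_{\cH_N(\theta)\to\cH_N(\theta)},
\]
where the bound on $R$ comes from Lemma~\ref{lem:torussymbolproperties}~\eqref{norm_boundedness} applied to the $\cO(N^{\rho+\rho'-1})_{S_{L,\rho,\rho'}}$ remainder, whose seminorms are controlled by those of $a$.

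Next, since $|a|\le 1$ the symbol $1-|a|^2$ is nonnegative and lies in $S_{L,\rho,\rho'}(\bT^{2n})$ with seminorms bounded in terms of those of $a$. By Lemma~\ref{lem:torussymbolproperties}~\eqref{sharpgarding2},
\[
\lrang{\op_{N,\theta}(1-|a|^2)u,u}_{\cH_N(\theta)} \geq -CN^{\rho+\rho'-1}\|u\|_{\cH_N(\theta)}^2,
\]
which rearranges (using $\op_{N,\theta}(1)=I$) to
\[
\lrang{\op_{N,\theta}(|a|^2)u,u}_{\cH_N(\theta)} \leq \|u\|_{\cH_N(\theta)}^2 + CN^{\rho+\rho'-1}\|u\|_{\cH_N(\theta)}^2.
\]
Combining with the earlier expansion (and absorbing the $R$ contribution into $C$) yields
\[
\|\op_{N,\theta}(a)u\|_{\cH_N(\theta)}^2 \leq \lrp{1 + CN^{\rho+\rho'-1}}\|u\|_{\cH_N(\theta)}^2.
\]

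Finally, using the elementary inequality $\sqrt{1+x}\le 1+\sqrt{x}$ for $x\ge 0$ (valid for all $N\ge 1$, regardless of how large $CN^{\rho+\rho'-1}$ is), I extract a square root to obtain
\[
\|\op_{N,\theta}(a)u\|_{\cH_N(\theta)} \leq \|u\|_{\cH_N(\theta)} + \sqrt{C}\,N^{\frac{\rho+\rho'-1}{2}}\|u\|_{\cH_N(\theta)},
\]
which is the claimed bound. There is no real obstacle here; the only point requiring mild care is verifying that the $S_{L,\rho,\rho'}$-seminorms of $|a|^2$ and of the $\#$-expansion remainder are controlled by seminorms of $a$ alone (so that the constant $C$ depends only on the latter), which follows from the Leibniz-type properties of $S_{L,\rho,\rho'}$ already used implicitly in Lemma~\ref{lem:torussymbolproperties}.
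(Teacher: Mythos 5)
Your proof is correct and follows exactly the paper's argument: expand $\op_{N,\theta}(a)^*\op_{N,\theta}(a)$ via the product formula, apply the sharp G\aa rding inequality to $1-|a|^2\ge 0$, and take a square root. The only difference is that you spell out the final square-root step (via $\sqrt{1+x}\le 1+\sqrt{x}$) and the seminorm bookkeeping, which the paper leaves implicit.
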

\begin{proof}
By Lemma~\ref{lem:torussymbolproperties}, 
$$I- \op_{N, \theta}(a)^* \op_{N, \theta}(a) = \op_{N, \theta}(1-|a|^2) + \cO(N^{\rho + \rho' -1})_{\cH_N(\theta) \rightarrow \cH_N(\theta)}.$$
As $1-|a|^2 \geq 0$, we again apply Lemma~\ref{lem:torussymbolproperties} to see
$$\lrang{\op_{N, \theta}(1-|a|^2) u, u }_{\cH_N(\theta)} \geq -CN^{\rho +\rho'-1}\|u\|^2_{\cH_N(\theta)}.$$
Thus,
$$\|\op_{N, \theta}(a) u\|_{\cH_N(\theta)} \leq \|u\|_{\cH_N(\theta)} + CN^{\frac{\rho + \rho' -1}{2}} \|u\|_{\cH_N(\theta)},$$
concluding the proof.
\end{proof}

\subsection{Uncertainty principles}\label{subsection:uncertainty}
As mentioned in \S\ref{subsection:previousresults}, previous work on semiclassical measures on quantum cat maps in~\cite{Sch} and~\cite{dyatlov2021semiclassical} applied the fractal uncertainty principle of Bourgain-Dyatlov~\cite{BD}. Our work also requires uncertainty principles, but instead employs a more basic uncertainty principle for Theorem~\ref{thm:coisotropic} and the higher-dimensional fractal uncertainty principle of Cohen~\cite{Cohen} for Theorem~\ref{thm:full_support}~\eqref{item:K}.

\subsubsection{Basic uncertainty principle}
Let $1 \leq d \leq n$.
Our uncertainty principle depends on the following estimate. For $x \in \R^n$, we use the notation $x=(x', x'')$, where $x' \in \R^d$ and $x'' \in \R^{n-d}$.
\begin{lemma}\label{lem:bumpdecay}
Fix $\chi \in C_c^\infty(\R^d)$, $\delta \in [\tfrac{1}{2},1]$, and $x'_0, \xi'_0 \in \R^d$. Then
$$\left\|\chi\left(\frac{x'-x'_0}{h^\delta}\right) \chi\left(\frac{h D_{x'} -\xi'_0}{h^\delta}\right)\right\|_{L^2(\R^n) \rightarrow L^2(\R^n)} = \cO\left(h^{\frac{d}{2}(2\delta-1)}\right).$$  
\end{lemma}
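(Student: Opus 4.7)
The plan is to reduce to $\R^d$ via tensor structure and then apply Schur's test to the integral kernel. Both factors in the product act only on the $x'$ variables: $\chi((x'-x_0')/h^\delta)$ is pointwise multiplication in $x'$, and $\chi((hD_{x'}-\xi_0')/h^\delta)$ is a Fourier multiplier in $x'$; both leave $x''$ untouched. Consequently the operator is of the form $T\otimes I_{L^2(\R^{n-d})}$, and its norm equals $\|T\|_{L^2(\R^d)\to L^2(\R^d)}$. It therefore suffices to prove the $d$-dimensional estimate.

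Next I would write down the kernel of $T$ using the Fourier inversion formula:
\[
K(x',y')=\chi\!\Big(\tfrac{x'-x_0'}{h^\delta}\Big)(2\pi h)^{-d}\!\int_{\R^d}e^{\frac{i}{h}\langle x'-y',\xi'\rangle}\chi\!\Big(\tfrac{\xi'-\xi_0'}{h^\delta}\Big)d\xi'.
\]
Substituting $\eta=(\xi'-\xi_0')/h^\delta$ in the inner integral gives
\[
K(x',y')=(2\pi)^{-d}h^{d(\delta-1)}\,\chi\!\Big(\tfrac{x'-x_0'}{h^\delta}\Big) e^{\frac{i}{h}\langle x'-y',\xi_0'\rangle}\widehat{\chi}\!\bigl(-h^{\delta-1}(x'-y')\bigr),
\]
where $\widehat{\chi}$ is a Schwartz function since $\chi\in C_c^\infty(\R^d)$.

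The main step is then Schur's test. For the first row sum, bound $|\chi((x'-x_0')/h^\delta)|\le \|\chi\|_\infty$ and change variables $z=h^{\delta-1}(x'-y')$ in the $y'$-integral; the Jacobian factor $h^{d(1-\delta)}$ cancels the prefactor $h^{d(\delta-1)}$, leaving $\sup_{x'}\int|K(x',y')|\,dy'=O(1)$, uniformly in $h$. For the dual bound, use $|\widehat{\chi}|\le \|\widehat{\chi}\|_\infty$ and observe that the support of $\chi((x'-x_0')/h^\delta)$ has volume $O(h^{d\delta})$, so $\sup_{y'}\int|K(x',y')|\,dx'=O(h^{d(\delta-1)}\cdot h^{d\delta})=O(h^{d(2\delta-1)})$. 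The Schur inequality then yields
\[
\|T\|_{L^2(\R^d)\to L^2(\R^d)}\le \Bigl(\sup_{x'}\!\int\!|K|\,dy'\Bigr)^{1/2}\!\Bigl(\sup_{y'}\!\int\!|K|\,dx'\Bigr)^{1/2}=O\!\bigl(h^{\frac{d}{2}(2\delta-1)}\bigr),
\]
as required. This is heuristically sharp: $T$ is a localizer onto a phase-space cell of volume $h^{2d\delta}$ with semiclassical scale $h^d$, so the expected operator norm is $(h^{2d\delta}/h^d)^{1/2}$, matching the conclusion. There is no genuine obstacle here; the only care needed is in tracking the powers of $h$ through the change of variables in Schur's test.
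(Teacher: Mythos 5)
Your proof is correct, and it takes a slightly different route from the paper's. The paper also reduces to the $x'$-variables (by slicing in $x''$ rather than by the tensor-product identity, but this is cosmetic) and then estimates the composition through the chain $L^2\xrightarrow{\cF_{x'}}L^2\xrightarrow{\chi_\xi}L^1\xrightarrow{\cF_{x'}^{-1}}L^\infty\xrightarrow{\chi_x}L^2$: each cutoff contributes its $L^2$ norm ($Ch^{d\delta/2}$ and $Ch^{d(\delta-1)/2}$ respectively after rescaling the frequency cutoff), and the inverse Fourier transform contributes an $O(1)$ factor as a map $L^1\to L^\infty$. You instead write out the Schwartz kernel $K(x',y')=Ch^{d(\delta-1)}\chi\bigl(\tfrac{x'-x_0'}{h^\delta}\bigr)e^{\frac{i}{h}\langle x'-y',\xi_0'\rangle}\widehat\chi\bigl(-h^{\delta-1}(x'-y')\bigr)$ and apply the asymmetric Schur test, getting $O(1)$ for one sup-integral and $O(h^{d(2\delta-1)})$ for the other; the geometric mean gives the claim. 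Both arguments are elementary instances of the same phase-space-volume uncertainty principle and both give the sharp exponent; the paper's version avoids writing the kernel and needs only that $\cF^{-1}:L^1\to L^\infty$ is bounded, while yours makes the localization structure of the kernel explicit (and would adapt more readily to off-diagonal decay estimates of the kind used elsewhere in the paper, e.g.\ in the Cotlar--Stein almost-orthogonality steps). Your computation of the powers of $h$ through the change of variables is accurate, including the degenerate regime $\delta<1/2$ where the exponent is negative.
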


\begin{proof}
First note that
\begin{equation}\label{eq:chi_bound}
\left\|\chi\left(\frac{x'-x'_0}{h^\delta}\right)\right\|_{L^\infty(\R^d) \rightarrow L^2(\R^d)} \leq  \left\|\chi\left(\frac{x'-x'_0}{h^\delta}\right)\right\|_{L^2(\R^d)} = Ch^{\frac{d}{2}\delta}.
\end{equation}
We also have that 
\begin{equation}\label{eq:chi_FT}
\chi\left(\frac{hD_{x'}-\xi'_0}{h^\delta}\right) = \cF_{x'}^{-1} \chi\left(\frac{x'}{h^{\delta-1}}-\frac{\xi'_0}{h^\delta}\right) \cF_{x'},
\end{equation}
where $\cF_{x'}$ denotes the  standard Fourier transform taken over only the first $d$ variables. 

Let $u(x) \in L^2(\R^n)$. For almost every $x'' \in \R^{n-d}$, define the function $u_{x''} \in L^2(\R^d)$ by $u_{x''}(x') = u(x', x'')$. Then set $v_{x''} = \chi\left(\frac{x'-x'_0}{h^\delta}\right) \chi\left(\frac{h D_{x'} -\xi'_0}{h^\delta}\right) u_{x''}$. 
From~\eqref{eq:chi_bound} and~\eqref{eq:chi_FT},
\begin{align*}
\|v_{x''}\|_{L^2} &\leq \left\|\chi \left(\frac{x'-x'_0}{h}\right) \cF^{-1}_{x'}\right\|_{L^1\rightarrow L^2} \left\|\chi\left(\frac{x'}{h^{\delta-1}}-\frac{\xi'_0}{h^\delta}\right) \cF_{x'} u_{x''}\right\|_{L^1}\\
&\leq \left\|\chi\left(\frac{x'-x'_0}{h}\right)\right\|_{L^\infty \rightarrow L^2} \|\cF^{-1}_{x'}\|_{L^1 \rightarrow L^\infty} \left\|\chi\left(\frac{x'}{h^{\delta-1}}-\frac{\xi'_0}{h^\delta}\right)\right\|_{L^2} \|\cF_{x'} u_{x''} \|_{L^2}\\
&\leq Ch^{\frac{d}{2}(2 \delta-1)}\|u_{x''}\|_{L^2},
\end{align*}
where each $L^p$ norm is taken over $\R^d$.
Squaring the above inequality and integrating over $x'' \in \R^{n-d}$ finishes the proof. 
\end{proof}

We now prove our basic uncertainty principle. 
\begin{lemma}\label{lem:aandbbounds}
Let $L_a$ and $L_b$ be coisotropic subspaces such that $L_a\subset \{x'=0\}$ and $L_b\subset \{\xi'=0\}$. Fix $x'_0, \xi'_0 \in \R^d$ and suppose $a \in S_{L_a, \rho + \varepsilon, \varepsilon}(\R^{2n})$ and $b \in S_{L_b, \rho + \varepsilon, \varepsilon}(\R^{2n})$ with $\supp a \subset \{|x'-x'_0| \leq h^{\delta}\}$ and  $\supp b \subset \{|\xi'-\xi'_0| \leq h^{\delta}\}$, where 
$\rho \in (\tfrac{1}{2}, 1)$ with $\rho + 2\varepsilon <1$ and $\delta \in (\tfrac{1}{2}, \rho + \varepsilon]$.
Let $\delta'= \min\{1- \rho- 2\varepsilon, d(2 \delta-1)/2\}$. Then $$\|\op_h(a) \op_h(b) \|_{L^2(\R^n) \rightarrow L^2(\R^n)} = \cO(h^{\delta'}).$$
\end{lemma}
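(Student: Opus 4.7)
The plan is to sandwich the product $\op_h(a)\op_h(b)$ between microlocal cutoffs that isolate the localizations of $a$ in $x'$ and $b$ in $\xi'$, and then invoke the basic estimate of Lemma~\ref{lem:bumpdecay}. Specifically, I will construct functions $\alpha(x)$ and $\beta(\xi)$ in suitable exotic symbol classes such that $\op_h(a)\op_h(\alpha) = \op_h(a) + \cO(h^\infty)_{L^2 \to L^2}$ and $\op_h(\beta)\op_h(b) = \op_h(b) + \cO(h^\infty)_{L^2 \to L^2}$, and whose Weyl quantizations reduce, respectively, to a multiplier in $x'$ and a Fourier multiplier in $D_{x'}$ to which Lemma~\ref{lem:bumpdecay} applies directly.

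Choose $\chi \in C_c^\infty(\R^d;[0,1])$ with $\chi \equiv 1$ on $\{|y| \leq 2\}$ and $\supp \chi \subset \{|y| \leq 3\}$, and set
$$\alpha(x) \coloneqq \chi\lrp{\frac{x'-x'_0}{h^\delta}}, \qquad \beta(\xi) \coloneqq \chi\lrp{\frac{\xi'-\xi'_0}{h^\delta}}.$$
Since $\alpha$ depends only on $x'$ and $L_a \subset \{x'=0\}$, every constant vector field tangent to $L_a$ has vanishing $x'$-component and therefore annihilates $\alpha$; arbitrary constant vector fields cost at most $h^{-\delta}$ per derivative. Using $\delta \leq \rho+\varepsilon$, this places $\alpha \in S_{L_a, \rho+\varepsilon, \varepsilon}(\R^{2n})$ with all seminorms uniform in $h$. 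The same reasoning, now using $L_b \subset \{\xi'=0\}$, gives $\beta \in S_{L_b, \rho+\varepsilon, \varepsilon}(\R^{2n})$. By construction $\alpha \equiv 1$ on an open neighborhood of $\supp a$ and $\beta \equiv 1$ on an open neighborhood of $\supp b$.

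Next, I verify the two composition identities. In the Moyal expansion of Lemma~\ref{lem:symbolproperties} applied to $a \# \alpha$, the term of order $k \geq 1$ is a sum of products $(\partial_z^\gamma a)(\partial_z^\delta \alpha)$ with $|\gamma|=|\delta|=k$. Because $\chi$ is identically $1$ on $\{|y|\leq 2\}$, the factor $\partial_z^\delta \alpha$ is supported in the annulus $\{2h^\delta \leq |x'-x'_0|\leq 3h^\delta\}$, disjoint from $\supp \partial_z^\gamma a \subset \supp a \subset \{|x'-x'_0|\leq h^\delta\}$. Hence every order-$k$ term with $k\geq 1$ vanishes identically, while the order-zero term equals $\alpha a = a$. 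Invoking the expansion at an arbitrarily large order $l$ and combining with the $L^2$-boundedness in Lemma~\ref{lem:symbolproperties} yields $\op_h(a)\op_h(\alpha) = \op_h(a) + \cO(h^M)_{L^2 \to L^2}$ for every fixed $M$. A symmetric argument handles $\op_h(\beta)\op_h(b)$.

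Assembling these ingredients,
$$\|\op_h(a)\op_h(b)\|_{L^2 \to L^2} \leq \|\op_h(a)\|\cdot\|\op_h(\alpha)\op_h(\beta)\|\cdot\|\op_h(b)\| + \cO(h^M).$$
Since $\alpha$ depends only on $x$, $\op_h(\alpha)$ is multiplication by $\chi((x'-x'_0)/h^\delta)$; since $\beta$ depends only on $\xi'$, $\op_h(\beta)$ is the Fourier multiplier $\chi((hD_{x'}-\xi'_0)/h^\delta)$. Lemma~\ref{lem:bumpdecay} bounds the middle factor by $\cO(h^{d(2\delta-1)/2})$ and Lemma~\ref{lem:symbolproperties} bounds the outer factors uniformly in $h$, so the inequality $h^{d(2\delta-1)/2} \leq h^{\delta'}$ for $h \leq 1$ concludes the proof. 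I expect the main care point to be verifying the symbol-class memberships of $\alpha$ and $\beta$ and the vanishing of all cross-derivative terms in the Moyal expansion; this is exactly why the hypothesis $\delta \leq \rho+\varepsilon$ together with the directional placements $L_a \subset \{x'=0\}$ and $L_b \subset \{\xi'=0\}$ are imposed.
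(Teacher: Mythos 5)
Your proposal is correct and follows essentially the same route as the paper: insert cutoffs $\chi((x'-x'_0)/h^\delta)$ and $\chi((\xi'-\xi'_0)/h^\delta)$ lying in $S_{L_a,\rho+\varepsilon,\varepsilon}$ and $S_{L_b,\rho+\varepsilon,\varepsilon}$, absorb them into $\op_h(a)$ and $\op_h(b)$ via the composition calculus and disjointness of supports, and bound the resulting multiplier/Fourier-multiplier product by Lemma~\ref{lem:bumpdecay}. The only (harmless) difference is that you extract an $\cO(h^\infty)$ error from the term-by-term vanishing of the Moyal expansion, whereas the paper settles for the $\cO(h^{1-\rho-2\varepsilon})$ error that accounts for that term in the definition of $\delta'$.
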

\begin{proof}
 We use $\|\cdot\|$ to denote $\|\cdot\|_{L^2(\R^n) \rightarrow L^2(\R^n)}$. Let $\tilde{\chi} \in C_c^\infty(\R^d)$ such that $\tilde{\chi} =1$ on $\{|x'|\leq 1\}$ and $\supp \tilde{\chi} \subset \{|x'| \leq 2\}$.  Define $\chi_x, \chi_\xi \in C^\infty(\R^{2n})$  by $\chi_x(x, \xi) = \tilde{\chi}(\frac{x'-x_0'}{h^\delta})$ and $\chi_\xi(x, \xi) = \tilde{\chi}(\frac{\xi'-\xi'_0}{h^\delta})$.  Since $L_a \subset \{x' = 0\}$, $L_b \subset \{\xi' =0\}$, and $\delta \leq \rho +\varepsilon$, we know $\chi_x \in S_{L_a, \rho + \varepsilon, \varepsilon}$ and $\chi_\xi \in S_{L_b, \rho + \varepsilon, \varepsilon}$.
 Then using Lemma~\ref{lem:symbolproperties}~\eqref{product} and the fact that $\supp a\cap\supp(1-\chi_x)=\supp b \cap\supp(1-\chi_\xi)=\emptyset$,  we know that 
 $$\op_h (a)=\op_h(a) \op_h\left(\chi_x \right) + \cO(h^{1-\rho-2\varepsilon})_{L^2 \rightarrow L^2},$$ 
$$\op_h (b)=\op_h\left(\chi_\xi \right)\op_h(b) + \cO(h^{1-\rho-2\varepsilon})_{L^2 \rightarrow L^2}.$$  
Thus, it suffices to prove that $$\left\|\op_h(a) \op_h\left(\chi_x \right) \op_h\left(\chi_\xi \right) \op_h(b)\right\| =\cO(h^{\frac{d }{2}(2 \delta-1)}).$$

From Lemma~\ref{lem:symbolproperties}~\eqref{bounded}, both $\|\op_h(a)\|$ and $\|\op_h(b)\|$ are uniformly bounded in $h \in (0,1]$. Thus, using Lemma~\ref{lem:bumpdecay}, we know
\begin{align*}
\left\|\op_h(a) \op_h\left(\chi_x\right) \op_h\left(\chi_\xi \right) \op_h(b)\right\| &\leq C \left\|\op_h\left(\tilde{\chi} \left(\frac{x' -x'_0}{h^{\delta}}\right)\right) \op_h\left(\tilde{\chi} \left( \frac{\xi'-\xi'_0}{h^{\delta}}\right)\right)\right\|\\
&\leq C h^{\frac{d }{2}(2 \delta-1)},
\end{align*}
which concludes our proof.
\end{proof}

\subsubsection{Higher-dimensional fractal uncertainty principle}
We begin with some definitions. Let $B_r(x)$ denote the ball centered at $x$ of radius $r$.
\begin{definition}\label{def:porous_on_balls}
A set $X \subset \R^n$ is \emph{$\nu$-porous on balls} from scales $\alpha_0$ to $\alpha_1$ if for every ball $B$ of diameter $\alpha_0 < R< \alpha_1$, there is some $x \in B$ such that $B_{\nu R}(x) \cap X = \emptyset$.
\end{definition}

\begin{definition}\label{def:porous_on_lines}
A set $X \subset \R^n$ is \emph{$\nu$-porous on lines} from scales $\alpha_0$ to $\alpha_1$ if for all line segments $\tau$ with length $\alpha_0 < R< \alpha_1$, there is some $x \in \tau$ such that $B_{\nu R}(x) \cap X = \emptyset$.
\end{definition}
Clearly, line porosity is a stronger condition than ball porosity. 

\begin{definition}\label{def:semiclassical_FT}
For $h>0$, we define the \emph{unitary semiclassical Fourier transform} $\cF_h :L^2(\R^n) \to L^2(\R^n)$ given by
$$\cF_h f(\xi) =(2 \pi h)^{-\frac{n}{2}}\int_{\R^n} e^{-\frac{i}{h} \lrang{x, \xi}} f(x) dx.$$
\end{definition}

We now state Cohen's higher-dimensional fractal uncertainty principle. 
\begin{theorem}[\cite{Cohen}*{Theorem 1.1}]\label{thm:fractal_uncertainty}
Set $0 < \nu \leq \frac{1}{3}$. Let
\begin{itemize}
\item $X_- \subset [-1,1]^n$ be $\nu$-porous on balls from scales $h$ to 1;
\item $X_+ \subset [-1,1]^n$ be $\nu$-porous on lines from scales $h$ to 1.
\end{itemize}
Then there exist $\beta, C >0$, depending only on $\nu$ and $n$, such that
\begin{equation*}
\|\1_{X_-} \cF_h \1_{X_+}\|_{L^2 (\R^n) \rightarrow L^2(\R^n)} \leq Ch^\beta.
\end{equation*}
\end{theorem}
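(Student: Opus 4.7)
The plan is to prove the estimate by adapting the multiscale strategy used for the one-dimensional Bourgain--Dyatlov fractal uncertainty principle, with the essential new ingredient being a direction-by-direction reduction that exploits the \emph{line} porosity of $X_+$ (which is strictly stronger than ball porosity). The two-sided asymmetry in the hypotheses (balls on one side, lines on the other) is exactly what allows such a reduction to succeed.

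First I would set up a dyadic multiscale framework. For each scale $h \leq \alpha \leq 1$, cover $[-1,1]^n$ by axis-parallel cubes of side $\alpha$ and define refined covers $\cN_{\pm}(\alpha)$ of $X_{\pm}$ consisting of those cubes meeting $X_\pm$. From $\nu$-porosity on balls for $X_-$, a standard counting argument gives $\# \cN_-(\alpha) \leq C(\alpha/h)^{n-\delta}$ for some $\delta=\delta(\nu,n)>0$, which in particular yields the thickened-volume bound $|X_-(h)|\leq C h^{n-\delta}$. This handles $X_-$ entirely through a dimension/volume loss.

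The core of the argument is an angular decomposition in frequency that converts line porosity of $X_+$ into a genuine $h^\beta$ gain. I would cover the unit sphere $S^{n-1}$ by spherical caps $\{\Omega_j\}$ of angular width $\sim h^{1/2}$ and build a smooth partition of unity on frequency space adapted to the conic strips $\Gamma_j$ along the directions $\omega_j$ (cap centers). For each such strip, $\cF_h$ restricted to the corresponding sliver is, up to acceptable errors, essentially a one-dimensional Fourier transform in the direction $\omega_j$ tensored with a localization in the $(n-1)$ transverse directions. Along each one-dimensional slice of $X_+$ in direction $\omega_j$, the line porosity hypothesis gives a $\nu$-porous 1D subset from scales $h$ to $1$, so the Bourgain--Dyatlov one-dimensional FUP yields a bound $Ch^{\beta'}$, uniform in the direction and in the transverse location. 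Fubini then integrates this gain transversely.

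Finally, I would combine the pieces by Cauchy--Schwarz: for each directional contribution the $\1_{X_-}$ factor further restricts the $L^2$ mass to a set of thickened volume $\leq Ch^{n-\delta}$, so the total operator norm is estimated by summing the directional 1D FUP bounds weighted by the $X_-$-volume, yielding $Ch^\beta$ with $\beta = \beta(\nu,n)>0$. The main obstacle I anticipate is making the angular decomposition genuinely compatible with the semiclassical scale: one must choose the cap width so that on each cap $\cF_h$ truly factors to within an $\cO(h^\infty)$ error (requiring caps of width $\asymp h^{1/2}$), and one must verify that the projection of $X_+$ onto each 1D slice remains $\nu'$-porous (for some $\nu'$ possibly smaller than $\nu$ but independent of direction and scale). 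This is where line porosity, as opposed to ball porosity, is crucial: projecting a ball-porous set onto a generic direction can destroy porosity, but line porosity survives by definition. A secondary technical point is propagating the gain through the range of intermediate scales inductively without losing a factor that eats the exponent $\beta$.
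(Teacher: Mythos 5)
First, note that the paper does not prove this statement: Theorem~\ref{thm:fractal_uncertainty} is quoted verbatim from Cohen \cite{Cohen}*{Theorem 1.1} and used as a black box, so there is no in-paper proof to compare against. What can be assessed is whether your strategy would actually establish the result, and it would not.

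The fatal gap is in the ``core of the argument.'' After you localize frequency to a conic sector $\Gamma_j$ of angular width $h^{1/2}$ and (approximately) factor $\cF_h$ into a one-dimensional semiclassical Fourier transform in the direction $\omega_j$ tensored with a coarse transverse localization, the resulting one-dimensional problem has the form $\|\1_{Y_j}\,\cF_{h}^{1D}\,\1_{X_+\cap\ell}\|$, where $Y_j$ is essentially the projection of $X_-\cap\Gamma_j$ onto the $\omega_j$-axis. The Bourgain--Dyatlov one-dimensional fractal uncertainty principle requires \emph{both} sets to be porous, and the projection of a ball-porous set onto a line need not be porous at all (e.g.\ $C\times[0,1]$ for a Cantor set $C$ is ball-porous in $\R^2$ but projects onto a full interval). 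You acknowledge that projections destroy ball porosity, but only in connection with $X_+$; for $X_-$ you propose to substitute a covering/volume bound. That cannot work: the volume bound $|X_-(h)|\le Ch^{\delta}$ (note your exponent $n-\delta$ is a slip --- the covering number is $\lesssim h^{-(n-\delta)}$, so the thickened volume is $\lesssim h^{\delta}$) only feeds into a Hilbert--Schmidt estimate, which carries the prefactor $(2\pi h)^{-n/2}$ and is therefore worse than the trivial bound $\|\1_{X_-}\cF_h\1_{X_+}\|\le 1$ unless $\delta>n/2$. This is precisely the difficulty that makes the fractal uncertainty principle nontrivial even for $n=1$: neither volume bounds nor naive tensor/angular reductions beat the trivial estimate.

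A second, independent problem is the transverse scale. Frequency localization to a sector of angular width $h^{1/2}$ corresponds to position-space resolution no finer than $h^{1/2}$ in the transverse directions, so what the localized operator sees on the physical side is not $X_+\cap\ell$ but the projection onto $\ell$ of $X_+$ intersected with a tube of radius $h^{1/2}$. Line porosity of $X_+$ only guarantees porosity of that projection from scales $\approx h^{1/2}/\nu$ to $1$, not from $h$ to $1$; the porosity between scales $h$ and $h^{1/2}$ is lost, and with it a large part of the exponent. For these reasons the angular-decomposition route does not close, which is consistent with the fact that Cohen's actual proof is of an entirely different nature (an induction on scales whose key new ingredient is a quantitative unique-continuation-type estimate on line segments for functions whose semiclassical Fourier transform is supported in a ball-porous set --- this is exactly where the asymmetric hypotheses, balls for $X_-$ and lines for $X_+$, enter), rather than a reduction to the one-dimensional theorem.
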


\begin{notation}
For any set $S \subset \R^n$, define 
\begin{equation}\label{eq:neighborhood_def}
S(\delta) \coloneqq S + B_\delta (0).
\end{equation}
\end{notation}
If $S$ is porous, for our application of the fractal uncertainty principle, we also want to say that $S(\delta)$ is porous. The following two lemmas, which are generalizations of~\cite{DJN}*{Lemma 2.11}, outline when this is possible.

\begin{lemma}\label{lem:gen_porous_balls}
Let $\nu \in (0,1 )$, $0 < \alpha_0 \leq \alpha_1$, and $0 < \alpha_2 \leq \frac{\nu}{2} \alpha_1$. Assume that $X \subset \R^n$ is $\nu$-porous on balls from scales $\alpha_0$ to $\alpha_1$. Then the neighborhood $X(\alpha_2)$ is $\frac{\nu}{2}$-porous on balls from scales $\max(\alpha_0, \frac{2}{\nu}\alpha_2)$ to $\alpha_1$.
\end{lemma}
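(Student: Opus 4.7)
The plan is to reduce the claim for the fattened set $X(\alpha_2)$ directly to the assumed porosity of $X$ via a triangle inequality argument, at the cost of shrinking the porosity radius from $\nu R$ to $\tfrac{\nu R}{2}$.

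First, I would fix an arbitrary ball $B \subset \R^n$ of diameter $R$ with
\[
\max\bigl(\alpha_0,\ \tfrac{2}{\nu}\alpha_2\bigr) \;<\; R \;<\; \alpha_1.
\]
In particular $\alpha_0 < R < \alpha_1$, so the hypothesis that $X$ is $\nu$-porous on balls from $\alpha_0$ to $\alpha_1$ applies to $B$: there exists $x \in B$ such that
\[
B_{\nu R}(x) \cap X = \emptyset.
\]
This $x$ is my candidate witness for $\tfrac{\nu}{2}$-porosity of $X(\alpha_2)$ at the scale $R$.

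Next I would verify that $B_{\nu R/2}(x) \cap X(\alpha_2) = \emptyset$ by contradiction. If some $y$ lay in the intersection, then by the definition \eqref{eq:neighborhood_def} of $X(\alpha_2)$ there would exist $z \in X$ with $|y - z| < \alpha_2$, giving
\[
|x - z| \;\leq\; |x - y| + |y - z| \;<\; \tfrac{\nu R}{2} + \alpha_2.
\]
The key inequality is now that $R > \tfrac{2}{\nu}\alpha_2$, which rearranges to $\alpha_2 < \tfrac{\nu R}{2}$; combining this with the previous display gives $|x - z| < \nu R$, so $z \in B_{\nu R}(x) \cap X$, contradicting our choice of $x$.

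The argument is essentially a one-line triangle inequality once the scale condition $R > \tfrac{2}{\nu}\alpha_2$ is isolated, so there is no real obstacle; the only subtlety is bookkeeping the hypotheses. The upper bound $\alpha_2 \leq \tfrac{\nu}{2}\alpha_1$ in the statement does not enter the argument at any individual scale $R$ but is precisely what is needed to guarantee that the scale range $\bigl(\max(\alpha_0,\tfrac{2}{\nu}\alpha_2),\,\alpha_1\bigr)$ for the conclusion is nonempty (equivalently, that the asserted porosity is not vacuous). No further structure of $X$ beyond the assumed porosity is used, and the argument is identical to \cite{DJN}*{Lemma 2.11}, just transcribed to arbitrary dimension $n$ with the ball-porosity definition.
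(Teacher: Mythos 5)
Your proof is correct and takes essentially the same approach as the paper's: both select the witness $x$ from the $\nu$-porosity of $X$ and use the inequality $\nu R \geq 2\alpha_2$ to absorb the $\alpha_2$-fattening into half of the pore, the only difference being that the paper phrases the final step as the containment $B_{\frac{\nu R}{2}+\alpha_2}(x)\subset B_{\nu R}(x)$ rather than as a triangle-inequality contradiction.
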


\begin{proof} We take a ball $B \subset \R^n$ of diameter $\max(\alpha_0, \frac{2}{\nu} \alpha_2) \leq R \leq \alpha_1$. As $X$ is $\nu$-porous on balls on scales $\alpha_0$ to $\alpha_1$, there exists some $x \in B$ such that $B_{\nu R} (x) \cap X = \emptyset$.

As $\nu R \geq 2 \alpha_2$, we know $B_{\frac{\nu R}{2} + \alpha_2}(x) \subset B_{\nu R}(x)$. Therefore $B_{\frac{\nu R}{2}} \cap X(\alpha_2) = \emptyset$. 
\end{proof}

\begin{lemma}\label{lem:gen_porous_lines}
Let $\nu \in (0,1 )$, $0 < \alpha_0 \leq \alpha_1$, and $0 < \alpha_2 \leq \frac{\nu}{2} \alpha_1$. Assume that $X \subset \R^n$ is $\nu$-porous on lines from scales $\alpha_0$ to $\alpha_1$. Then the neighborhood $X(\alpha_2)$ is $\frac{\nu}{2}$-porous on lines from scales $\max(\alpha_0, \frac{2}{\nu}\alpha_2)$ to $\alpha_1$.
\end{lemma}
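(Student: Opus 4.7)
The plan is to recycle the proof of the preceding lemma (the ball version) essentially verbatim, since the witness produced by line porosity, namely a point $x$ on the segment together with an $X$-free ball of radius $\nu R$ around it, plays exactly the same role as the witness produced by ball porosity. The one-line geometric fact we need is that if $B_{\nu R}(x)$ misses $X$ and if $\nu R \geq 2\alpha_2$, then $B_{\nu R/2}(x)$ misses the $\alpha_2$-neighborhood $X(\alpha_2)$, because $B_{\nu R/2}(x) + B_{\alpha_2}(0) \subset B_{\nu R/2 + \alpha_2}(x) \subset B_{\nu R}(x)$.

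Concretely, I would fix a line segment $\tau \subset \R^n$ of length $R$ with $\max(\alpha_0, \tfrac{2}{\nu}\alpha_2) \leq R \leq \alpha_1$. The hypothesis $\alpha_2 \leq \tfrac{\nu}{2}\alpha_1$ together with $R \geq \alpha_0$ and $R \leq \alpha_1$ ensures that the line-porosity hypothesis on $X$ applies at scale $R$; so there exists $x \in \tau$ with $B_{\nu R}(x) \cap X = \emptyset$. The lower bound $R \geq \tfrac{2}{\nu}\alpha_2$ rearranges to $\tfrac{\nu R}{2} \geq \alpha_2$, equivalently $\tfrac{\nu R}{2} + \alpha_2 \leq \nu R$, which is precisely the inclusion needed to conclude $B_{\nu R/2}(x) \cap X(\alpha_2) = \emptyset$. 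Since $\tfrac{\nu R}{2}$ is the radius demanded by $\tfrac{\nu}{2}$-porosity at scale $R$, we are done.

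There is no real obstacle here: ball porosity and line porosity differ only in the admissible witness sets, and the thickening argument is insensitive to this distinction. The only small point to keep track of is that the line segment $\tau$ in the target statement matches the line segment required by the hypothesis on $X$ (which it does, since a segment of length $R$ is exactly what the hypothesis consumes). In practice I would simply state ``the proof is identical to that of Lemma~\ref{lem:gen_porous_balls}, replacing each instance of a ball of diameter $R$ by a line segment of length $R$,'' which is fully rigorous given that the lemma statements are structurally parallel.
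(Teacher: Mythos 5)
Your proposal is correct and is essentially identical to the paper's proof: both take a segment of length $R$ in the stated range, invoke line porosity of $X$ to find $x\in\tau$ with $B_{\nu R}(x)\cap X=\emptyset$, and use $\nu R\geq 2\alpha_2$ to get $B_{\nu R/2+\alpha_2}(x)\subset B_{\nu R}(x)$, hence $B_{\nu R/2}(x)\cap X(\alpha_2)=\emptyset$. Your observation that the argument is insensitive to whether the witness comes from a ball or a segment is exactly how the paper handles it as well.
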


\begin{proof} We take a line segment $\tau \subset \R^n$ of length $\max(\alpha_0, \frac{2}{\nu} \alpha_2) \leq R \leq \alpha_1$. As $X$ is $\nu$-porous on lines on scales $\alpha_0$ to $\alpha_1$, there exists some $x \in \tau$ such that $B_{\nu R} (x) \cap X = \emptyset$.

As $\nu R \geq 2 \alpha_2$, we know $B_{\frac{\nu R}{2} + \alpha_2}(x) \subset B_{\nu R}(x)$. Therefore $B_{\frac{\nu R}{2}} \cap X(\alpha_2) = \emptyset$. 
\end{proof}

Adapting the argument from~\cite{DJN}*{Proposition 2.9} to higher dimensions we can generalize Theorem~\ref{thm:fractal_uncertainty} to unbounded sets. The proof relies on almost orthogonality, Lemma~\ref{lem:gen_porous_balls}, and Lemma~\ref{lem:gen_porous_lines}.
\begin{proposition}\label{prop:FUP_unbounded}
Set $0 < \nu \leq \frac{1}{3}$. Let
\begin{itemize}
\item $X_-\subset\mathbb R^n$ be $\nu$-porous on balls from scales $h$ to 1;
\item $X_+\subset\mathbb R^n$ be $\nu$-porous on lines from scales $h$ to 1.
\end{itemize}
Then there exist $\beta, C >0$, depending only on $\nu$ and $n$, such that
\begin{equation*}
\|\1_{X_-} \cF_h \1_{X_+}\|_{L^2 (\R^n) \rightarrow L^2(\R^n)} \leq Ch^\beta.
\end{equation*}
\end{proposition}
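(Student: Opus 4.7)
The strategy is to reduce to Cohen's bounded fractal uncertainty principle (Theorem~\ref{thm:fractal_uncertainty}) by an almost-orthogonality argument indexed by the unit-cube partition of $\mathbb R^n$, adapting the one-dimensional scheme of~\cite{DJN}*{Proposition 2.9}. Fix $\chi\in C_c^\infty(\mathbb R^n;[0,1])$ with $\supp\chi\subset(-1,1)^n$ such that $\{\chi_j := \chi(\cdot-j)\}_{j\in\mathbb Z^n}$ forms a partition of unity (say $\sum_j\chi_j^2\equiv 1$), and decompose
\begin{equation*}
\mathbf{1}_{X_-}\mathcal F_h\mathbf{1}_{X_+} = \sum_{j,k\in\mathbb Z^n} T_{jk}, \qquad T_{jk} := M_{\chi_j\mathbf{1}_{X_-}}\,\mathcal F_h\,M_{\mathbf{1}_{X_+}\chi_k}.
\end{equation*}

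The first task is the uniform bound $\|T_{jk}\|_{L^2\to L^2}\le Ch^\beta$. After conjugating by the unitary translations $f\mapsto f(\cdot-j)$ and $f\mapsto f(\cdot-k)$ (which, together with the induced modulations of $\mathcal F_h$, leave operator norms invariant), $T_{jk}$ is realized as an operator supported in $(-1,1)^n\times(-1,1)^n$ whose restricted sets $(X_-\!-j)\cap\supp\chi$ and $(X_+\!-k)\cap\supp\chi$ inherit $\nu$-porosity on balls (resp.\ lines) from scales $h$ to $1$. Since the smooth $\chi$ is dominated by the indicator of any slightly larger porous set—and by Lemmas~\ref{lem:gen_porous_balls}--\ref{lem:gen_porous_lines} small $h$-neighborhoods remain $\nu/2$-porous on scales $\gtrsim h$ to $1$—Theorem~\ref{thm:fractal_uncertainty} applies and yields $\|T_{jk}\|\le Ch^\beta$ uniformly in $j,k$.

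The second task is almost orthogonality. The compositions $T_{jk}^*T_{j'k'}$ and $T_{jk}T_{j'k'}^*$ contain the middle factors $\chi_j\chi_{j'}$ and $\chi_k\chi_{k'}$, so they vanish unless $|j-j'|$ (resp.\ $|k-k'|$) is bounded. In the remaining free direction—say $T_{jk}^*T_{jk'}$ with $|k-k'|$ large—the kernel carries an oscillating phase $e^{i(y-y')\xi/h}$ with $\xi$ localized in $\supp\chi_j$. To integrate by parts one first replaces $\mathbf{1}_{X_\pm}$ by smooth approximants supported in the $h$-neighborhood $X_\pm(h)$, which remains porous on scales $\gtrsim h$ to $1$ by Lemmas~\ref{lem:gen_porous_balls}--\ref{lem:gen_porous_lines}; repeated integration by parts in $\xi$ then produces rapid decay $\|T_{jk}^*T_{jk'}\|=\mathcal O_N\bigl((h/|k-k'|)^N\bigr)$ once $|k-k'|$ exceeds any fixed negative power of $h$.

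The final step is Cotlar--Stein. Combining the uniform off-diagonal bound $\|T_{jk}^*T_{j'k'}\|\le Ch^{2\beta}$ on a near-diagonal window $|k-k'|\le h^{-\alpha}$ with the rapid decay outside it, optimizing $\alpha$ small, gives $\sup_{j,k}\sum_{j',k'}\|T_{jk}^*T_{j'k'}\|^{1/2}=\mathcal O(h^{\beta'})$ and likewise for $T_{jk}T_{j'k'}^*$, for some $\beta'>0$ depending only on $\nu$ and $n$. Cotlar--Stein then yields $\|\mathbf{1}_{X_-}\mathcal F_h\mathbf{1}_{X_+}\|\le Ch^{\beta'}$. \textbf{The main obstacle} is the off-diagonal decay: the roughness of $\mathbf{1}_{X_\pm}$ blocks direct integration by parts, and the whole mechanism hinges on Lemmas~\ref{lem:gen_porous_balls}--\ref{lem:gen_porous_lines} to guarantee that an $h$-mollification can be made dominating while preserving porosity—and on choosing the diagonal window width so that the summed uniform bound and the tail from integration by parts are simultaneously of size $h^{\beta'}$.
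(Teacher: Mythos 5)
Your proposal is correct and follows essentially the same route as the paper: unit-cube partition of unity, Cohen's bounded FUP plus Lemmas~\ref{lem:gen_porous_balls}--\ref{lem:gen_porous_lines} for the uniform block bound, smoothing of the indicators to enable integration by parts for the off-diagonal decay, and Cotlar--Stein with a near-diagonal window. The only organizational difference is that the paper performs the mollification $\1_{X_\pm}\rightsquigarrow\chi_\pm$ (with $\chi_\pm=1$ on $X_\pm$ and $\supp\chi_\pm\subset X_\pm(\nu h/4)$) once, globally, \emph{before} the partition — which you should do as well, since swapping in smooth approximants only inside the $T_{jk}^*T_{j'k'}$ estimates would change the operators being summed in Cotlar--Stein.
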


\begin{proof}
1. We first show that instead of $\1_{X_\pm}$, it suffices instead to consider certain smooth functions $\chi_\pm \in C^\infty(\R^n; [0,1])$. We set $\chi_\pm$ to be the convolution of the indicator function of $X_\pm (\nu h/8)$ with a smooth cutoff function supported in $B_{\nu h/8}(0)$. For a more detailed construction, see~\cite{DZ16}*{Lemma 3.3}. Then,
$$
\supp \chi_\pm \subset X_\pm(\nu h/4), \quad \supp(1-\chi_\pm)\cap X_\pm = \emptyset,
$$
\begin{equation}\label{eq:derivative_bounds_chi}
\sup |\partial^\alpha \chi_\pm| \leq C_\alpha h^{-|\alpha|},
\end{equation}
where $C_\alpha$ depends on $\alpha$, $\nu$, and $n$.
Therefore, 
$$\|\1_{X_-} \cF_h \1_{X_+}\|_{L^2 (\R^n) \rightarrow L^2(\R^n)} = \|\1_{X_-}\chi_- \cF_h \chi_+ \1_{X_+} \|_{L^2 (\R^n) \rightarrow L^2(\R^n)} \leq  \|\chi_- \cF_h  \chi_+\|_{L^2 (\R^n) \rightarrow L^2(\R^n)}.$$

We next split up the support of $\chi_\pm$ via a partition of unity. Fix $\chi \in C_c^\infty(\R^n; [0,1])$ such that $\supp \chi \subset [-1, 1]^n$ and 
$$1 = \sum_{j \in \Z^n} \chi_j \quad \text{where} \quad \chi_j(x) \coloneqq \chi(x-j).$$
Then set
\begin{equation}\label{eq:split_up_chi}
\chi^\pm_j \coloneqq \chi_j \chi_\pm, \quad \supp \chi_j^\pm \subset X_\pm(\nu h/4) \cap \left([-1,1]^n +j\right).
\end{equation}

Note that $\chi_j^\pm$ still satisfy the derivative bounds~\eqref{eq:derivative_bounds_chi}. With respect to the strong operator topology, 
$$\chi_- \cF_h \chi_+ = \sum_{j,k \in \Z^n} A_{jk} \quad \text{where} \quad A_{jk} \coloneqq \chi_j^- \cF_h \chi_k^+.$$

Therefore, it suffices to show 
\begin{equation}\label{eq:Ajk_sum_bound}
\left\|\sum_{j,k \in \Z^n} A_{jk}\right\|_{L^2(\R^n) \rightarrow L^2(\R^n)} \leq Ch^{\beta}.
\end{equation}

2. We will show
\begin{align}
\label{eq:sum_AA*_estimate} \sup_{j,k} \sum_{j',k'} \|A_{jk}A^*_{j'k'}\|^{1/2}_{L^2(\R^n) \rightarrow L^2(\R^n)} &\leq Ch^\beta;\\ 
\label{eq:sum_A*A_estimate}  \sup_{j,k} \sum_{j',k'} \|A^*_{j'k'}A_{jk}\|^{1/2}_{L^2(\R^n) \rightarrow L^2(\R^n)} &\leq Ch^\beta.
\end{align}
Then,~\eqref{eq:sum_AA*_estimate} and~\eqref{eq:sum_A*A_estimate} imply~\eqref{eq:Ajk_sum_bound} via  the Cotlar--Stein Theorem~\cite{z12semiclassical}*{Theorem C.5}.

Note that~\eqref{eq:sum_AA*_estimate} and~\eqref{eq:sum_A*A_estimate} follow from the following bounds for all $j, k, j', k' \in \Z^n$ and $N \in \N:$ 
\begin{align}
\label{eq:Ajk_estimate} \|A_{jk}\|_{L^2(\R^n) \rightarrow L^2(\R^n)} &\leq Ch^{2 \beta},\\ 
\label{eq:AA*_estimate} \|A_{jk}A^*_{j'k'}\|_{L^2(\R^n) \rightarrow L^2(\R^n)} &\leq C_Nh^{-n} (1+ |j-j'| + |k-k'|)^{-N},\\ 
\label{eq:A*A_estimate}\|A^*_{j'k'}A_{jk}\|_{L^2(\R^n) \rightarrow L^2(\R^n)} &\leq C_Nh^{-n} (1+ |j-j'| + |k-k'|)^{-N}, 
\end{align}
where $C, \beta>0$ depend only $\nu$ and $n$ and $C_N >0$ depends only on $N$ and $n$. 
Indeed, we obtain~\eqref{eq:sum_AA*_estimate} using~\eqref{eq:Ajk_estimate} for $|j-j'| + |k -k'| \leq h^{-\beta/2n}$ and using $\eqref{eq:AA*_estimate}$ with $N = \lceil 4+ 4n + 2n/\beta \rceil$ for  $|j-j'| + |k -k'| > h^{-\beta/2n}$. Similarly,~\eqref{eq:sum_A*A_estimate} follows from~\eqref{eq:Ajk_estimate} and~\eqref{eq:A*A_estimate}.

3. We first show~\eqref{eq:Ajk_estimate} using Theorem~\ref{thm:fractal_uncertainty}. For $j \in \Z^n$, set $I_j =[-1,1]^n +j$. Then by~\eqref{eq:split_up_chi},
\begin{align*}
\|A_{jk}\|_{L^2(\R^n) \rightarrow L^2(\R^n)} &\leq \|\1_{X_-(\nu h/4) \cap I_j} \cF_h \1_{X_+(\nu h/4) \cap I_k}\|_{L^2(\R^n) \rightarrow L^2(\R^n)}\\
&=\|\1_{[X_-(\nu h/4) \cap I_j]-j} \cF_h \1_{[X_+(\nu h/4) \cap I_k]-k}\|_{L^2(\R^n) \rightarrow L^2(\R^n)}.
\end{align*}

By Lemma~\ref{lem:gen_porous_balls} and Lemma~\ref{lem:gen_porous_lines}, we know that $X_-(\nu h/4)$ is $\nu/2$-porous on balls from scales $h$ to 1 and $X_+(\nu h/4)$ is $\nu/2$-porous on lines from scales $h$ to 1. We then apply Theorem~\ref{thm:fractal_uncertainty} to conclude~\eqref{eq:Ajk_estimate}. 

4. We now prove~\eqref{eq:AA*_estimate};~\eqref{eq:A*A_estimate} is shown similarly.

If $|k -k'| > 2 \sqrt{n}$, then $\supp \chi_k^+ \cap \supp \chi_{k'}^+ = \emptyset$, which implies $A_{jk}A^*_{j'k'} =0$. Thus, we assume $|k-k'| \leq 2 \sqrt{n}$. We denote the integral kernel of $A_{jk}A^*_{j'k'}$ by $\cK$. Specifically,
$$\cK(x,y) = (2\pi h)^{-n} \chi_j^-(x) \chi_{j'}^-(y) \int_{\R^n} e^{i \lrang{y-x, \xi}/h} \chi_k^+(\xi) \chi_{k'}^+(\xi) d\xi.$$
It suffices to assume that $|j-j'| >2 \sqrt{n}$. Then $\frac{1}{10 \sqrt{n}}|j-j'| \leq |x-y|$ on the support of $\cK$. As $\chi_k^+ \chi_{k'}^+$ is supported inside a cube with side length $2$ and satisfies the derivative bounds~\eqref{eq:derivative_bounds_chi}, we can integrate by parts $N$ times in $\xi$ to get 
$$\sup_{x, y} |\cK(x,y)| \leq C_Nh^{-n}|j-j'|^{-N}.$$
As $\cK(x,y)$ is supported inside a cube of side length 2, from Schur's inequality (see~\cite{z12semiclassical}*{Theorem 4.21}) we know
$$\|A_{jk}A^*_{j'k'}\|_{L^2(\R^n) \rightarrow L^2(\R^n)} \leq C_Nh^{-n}(1 +|j-j'| + |k-k'|)^{-N},$$
which completes the proof.
\end{proof}

We can extend Proposition~\ref{prop:FUP_unbounded} by using a simple generalization of~\cite{DJN}*{Proposition 2.10}. Using the notation of~\cite{DJN}, we set $\gamma_0^\pm = \varrho$ and $\gamma_1^\pm =0$ to obtain the following.

\begin{proposition}
\label{prop:fractal_uncertainty}
Set $0 < \nu \leq \frac{1}{3}$ and $1/2 < \varrho \leq 1$. Let 
\begin{itemize}
\item $X_- \subset \R^n$ be $\nu$-porous on balls from scales $h^\varrho$ to 1;
\item $X_+ \subset \R^n$ be $\nu$-porous on lines from scales $h^\varrho$ to 1.
\end{itemize}
Then there exists $\beta, C >0$, depending only on $\nu$, $n$, and $\varrho$ such that
\begin{equation*}
\|\1_{X_-} \cF_h \1_{X_+}\|_{L^2 (\R^n) \rightarrow L^2(\R^n)} \leq Ch^\beta.
\end{equation*}
\end{proposition}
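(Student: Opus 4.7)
The plan is to derive Proposition~\ref{prop:fractal_uncertainty} directly from Proposition~\ref{prop:FUP_unbounded} by a single rescaling, which corresponds to the special case $\gamma_0^\pm = \varrho$, $\gamma_1^\pm = 0$ of the argument in~\cite{DJN}*{Proposition~2.10} alluded to in the paragraph above the statement. The main tool is the unitary dilation $D_\lambda u(x) = \lambda^{-n/2}u(x/\lambda)$ on $L^2(\R^n)$, which conjugates multiplication operators and the semiclassical Fourier transform according to the identities
\begin{equation*}
D_{1/\lambda}\1_X D_\lambda = \1_{X/\lambda}, \qquad D_{1/\lambda}\cF_h D_\lambda = \cF_{\tilde h}, \qquad \tilde h := h/\lambda^2.
\end{equation*}
Both identities follow from a direct change of variables $x \mapsto \lambda x$ in the integrals involved, and the unitarity of $D_\lambda$ then yields
\begin{equation*}
\|\1_{X_-}\cF_h \1_{X_+}\|_{L^2\to L^2} = \|\1_{X_-/\lambda}\cF_{\tilde h}\1_{X_+/\lambda}\|_{L^2\to L^2}.
\end{equation*}

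I would choose $\lambda = h^{1-\varrho}$, which gives the rescaled parameter $\tilde h = h^{2\varrho-1}$; the hypothesis $1/2 < \varrho \leq 1$ ensures $\tilde h \in (0,1]$ with $\tilde h \to 0$ as $h \to 0$. Unpacking Definitions~\ref{def:porous_on_balls} and~\ref{def:porous_on_lines}, if $X$ is $\nu$-porous on balls (respectively on lines) from scales $\alpha_0$ to $\alpha_1$, then $X/\lambda$ is $\nu$-porous on balls (respectively on lines) from scales $\alpha_0/\lambda$ to $\alpha_1/\lambda$, since any ball (respectively line segment) in the rescaled picture of size $R \in [\alpha_0/\lambda,\alpha_1/\lambda]$ pulls back to one of size $\lambda R \in [\alpha_0,\alpha_1]$ in the original picture. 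Taking $\alpha_0 = h^\varrho$, $\alpha_1 = 1$, and $\lambda = h^{1-\varrho}$ yields that $X_\pm/\lambda$ is $\nu$-porous from scales $\tilde h$ to $h^{\varrho-1}$; since $h^{\varrho-1} \geq 1$, the same porosity holds a fortiori on the restricted range from $\tilde h$ to $1$, which is exactly the hypothesis of Proposition~\ref{prop:FUP_unbounded} with semiclassical parameter $\tilde h$.

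Invoking Proposition~\ref{prop:FUP_unbounded} for the pair $X_\pm/\lambda$ with parameter $\tilde h$ then produces constants $\beta_0, C_0 > 0$ depending only on $\nu$ and $n$ such that the right-hand norm above is bounded by $C_0 \tilde h^{\beta_0} = C_0 h^{(2\varrho-1)\beta_0}$, giving the proposition with $\beta = (2\varrho-1)\beta_0$ and $C = C_0$, both depending only on $\nu$, $n$, and $\varrho$. I do not anticipate a genuine obstacle once the two dilation identities are verified; the only subtleties to check carefully are the inequality $h^{\varrho-1} \geq 1$, needed to preserve the upper porosity scale $1$ required by Proposition~\ref{prop:FUP_unbounded} and accounting for the assumption $\varrho \leq 1$, and the limit $\tilde h \to 0$, needed so that Proposition~\ref{prop:FUP_unbounded} delivers a genuine power-of-$h$ gain, which is where the strict inequality $\varrho > 1/2$ enters.
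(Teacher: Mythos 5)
Your proposal is correct, and it is essentially the paper's own argument: the paper defers the proof to the rescaling lemma of \cite{DJN}*{Proposition 2.10} with $\gamma_0^\pm=\varrho$, $\gamma_1^\pm=0$, which in this symmetric case reduces to exactly your single unitary dilation with $\lambda=h^{1-\varrho}$, $\tilde h=h^{2\varrho-1}$, followed by an application of Proposition~\ref{prop:FUP_unbounded}. The dilation identities, the rescaled porosity range from $\tilde h$ to $h^{\varrho-1}\geq 1$, and the resulting exponent $\beta=(2\varrho-1)\beta_0$ all check out.
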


\section{Proof of Theorems~\ref{thm:coisotropic} and~\ref{thm:full_support}~\eqref{item:K} up to a key lemma}\label{section:groundwork}

Here we begin the proofs of Theorem~\ref{thm:coisotropic} and  Theorem~\ref{thm:full_support}~\eqref{item:K}.  Theorem~\ref{thm:full_support}~\eqref{item:full_support} is shown in 
Lemma~\ref{prop:K}

\subsection{Definitions}\label{subsection:definition}
Choose $N_j \rightarrow \infty$ and $\theta_j \in \bT^{2n}$ such that $N_j, \theta_j$ satisfy the quantization condition~\eqref{eq:domainrange}. Let $u_j \in \cH_{N_j}(\theta_j)$ be a normalized sequence of eigenfunctions for $M_{N_j, \theta_j}$ that weakly converges to a semiclassical measure $\mu$. For Theorem~\ref{thm:coisotropic}, we assume towards a contradiction that $\supp \mu \subset \Sigma$, where $\Sigma$ is the union of finitely many subtori $V$ with non-coisotropic tangent space $V$. For Theorem~\ref{thm:full_support}, we instead assume $\bT^{2n} \setminus \supp \mu$ intersects $z + \R v \mod \Z^{2n}$ for all $z \in \bT^{2n}$ and $v \in E_+ \cup E_-$.

As outlined in \S\ref{subsection:proofoutline}, let $b_1, b_2 \in C^\infty(\bT^{2n})$ such that $b_1+b_2=1$ form a partition of unity on $\bT^{2n}$ and $\supp b_1 \cap \supp \mu = \emptyset$. We note that $b_1$ and $b_2$ are further specified in \S\ref{subsubsection:constructionofsymbols}  for Theorem~\ref{thm:coisotropic} and in \S\ref{subsection:Bwdecay1.4} for Theorem~\ref{thm:full_support}.

In the following exposition, we fix $j$ and omit it from our writing. Define $$B_1 \coloneqq \op_{N, \theta}(b_1) \quad \text{and} \quad B_2 \coloneqq \op_{N, \theta}(b_2).$$  Note that these operators form a quantum partition of unity: $B_1 + B_2=I$.  

For an operator $L: \cH_N(\theta) \rightarrow \cH_N(\theta)$, set 
\begin{equation*}
L(T) \coloneqq M_{N, \theta}^{-T} LM_{N, \theta}^{T}: \cH_N(\theta) \rightarrow \cH_N(\theta).
\end{equation*}

Now for $m \in \N$, define the set of words $\cW(m)=\{1,2\}^m$. We write an element of $\cW(m)$ as $\w=w_0 \cdots w_{m-1}$. Setting $\w \in \cW(M)$, let
\begin{equation*}
B_\w \coloneqq  B_{w_{m-1}}(m-1)\cdots B_{w_{1}}(1) B_{w_{0}}(0),
\end{equation*}
with corresponding principal symbol
\begin{equation*}
b_\w \coloneqq \prod_{k=0}^{m-1} b_{w_k} \circ A^k.
\end{equation*}

For a function $c: \cW(m) \rightarrow \R$, define the operator 
\begin{equation*}
B_c \coloneqq \sum_{\w \in \cW(m)} c(\w) B_\w,
\end{equation*}
with principal symbol 
\begin{equation*}
b_c \coloneqq \sum_{\w \in \cW(m)} c(\w) b_\w.
\end{equation*}  If $c=\1_E$ for $E \subset \cW(m)$, we use the notation $B_E$ to denote $B_{\1_E}$.

To specify the values of $m$ used in the rest of the paper, we first define the following constants. Set $\Lambda >1$ to be the absolute value of a particular eigenvalue of $A$, precisely chosen in~\eqref{eq:Lambda1} for Theorem~\ref{thm:coisotropic} and~\eqref{eq:Lambda2} for Theorem~\ref{thm:full_support}. Set 
\begin{equation}\label{eq:original_Lambda_def}
\Lambda_+ \coloneqq \max \{|\lambda| : \lambda \text{ is an eigenvalue of } A\}.
\end{equation}
The earlier works \cite{Sch} and \cite{dyatlov2021semiclassical}, used $\Lambda = \Lambda_+$.
However, to prove Theorem~\ref{thm:coisotropic}, we require more control over the precise stable/unstable behavior of $A$ on a particular subspace determined by $V$.

Pick 
$$\rho, \rho' \in (0, 1) \quad \text{such that} \quad \rho' \leq \rho \quad \text{and} \quad \rho + \rho' <1.$$ The particular choices of $\rho, \rho'$ are given in~\eqref{eq:rho_def} and~\eqref{eq:rho} for Theorem~\ref{thm:coisotropic} and~\ref{thm:full_support}, respectively. 
Then let $J$ be an integer such that 
\begin{equation}\label{eq:J_bounds}
J> \max\left\{\frac{5 \rho \log 2}{2 \log \Lambda}, \frac{25 \rho \log \Lambda_+}{\log \Lambda}\right\}.
\end{equation}
Finally, define 
\begin{equation}\label{eq:original_T_Def}
T_0 \coloneqq \lrfl{\frac{\rho \log N }{J \log \Lambda}} \quad \text{and} \quad  T_1 \coloneqq J T_0.
\end{equation}
In this paper, we will construct words of length  $T_0$ and $T_1$.

Define the following function $F:\cW(T_0) \rightarrow [0,1]$, which gives the proportion of the digit 1 in a word:
\begin{equation*}
F(\w) \coloneqq \frac{|\{k \in \{0,\ldots, T_0 -1\}: w_k=1\}|}{T_0},
\end{equation*}
for $\w = w_0 \cdots w_{T_0-1}$.
For $\alpha \in (0, \frac{1}{2})$, which we later select to be sufficiently small in~\eqref{eq:alpha_1.4}, with additional condition~\eqref{eq:alpha_1.3} for Theorem~\ref{thm:coisotropic}, let 
\begin{equation*}
\cZ = \{ \w \in \cW(T_0): F(\w) >\alpha\}.
\end{equation*}
We use $\cZ$ to split $\cW(2T_1)$ into the following two disjoint sets, where a word in $\cW(2T_1)$  is now written as a concatenation of $2J$ elements of $\cW(T_0)$. 
Specifically, set
$$\cY=\{\w_{(1)} \cdots\w_{(2J)} : \w_{(k)} \in \cZ \text{ for some } 1 \leq k \leq 2J \}$$
and
\begin{equation}\label{eq:def_X}
\cX=\cW(2T_1) \setminus \cY=\{\w_{(1)} \cdots \w_{(2J)} :  \w_{(k)} \in \cW(2T_1) \setminus \cZ \text{ for all } 1 \leq k \leq 2J\}.
\end{equation}
We call elements of $\cX$ \emph{uncontrolled long logarithmic words} and elements of $\cY$ \emph{controlled long logarithmic words}. To simplify notation, we set
$$\Hj \coloneqq \cH_{N_j}(\theta_j).$$

Noting that $B_\cY + B_\cX=1$, it suffices to show $\|B_\cY u_j\|_{\cH_j}, \|B_\cX u_j\|_{\cH_j} \rightarrow 0$ to find a contradiction. 

\subsection{Decay of $B_\cY$}\label{subsection:Y}
We begin with the decay of $\|B_\cY u_j\|_{\cH_j}$, the proof of which relies on the fact that $\supp b_1 \cap \supp \mu = \emptyset$.

\begin{lemma}\label{lem:Ydecay}
As $j \rightarrow \infty$, we have $$\|B_\cY u_j\|_{\Hj} \rightarrow 0.$$
\end{lemma}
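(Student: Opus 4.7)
The plan is to isolate within each word $\w\in\cY$ the earliest block $\w_{(k)}$ lying in $\cZ$, thereby producing an operator factor whose symbol vanishes on $\supp\mu$. First I would partition $\cY=\bigsqcup_{k=1}^{2J}\cY_k$ with
\[
\cY_k=\bigl\{\w_{(1)}\cdots\w_{(2J)}:\w_{(1)},\ldots,\w_{(k-1)}\in\cW(T_0)\setminus\cZ,\ \w_{(k)}\in\cZ\bigr\},
\]
and since $2J$ is fixed, by the triangle inequality it suffices to prove $\|B_{\cY_k}u_j\|_{\Hj}\to 0$ for each $k$. Using that the time ranges of consecutive blocks are disjoint and that the trailing sum over $\w_{(k+1)},\ldots,\w_{(2J)}$ telescopes to the identity via $B_1+B_2=I$, I would obtain the factorization $B_{\cY_k}=C^\cZ(k)\cdot L_k$, where
\[
C^\cZ(k)=\sum_{\w\in\cZ}B^{((k-1)T_0,\,kT_0)}_\w,\qquad L_k=\prod_{i=1}^{k-1}\Bigl(\sum_{\w_{(i)}\in\cW(T_0)\setminus\cZ}B^{((i-1)T_0,\,iT_0)}_{\w_{(i)}}\Bigr),
\]
with the product ordered so that the $i=1$ factor acts first on $u_j$.

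Next I would control $L_k$: each inner sum has total symbol $1-c^\cZ\circ A^{(i-1)T_0}\in[0,1]$, where $c^\cZ\coloneqq\sum_{\w\in\cZ}b_\w$. By Lemma~\ref{lem:manymultiplication}\eqref{manysymbol} applied to each $T_0$-fold product and then summed, the inner sum equals $\op_{N,\theta}(1-c^\cZ\circ A^{(i-1)T_0})+o(1)_{\Hj\to\Hj}$; Lemma~\ref{lem:b_bounds} then gives operator norm $\leq 1+o(1)$, so $\|L_k\|_{\Hj\to\Hj}\leq C_J$. Writing
\[
\|B_{\cY_k}u_j\|_{\Hj}^2=\bigl\langle L_k^*(C^\cZ(k))^*C^\cZ(k)L_k u_j,\,u_j\bigr\rangle_{\Hj},
\]
and applying the same semiclassical expansion to the full composition (still only $O(\log N)$ factors), the operator in the middle equals $\op_{N,\theta}\bigl((c^\cZ\circ A^{(k-1)T_0})^2\cdot\ell_k^2\bigr)+o(1)_{\Hj\to\Hj}$, where $\ell_k\coloneqq\prod_{i=1}^{k-1}(1-c^\cZ\circ A^{(i-1)T_0})$. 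Weak convergence $u_j\rightharpoonup\mu$ then yields
\[
\|B_{\cY_k}u_j\|_{\Hj}^2\ \longrightarrow\ \int_{\bT^{2n}}(c^\cZ\circ A^{(k-1)T_0})^2\,\ell_k^2\,d\mu.
\]

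To see that the integral vanishes: on $\supp\mu$ the partition of unity forces $b_1\equiv 0$ and $b_2\equiv 1$, so $b_\w|_{\supp\mu}=\1_{\w=(2,\ldots,2)}$; since $F((2,\ldots,2))=0<\alpha$, the word $(2,\ldots,2)$ lies outside $\cZ$, giving $c^\cZ\equiv 0$ on $\supp\mu$, and by $A$-invariance of $\mu$ the same holds for $c^\cZ\circ A^{(k-1)T_0}$. The hard part will be the error analysis: each application of Lemma~\ref{lem:manymultiplication} gives $O(N^{\rho+\rho'-1+\varepsilon})$ per summand, while $|\cZ|$ and $|\cW(T_0)\setminus\cZ|$ are exponentially large in $T_0\sim\log N$. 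I expect the choices of $J$ in~\eqref{eq:J_bounds}, the smallness of $\alpha$ (to be fixed in~\eqref{eq:alpha_1.4}), and the standing requirement $\rho+\rho'<1$ to be calibrated precisely so that the total accumulated error $|\cZ|\cdot N^{\rho+\rho'-1+\varepsilon}$ stays $o(1)$.
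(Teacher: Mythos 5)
Your decomposition of $\cY$ by the position of a distinguished $\cZ$-block is the right skeleton (it is the mirror image of the paper's), and your reason for the limit to vanish is correct: every word in $\cZ$ contains a digit $1$, so $c^\cZ\equiv 0$ on the $A$-invariant set $\supp\mu$. The gap is in the mechanism you use to convert this into an estimate. By singling out the \emph{earliest} $\cZ$-block you place $C^\cZ(k)$ to the left of $L_k$, so the $\cZ$-factor acts on $L_ku_j$ rather than on the eigenfunction itself; this forces you into the quadratic form $\langle L_k^*(C^\cZ(k))^*C^\cZ(k)L_ku_j,u_j\rangle_{\Hj}$ and the total symbol $(c^\cZ\circ A^{(k-1)T_0})^2\ell_k^2$. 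That symbol involves $b_i\circ A^m$ for $m$ up to roughly $2T_1\approx \frac{2\rho\log N}{\log\Lambda}$, and $\|A^m\|\sim\Lambda_+^{m}\geq N^{2\rho}>N$ for such $m$ since $\rho>\tfrac12$; hence the total symbol lies outside every admissible class $S_{L,\rho,\rho'}$ with $\rho+\rho'<1$, and neither Lemma~\ref{lem:manymultiplication} nor the sharp G\aa rding inequality applies to the full composition. (The condition~\eqref{eq:J_bounds} only guarantees that a \emph{single} block of length $T_0$ has symbol in $S_{1/20}(1)$; it does nothing for products across all $2J$ blocks at their original times.)

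Even granting the expansion, the final step invokes $\langle\op_{N,\theta}(a_j)u_j,u_j\rangle_{\Hj}\to\int a_j\,d\mu$ for symbols $a_j$ that depend on $N_j$ (through $T_0$ and the exponentially many terms of $c^\cZ$); Definition~\ref{def:semiclassical_measure} provides this only for a fixed $a\in C^\infty(\bT^{2n})$, with no uniformity over such families. The paper's proof is designed to avoid exactly these two issues: it decomposes by the \emph{last} $\cZ$-block so that $B_\cZ$ hits $u_j$ directly (up to a unitary phase, using the eigenfunction property), bounds the remaining factors by $\|B_{\cW(T_0)\setminus\cZ}\|_{\Hj\to\Hj}\leq 1+CN^{-\delta}$ at the operator level without ever forming a cross-block symbol, and reduces $\|B_\cZ u_j\|_{\Hj}$ to the \emph{fixed} symbol $b_1$ via the pointwise inequality $\1_\cZ\leq F/\alpha$, the G\aa rding-type comparison of Lemma~\ref{lem:cd_bounds}, and the exact identity $B_F=\frac1{T_0}\sum_k B_1(k)$ together with $\|B_1(k)u_j\|_{\Hj}=\|B_1u_j\|_{\Hj}$. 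If you flip your decomposition to the last $\cZ$-block and replace the weak-convergence step by this comparison argument, your proof becomes the paper's.
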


To prove Lemma~\ref{lem:Ydecay}, we follow the proof outline from~\cite{dyatlov2021semiclassical}*{Lemma 3.1}, beginning with the following estimate for $B_1$.
\begin{lemma}\label{lem:B1bound}
Suppose that for $b_1 \in C^\infty(\bT^{2n})$, $\supp b_1 \cap \supp \mu = \emptyset$. Then for $B_1 = \op_{N_j, \theta_j}(b_1)$,  $$\|B_1 u_j\|_{\Hj} \rightarrow 0.$$
\end{lemma}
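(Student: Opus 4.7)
The goal is a standard semiclassical measure argument: reduce $\|B_1 u_j\|_{\Hj}^2$ to a matrix element of a quantized observable, then use weak convergence $u_j\rightharpoonup\mu$ together with the assumption $\supp b_1\cap\supp\mu=\emptyset$.

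The plan is to write
\begin{equation*}
\|B_1 u_j\|_{\Hj}^2=\lrang{B_1^*B_1 u_j, u_j}_{\Hj}.
\end{equation*}
Since $b_1$ is real valued (being one of the two functions of a partition of unity), Lemma~\ref{lem:torussymbolproperties}(4) gives $B_1^*=\op_{N_j,\theta_j}(b_1)$, and then Lemma~\ref{lem:torussymbolproperties}(1) with $l=1$ yields
\begin{equation*}
B_1^*B_1=\op_{N_j,\theta_j}(b_1^2)+R_j,\qquad \|R_j\|_{\Hj\to\Hj}=\cO(N_j^{-1}).
\end{equation*}
Because $u_j$ is normalized, the remainder contributes $\cO(N_j^{-1})$ to the inner product. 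By Definition~\ref{def:semiclassical_measure} applied to the test function $a=b_1^2\in C^\infty(\bT^{2n})$,
\begin{equation*}
\lrang{\op_{N_j,\theta_j}(b_1^2)u_j, u_j}_{\Hj}\;\longrightarrow\;\int_{\bT^{2n}} b_1^2\, d\mu.
\end{equation*}

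Finally, the assumption $\supp b_1\cap\supp\mu=\emptyset$ forces $b_1^2=0$ $\mu$-almost everywhere, so the limit is $0$, and hence $\|B_1 u_j\|_{\Hj}^2\to 0$. There is no real obstacle here: the only thing to be careful about is that we are using the quantization on $\Hj$ (not on $L^2(\R^n)$), so we must invoke Lemma~\ref{lem:torussymbolproperties} rather than the $L^2$ version Lemma~\ref{lem:symbolproperties}, and we need $b_1$ real so that $B_1^*B_1$ simplifies cleanly to the quantization of $b_1^2$ modulo a lower-order term.
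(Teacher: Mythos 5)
Your proof is correct and follows essentially the same route as the paper: reduce $\|B_1u_j\|_{\Hj}^2$ to $\lrang{\op_{N_j,\theta_j}(|b_1|^2)u_j,u_j}_{\Hj}+\cO(N_j^{-1})$ via Lemma~\ref{lem:torussymbolproperties}, then apply the definition of weak convergence and the support hypothesis. The only cosmetic difference is that the paper uses $\op_{N,\theta}(b_1)^*=\op_{N,\theta}(\overline{b_1})$ and works with $|b_1|^2$, so it does not even need to invoke realness of $b_1$.
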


\begin{proof}
Recalling the definition of $S(1)$ from~\eqref{eq:def_S(1)}, we see that $b_1 \in S(1)$. Therefore by Lemma~\ref{lem:torussymbolproperties}, 
\begin{align*}
\left\|\op_{N_j, \theta_j}(b_1) u_j \right\|_{\Hj} &= \lrang{\op_{N_j, \theta_j}(b_1)^* \op_{N_j, \theta_j}(b_1)u_j, u_j}_{\Hj}\\
&=\lrang{\op_{N_j, \theta_j}(|b_1|^2) u_j, u_j}_{\Hj} + \cO\left(N_j^{-1}\right).
\end{align*}
As $u_j$ weakly converge to $\mu$, we know $$\lrang{\op_{N_j, \theta_j}\left(|b_1|^2\right) u_j, u_j}_{\Hj} \rightarrow \int_{\bT^{2n}} |b_1|^2 d\mu =0.$$ Therefore,  $\|\op_{N_j, \theta_j}(b_1) u_j \|_{\Hj} =o(1)$. 
\end{proof}

Recall the symbol class $S_\rho(1)$, defined in~\eqref{eq:S_rho(1)}.
In order to examine $B_\cY$, we use the following lemma, which shows that $b_\w$ lies in $S_{1/10}(1)$. We adapt the proof from~\cite{dyatlov2021semiclassical}*{Lemma 3.7}.

\begin{lemma}\label{lem:symbolclassofb_w}
For every $\w \in \cW(T_0)$, the symbol $b_\w$ belongs to the symbol class $S_{\frac{1}{10}}(1)$ and 
\begin{equation}\label{eq:symbol_formula}
B_\w = \op_{N, \theta}(b_\w) + \cO\left(N_j^{-\frac{7}{10}}\right)_{\cH_j \rightarrow \cH_j}.
\end{equation}
\end{lemma}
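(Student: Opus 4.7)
The plan is to unfold the conjugations using the exact Egorov theorem~\eqref{eq:exactegorov} and then apply the many-factor product formula of Lemma~\ref{lem:manymultiplication}. Writing $a_k := b_{w_k}\circ A^k$, Egorov gives $B_{w_k}(k) = M_{N,\theta}^{-k}\op_{N,\theta}(b_{w_k})M_{N,\theta}^k = \op_{N,\theta}(a_k)$, so
\[
B_\w = \op_{N,\theta}(a_{T_0-1})\cdots \op_{N,\theta}(a_0) \quad\text{and}\quad a_0 a_1\cdots a_{T_0-1}=b_\w.
\]

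First I would establish uniform-in-$k$ seminorm bounds on the factors $a_k$. Since $\|A^k\|\leq C(1+k)^{2n-1}\Lambda_+^k$ for some $C$ depending only on $A$, the chain rule yields $|\partial^\beta a_k|\leq C_\beta (1+k)^{(2n-1)|\beta|}\Lambda_+^{k|\beta|}$. The key quantitative point is that, by the definition~\eqref{eq:original_T_Def} of $T_0$ together with the lower bound $J>25\rho\log\Lambda_+/\log\Lambda$ from~\eqref{eq:J_bounds},
\[
T_0\log\Lambda_+\leq \frac{\rho\log\Lambda_+}{J\log\Lambda}\log N<\frac{\log N}{25},
\]
so $\Lambda_+^{k|\beta|}\leq N^{|\beta|/25}$ for all $0\leq k\leq T_0-1$. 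The polynomial factor $(1+T_0)^{(2n-1)|\beta|}$ is $N^{o(1)}$ and absorbs harmlessly, giving $|\partial^\beta a_k|\leq C'_\beta N^{|\beta|/10}$ uniformly in $k$ for $N$ large. Moreover $|a_k|\leq 1$, since $b_1,b_2$ form a partition of unity with $0\leq b_i\leq 1$.

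Next I would apply Lemma~\ref{lem:manymultiplication}, whose hypothesis $M\leq C_0\log N$ is comfortably satisfied by $M=T_0\leq \rho(\log N)/(J\log\Lambda)$. For the symbol-class claim on $b_\w$, the direct Leibniz estimate (only at most $|\alpha|$ of the $T_0$ factors carry any derivative, while the remaining factors contribute bounded-by-$1$ terms) combined with the uniform bound above gives
\[
|\partial^\alpha b_\w|\leq C_\alpha\, T_0^{|\alpha|}\cdot N^{|\alpha|/25+o(1)}\leq C'_\alpha N^{|\alpha|/10},
\]
so $b_\w\in S_{1/10}(1)$. Part~\eqref{manysymbol} of Lemma~\ref{lem:manymultiplication}, applied with $\rho=\rho'=1/10$ and sufficiently small $\varepsilon$, then yields
\[
B_\w=\op_{N,\theta}(b_\w)+\cO\bigl(N^{-4/5+\varepsilon}\bigr)_{\cH_j\to\cH_j}=\op_{N,\theta}(b_\w)+\cO\bigl(N^{-7/10}\bigr)_{\cH_j\to\cH_j},
\]
which is~\eqref{eq:symbol_formula}.

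The only real obstacle is bookkeeping of the uniform-in-$k$ seminorms: this is exactly what forces the lower bound on $J$ in~\eqref{eq:J_bounds}, since one needs the dynamical expansion rate $\Lambda_+^{T_0}$ to be an arbitrarily small power of $N$ so that iterating $T_0\sim\log N$ times still keeps the product within the class $S_{1/10}(1)$. Once that uniform control is in hand, everything else is a direct invocation of the many-factor product formula.
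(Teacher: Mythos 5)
Your proposal is correct and follows essentially the same route as the paper: both reduce the problem to uniform-in-$k$ seminorm bounds on the factors $b_i\circ A^k$, using exactly the inequality $T_0\log\Lambda_+\leq\tfrac{1}{25}\log N$ forced by the choice of $J$ in~\eqref{eq:J_bounds}, and then conclude via Lemma~\ref{lem:manymultiplication}. The only cosmetic differences are that you bound $\|A^k\|$ by a polynomial-times-exponential instead of Gelfand's formula with a small $\varepsilon$, and you rederive the product's symbol class by a direct Leibniz count where the paper simply cites part~\eqref{manyproduct} of that lemma.
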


\begin{proof}
By Lemma~\ref{lem:manymultiplication}~\eqref{manyproduct}, to show $b_\w \in S_{1/10}(1)$, it suffices to show for $i=1,2$ and $0 \leq k \leq T_0$, each  $S_{\frac{1}{20}}(1)$-seminorm of $b_i \circ A^k$ is bounded uniformly in $k$ and $N$.  Once we prove that $b_\w \in S_{1/10}(1)$,~\eqref{eq:symbol_formula} follows from Lemma~\ref{lem:manymultiplication}~\eqref{manysymbol}.

Let $j_1, \ldots, j_m \in \{1, \ldots, 2n\}$. Using the fact that $A$ is a linear map, we calculate for $z \in \bT^{2n}$
$$\partial_{j_1} \cdots \partial_{j_m} \left(b_i \circ A^k\right)(z) = D^{m}b_i\left(A^k z\right) \cdot \left(A^k \partial_{j_1}, \ldots, A^k \partial_{j_m}\right),$$ where  $D^{m}b_i$ denotes the $m$-th derivative of $b_i$, an $m$-linear form, uniformly bounded in $N$. By Gelfand's formula, for all $\varepsilon >0$,  $\|A^k\| \leq \cO(\Lambda_+^{k(1+\varepsilon)})$ as $k \rightarrow \infty$. From~\eqref{eq:J_bounds}, we have
$$
T_0 \leq \frac{\log N}{25 \log \Lambda_+}.
$$
Choosing $\varepsilon$ sufficiently small,
$$
\sup_{\R^{2n}} \left|\partial_{j_1} \cdots \partial_{j_m} \left(b_i \circ A^k\right)\right| \leq C \left|A^k \partial_{j_1}\right| \cdots \left|A^k \partial_{j_m}\right| \leq C \Lambda_+^{km(1+\varepsilon)} \leq C \Lambda_+^{T_0 m(1+\varepsilon)} \leq CN^{\frac{m}{20}},
$$
which completes the proof. 
\end{proof}

We want to use Lemma~\ref{lem:B1bound} to bound $B_\cZ$. To do so, we require the following general lemma, which is adapted from~\cite{dyatlov2021semiclassical}*{Lemma 3.8}.

\begin{lemma}\label{lem:cd_bounds}
Let $c, d :\cW(T_0) \rightarrow \R$ with $0 \leq c(\w) \leq d(\w) \leq 1$ for all $\w \in \cW(T_0)$. Then for all $u \in \cH_j$, there exists some $\delta>0$ such that 
$$\|B_c u\|_{\Hj} \leq \|B_d u\|_{\Hj} +CN_j^{-\delta} \|u\|_{\cH_j}.$$
\end{lemma}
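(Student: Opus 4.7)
The plan is to reduce the desired inequality to a sharp G\aa rding estimate on the pointwise nonnegative symbol $b_c(b_d-b_c)$. Setting $e := d - c$, which takes values in $[0,1]$, one has $B_d = B_c + B_e$ and
\[
\|B_d u\|_{\Hj}^2 = \|B_c u\|_{\Hj}^2 + \langle (B_c^* B_e + B_e^* B_c) u, u\rangle_{\Hj} + \|B_e u\|_{\Hj}^2.
\]
Discarding the nonnegative $\|B_e u\|_{\Hj}^2$ term, the task reduces to proving
\[
\langle (B_c^* B_e + B_e^* B_c) u, u\rangle_{\Hj} \geq -C N_j^{-\delta} \|u\|_{\Hj}^2
\]
for some $\delta > 0$, after which $\sqrt{x+y} \leq \sqrt{x} + \sqrt{y}$ for $x, y \geq 0$ yields the stated bound.

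The crucial preliminary step is to place $b_c$, $b_e$, and the product $b_c b_e$ in a symbol class $S_{\rho_0}(1)$ with $\rho_0 < 1/2$ and seminorms uniform in $N_j$ and in $c, d$, in spite of the fact that $b_c = \sum_{\w} c(\w) b_\w$ is a sum over $2^{T_0} \leq N_j^{2/5}$ words. The product factorization $b_\w = \prod_{k=0}^{T_0-1}(b_{w_k} \circ A^k)$, together with positivity $b_i \circ A^k \geq 0$ and the partition identity $\sum_{w \in \{1,2\}}(b_w \circ A^k) = 1$, allows the Leibniz expansion of $\partial^\alpha b_\w$ to factor coordinatewise after taking absolute values and summing over $\w$. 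Since each multi-index $\alpha$ distributes derivatives over at most $|\alpha|$ positions $k$, while the remaining positions contribute factors summing to $1$, one obtains
\[
\sum_{\w \in \cW(T_0)} |\partial^\alpha b_\w(z)| \leq C_\alpha T_0^{|\alpha|} \Lambda_+^{T_0 |\alpha|} \leq C_\alpha N_j^{|\alpha|/25 + \varepsilon}
\]
for any $\varepsilon > 0$, using $\Lambda_+^{T_0} \leq N_j^{1/25}$ from~\eqref{eq:J_bounds}. Combining with $|\partial^\alpha b_c| \leq \sum_\w |c(\w)||\partial^\alpha b_\w|$ and $|c| \leq 1$ gives $b_c \in S_{1/25+\varepsilon}(1)$ with uniform seminorms; the same holds for $b_e$ and for $b_c b_e$ by the Leibniz rule.

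With these uniform seminorm bounds, Lemma~\ref{lem:symbolclassofb_w} yields $B_c = \op_{N,\theta}(b_c) + R_c$ with $\|R_c\|_{\Hj \to \Hj} \leq C\cdot 2^{T_0} N_j^{-7/10} \leq C N_j^{-3/10}$, and analogously for $B_e$; moreover $\|\op_{N,\theta}(b_c)\|, \|\op_{N,\theta}(b_e)\| = O(1)$ by Lemma~\ref{lem:b_bounds}. Since $b_c, b_e$ are real, $\op_{N,\theta}(b_c)^* = \op_{N,\theta}(b_c)$ and $\op_{N,\theta}(b_e)^* = \op_{N,\theta}(b_e)$, so Lemma~\ref{lem:torussymbolproperties}(1) at $\rho = \rho' = 1/25 + \varepsilon$ gives, after summing the two orderings,
\[
B_c^* B_e + B_e^* B_c = 2 \op_{N,\theta}(b_c b_e) + O\!\left(N_j^{-\delta'}\right)_{\Hj \to \Hj}
\]
for some $\delta' > 0$. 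Since $b_c b_e = b_c(b_d - b_c) \geq 0$ pointwise on $\bT^{2n}$, the sharp G\aa rding estimate Lemma~\ref{lem:torussymbolproperties}(3)---whose constant depends only on the uniformly bounded $S_{1/25+\varepsilon}$-seminorms of $b_c b_e$---gives $\langle \op_{N,\theta}(b_c b_e) u, u\rangle_{\Hj} \geq -C N_j^{\rho + \rho' - 1} \|u\|_{\Hj}^2$, completing the reduction.

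The main obstacle is the symbol-class step: a naive triangle inequality over $\w$ would produce first-derivative bounds of order $2^{T_0} N_j^{1/10} \sim N_j^{1/2}$, placing $b_c$ outside every useful $S_\rho(1)$ class. The saving observation is that after applying Leibniz, positivity of $b_i$ together with the identity $b_1 + b_2 = 1$ collapses the $\w$-sum factorwise, reducing the $2^{T_0}$ proliferation to a polylogarithmic prefactor $T_0^{|\alpha|}$, which is what permits the subsequent application of sharp G\aa rding with an $N_j^{-\delta}$ saving.
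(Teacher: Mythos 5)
Your proof is correct, and it rests on the same core mechanism as the paper's: the pointwise inequality $0 \le b_c \le b_d$ (coming from $b_{\w}\ge 0$ and $0\le c\le d$) fed into the sharp G\aa rding inequality of Lemma~\ref{lem:torussymbolproperties}~\eqref{sharpgarding2}. The packaging, however, is genuinely different in two places. First, the paper applies G\aa rding directly to $|b_d|^2-|b_c|^2\ge 0$, whereas you expand $\|B_du\|^2=\|B_cu\|^2+\langle(B_c^*B_e+B_e^*B_c)u,u\rangle+\|B_eu\|^2$, discard the trivially nonnegative last term, and apply G\aa rding to $b_cb_e\ge 0$; at the symbol level this is just the splitting $b_d^2-b_c^2=2b_cb_e+b_e^2$, so the positivity input is identical. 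Second, and more substantively, the paper controls the seminorms of $b_c,b_d$ by summing the $S_{1/10}(1)$ bounds of Lemma~\ref{lem:symbolclassofb_w} over all $2^{T_0}$ words, accepting a loss of $N^{\rho\log 2/(J\log\Lambda)}$ that must then be beaten by the $N^{-4/5}$ G\aa rding gain --- this is exactly where the constraint $J>\tfrac{5\rho\log 2}{2\log\Lambda}$ from~\eqref{eq:J_bounds} enters. Your factorization $\sum_{\w}|\partial^\alpha b_{\w}|\le C_\alpha T_0^{|\alpha|}\Lambda_+^{T_0|\alpha|(1+\varepsilon)}$, obtained by distributing the Leibniz expansion through the word sum and using $b_1+b_2=1$, $b_i\ge 0$ on the underived factors, places $b_c$ and $b_e$ in $S_{1/25+\varepsilon}(1)$ with seminorms uniform in $N$ and avoids the exponential loss in the symbol class entirely; this is a sharper estimate than the paper needs and would be reusable elsewhere. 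Note only that the factor $2^{T_0}$ still enters your argument through the remainder bound $\|R_c\|\le C\,2^{T_0}N^{-7/10}$, so you do still implicitly rely on the same condition on $J$ to make that error decay --- you have relocated the constraint rather than removed it.
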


\begin{proof}
To simplify notation, we fix $j$ and omit it from our writing. 
First, note that $|\cW(T_0)|=2^{T_0} \leq N^{\frac{\rho\log 2}{J\log \Lambda}}$. From Lemma~\ref{lem:symbolclassofb_w}, for each $\w \in \cW(T_0)$, $b_\w \in S_{\frac{1}{10}}(1)$. Therefore, $N^{-\frac{\rho\log 2}{J\log \Lambda}} b_c$, $N^{-\frac{\rho\log 2}{J\log \Lambda}} b_d \in S_{\frac{1}{10}}(1)$. 

Recall that for any coisotropic $L$, $S_{\frac{1}{10}}(1) = S_{L, \frac{1}{10}, \frac{1}{10}}$. Therefore, we can apply Lemma~\ref{lem:torussymbolproperties}:
\begin{align*}
-&CN^{-\frac{4}{5}}\|u\|_\cH^2\\
&\leq \lrang{\op_{N, \theta} \left(N^{-\frac{2\rho\log 2}{J\log \Lambda}} \left(|b_d|^2-|b_c|^2 \right) \right)u,u}_\cH\\
&\leq N^{-\frac{2\rho\log 2}{J\log \Lambda}}\left(\lrang{\op_{N, \theta} \left(b_d\right)^*\op_{N, \theta} \left(b_d \right) u, u}_\cH -\lrang{\op_{N, \theta} \left(b_c\right)^*\op_{N, \theta} \left(b_c \right) u,u}_\cH \right)+CN^{-\frac{4}{5}}\|u\|^2_{\cH}\\
&=N^{-\frac{2\rho\log 2}{J\log \Lambda}}\left(\left\|\op_{N, \theta} \left(b_d\right) u\right\|^2_\cH -\left\|\op_{N, \theta} \left(b_c\right)u\right\|^2_\cH\right) +CN^{-\frac{4}{5}}\|u\|^2_{\cH}\\
&\leq N^{-\frac{2\rho\log 2}{J\log \Lambda}}\left(\left\|B_d u\right\|^2_\cH -\left\|B_c u\right\|^2_\cH\right) +CN^{-\frac{4}{5}}\|u\|^2_{\cH}.
\end{align*}

Therefore, $$\|B_c u\|_\cH^2 \leq \|B_d u\|_\cH^2 +CN^{-\frac{4}{5}}N^{\frac{2\rho\log 2}{J\log \Lambda}}\|u\|_\cH^2.$$
Recalling from~\eqref{eq:J_bounds} that $J> \frac{5 \rho \log 2}{2 \log \Lambda}$, we conclude $CN^{-\frac{4}{5}}N^{\frac{2\rho\log 2}{J\log \Lambda}}\leq CN^{-2\delta}$  for some $\delta>0$.
\end{proof}

To prove  Lemma~\ref{lem:Ydecay}, we will control $B_\cY$ by the behavior of $B_\cZ$ and the behavior of $B_{\cW(T_0)\setminus \cZ}$. Thus, we first bound $B_\cZ$ and $B_{\cW(T_0)\setminus \cZ}$ via Lemma~\ref{lem:B1bound}  and Lemma~\ref{lem:cd_bounds}. 

\begin{lemma}\label{lem:BZbound}
For some $\delta>0$, we have $$\|B_\cZ u_j \|_{\Hj} \rightarrow 0$$ and
$$\|B_{\cW(T_0)\setminus \cZ}\|_{\Hj \rightarrow \Hj} \leq 1 + CN_j^{-\delta}.$$
\end{lemma}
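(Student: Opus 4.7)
The lemma decomposes into two tasks: a straightforward operator-norm bound and a more delicate quantitative vanishing statement on the eigenfunction.

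\emph{Operator-norm bound.} I would apply Lemma~\ref{lem:cd_bounds} with $c=\mathbf 1_{\cW(T_0)\setminus\cZ}$ and $d\equiv 1$. The key identity is
\begin{equation*}
B_{d\equiv 1} \;=\; \sum_{\w\in\cW(T_0)} B_\w \;=\; \prod_{k=0}^{T_0-1}\bigl(B_1(k)+B_2(k)\bigr) \;=\; I,
\end{equation*}
since $B_1+B_2=\op_{N,\theta}(b_1+b_2)=I$ is preserved by Egorov conjugation by $M_{N,\theta}$. Lemma~\ref{lem:cd_bounds} then immediately yields $\|B_{\cW(T_0)\setminus\cZ}u\|_{\Hj}\leq (1+CN_j^{-\delta})\|u\|_{\Hj}$.

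\emph{Vanishing on the eigenfunction.} My plan is to dominate $b_\cZ:=\sum_{\w\in\cZ}b_\w$ pointwise by a Birkhoff-type average and then pass the inequality through the semiclassical calculus. First I would verify two symbolic facts (under the natural assumption $b_1,b_2\geq 0$): (i)~$0\leq b_\cZ\leq 1$, since $\sum_{\w\in\cW(T_0)} b_\w\equiv 1$; and (ii)~the pointwise bound
\begin{equation*}
b_\cZ(z) \;\leq\; \tilde b(z) \;:=\; \frac{1}{\alpha T_0}\sum_{k=0}^{T_0-1} b_1(A^kz).
\end{equation*}
The bound (ii) comes from $\mathbf 1_\cZ(\w)\leq \frac{1}{\alpha T_0}\sum_k \mathbf 1[w_k=1]$ on $\cZ$, together with swapping the order of summation and collapsing the free coordinates via $b_1+b_2=1$. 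Next I would check that both $b_\cZ$ and $\tilde b$ lie in $S_{\rho^\ast}(1)$ for some $\rho^\ast<1/2$: Lemma~\ref{lem:symbolclassofb_w} places each $b_\w\in S_{1/10}(1)$ with uniformly bounded seminorms, and the size $|\cZ|\leq 2^{T_0}\leq N^{\rho\log 2/(J\log\Lambda)}$ is absorbed using the condition $J>5\rho\log 2/(2\log\Lambda)$ from~\eqref{eq:J_bounds}; similarly $\tilde b$ is controlled via $\Lambda_+^{T_0}\leq N^{1/25}$.

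With these ingredients, the conclusion unfolds in a short chain. Summing the symbol expansion of Lemma~\ref{lem:symbolclassofb_w} over $\w\in\cZ$ yields $B_\cZ=\op_{N,\theta}(b_\cZ)+\cO(N_j^{-\delta_1})_{\Hj\to\Hj}$, and the product rule from Lemma~\ref{lem:torussymbolproperties} then gives
\begin{equation*}
\|B_\cZ u_j\|_{\Hj}^2 \;=\; \langle \op_{N,\theta}(b_\cZ^{\,2})\,u_j,u_j\rangle_{\Hj} + o(1).
\end{equation*}
Because $b_\cZ\in[0,1]$ we have $b_\cZ^{\,2}\leq b_\cZ\leq \tilde b$, so the sharp G\aa rding inequality (Lemma~\ref{lem:torussymbolproperties}\eqref{sharpgarding2}) applied to the nonnegative $\tilde b-b_\cZ^{\,2}$ bounds the right-hand side by $\langle\op_{N,\theta}(\tilde b)u_j,u_j\rangle_{\Hj}+o(1)$. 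Finally, linearity, the Egorov identity $\op_{N,\theta}(b_1\circ A^k)=B_1(k)=M_{N,\theta}^{-k}B_1M_{N,\theta}^{k}$, and the eigenfunction property $M_{N,\theta}u_j=\lambda_j u_j$ with $|\lambda_j|=1$ give $\langle B_1(k)u_j,u_j\rangle_{\Hj}=\langle B_1u_j,u_j\rangle_{\Hj}$ for every $k$, whence
\begin{equation*}
\langle \op_{N,\theta}(\tilde b)u_j,u_j\rangle_{\Hj} \;=\; \tfrac{1}{\alpha}\langle B_1u_j,u_j\rangle_{\Hj} \;\longrightarrow\; 0
\end{equation*}
by Lemma~\ref{lem:B1bound}. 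Combining the chain proves $\|B_\cZ u_j\|_{\Hj}\to 0$.

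\textbf{Main obstacle.} The delicate step is uniform symbol-class control: one must sum $\sim 2^{T_0}$ copies of $b_\w$ in the calculus without swamping the Egorov remainder $N_j^{-7/10}$. This is precisely what the lower bound on $J$ in~\eqref{eq:J_bounds} is designed to guarantee, and each of the applications of Lemmas~\ref{lem:symbolclassofb_w}, \ref{lem:torussymbolproperties}, and \ref{lem:manymultiplication} above closes with a positive power of $N_j$ to spare.
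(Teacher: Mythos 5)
Your proposal is correct and follows essentially the same route as the paper: the second bound is obtained exactly as you do it (Lemma~\ref{lem:cd_bounds} with $d\equiv 1$ and $B_{\cW(T_0)}=I$), and for the first the paper likewise dominates $\1_\cZ$ by $F/\alpha$, identifies $\sum_\w F(\w)B_\w=\tfrac1{T_0}\sum_k B_1(k)$, and concludes via Lemma~\ref{lem:B1bound}. The only cosmetic difference is that the paper routes the comparison through the pre-packaged operator inequality of Lemma~\ref{lem:cd_bounds} applied to $c=\1_\cZ$, $d=F/\alpha$ (and then bounds $\|B_Fu_j\|$ by $\max_k\|B_1(k)u_j\|$), whereas you re-derive that comparison inline at the symbol level with sharp G\aa rding applied to $\tilde b-b_\cZ^2$ — which is precisely how Lemma~\ref{lem:cd_bounds} is proved.
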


\begin{proof}
Set $B_F = \sum_{\w} F(\w) B_\w$, where we recall that
$$F(\w)=\frac{|\{k \in \{0,\ldots, T_0 -1\}: w_k=1\}|}{T_0}.$$
Then note that
$$B_F = \frac{1}{T_0} \sum_{\w \in \cW(T_0)} \left( \sum_{k=0}^{T_0-1} \1_{\{w_k=1\}} \right) B_\w= \frac{1}{T_0} \sum_{k=0}^{T_0-1} \sum_{\substack{\w \in \cW(T_0) \\ w_k=1}}B_\w= \frac{1}{T_0} \sum_{k=0}^{T_0-1} B_1(k).$$ Therefore,
$$\|B_F u_j\|_{\Hj} \leq \max_k \|B_1(k) u_j \|_{\Hj} = \|B_1 u_j\|_{\Hj}= o(1),$$
where the final equality follows from  Lemma~\ref{lem:B1bound}.

We now use our bounds on $B_F$ to bound $B_\cZ$. First, note that for all $\w \in \cW(T_0)$, we have $0 \leq \1_{\cZ}(\w) \leq \frac{F(\w)}{\alpha}$.
Therefore, by Lemma~\ref{lem:cd_bounds}, $\|B_\cZ u_j \|_{\Hj} \leq \frac{1}{\alpha} \|B_Fu_j\|_{\Hj} +CN_j^{-\delta} = o(1)$. 

We now turn our attention to $B_{\cW(T_0)\setminus \cZ}$.
As $\1_{\cW(T_0)\setminus \cZ} \leq \1_{\cW(T_0)}$, using Lemma~\ref{lem:cd_bounds} and the fact that $B_{\cW(T_0)}=1$, we have
$$\|B_{\cW(T_0)\setminus \cZ}\|_{\Hj \rightarrow \Hj} \leq 1 +CN_j^{-\delta},$$
thus completing the proof.
\end{proof}

We finally prove Lemma~\ref{lem:Ydecay}.
\begin{proof}[Proof of Lemma~\ref{lem:Ydecay}]
Using the fact that $B_{\cW(T_0)} =I$, we have that
$$B_\cY = \sum_{k=1}^{2J} M_{N_j, \theta_j}^{-(2J-1)T_0} \left(B_{\cW(T_0)\setminus \cZ} M^{T_0}_{N_j, \theta_j}\right)^{2J-k}B_\cZ M_{N_j, \theta_j}^{(k-1)T_0}.$$

Therefore, applying Lemma~\ref{lem:BZbound} for sufficiently large $N_j$,
\begin{align*}
\|B_\cY u_j\|_{\Hj} &\leq \sum_{k=1}^{2J} \left\| \left(B_{\cW(T_0)\setminus \cZ} M^{T_0}_{N_j, \theta_j}\right)^{2J-k}B_\cZ M_{N_j, \theta_j}^{(k-1)T_0} u_j\right\|_{\Hj}\\
&\leq \sum_{k=1}^{2J} \left\|B_{\cW(T_0)\setminus \cZ} \right\|_{\Hj \rightarrow \Hj}^{2J-k} \left\|B_\cZ M_{N_j, \theta_j}^{(k-1)T_0} u_j\right\|_{\Hj}\\
&\leq 4J\left\|B_\cZ  u_j\right\|_{\Hj}, 
\end{align*}
which decays to 0. 
\end{proof}

\subsection{Decay of $B_\cX$}\label{subsection:X}
Following the approach of~\cite{dyatlov2021semiclassical}, we first bound the size of $\cX$, then  show the decay of $\|B_\w\|_{\cH_j \rightarrow \cH_j}$ for $\w \in \cX$.

We recall the following lemma from~\cite{dyatlov2021semiclassical} and for the reader's convenience, recreate the authors' proof here.  
\begin{lemma}[\cite{dyatlov2021semiclassical}*{Lemma 3.13}]\label{lem:sizeofX}
There is a constant $c>0$ with no $\alpha$-dependence and a constant $C>0$ that may have $\alpha$-dependence such that for $N$ sufficiently large,
$$\# \cX \leq C(\log N)^{2J} N^{\frac{2c \rho \sqrt{\alpha}}{\log \Lambda}}.$$
\end{lemma}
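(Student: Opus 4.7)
The set $\cX$ has a transparent product structure: from~\eqref{eq:def_X}, an element of $\cX$ is precisely a concatenation $\w_{(1)} \cdots \w_{(2J)}$ in which every $\w_{(k)}$ lies in $\cW(T_0) \setminus \cZ$. Hence
$$
\#\cX = \bigl(\#(\cW(T_0) \setminus \cZ)\bigr)^{2J},
$$
and the problem reduces to counting length-$T_0$ binary words containing at most $\alpha T_0$ ones.

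This count equals $\sum_{k=0}^{\lfloor \alpha T_0\rfloor}\binom{T_0}{k}$. Since $\alpha < 1/2$, the binomial coefficients are increasing on this range, so the sum is at most $(T_0+1)\binom{T_0}{\lfloor \alpha T_0\rfloor}$, which in turn is bounded by $(T_0+1)\,2^{H(\alpha)T_0}$ via the classical entropy estimate, where $H(\alpha) = -\alpha \log_2 \alpha - (1-\alpha)\log_2(1-\alpha)$. Plugging into the product formula and using $T_0 = \lrfl{\rho\log N/(J\log\Lambda)}$, which yields $2JT_0 \leq 2\rho\log N/\log\Lambda$ and $T_0+1 \leq C\log N$, I obtain
$$
\#\cX \leq C(\log N)^{2J}\, N^{2 H(\alpha)\rho \log 2/\log \Lambda}.
$$

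The last step is the elementary inequality $H(\alpha)\log 2 \leq c\sqrt{\alpha}$ on $(0,1/2)$ for an absolute constant $c>0$, which follows from the calculus bound $-\alpha\log\alpha \leq (2/e)\sqrt{\alpha}$ together with the trivial $-(1-\alpha)\log(1-\alpha) = O(\alpha) = O(\sqrt{\alpha})$. Substituting into the exponent gives precisely $N^{2c\rho\sqrt{\alpha}/\log\Lambda}$, completing the bound. There is no serious obstacle; the only point requiring care is the convention separating $c$ from $C$, namely that $c$ must be $\alpha$-independent since it sits in the exponent of $N$. This is automatic from the entropy inequality, whose proof uses only the calculus bound on $-\alpha\log\alpha$ with no $\alpha$-dependent constants, while $C$ absorbs the dependence on $J$, $\alpha$, and $\Lambda$ coming from the $(\log N)^{2J}$ prefactor and the $(T_0+1)$ multiplier in the binomial estimate.
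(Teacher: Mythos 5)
Your proposal is correct and follows essentially the same route as the paper: reduce to counting $\cW(T_0)\setminus\cZ$ via the product structure, bound the binomial sum by $(T_0+1)$ times the largest term, apply the entropy estimate (the paper phrases this via Stirling's formula, which gives the identical bound $\exp(-(\alpha\log\alpha+(1-\alpha)\log(1-\alpha))T_0)$), and finish with the elementary inequality $-\alpha\log\alpha-(1-\alpha)\log(1-\alpha)\leq c\sqrt{\alpha}$. Your care about which constants may depend on $\alpha$ matches the paper's bookkeeping exactly.
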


\begin{proof}
As $\# \cX =\#(\cW(T_0) \setminus \cZ)^{2J}$, it suffices to bound $\#(\cW(T_0) \setminus \cZ)$. Recall that $\w=w_0 \cdots w_{T_0-1}$ is in $\cW(T_0) \setminus \cZ$ if and only if $\#\{j \in \{0, \ldots, T_0-1\}: w_j =1\} < \alpha T_0$. Thus, using the fact that $\alpha<1/2$ and $T_0 = \lrfl{\frac{\rho \log N}{J \log \Lambda}}$,
\begin{align}
\# (\cW(T_0) \setminus \cZ) &\leq \sum_{0 \leq l \leq \alpha T_0} \binom{T_0}{l} \leq (\alpha T_0 + 1) \binom{T_0}{\lrceil{\alpha T_0}} \nonumber \\
&\leq C \log N \exp(-(\alpha \log \alpha +(1-\alpha)\log(1-\alpha))T_0) \label{line1}\\
&\leq CN^{\frac{c \rho \sqrt{\alpha}}{J \log \Lambda}}\log N \label{line2},
\end{align}
where Stirling's formula gives~\eqref{line1} and $c>0$ in~\eqref{line2} is chosen so that 
$$-(\alpha \log \alpha +(1-\alpha)\log(1-\alpha)) \leq c \sqrt{\alpha},$$
for all $\alpha \in (0,1/2)$.
\end{proof}

The following is the key lemma for Theorem~\ref{thm:coisotropic} and Theorem~\ref{thm:full_support}, requiring different proofs under the respective conditions for each theorem. In both cases, the proof requires an uncertainty principle. We defer the proof under the conditions of Theorem~\ref{thm:coisotropic} to \S\ref{section:thm1.3} and the proof under the conditions of Theorem~\ref{thm:full_support} to \S\ref{section:thm1.4}.

\begin{lemma}\label{lem:Bw_decay}
Under the conditions of Theorem~\ref{thm:coisotropic} or those of Theorem~\ref{thm:full_support}, there exists $C, \beta >0$ such that for all $\w \in \cX$,
\begin{equation}\label{eq:Bw_bound}
\|B_\w\|_{\cH_j \rightarrow \cH_j} \leq Ch^\beta.
\end{equation}
\end{lemma}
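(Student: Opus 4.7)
The plan is to split each $\w\in\cX\subset\cW(2T_1)$ as $\w=\w_+\w_-$ with $\w_\pm\in\cW(T_1)$, giving the factorization
\[
B_\w \;=\; B_{\w_-}(T_1)\,B_{\w_+}(0) \;=\; M_{N,\theta}^{-T_1}\,B_{\w_-}\,M_{N,\theta}^{T_1}\,B_{\w_+}.
\]
Because $M_{N,\theta}$ is unitary, $\|B_\w\|_{\cH_j\to\cH_j}=\|B_{\w_-}\,\widetilde B_{\w_+}\|_{\cH_j\to\cH_j}$, where $\widetilde B_{\w_+}:=M_{N,\theta}^{T_1}B_{\w_+}M_{N,\theta}^{-T_1}$ has symbol $b_{\w_+}\circ A^{-T_1}$ by exact Egorov~\eqref{eq:exactegorov}. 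Applying Lemma~\ref{lem:symbolclassofb_w} to each factor and Lemma~\ref{lem:manymultiplication}\eqref{manysymbol} to the composition, the bound sought in~\eqref{eq:Bw_bound} reduces, up to a polynomially small error, to the $L^2$-estimate
\[
\bigl\|\op_h(b_+)\,\op_h(b_-)\bigr\|_{L^2(\R^n)\to L^2(\R^n)} \;\le\; Ch^\beta,
\]
where $b_-:=b_{\w_-}$ and $b_+:=b_{\w_+}\circ A^{-T_1}$ lie in $S_{1/10}(1)$.

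Next I would analyze the supports. Fix an open $U_0\subset\bT^{2n}\setminus\supp b_2$, available since $\supp b_1\cap\supp\mu=\emptyset$ and $b_1+b_2=1$. Because $\w\in\cX$ forces every length-$T_0$ subword to contain fewer than $\alpha T_0$ ones (Definition~\eqref{eq:def_X}), at least $(1-\alpha)T_1$ of the $T_1$ factors making up each $b_\pm$ are pullbacks of $b_2$, so
\[
\supp b_+ \subset \bT^{2n}\setminus\!\!\bigcup_{k\in I_+}\! A^{k}U_0, \qquad \supp b_- \subset \bT^{2n}\setminus\!\!\bigcup_{k\in I_-}\! A^{-k}U_0,
\]
for index sets $I_\pm\subset[1,T_1]$ of density at least $1-\alpha$, spread across dyadic scales by the block structure defining $\cX$. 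Under the hyperbolic dynamics of $A$---or of its restriction to the symplectic subspace $W=W_+\oplus W_-$ supplied by non-coisotropicity of $V$ in the setting of \cref{thm:coisotropic}---the pullbacks $A^{\pm k}U_0$ contract along one eigendirection and stretch transversely at rates bounded by $\Lambda^{\pm k}$, so $\supp b_+$ (resp.\ $\supp b_-$) develops gaps in the contracting (resp.\ expanding) direction at every dyadic scale from $\Lambda^{-T_1}\asymp h^{\rho/J}$ up to a constant.

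The final step, where the two theorems diverge, is the uncertainty principle. For \cref{thm:coisotropic}, choose Darboux coordinates sending $W_\pm$ to coordinate planes, so the above places $\supp b_+\subset\{|x'-x'_0|\le h^\delta\}$ and $\supp b_-\subset\{|\xi'-\xi'_0|\le h^\delta\}$ for some $\delta>\tfrac12$; \cref{lem:aandbbounds} then yields the desired $\cO(h^\beta)$. For \cref{thm:full_support}\eqref{item:K}, apply a Darboux straightening sending $E_\pm$ to coordinate planes; the hypothesis that $z+\R v\bmod\Z^{2n}$ escapes $\supp\mu$ for \emph{every} $v\in E_+\cup E_-$ permits a choice of $b_1$ whose cascade of pullbacks blocks every line direction in the projection of $\supp b_\pm$ onto $E_\mp$, making that projection $\nu$-porous on lines from scales $h^\varrho$ to $1$ for some $\nu>0$ and $\varrho>\tfrac12$; then \cref{prop:fractal_uncertainty} completes the proof. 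The principal obstacle, which forces separate treatment in \S\ref{section:thm1.3} and \S\ref{section:thm1.4}, is the verification of line porosity in the latter case: ball porosity would follow from any single escape direction plus hyperbolic stretching, whereas line porosity requires the full strength of the line-escape hypothesis---one must arrange the orbits of $U_0$ under $A^{\pm k}$ to fill the projection \emph{uniformly in direction}, and this is the content of \S\ref{section:thm1.4}.
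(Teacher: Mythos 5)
Your high-level architecture matches the paper's: split $\w=\w_+\w_-$, conjugate by $M_{N,\theta}^{T_1}$ to reduce to $\|\op_h(b_+)\op_h(b_-)\|_{L^2\to L^2}$, and then invoke Lemma~\ref{lem:aandbbounds} in the coisotropic case and Proposition~\ref{prop:fractal_uncertainty} in the full-support case. However, there are two genuine gaps.

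First, the symbol-class claim is wrong. Lemma~\ref{lem:symbolclassofb_w} applies to words of length $T_0$, for which $\Lambda_+^{T_0}\le N^{1/20}$ by~\eqref{eq:J_bounds}. The symbols $b_\pm$ are products of $T_1=JT_0$ factors, and $\Lambda^{T_1}\asymp N^{\rho}$ with $\rho>\tfrac12$, so $b_\pm$ do \emph{not} lie in $S_{1/10}(1)$ and one cannot feed them into Lemma~\ref{lem:manymultiplication} in that class. The paper instead proves $b_\pm\in S_{L_\mp,\rho+\varepsilon,\rho'+\varepsilon}$ (respectively $S_{L_\mp,\rho+\varepsilon,\varepsilon}$ in \S\ref{section:thm1.3}): the derivatives transverse to $L_\mp$ may grow like $N^\rho$, but derivatives along the coisotropic space $L_\mp$ grow only like $\gamma^{T_1}\le N^{\rho'}$ with $\rho+\rho'<1$. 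This anisotropy is not a technicality --- it is what makes every subsequent symbol-calculus step (composition, nonintersecting supports, the cutoffs $\chi_x,\chi_\xi$, the Cotlar--Stein remainders) close up with $o(1)$ errors.

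Second, in the coisotropic case your support localization does not follow from the escape/porosity mechanism you describe. The paper's \S\ref{section:thm1.3} makes no use of gaps created by pullbacks of a set $U_0$ disjoint from $\supp b_2$; the smallness of $\pi_+(\supp(\tilde b_-\psi_k))$ comes from the fact that $b_2$ is supported in an $r$-neighborhood of $\Sigma'$, which is thin in the $W$ directions, \emph{after} a further decomposition $b_1=\sum\tilde b_i$, $b_2=\sum\tilde b_{R+i}$ into pieces of diameter $r\sqrt n$. One then needs (a) a counting argument bounding the number of nonzero refined words by $SN^{2\alpha\rho\log SR/\log\Lambda}$ --- this is the source of the extra constraint~\eqref{eq:alpha_1.3} on $\alpha$, which your sketch has no analogue of --- and (b) a lattice-avoidance step (the choice of $r$ against $D$ in~\eqref{eq:first_r_condition}--\eqref{eq:D_def}) to show the $W_+$-projections of consecutive supports stay within $Pr\sqrt n$ of each other in $\R^{2n}$ rather than merely modulo $\Z^{2n}$, so that applying $A^{-l}$ genuinely contracts them to scale $\Lambda^{-T_1}\asymp h^\rho$. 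Without (a) and (b) the set $\supp b_-$ is only contained in a neighborhood of the full $\Sigma'$ intersected with pullbacks, which is not a single slab $\{|x'-x_0'|\le h^\delta\}$. Finally, in both cases the reduction to the model operators requires the Cotlar--Stein localization by the partition $\sum_k\psi_k^2=1$, which your sketch omits; in the full-support case this is also what ties the porous sets $\tilde\Omega_\pm$ to a fixed base point $z^{(k)}$ before the fractal uncertainty principle can be applied.
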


Assuming Lemma~\ref{lem:Bw_decay}, we can finish the proofs of Theorem~\ref{thm:coisotropic} and Theorem~\ref{thm:full_support}.
Recall that $h=(2\pi N)^{-1}$. As $c$ from Lemma~\ref{lem:sizeofX} does not depend on $\alpha$, we can pick $\alpha$ sufficiently small such that
\begin{equation}\label{eq:alpha_1.4}
(\log N)^{2J} N^{\frac{2c \rho \sqrt{\alpha}}{\log \Lambda}}h^{\beta} \leq Ch^{\frac{\beta}{2}}. 
\end{equation} 
For Theorem~\ref{thm:coisotropic}, we also require that $\alpha$ satisfy~\eqref{eq:alpha_1.3}. This is due to the fact that we will split up $B_\w$ into an $\alpha$-dependent number of parts to gain more control over $\supp b_\w$. We want to bound this number of parts, which gives the constraint~\eqref{eq:alpha_1.3}.

Then, from Lemma~\ref{lem:sizeofX} and Lemma~\ref{lem:Bw_decay}, $$1 \leq \|B_\cY u_j\|_{\Hj} + \|B_\cX u_j\|_{\Hj}  \leq  \|B_\cY u_j\|_{\Hj} + \#\cX \cdot Ch^\beta \leq  \|B_\cY u_j\|_{\Hj} + Ch^{\frac\beta 2}.$$
From Lemma~\ref{lem:Ydecay}, the right-hand side decays to 0 as $h \rightarrow 0$, a contradiction. This immediately concludes the proof of Theorem~\ref{thm:coisotropic}.

Under the assumptions of Theorem~\ref{thm:full_support}, we now know for some $z \in \bT^{2n}$ and some $v \in E_+ \cup E_- \setminus \{0\}$, $z + \R v \subset \supp \mu$. Then by Lemma~\ref{lem:T_v}, $z + \bT_v \subset \supp \mu$. Finally, since $\mu$ is $A$-invariant, $\supp \mu$ must also contain the set $\overline{\{A^l (z + \bT_v) : l \in \Z\}}$, completing the proof of Theorem~\ref{thm:full_support}~\eqref{item:K}.

\section{Proof of Lemma~\ref{lem:Bw_decay} for Theorem~\ref{thm:coisotropic}}\label{section:thm1.3}
Recall that Theorem~\ref{thm:coisotropic} states if $A \in \Sp(2n,\Z)$ is hyperbolic and diagonalizable in $\C$ with a semiclassical measure supported in union of tori with rational tangent space $V$.  From \S\ref{section:groundwork}, it remains to prove Lemma~\ref{lem:Bw_decay}.

\subsection{Proof groundwork}\label{subsection:proof_groundwork}
We fix a hyperbolic $A \in \Sp(2n, \Z)$ that is diagonalizable in $\C$. We use this diagonalizability condition in Lemma~\ref{lem:W}, \eqref{eq:v_u_gelfand}, and \eqref{eq:spectralbounds}.  Let $M \in \cM_A$ be a metaplectic transformation quantizing $A$.
As $A$ is symplectic, its transpose is conjugate to its inverse. Thus, if $\lambda$ is an eigenvalue of $A$, $\lambda^{-1}$ is also an eigenvalue.

Assume that $V \subset \R^{2n}$ is a rational subspace which is invariant under $A$ and not coisotropic. Let $\Sigma$ be the union of finitely many subtori of $\bT^{2n}$ with tangent space $V$. 

Take sequences $N_j \in \N$  and $\theta_j \in \bT^{2n}$ such that $N_j \rightarrow \infty$ and $N_j, \theta_j$ satisfy the quantization condition~\eqref{eq:domainrange}. Set $h=(2\pi N)^{-1}$. Suppose towards a contradiction that $u_j \in \cH_{N_j}(\theta_j)$ are normalized eigenfunctions of $M_{N_j, \theta_j}$ that converge weakly to a semiclassical measure $\mu$ with $\supp \mu \subset \Sigma$.

\subsubsection{Construction of symplectic spaces}
We have assumed that $V$ is not coisotropic, which includes the case where $V$ is a
proper symplectic subspace of~$\mathbb R^{2n}$. As we later apply Darboux's theorem,
the easiest case to examine is when $V$ is symplectic. However, under our more general assumption, we can still reduce to the study of symplectic spaces. We construct symplectic $W$ in the following lemma and eventually use $W^{\perp \sigma}$ in lieu of $V$.

\begin{lemma}\label{lem:W}
There exists a nontrivial, symplectic, rational, $A$-invariant subspace $W\subset\mathbb R^{2n}$ such that $W \cap V = \{0\}$ and $W \subset V^{\perp \sigma}$. 
\end{lemma}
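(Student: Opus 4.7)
The plan is to take $W$ to be an $A$-invariant rational complement of $U := V \cap V^{\perp\sigma}$ inside $V^{\perp\sigma}$ and then verify that $W$ inherits the symplectic structure from $V^{\perp\sigma}$ modulo the isotropic part $U$.

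First I would set up the three relevant $A$-invariant rational subspaces. Since $A \in \Sp(2n,\Q)$, the symplectic complement $V^{\perp\sigma}$ of the rational, $A$-invariant subspace $V$ is again rational and $A$-invariant: rationality follows because $\sigma$ has rational matrix and $V$ has a rational basis, while $A$-invariance follows from $\sigma(Az, Aw) = \sigma(z,w)$. Consequently the intersection $U := V \cap V^{\perp\sigma}$ is rational and $A$-invariant as well, and it is isotropic by construction. The hypothesis that $V$ is not coisotropic means precisely that $V^{\perp\sigma} \not\subset V$, equivalently $U \subsetneq V^{\perp\sigma}$, so any complement to $U$ inside $V^{\perp\sigma}$ will be nonzero.

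Next I would exploit semi-simplicity. Because $A$ is integer-valued and diagonalizable over $\C$, its minimal polynomial over $\Q$ has no repeated factors, so the finite-dimensional $\Q$-algebra $\Q[A]$ is semisimple. Maschke's theorem for this algebra then produces, for every $\Q[A]$-submodule of $\Q^{2n}$, an $\Q[A]$-invariant complement; applying this to $U \subset V^{\perp\sigma}$ yields a rational, $A$-invariant $W$ with $V^{\perp\sigma} = U \oplus W$. Since $U \subsetneq V^{\perp\sigma}$, $W$ is nontrivial. The relation $W \cap V = \{0\}$ is automatic: any $w \in W \cap V$ lies in $W \cap V^{\perp\sigma} \cap V = W \cap U = \{0\}$.

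Finally I would verify that $W$ is symplectic. Suppose $w \in W$ satisfies $\sigma(w, w') = 0$ for every $w' \in W$. Every $v \in V^{\perp\sigma}$ decomposes as $v = u + w'$ with $u \in U \subset V$ and $w' \in W$; since $w \in W \subset V^{\perp\sigma}$ annihilates $V$ under $\sigma$ we have $\sigma(w, u) = 0$, and by hypothesis $\sigma(w, w') = 0$, so $\sigma(w, v) = 0$ for all $v \in V^{\perp\sigma}$. Therefore $w \in (V^{\perp\sigma})^{\perp\sigma} = V$, and combining with $w \in W$ gives $w \in W \cap V = \{0\}$. Thus $\sigma|_W$ is nondegenerate, i.e.\ $W$ is symplectic.

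The only genuinely non-formal step is the appeal to semi-simplicity to produce a rational $A$-invariant complement; once that complement is in hand the symplectic, transversality, and nontriviality assertions are bookkeeping with $V$, $V^{\perp\sigma}$, and $U$. This is also where the hypothesis that $A$ is diagonalizable over $\C$ enters in an essential way, which matches the remark following the statement of Theorem~\ref{thm:coisotropic} about needing a rational $A|_{V^{\perp\sigma}}$-invariant complement of $V \cap V^{\perp\sigma}$.
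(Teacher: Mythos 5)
Your proposal is correct and follows essentially the same route as the paper: both obtain $W$ as a rational $A$-invariant complement of $V\cap V^{\perp\sigma}$ inside $V^{\perp\sigma}$ via semi-simplicity of $A$ over $\Q$, and both verify nondegeneracy of $\sigma|_W$ by showing that a vector in $W\cap W^{\perp\sigma}$ pairs trivially with all of $V^{\perp\sigma}$ and hence lies in $V\cap W=\{0\}$. The only cosmetic difference is that you spell out the (automatic) verification of $W\cap V=\{0\}$, which the paper folds into the statement of the decomposition.
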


\begin{proof}
As $A$ is diagonalizable over $\C$, its minimal polynomial is square-free. Additionally, as $A$ has integer entries, its minimal polynomial has integer coefficients. Therefore, $A$ is semi-simple over $\Q$.
Note that both $V^{\perp\sigma}$ and $V\cap V^{\perp\sigma}$ are rational, $A$-invariant subspaces.
Therefore, there exists a rational, $A$-invariant subspace $W \subset V^{\perp \sigma}$ such that $W\cap V=\{0\}$ and $V^{\perp\sigma} = W \oplus (V\cap V^{\perp\sigma})$.
As $V$ is not coisotropic, $W$ is nontrivial. 

It remains to show that $W$ is symplectic, i.e. $W\cap W^{\perp\sigma}=\{0\}$.
Assume that $w\in W\cap W^{\perp\sigma}$. Then $\sigma(w,v)=0$
for every $v\in W$. Moreover, as $W\subset V^{\perp\sigma}$, we have
$\sigma(w,v)=0$ for every $v\in V$, in particular for every $v\in V\cap V^{\perp\sigma}$.
Since $V^{\perp\sigma}=W\oplus (V\cap V^{\perp\sigma})$, we see that
$\sigma(w,v)=0$ for every $v\in V^{\perp\sigma}$, and thus $w\in V$.
Together with $w\in W$, this implies that $w=0$. 
\end{proof}

At the start of this section, we assumed towards a contradiction that $\supp \mu \subset \Sigma$. We can write $\Sigma = \bigcup_{i=1}^S (x_i + V) \mod \Z^{2n}$ for some set of $x_i \in \R^{2n}$ and $S< \infty$. As $V \subset W^{\perp \sigma}$, we set 
\begin{equation}\label{eq:sigma'def}
\Sigma' \coloneqq  \bigcup_{i=1}^S (x_i + W^{\perp \sigma}) \bmod \Z^{2n}
\end{equation}
and note that
$$\supp \mu \subset \Sigma'.$$

Now set 
\begin{equation}\label{eq:Lambda1}
\Lambda \coloneqq \max \left\{|\lambda| \colon \lambda \text{ is an eigenvalue of } A|_W \right\}.
\end{equation}
In other words, $\Lambda$ is the spectral radius of $A|_{W}$. As $W$ is symplectic and $A$ is hyperbolic, $\Lambda>1$.

Let $W_\pm$ be the real part of the sum of eigenspaces corresponding to eigenvalues $\lambda$ of $A|_{W}$ with $|\lambda| = \Lambda^{\pm 1}$. Similarly, let $W_0$ be the real parts of the sum of eigenspaces corresponding to eigenvalues $\lambda$ of $A|_{W}$ with $\Lambda^{-1} < |\lambda| <\Lambda$.

We see that 
$$\R^{2n} = W^{\perp \sigma} \oplus W_+ \oplus W_- \oplus W_0.$$
Define 
$$d \coloneqq \dim W_+ =\dim W_-.$$ 
Note that if $v, u$ are eigenvectors of $A$ with respective eigenvalues $\lambda, \gamma$ and $\sigma(v, u) \neq 0$, then $\gamma = \lambda^{-1}$.  This follows from the fact that
$$\sigma(v, u) = \sigma(Av, Au) = \lambda \gamma \sigma(v, u).$$
Thus, we can find bases $\{u^\pm_1, \ldots, u^\pm_d\}$ for $W_\pm$ such that 
\begin{equation*}
\sigma(u_i^+, u_j^+)=0, \quad \sigma(u_i^-, u_j^-)=0, \quad \sigma(u_i^-, u_j^+) =\delta_{ij}. 
\end{equation*}

Note that $W_\pm$ are $A$-invariant. 
Since $A$ is diagonalizable, as $k \rightarrow \infty$, 
\begin{equation}\label{eq:v_u_gelfand}
\|A^k \mid_{W_-}\| = \cO\left(\Lambda^{-k}\right), \quad \|A^{-k} \mid_{W_+}\|= \cO\left(\Lambda^{-k}\right).
\end{equation}

Let $F_+$ be the sum of the eigenspaces corresponding to all eigenvalues $\lambda$ of $A$ with $|\lambda|>1$. Similarly, let $F_-$ be the sum of the eigenspaces corresponding to eigenvalues $\lambda$ with $|\lambda|<1$.

Set $L_\pm$ to be the real part of $F_\pm$, i.e., 
$$L_\pm \coloneqq \{v \in F_\pm : \overline{v} = v\}.$$ Note that $L_\pm$ are real, $A$-invariant vector spaces such that $L_+ \oplus L_-=\R^{2n}$. 
We call $L_+$ and $L_-$, respectively, the \emph{unstable} and \emph{stable subspaces}. 

As $A$ is diagonalizable, 
\begin{equation}\label{eq:spectralbounds}
\begin{split}
\|A^{k}\mid_{W}\| &= \cO\left(\Lambda^{k}\right) \quad \text{as } k \rightarrow \infty,\\
\|A^{k}\mid_{L_-}\| &= o(1) \quad \text{as } k \rightarrow \infty.
\end{split}
\end{equation}
Now suppose $u, v \in L_-$. For all $k$, $\sigma(u, v) = \sigma(A^k u, A^k v)$. As $\|A^k u\|, \|A^k v\| \rightarrow 0$, we see $\sigma(u, v) =0$. Therefore, $L_- \subset L_-^{\perp \sigma}$.

As $\dim L_- =n$, we deduce that $L_-$ is Lagrangian. A similar argument shows that $L_+$ is also Lagrangian. Note that $W_\pm\subset L_\pm$.

\subsubsection{Construction of symbols}\label{subsubsection:constructionofsymbols}
Recall that $W \subset \R^{2n}$ is a rational, symplectic, $A$-invariant subspace. $\Sigma'$ is the union of finitely many subtori of $\bT^{2n}$ with tangent space $W^{\perp \sigma}$. Let $\ell_0$ be the smallest distance between two different tori that make up $\Sigma'$. If $\Sigma'$ is only made up of one torus, set $\ell_0 =1$. 

Fix 
\begin{equation}\label{eq:rho_def}
\frac{1}{2} < \rho < 1, \quad \rho' = 0.
\end{equation}

Letting $\bT_{W^{\perp \sigma}}$ denote the subtorus $\{{W^{\perp \sigma}} \bmod (\Z^{2n} \cap {W^{\perp \sigma}})\}$, we examine the torus $\bT^{2n} / \bT_{W^{\perp \sigma}}$. Let $\pi_{W^{\perp \sigma}}: \bT^{2n} \rightarrow \bT^{2n} / \bT_{W^{\perp \sigma}}$ be the natural projection. From~\eqref{eq:sigma'def}, we note that 
$$\Sigma' = \bigcup_{i=1}^S \pi_{W^{\perp \sigma}}^{-1}(\pi_{W^{\perp \sigma}}(x_i)).$$ 
Define $\pi_\pm: \R^{2n} \rightarrow W_\pm$ to be the projection onto $W_\pm$ with kernel $W^{\perp \sigma} \oplus W_\mp \oplus W_0$.   Let 
\begin{equation*}
P \coloneqq \max\{\|\pi_+\|, \|\pi_-\|\}.
\end{equation*}
Recall that $\ell_0$ is the smallest distance between any of the tori that make up $\Sigma'$. Choose $r>0$ such that
\begin{equation}\label{eq:first_r_condition}
\begin{alignedat}{2}
&r < \frac{2 \ell_0}{\sqrt{n}(1+ \|A\|)} \quad \quad &\text{if} \quad (W^{\perp \sigma} + \Z^{2n}) \cap (W_+ \cup W_-) = \{0\},\\
&r< \min \left \{\frac{2 \ell_0}{\sqrt{n}(1+ \|A\|)}, \frac{D}{P \sqrt{n}(1 + \|A\|)}\right\}  \quad \quad  &\text{if} \quad (W^{\perp \sigma} + \Z^{2n}) \cap (W_+ \cup W_-) \neq \{0\},
\end{alignedat}
\end{equation}
where 
\begin{equation}\label{eq:D_def}
D \coloneqq \min \left\{|\Vec{j}|: \Vec{j} \in (W^{\perp \sigma} + \Z^{2n}) \cap (W_+ \cup W_-), \text{ } \vec{j} \neq 0\right\}.
\end{equation}

Let $\phi_1 + \phi_2 =1$ be a partition of unity on $\bT^{2n}/\bT_{W^{\perp \sigma}}$ such that $\phi_1=0$ in a neighborhood of $\pi_{W^{\perp \sigma}}(\Sigma')$ and $\phi_2$ is supported in $\pi_{W^{\perp \sigma}}(\Sigma') + B_{r\sqrt{n}/2} (0)$. We further split this partition of unity into 
$$\phi_1 = \tilde{\phi}_1 + \cdots + \tilde{\phi}_R \quad  \text{and} \quad \phi_2 = \tilde{\phi}_{R+1} + \cdots + \tilde{\phi}_{R+S},$$ such that
$$
\diam(\supp \tilde{\phi}_i) \leq r \sqrt{n} \text{ for all } i \quad \text{and} \quad \supp \tilde{\phi}_{R+i} \subset B_{r\sqrt{n}/2} (\pi_{W^{\perp \sigma}}(x_i)) \text{ for } 1 \leq i \leq S.
$$
Then define the functions on $\bT^{2n}$
$$b_j \coloneqq \pi_{W^{\perp \sigma}}^*\phi_j \quad \text{and} \quad \tilde{b}_j \coloneqq \pi_{W^{\perp \sigma}}^*\tilde{\phi}_j.$$
Intuition for these functions is given by  Figure~\ref{fig:support}.

Note that $\supp b_1 \cap \supp \mu = \emptyset$. We use $b_1, b_2$ to construct $B_\w$ from Lemma~\ref{lem:Bw_decay}. Importantly, $\w \in \cX$, where we recall the definition of $\cX$ from~\eqref{eq:def_X}.

\begin{figure}
\centering
\begin{subfigure}{\textwidth}
\begin{tikzpicture}[scale=.45]  
\begin{scope}[xshift=-1000]
\filldraw[color=blue!25] (0,0) rectangle (8,8);
\filldraw[color=white] (3.5,0) -- (8, 2.25) -- (8, 2.75) -- (2.5, 0) --(3.5,0);
\filldraw[color=white] (.5,0) -- (8, 3.75) -- (8, 4.25) -- (0, .25) -- (0,0) --(.5,0);
\filldraw[color=white] (0, 2.25) -- (8, 6.25) -- (8, 6.75) -- (0,2.75) -- (0, 2.25);
\filldraw[color=white] (0, 3.75) -- (8, 7.75) -- (8,8) -- (7.5, 8) -- (0,4.25) -- (0, 3.75);
\filldraw[color=white] (0, 6.25) -- (3.5,8) --(2.5,8) -- (0,6.75) --(0,6.25);
\draw[very thick] (3,0) -- (8, 2.5);
\draw[very thick] (0,0) -- (8,4);
\draw[very thick] (0,2.5) -- (8, 6.5);
\draw[very thick] (0, 4) -- (8,8);
\draw[very thick] (0, 6.5) -- (3,8);
\draw (0,0) rectangle (8,8);
\draw node[scale=1.2] at (4, -.75) {$\text{supp } b_1$};
\end{scope}
\begin{scope}[xshift=-750]
\filldraw[color=red!25] (3.5,0) -- (8, 2.25) -- (8, 2.75) -- (2.5, 0) --(3.5,0);
\filldraw[color=red!25] (.5,0) -- (8, 3.75) -- (8, 4.25) -- (0, .25) -- (0,0) --(.5,0);
\filldraw[color=red!25] (0, 2.25) -- (8, 6.25) -- (8, 6.75) -- (0,2.75) -- (0, 2.25);
\filldraw[color=red!25] (0, 3.75) -- (8, 7.75) -- (8,8) -- (7.5, 8) -- (0,4.25) -- (0, 3.75);
\filldraw[color=red!25] (0, 6.25) -- (3.5,8) --(2.5,8) -- (0,6.75) --(0,6.25);
\draw[very thick] (3,0) -- (8, 2.5);
\draw[very thick] (0,0) -- (8,4);
\draw[very thick] (0,2.5) -- (8, 6.5);
\draw[very thick] (0, 4) -- (8,8);
\draw[very thick] (0, 6.5) -- (3,8);
\draw (0,0) rectangle (8,8);
\draw node[scale=1.2] at (4, -.75) {$\text{supp } b_2$};
\end{scope}
\begin{scope}[xshift=-500]
\filldraw[color=blue!60] (0,5) -- (6,8) -- (5,8) -- (0, 5.5) --(0,5) ;
\filldraw[color=blue!60] (0,1) -- (8,5) -- (8,5.5) -- (0, 1.5) --(0,1) ;
\filldraw[color=blue!60] (6,0) -- (8, 1) -- (8,1.5) -- (5,0) --(6,0) ;
\draw[very thick] (3,0) -- (8, 2.5);
\draw[very thick] (0,0) -- (8,4);
\draw[very thick] (0,2.5) -- (8, 6.5);
\draw[very thick] (0, 4) -- (8,8);
\draw[very thick] (0, 6.5) -- (3,8);
\draw[thick, <->] (4, 2.95) -- (3.75, 3.45); 
\draw node[scale=.7] at (4, 2.7){$r\sqrt{n}$};
\draw (0,0) rectangle (8,8);
\draw node[scale=1.2] at (4, -.75) {$\text{supp } \tilde{b}_1$};
\end{scope}
\begin{scope}[xshift=-250]
\filldraw[color=red!75] (3.5,0) -- (8, 2.25) -- (8, 2.75) -- (2.5, 0) --(3.5,0);
\filldraw[color=red!75] (0, 2.25) -- (8, 6.25) -- (8, 6.75) -- (0,2.75) -- (0, 2.25);
\filldraw[color=red!75] (0, 6.25) -- (3.5,8) --(2.5,8) -- (0,6.75) --(0,6.25);
\draw[very thick] (3,0) -- (8, 2.5);
\draw[very thick] (0,0) -- (8,4);
\draw[very thick] (0,2.5) -- (8, 6.5);
\draw[very thick] (0, 4) -- (8,8);
\draw[very thick] (0, 6.5) -- (3,8);
\draw[thick, <->] (3.2,4.4) --(3.5, 3.93) ; \draw node[scale=.7] at (3.5, 3.7)  {$r\sqrt{n}$};
\draw (0,0) rectangle (8,8);
\draw node[scale=1.2] at (4, -.75) {$\text{supp } \tilde{b}_{R+1}$};
\end{scope}
\end{tikzpicture}
\caption{}
\end{subfigure}
\begin{subfigure}{\textwidth}
\centering
\begin{tikzpicture}[scale=.45]
\begin{scope}[xshift=-250]
\filldraw[color=red!75] (4,0) -- (8, 2) -- (8, 3) -- (2, 0) --(4,0);
\filldraw[color=red!75] (0, 2) -- (8, 6) -- (8, 7) -- (0,3) -- (0, 2);
\filldraw[color=red!75] (0, 6) -- (4,8) --(2,8) -- (0,7) --(0,6);
\draw[very thick] (3,0) -- (8, 2.5);
\draw[very thick] (0,0) -- (8,4);
\draw[very thick] (0,2.5) -- (8, 6.5);
\draw[very thick] (0, 4) -- (8,8);
\draw[very thick] (0, 6.5) -- (3,8);
\draw (0,0) rectangle (8,8);
\end{scope}
\begin{scope}
\filldraw[color=red!75] (1,0) -- (8, 3.5) -- (8, 4.5) -- (0, .5) -- (0,0) --(1,0);
\filldraw[color=red!75] (0, 3.5) -- (8, 7.5) -- (8,8) -- (7, 8) -- (0,4.5) -- (0, 3.5);
\draw[very thick] (3,0) -- (8, 2.5);
\draw[very thick] (0,0) -- (8,4);
\draw[very thick] (0,2.5) -- (8, 6.5);
\draw[very thick] (0, 4) -- (8,8);
\draw[very thick] (0, 6.5) -- (3,8);
\draw (0,0) rectangle (8,8);
\end{scope}
\end{tikzpicture}
\caption{}
\end{subfigure}
\caption{The colored regions of (A) depict of the support of $b_i$ and $\tilde{b}_i$. Here $\Sigma'$ is given by the black lines with $S=2$. The width of $\supp \tilde{b}_i$ is $r \sqrt{n}$. The illustration (B) demonstrates the action of $A$ on $\supp \tilde{b}_{R+1}$. Specifically, $A (\supp \tilde{b}_{R+1})$ must look like one of the two images.}
\label{fig:support}
\end{figure}

\subsubsection{Preliminary bound on $B_\w$} \label{subsubsection:preliminary_bound}
We write $\w$ as the following concatenation of words:
\begin{equation}\label{eq:w_decomp}
\w = 2_{s_1} 1_{r_1} 2_{s_2} 1_{r_2} \cdots 2_{s_K} 1_{r_K} 2_{s_{K+1}},
\end{equation}
where $1_{r_i} \in \{1\}^{r_i}$ and $2_{s_i} \in \{2\}^{s_i}$ with $r_i \geq 1$, $s_i \geq 0$. As $\w \in \cX$, we know $r_1 + \cdots + r_K \leq 2\alpha T_1$. Therefore,
\begin{equation}\label{eq:K_bound}
K \leq 2\alpha T_1.
\end{equation}

Using~\eqref{eq:w_decomp}, from the decompositions $b_1 =\tilde{b}_1 + \cdots + \tilde{b}_R$ and $b_2 = \tilde{b}_{R+1} + \cdots + \tilde{b}_{R+S}$, we can write the symbol of $B_\w$ as 
$$b_\w = \sum_{\z_m \in \bM} \tilde{b}_{\z_m},$$
for some set of words $\bM \subset \{1, \ldots, R+S\}^{2 T_1}$ and
$$
\z_m = \z_{(1)} \z_{(2)} \cdots \z_{(2K+1)} \quad \text{with} \quad  \z_{(2i-1)} \in \{R+1, \ldots, R+S\}^{s_i} \quad \text{and} \quad \z_{(2i)} \in \{1, \ldots, R\}^{r_i}.
$$

We use the notation $\z_{(2i-1)} \coloneqq \mathfrak{z}_0^{(2i-1)}\cdots \mathfrak{z}_{s_i-1}^{(2i-1)}$ and $\z_{(2i)} = \mathfrak{z}_0^{(2i)}\cdots \mathfrak{z}_{r_i-1}^{(2i)}$ and set
$$
\tilde{b}_{\z_{(2i-1)}} =\prod_{k=0}^{s_i-1} \tilde{b}_{\mathfrak{z}_k^{(2i-1)}} \circ A^k.$$

Split $\bM$ into two disjoint sets: $\bM = \bM_1 \cup \bM_2$, where
$$\bM_1 \coloneqq \{ \z_m \in \bM: \tilde{b}_{\z_{(2i-1)}}=0 \text{ for some } 1 \leq i \leq K+1\},$$
$$\bM_2 \coloneqq \{ \z_m \in \bM: \tilde{b}_{\z_{(2i-1)}} \neq 0 \text{ for all } 1 \leq i \leq K+1\}.$$

Using 
$$\tilde{b}_{\z_m} = \prod_{k=0}^{2T_1-1} \tilde{b}_{\mathfrak{z}_k} \circ A^k,$$
we set 
$$\tilde{B}_{\z_m} \coloneqq  \tilde{B}_{\mathfrak{z}_{2T_1-1}}(2T_1-1)\cdots \tilde{B}_{\mathfrak{z}_{1}}(1) \tilde{B}_{\mathfrak{z}_{0}}(0),$$
where $\tilde{B}_i = \op_{N_j, \theta_j}(\tilde{b}_i)$. Thus, $B_\w = \sum_{\z_m \in \bM_1} \tilde{B}_{\z_m}  + \sum_{\z_m \in \bM_2} \tilde{B}_{\z_m}$. To establish a preliminary bound for $\|B_\w\|$, we estimate each sum separately. We first focus on $\sum_{\z_m \in \bM_2} \tilde{B}_{\z_m}$.

\begin{lemma}\label{lem:M_2_bound}
We have the following bound:
\begin{equation*}
\left\|\sum_{\z_m \in \bM_2} \tilde{B}_{\z_m}\right\| \leq S N^{\frac{2 \alpha \rho \log SR}{\log \Lambda}} \max_{\z_m \in \bM_2} \|B_{\z_m}\|.
\end{equation*}
\end{lemma}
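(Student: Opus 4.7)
The plan is to reduce the estimate to a cardinality bound on $\bM_2$. By the triangle inequality and $\bM_2 \subset \bM$,
$$\left\|\sum_{\z_m \in \bM_2} \tilde{B}_{\z_m}\right\| \leq |\bM_2| \cdot \max_{\z_m \in \bM_2} \|\tilde{B}_{\z_m}\| \leq |\bM_2| \cdot \max_{\z_m \in \bM} \|\tilde{B}_{\z_m}\|,$$
so it suffices to prove $|\bM_2| \leq S \cdot N^{\frac{2\alpha\rho\log SR}{\log\Lambda}}$. The crucial step is a geometric rigidity statement for the blocks of $\z_m$ coming from $\tilde b_2$-symbols.

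The claim I would establish is: for any $\z_m = \z_{(1)}\cdots\z_{(2K+1)} \in \bM_2$ and any $1 \leq i \leq K+1$, the block $\z_{(2i-1)} \in \{R+1,\ldots,R+S\}^{s_i}$ is uniquely determined by its first entry $z_0^{(2i-1)}$. I would prove this by induction on the position $k$ inside the block. Since $\supp \tilde{b}_{R+j}$ lies in an $(r\sqrt{n}/2)$-neighborhood of the torus $x_j + W^{\perp\sigma} \bmod \Z^{2n}$, nonvanishing of $\tilde{b}_{\z_{(2i-1)}}$ at some $z$ forces $A^k z$ to lie within distance $r\sqrt{n}/2$ of $x_{z_k^{(2i-1)}-R} + W^{\perp\sigma}$ for every $0 \leq k \leq s_i-1$. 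Applying $A$ and using that $W^{\perp\sigma}$ is $A$-invariant, $A^{k+1}z$ is within $\|A\|\,r\sqrt{n}/2$ of $A x_{z_k^{(2i-1)}-R} + W^{\perp\sigma}$, while also within $r\sqrt{n}/2$ of $x_{z_{k+1}^{(2i-1)}-R} + W^{\perp\sigma}$. By the triangle inequality, the latter torus is at distance $\leq r\sqrt{n}(1+\|A\|)/2 < \ell_0$ from $A x_{z_k^{(2i-1)}-R} + W^{\perp\sigma}$ (using \eqref{eq:first_r_condition}); since the tori composing $\Sigma'$ are pairwise at distance $\geq \ell_0$, $z_{k+1}^{(2i-1)}$ is uniquely determined.

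Granting the claim, each of the $K+1$ blocks $\z_{(2i-1)}$ admits at most $S$ choices and each of the $K$ blocks $\z_{(2i)}$ admits at most $R^{r_i}$ choices, so
$$|\bM_2| \leq S^{K+1} \cdot R^{r_1 + \cdots + r_K}.$$
Combining $r_1 + \cdots + r_K \leq 2\alpha T_1$ (which holds since $\w \in \cX$ by \eqref{eq:def_X}) and $K \leq 2\alpha T_1$ from \eqref{eq:K_bound} gives $|\bM_2| \leq S \cdot (SR)^{2\alpha T_1}$. The upper bound $2T_1 \leq 2\rho\log N/\log\Lambda$ coming from \eqref{eq:original_T_Def} then converts this into the desired $S \cdot N^{\frac{2\alpha\rho\log SR}{\log\Lambda}}$.

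The main obstacle is the propagation step in the key claim: the scale $r$ must be small enough that the $A$-image of any torus of $\Sigma'$ is within distance $\ell_0$ of at most one other torus of $\Sigma'$, and this requires precisely the constraint on $r$ in \eqref{eq:first_r_condition} together with the $A$-invariance of $W^{\perp\sigma}$. The choice of $W^{\perp\sigma}$ (rather than $V$) in \eqref{eq:sigma'def} was arranged exactly so that this invariance is available and the inductive step closes; once the rigidity is established, the counting argument is mechanical.
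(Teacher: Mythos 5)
Your proposal is correct and follows essentially the same route as the paper: reduce to the cardinality bound $\#\bM_2 \leq S(SR)^{2\alpha T_1}$, establish that each block $\z_{(2i-1)}$ is rigid (determined by its first entry) because the choice of $r$ in \eqref{eq:first_r_condition} forces the $A$-image of the $(r\sqrt{n}/2)$-neighborhood of one torus of $\Sigma'$ to meet the neighborhood of only one torus, and then convert $T_1 \leq \rho\log N/\log\Lambda$ into the stated power of $N$. The only point worth noting is that your uniqueness step, like the paper's definition of $j_k$, implicitly uses that $A$ maps each torus of $\Sigma'$ onto another torus of $\Sigma'$ (so that "within distance $<\ell_0$ of $A x_{z_k}+W^{\perp\sigma}$" pins down a unique index); this is the same level of detail as the paper's own argument.
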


\begin{proof}
It suffices to show $\# \bM_2 \leq S N^{\frac{2 \alpha \rho \log SR}{\log \Lambda}}$. We fix $1 \leq i \leq K+1$ and examine $\tilde{b}_{\z_{(2i-1)}}$.
Note that for each $1 \leq k \leq S$, there exists a unique $1 \leq j_k \leq S$ such that $A(\pi_{W^{\perp \sigma}} (x_k))=\pi_{W^{\perp \sigma}}(x_{j_k})$. In other words, $A(\supp \tilde{\phi}_{R+k}) \cap \supp \tilde{\phi}_{R+j_k} \neq \emptyset$. See Figure~\ref{fig:support}.
We know
$$A(\supp \tilde{\phi}_{R+k'}) \subset A\left(B_{\frac{r\sqrt{n}}{2}}(\pi_{W^{\perp \sigma}}(x_{k'}))\right) \subset B_{\frac{\|A\| r\sqrt{n}}{2}}(\pi_{W^{\perp \sigma}}(x_{j_{k'}})).$$
By our choice of $r$, if $k \neq k'$, $A(\supp \tilde{\phi}_{R+k'}) \cap \supp \tilde{\phi}_{R+j_k} = \emptyset$.
Thus, for each $1 \leq k \leq S$, there exists exactly one $1 \leq j_k \leq S$ such that $\supp \tilde{b}_{R+ j_k}$ intersects $\supp \tilde{\phi}_{R+j_k}(\supp \tilde{b}_{R+k})$. Therefore, $\tilde{b}_{\z_{(2i-1)}}$ is nonzero only if 
$$\mathfrak{z}_1^{(2i-1)} = j_{\mathfrak{z}_0^{(2i-1)}}, \quad \mathfrak{z}_2^{(2i-1)} = j_{\mathfrak{z}_1^{(2i-1)}}, \quad \ldots, \quad \mathfrak{z}_{s_i-1}^{(2i-1)} = j_{\mathfrak{z}_{s_i-2}^{(2i-1)}}.$$ 
Consequently, for each choice of $\mathfrak{z}_0^{(2i-1)} \in \{R+1, \ldots, R+S\}$, there is only one way to pick $\mathfrak{z}_1^{(2i-1)}, \ldots, \mathfrak{z}_{s_i-1}^{(2i-1)}$ such that $\tilde{b}_{\z_{(2i-1)}}$ is nonzero. Then, there are $S$ nonzero choices of each $\tilde{b}_{\z_{(2i-1)}}$. Therefore, $\# \bM_2 \leq S^{K+1} R^{2 \alpha T_1}$. By~\eqref{eq:original_T_Def} and~\eqref{eq:K_bound},
\begin{equation*}
S^{K+1} R^{2 \alpha T_1} \leq S(SR)^{2\alpha T_1} \leq S N^{\frac{2 \alpha \rho \log SR}{\log \Lambda}},
\end{equation*}
which completes the proof.
\end{proof}

We now turn our attention to $\sum_{\z_m \in \bM_1} \tilde{B}_{\z_m}$.
\begin{lemma} \label{lem:zero_symbol}
We have the following bound:
$$\left\|\sum_{\z_m \in \bM_1} \tilde{B}_{\z_m}\right\|= \cO(N_j^{-\infty}).$$
\end{lemma}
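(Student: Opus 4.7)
The plan is to show the pointwise decay $\|\tilde{B}_{\z_m}\|_{\Hj\to\Hj}=\cO(N_j^{-\infty})$ for each $\z_m\in\bM_1$ separately, and then sum using the crude count $\#\bM_1\leq (R+S)^{2T_1}=N_j^{\cO(1)}$ arising from $T_1=\cO(\log N_j)$. The mechanism is the non-intersecting support property~\eqref{eq:nonintersecting}: I will isolate a pair of \emph{consecutive} operators inside $\tilde{B}_{\z_m}$ whose symbols have disjoint supports, and then show that the two outer factors remain of bounded norm.

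Extracting the pair is the combinatorial heart of the argument. For $\z_m\in\bM_1$ fix $i_0$ with $\tilde{b}_{\z_{(2i_0-1)}}=0$. Following the uniqueness analysis in the proof of Lemma~\ref{lem:M_2_bound}, the map $k\mapsto j_k$ determined by $A(\pi_{W^{\perp\sigma}}(x_k))=\pi_{W^{\perp\sigma}}(x_{j_k})$ is a bijection of $\{1,\ldots,S\}$. If the sequence $z_0^{(2i_0-1)},\ldots,z_{s_{i_0}-1}^{(2i_0-1)}$ obeyed the rule $z_k^{(2i_0-1)}=R+j_{z_{k-1}^{(2i_0-1)}-R}$ at every step, then the point $\pi_{W^{\perp\sigma}}(x_{z_0^{(2i_0-1)}-R})$ would lie in $\bigcap_k A^{-k}(\supp\tilde{\phi}_{z_k^{(2i_0-1)}})$, contradicting $\tilde{b}_{\z_{(2i_0-1)}}=0$. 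Hence some consecutive pair $(z_{k-1}^{(2i_0-1)},z_k^{(2i_0-1)})$ must violate the rule, and the smallness of $r$ in~\eqref{eq:first_r_condition} then forces
$$
\supp\tilde{b}_{z_{k-1}^{(2i_0-1)}}\cap\supp\bigl(\tilde{b}_{z_k^{(2i_0-1)}}\circ A\bigr)=\emptyset.
$$

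Let $\ell,\ell+1$ be the absolute positions of this pair in $\z_m$, and factor
$$
\tilde{B}_{\z_m}=\tilde{B}_{\mathrm{right}}\cdot\tilde{B}_{z_{\ell+1}}(\ell+1)\tilde{B}_{z_\ell}(\ell)\cdot\tilde{B}_{\mathrm{left}},
$$
where $\tilde{B}_{\mathrm{left}}$ and $\tilde{B}_{\mathrm{right}}$ collect the operators at positions $0,\ldots,\ell-1$ and $\ell+2,\ldots,2T_1-1$ respectively. Conjugating the middle factor by $M_{N_j,\theta_j}^\ell$ and applying the exact Egorov theorem~\eqref{eq:exactegorov} identifies its norm with
$$
\bigl\|\op_{N_j,\theta_j}\bigl(\tilde{b}_{z_{\ell+1}}\circ A\bigr)\,\op_{N_j,\theta_j}\bigl(\tilde{b}_{z_\ell}\bigr)\bigr\|_{\Hj\to\Hj}.
$$
Both symbols lie in $S(1)=S_{\R^{2n},0,0}(\bT^{2n})$ with seminorms depending only on $A$ and the partition of unity, and their supports are disjoint by the previous step, so~\eqref{eq:nonintersecting} yields $\cO(N_j^{-\infty})$. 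For the outer factors, Lemma~\ref{lem:b_bounds} applied to each $\tilde{b}_{z_i}\in S(1)$ gives $\|\tilde{B}_{z_i}(i)\|_{\Hj\to\Hj}\leq 1+CN_j^{-1/2}$, and compounding over the $\cO(\log N_j)$ factors keeps both $\|\tilde{B}_{\mathrm{left}}\|_{\Hj\to\Hj}$ and $\|\tilde{B}_{\mathrm{right}}\|_{\Hj\to\Hj}$ at $\cO(1)$. Combining and then summing over $\bM_1$ completes the proof.

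The main obstacle is the first step: turning the a priori global equality $\tilde{b}_{\z_{(2i_0-1)}}=0$ into a \emph{local} consecutive disjointness statement. It rests on the bijectivity of $k\mapsto j_k$ together with the particular choice of $r$ in~\eqref{eq:first_r_condition}, which guarantees that distinct balls around the $\pi_{W^{\perp\sigma}}(x_i)$ cannot interact under $A$. Once this structural input is in hand the rest is routine symbol calculus.
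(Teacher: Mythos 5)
Your proposal is correct and follows essentially the same route as the paper: extract a consecutive pair with disjoint supports from the vanishing of some $\tilde{b}_{\z_{(2i_0-1)}}$, apply the nonintersecting support property~\eqref{eq:nonintersecting} to that pair, bound the remaining $\cO(\log N_j)$ factors uniformly, and absorb the crude count $\#\bM_1\leq (R+S)^{2T_1}=N_j^{\cO(1)}$ into $\cO(N_j^{-\infty})$. Your version is if anything slightly more careful than the paper's in two spots — you conjugate before invoking~\eqref{eq:nonintersecting} so that only the fixed symbols $\tilde{b}_{z_\ell}$ and $\tilde{b}_{z_{\ell+1}}\circ A$ appear, and you justify explicitly (via the orbit of the centers $\pi_{W^{\perp\sigma}}(x_i)$) why vanishing of the block symbol forces a violation of the transition rule — but these are refinements of the same argument, not a different one.
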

\begin{proof}
Choose $\z_m \in \bM_1$. From the proof of Lemma~\ref{lem:M_2_bound}, we see that as $\z_m \in \bM_1$, there exists some $0 \leq k < 2T_1 -1$ such that
$$\supp (\tilde{b}_{\mathfrak{z}_k} \circ A^k) \cap \supp (\tilde{b}_{\mathfrak{z}_{k+1}} \circ A^{k+1})  = \emptyset.$$
Therefore, by the nonintersecting support property~\eqref{eq:nonintersecting}, 
$$\|\tilde{B}_{\mathfrak{z}_{k+1}}(k+1) \tilde{B}_{\mathfrak{z}_{k}}(k)\|_{\Hj \rightarrow \Hj}=\|\tilde{B}_{\mathfrak{z}_{k+1}}(1) \tilde{B}_{\mathfrak{z}_{k}}(0)\|_{\Hj \rightarrow \Hj} = \cO(N_j^{-\infty}).$$

From~\cite{dyatlov2021semiclassical}*{(2.45)}, viewing $a \in C^\infty(\bT^{2n})$ as a $\Z^{2n}$-periodic function in $C^\infty(\R^{2n})$, we have  
\begin{equation}\label{eq:change_norms}
\max_{\theta \in \bT^{2n}} \|\op_{N, \theta}(a) \|_{\cH_N(\theta) \rightarrow \cH_N(\theta)} = \|\op_h(a)\|_{L^2(\R^n) \rightarrow L^2(\R^n)},
\end{equation}
where $h=(2 \pi N)^{-1}$ and $\op_h$ is the semiclassical Weyl quantization, defined in~\eqref{eq:weylquant}.
Thus, using the fact that $M$ is unitary,
$$
\|\tilde{B}_{\mathfrak{z}_{i}}(i)\|_{\Hj \rightarrow \Hj} = \|\tilde{B}_{\mathfrak{z}_{i}}\|_{\Hj \rightarrow \Hj}  \leq \|\op_{h_j}(\tilde{b}_{\mathfrak{z}_{i}})\|_{L^2 \rightarrow L^2}.
$$

Therefore,
\begin{align*}
\|\tilde{B}_{\z_m}\|_{\Hj \rightarrow \Hj} &\leq \|\tilde{B}_{\mathfrak{z}_{2T_1-1}}(2T_1-1)\|_{\Hj \rightarrow \Hj} \cdots \|\tilde{B}_{\mathfrak{z}_{k+1}}(k+1) \tilde{B}_{\mathfrak{z}_{k}}(k)\|_{\Hj \rightarrow \Hj} \cdots\|\tilde{B}_{\mathfrak{z}_{0}}(0)\|_{\Hj \rightarrow \Hj}\\
&\leq \left(\max_{1 \leq i \leq R+S} \|\op_{h_j}(\tilde{b}_i)\|_{L^2 \rightarrow L^2} \right)^{2T_1-2} \cdot \cO(N_j^{-\infty}).
\end{align*}
By Lemma~\ref{lem:symbolproperties}~\eqref{bounded}, we know that $\max_i \|\op_{h_j}(\tilde{b}_i)\|_{L^2 \rightarrow L^2}$ is bounded by a constant. Thus, 
$$\left\|\sum_{\z_m \in \bM_1} \tilde{B}_{\z_m}\right\| \leq \#\bM_1  \cdot C^{2T_1-2}  \cdot \cO(N_j^{-\infty}) \leq  (C (R+S))^{2T_1} \cdot \cO(N_j^{-\infty}).$$
As $\frac{\rho \log N_j}{\log \Lambda} + J \geq T_1$, there exists some $m \in \N$ such that $(C (R+S))^{2T_1} = o(N_j^m)$. 
We conclude $\|\sum_{\z_m \in \bM_1} \tilde{B}_{\z_m}\|= \cO(N_j^{-\infty})$. 
\end{proof}

By Lemma~\ref{lem:M_2_bound} and Lemma~\ref{lem:zero_symbol}, we have 
\begin{equation}\label{eq:preliminary_Bw_bound}
\|B_\w\|_{\Hj \rightarrow \Hj} \leq  S N^{\frac{2 \alpha \rho \log SR}{\log \Lambda}} \max_{\z_m \in \bM_2} \|\tilde{B}_{\z_m}\|_{\Hj \rightarrow \Hj} + \cO(N_j^{-\infty}).
\end{equation}

\subsection{Proof of Lemma~\ref{lem:Bw_decay}}\label{subsection:bwdecaycoisotropic} 
As mentioned above, to prove Theorem~\ref{thm:coisotropic}, it remains to show Lemma~\ref{lem:Bw_decay}, that there exists $C$, $\beta>0$ such that for all $\w \in \cX \subset \cW(2T_1)$,
\begin{equation}\label{eq:end_bound}
\|B_\w \|_{\Hj \rightarrow \cH_j} \leq Ch^\beta.
\end{equation}

1. Suppose that $\w \in \cX$. From~\eqref{eq:preliminary_Bw_bound},
to show~\eqref{eq:end_bound}, it suffices to find  $C,\beta>0$ such that for each $\z_m$, $\|\tilde{B}_{\z_m}\|_{\Hj \rightarrow \Hj} \leq Ch^{2\beta}$. Indeed, as $h = (2 \pi N)^{-1}$, we can pick $\alpha$ sufficiently small so that in addition to~\eqref{eq:alpha_1.4}, $\alpha$ also satisfies
\begin{equation}\label{eq:alpha_1.3}
N^{\frac{2 \alpha \rho \log SR}{\log \Lambda}} \leq h^{-\beta}.
\end{equation}
Then~\eqref{eq:end_bound} follows from~\eqref{eq:preliminary_Bw_bound}.

2. We now rewrite $\tilde{B}_{\z_m}$ as the product of the quantizations of two symbols.

Write $\z_m = \z_+ \z_-$, where $\z_\pm$ are words of length $T_1$. 
Relabel $\z_+ = \mathfrak{z}_{T_1}^+ \cdots \mathfrak{z}_1^+$, $\z_- = \mathfrak{z}_{0}^- \cdots \mathfrak{z}_{T_1-1}^-$. 
We see that
\begin{equation}\label{eq:expansion}
\begin{aligned}
\tilde{B}_{\z_m} &= \tilde{B}_{\mathfrak{z}^-_{T_1 -1}}(2T_1-1) \cdots \tilde{B}_{\mathfrak{z}^-_0}(T_1)\tilde{B}_{\mathfrak{z}_1^+}(T_1 -1) \cdots \tilde{B}_{\mathfrak{z}_{T_1-1}^+}(0)\\& = M_{N_j, \theta}^{-T_1} \tilde{B}_{\z_-} \tilde{B}_{\z_+}(-T_1)M_{N_j, \theta}^{T_1}.
\end{aligned}
\end{equation}

Now set $$\tilde{b}_+ =\prod_{k=1}^{T_1} \tilde{b}_{\mathfrak{z}_k^+} \circ A^{-k} \quad \text{and} \quad \tilde{b}_- =\prod_{k=0}^{T_1-1} \tilde{b}_{\mathfrak{z}_k^-} \circ A^{k}.$$
To work  with $\tilde{b}_\pm$, we first show they lie in an appropriate symbol class. We adapt~\cite{dyatlov2021semiclassical}*{Lemma~3.7}.

\begin{lemma}\label{lem:bpm_symbol_class}
For all $\varepsilon>0$, $\tilde{b}_\pm \in S_{L_{\pm}, \rho + \varepsilon, \varepsilon}$ with bounds on the semi-norms that do not depend on $N$ or $\z_m$. Moreover, 
$$\tilde{B}_{\z_+}(-T_1)= \op_{N_j, \theta_j}(\tilde{b}_+) + \cO\left(N_j^{\rho +\varepsilon-1}\right)_{\Hj \rightarrow \Hj},$$
$$\tilde{B}_{\z_-}= \op_{N_j, \theta_j}(\tilde{b}_-) + \cO\left(N_j^{\rho +\varepsilon-1}\right)_{\Hj \rightarrow \Hj},$$
where the constants in $\cO(\cdot)$ are uniform in $N$ and $\z_m$.
\end{lemma}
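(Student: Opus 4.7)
The plan is to follow the strategy of~\cite{dyatlov2021semiclassical}*{Lemma 3.7}, exploiting the crucial observation that each cutoff $\tilde{b}_i = \pi_{W^{\perp \sigma}}^* \tilde{\phi}_i$ is constant along $W^{\perp \sigma}$. This forces only the $W$-component of directional derivatives to contribute, reducing the effective dynamics from $A$ on $\R^{2n}$ to $A|_W$, whose spectral radius is $\Lambda$ rather than the potentially larger $\Lambda_+$.

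First I would handle the derivative analysis of a single factor $f_k \coloneqq \tilde{b}_{z_k^+} \circ A^{-k}$. By the chain rule, $X f_k(z) = d\tilde{b}_{z_k^+}(A^{-k}z) \cdot A^{-k} X$. Decomposing $X = X_W + X_{W^\perp}$ according to the symplectic splitting $\R^{2n} = W \oplus W^{\perp \sigma}$ and using the $A$-invariance of both summands, $A^{-k} X_{W^\perp}$ stays in $W^{\perp \sigma}$ and is annihilated by $d \tilde{b}_{z_k^+}$. Gelfand's formula applied to $A|_W$ then gives $|X f_k| \leq C_\varepsilon \Lambda^{k(1+\varepsilon)} \|X\|$. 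For a tangent vector $Y \in L_+$, the projection $\pi_W Y$ lies in $L_+ \cap W = W_+$ (since $\pi_W$ commutes with $A$, so $\|A^{-k}\pi_W Y\| = \|\pi_W A^{-k} Y\| \to 0$ characterizes membership in $L_+$), and contraction of $A^{-1}$ on the unstable subspace $W_+$ of $A|_W$ yields $|Y f_k| \leq C \Lambda^{-k}$. Higher derivatives iterate these estimates in an obvious way.

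Next I would combine the single-factor bounds via Leibniz's rule on $\tilde{b}_+ = \prod_{k=1}^{T_1} f_k$. Distributing $k$ non-tangent and $m$ tangent derivatives across the $T_1$ factors, the worst case places every non-tangent derivative on the largest index, contributing $\Lambda^{T_1 k (1+\varepsilon)}$; substituting $T_1 \leq \rho \log N / \log \Lambda$ from~\eqref{eq:original_T_Def}, this equals $N^{\rho k (1+\varepsilon)}$. Tangent contributions give bounded geometric sums, while the multinomial prefactors polynomial in $T_1$ are absorbed into $N^{m \varepsilon}$. Therefore $|X^k Y^m \tilde{b}_+| \leq C_{k,m,\varepsilon} N^{(\rho+\varepsilon')k + \varepsilon' m}$ for any $\varepsilon' > 0$, with seminorm bounds independent of $N$ and $\z_m$. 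The argument for $\tilde{b}_-$ is symmetric, relying on contraction of $A$ on $W_-$.

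For the operator identity, I would apply the conjugation rule $M^{T_1} L(k) M^{-T_1} = L(k - T_1)$ to rewrite $\tilde{B}_{\z_+}(-T_1) = \tilde{B}_{z_1^+}(-1) \cdots \tilde{B}_{z_{T_1}^+}(-T_1)$. By Egorov's theorem~\eqref{eq:exactegorov}, each factor equals $\op_{N,\theta}(\tilde{b}_{z_k^+} \circ A^{-k})$, and the analysis above gives that these individual symbols sit in $S_{L_+, \rho+\varepsilon, \varepsilon}$ with uniform seminorm bounds across $k \leq T_1$. Lemma~\ref{lem:manymultiplication}~\eqref{manysymbol}, applied with $M = T_1 = \cO(\log N)$ factors, then identifies the operator product with $\op_{N,\theta}(\tilde{b}_+)$ up to an error $\cO(N^{\rho+\varepsilon-1})_{\Hj \to \Hj}$; the case of $\tilde{B}_{\z_-}$ is entirely symmetric. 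The hard part is ensuring that the effective derivative rate is $\Lambda$, not $\Lambda_+$: without the invariance of $\tilde{b}_i$ along $W^{\perp \sigma}$, one would instead obtain $\Lambda_+^{T_1 k} = N^{\rho k \log \Lambda_+ / \log \Lambda}$, which generically exceeds the $S_{L_+, \rho+\varepsilon, \varepsilon}$ threshold $N^{(\rho+\varepsilon) k}$ whenever $\Lambda < \Lambda_+$.
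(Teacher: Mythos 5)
Your proposal follows essentially the same route as the paper: in both, the key observation is that $\tilde b_i = \pi_{W^{\perp\sigma}}^*\tilde\phi_i$ is annihilated by derivatives along $W^{\perp\sigma}$, so only $A^{\pm k}|_W$ enters the chain-rule estimate and the growth rate is $\Lambda$ rather than $\Lambda_+$; the paper then delegates both the product symbol-class membership and the operator identity to Lemma~\ref{lem:manymultiplication}, whereas you partially redo the Leibniz bookkeeping by hand before invoking that lemma for the operator statement. One small inaccuracy: $L_+\cap W = W_+\oplus(W_0\cap L_+)$, which can strictly contain $W_+$, so the tangent derivative need not decay at the exact rate $\Lambda^{-k}$; this is harmless because the class $S_{L_+,\rho+\varepsilon,\varepsilon}$ only requires tangent derivatives to be $\cO(h^{-\varepsilon})$, and uniform boundedness holds on all of $L_\pm\cap W$ by diagonalizability, exactly as in the paper's use of $\|A^k|_{L_-}\| = o(1)$.
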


\begin{proof}
We give the proof for $\tilde{b}_-$; the proof for $\tilde{b}_+$ follows similarly. By Lemma~\ref{lem:manymultiplication}, it suffices to show for $i=1, \ldots, R+S$ and $0 \leq k \leq T_1-1$, each  $S_{L_-, \rho, 0}$-seminorm of $\tilde{b}_i \circ A^k$ is bounded uniformly in $k$ and $N$.  

Each $\tilde{b}_i$ was constructed to be invariant along ${W^{\perp \sigma}}$. Therefore, if $X$ is a constant vector field on $\R^n$ that is tangent to ${W^{\perp \sigma}}$, then $X \tilde{b}_i =0$. As ${W^{\perp \sigma}}$ is symplectic, it is enough to bound $X_1 \cdots X_l Y_1 \cdots Y_m (\tilde{b}_i \circ A^k)$, where $X_j$, $Y_j$ are constant vector fields on $\R^{2n}$ tangent to $W$ with  $Y_j$ also tangent to $L_-$.

Using the fact that $A$ is a linear map, we calculate 
$$X_1\cdots X_l Y_1 \cdots Y_m \left(\tilde{b}_i \circ A^k\right)(z) = D^{l+m}\tilde{b}_i\left(A^k z\right) \cdot \left(A^k X_1, \ldots, A^k X_l, A^k Y_1, \ldots, A^k Y_m\right),$$ where  $D^{l+m}\tilde{b}_i$ denotes the $(l + m)$-th derivative of $\tilde{b}_i$, an $(l + m)$-linear form uniformly bounded in $N$. Therefore, using the bounds on powers of $A$ given by~\eqref{eq:spectralbounds},
\begin{align*}
\sup_{\R^{2n}} \left|X_1\cdots X_l Y_1 \cdots Y_m \left(\tilde{b}_i \circ A^k\right)\right| &\leq C \left|A^kX_1\right| \cdots \left|A^k X_l\right|\left|A^k Y_1\right|\cdots \left|A^k Y_m\right|\\
&\leq C \Lambda^{kl}\leq C \Lambda^{T_1 l} \leq CN^{l\rho }.
\end{align*}
The final inequality follows from~\eqref{eq:original_T_Def}, the definition of $T_1$.
\end{proof}

From~\eqref{eq:expansion} and Lemma~\ref{lem:bpm_symbol_class}, 
\begin{equation*}
\left\|\tilde{B}_{\z_m} \right\|_{\Hj \rightarrow \Hj} \leq \left\|\op_{N_j, \theta_j}(\tilde{b}_-)\op_{N_j, \theta_j}(\tilde{b}_+)\right\|_{\Hj \rightarrow \Hj} + CN_j^{\rho +\varepsilon-1}.
\end{equation*}

3. We now reduce the estimate of $\tilde{B}_{\z_m}$ to an estimate on operators on $\R^n$ using the Cotlar--Stein Theorem~\cite{z12semiclassical}*{Theorem C.5}. Similarly to~\eqref{eq:change_norms}, viewing 
$\tilde{b}_\pm$ as $\Z^{2n}$-periodic functions in $C^\infty(\R^{2n})$, we know
\begin{equation}\label{eq:B_w_split}
\left\|\tilde{B}_{\z_m} \right\|_{\Hj \rightarrow \Hj} \leq \left\|\op_h(\tilde{b}_-)\op_h(\tilde{b}_+)\right\|_{L^2(\R^n) \rightarrow L^2(\R^n)} + Ch^{1-\rho -\varepsilon}.
\end{equation}

We want show $\|\op_h(\tilde{b}_-)\op_h(\tilde{b}_+)\|_{L^2 \rightarrow L^2} \leq Ch^{\gamma}$ for some $C, \gamma >0$.
We begin by constructing a partition of unity. Fix $\psi(z) \in C^\infty(\R^{2n})$ to be compactly supported on $|z| \leq \sqrt{n}$ such that $\sum_{k \in \Z^{2n}} \psi(z-k)^2=1$ for all $z \in \R^{2n}$. Recall the definition of $r$ from~\eqref{eq:first_r_condition} and set 
$$\psi_k(z) \coloneqq \psi\left(\frac{2z}{r}-k\right).$$ 
Note that $\psi_k \in  S_{L, 0, 0}(\R^{2n})$ uniformly in $k$ for any coisotropic $L \subset \R^{2n}$. Now we use $\psi_k$ to split up $\op_h(\tilde{b}_-) \op_h(\tilde{b}_+)$, setting 
$$P_k \coloneqq \op_h(\tilde{b}_-) \op_h(\psi_k^2) \op_h (\tilde{b}_+).$$
We see that $\op_h(\tilde{b}_-) \op_h(\tilde{b}_+) = \sum_{k \in \Z^{2n}} P_k$, where the series converges in the strong operator topology as an operator $L^2(\R^n) \rightarrow L^2(\R^n)$.  

Via the Cotlar--Stein Theorem, to show $\|\op_h(\tilde{b}_-)\op_h(\tilde{b}_+)\|_{L^2 \rightarrow L^2} \leq Ch^{\gamma}$,  it suffices to show that 
\begin{equation}\label{eq:Cotlar_Stein}
\sup_{k \in \Z^{2n}} \sum_{l \in \Z^{2n}} \|P^*_k P_l\|^{\frac{1}{2}}_{L^2 \rightarrow L^2} \leq Ch^{\gamma} \quad \text{and} \quad \sup_{k \in \Z^{2n}} \sum_{l \in \Z^{2n}} \|P_k P_l^*\|^{\frac{1}{2}}_{L^2 \rightarrow L^2} \leq Ch^{\gamma}.
\end{equation}

The proof of~\eqref{eq:Cotlar_Stein} follows from the next two lemmas. 

\begin{lemma}[\cite{dyatlov2021semiclassical}*{Lemma 4.5}]\label{lem:close_together}
For every $m>0$, there exists a constant $C_m$ such that for all $k,l \in \Z^{2n}$ with $|k-l| \geq 10\sqrt{n}$, we have
$$\|P^*_k P_l\|^{\frac{1}{2}}_{L^2 \rightarrow L^2} \leq C_m h^m |k-l|^{-m} \quad \text{and} \quad \|P_k P_l^*\|^{\frac{1}{2}}_{L^2 \rightarrow L^2} \leq C_m h^m |k-l|^{-m}.$$
\end{lemma}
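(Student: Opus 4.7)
The two bounds are symmetric, so I will focus on $P_k^*P_l$; the argument for $P_kP_l^*$ is analogous. Expanding the product gives
\[
P_k^*P_l = \op_h(\tilde b_+)^*\,\op_h(\psi_k^2)\,\op_h(\tilde b_-)^*\op_h(\tilde b_-)\,\op_h(\psi_l^2)\,\op_h(\tilde b_+),
\]
using that $\psi_k^2$ is real. The outermost factors $\op_h(\tilde b_+)$ and $\op_h(\tilde b_+)^*$ are $L^2$-bounded uniformly in $h$ by Lemma~\ref{lem:symbolproperties}\eqref{bounded}, and by Lemma~\ref{lem:symbolproperties}\eqref{product} the middle factor $\op_h(\tilde b_-)^*\op_h(\tilde b_-)$ equals $\op_h(c)$ for some $c\in S_{L_-,\rho+\varepsilon,\varepsilon}$ modulo an $L^2$-negligible error. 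The problem therefore reduces to showing
\[
\|\op_h(\psi_k^2)\,\op_h(c)\,\op_h(\psi_l^2)\|_{L^2\to L^2} \le C_m h^m |k-l|^{-m}
\]
for every $m$, uniformly in $k,l$ with $|k-l|\ge 10\sqrt n$.

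\textbf{Spatial localization.} The supports $\supp\psi_k^2\subset B_{r\sqrt n}(rk/2)$ and $\supp\psi_l^2\subset B_{r\sqrt n}(rl/2)$ are separated by at least $\tfrac r2|k-l|$ in this regime. I will introduce a smooth cutoff $\chi_l\in C^\infty(\R^{2n};[0,1])$ adapted to the scale $r|k-l|$: equal to $1$ on an $\tfrac r8|k-l|$-neighborhood of $\supp\psi_l^2$, vanishing outside an $\tfrac r4|k-l|$-neighborhood, and with $\partial^\alpha\chi_l=O((r|k-l|)^{-|\alpha|})$. Since $(1-\chi_l)c$ and $\psi_l^2$ have disjoint supports separated by $\gtrsim r|k-l|$, a careful inspection of the integral remainder in the composition formula yields
\[
\op_h(c)\op_h(\psi_l^2) = \op_h(\chi_l c)\op_h(\psi_l^2) + \cO_m((h/|k-l|)^m)_{L^2\to L^2}.
\]
The new symbol $\chi_l c$ is compactly supported, still lies in $S_{L_-,\rho+\varepsilon,\varepsilon}$ uniformly in $k,l$, and its support is disjoint from $\supp\psi_k^2$ with separation still $\gtrsim r|k-l|$.

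\textbf{Non-stationary phase and main obstacle.} It remains to bound $\op_h(\psi_k^2)\op_h(\chi_l c)$. Writing its Schwartz kernel as a double oscillatory integral with phase $(x-y)\xi+(y-z)\eta$, the stationary-phase equations force the two base points $(\tfrac{x+y}{2},\xi)$ and $(\tfrac{y+z}{2},\eta)$ to both lie in $\supp\psi_k^2$ and $\supp\chi_l c$ simultaneously, which is impossible for $|k-l|\ge 10\sqrt n$. Repeated integration by parts in the intermediate variables then produces a pointwise kernel estimate of the form $\cO_m((h/(r|k-l|))^m)$, which Schur's inequality converts into the required $L^2\to L^2$ bound. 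The principal technical obstacle is tracking the $|k-l|$-dependence of the constants, since Lemma~\ref{lem:symbolproperties}\eqref{product} is stated with seminorm-dependent rather than support-separation-dependent remainders. This is resolved either by invoking the integral form of the remainder or, equivalently, by running the integration-by-parts argument directly on the kernel. Each step costs a factor $h^{-(\rho+\varepsilon)}$ from derivatives of symbols in the exotic class and gains a factor $h/(r|k-l|)$ from the phase, so iterating sufficiently many times yields the arbitrary polynomial decay in both $h$ and $|k-l|$ claimed in the lemma.
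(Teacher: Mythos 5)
Your proposal is correct and takes essentially the same route as the argument the paper relies on: the paper does not reprove this lemma but defers to~\cite{dyatlov2021semiclassical}*{Lemma 4.5}, remarking only that the proof ``exploits the fact that the supports of $\psi_k$ and $\psi_l$ are sufficiently disjoint,'' which is precisely the disjoint-support/non-stationary-phase composition estimate you carry out after sandwiching the bounded factors $\op_h(\tilde b_\pm)$ away. The only cosmetic points are that $\op_h(\tilde b_-)^*\op_h(\tilde b_-)=\op_h(\overline{\tilde b_-}\#\tilde b_-)$ holds exactly (no negligible error is needed there), and your bookkeeping of the constants is right: each integration by parts gains $h^{1-\rho-2\varepsilon}(r|k-l|)^{-1}$, so iterating gives the claimed $C_mh^m|k-l|^{-m}$ uniformly in $k,l$.
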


\begin{lemma}\label{lem:P_k}
There exists constants $C, \delta' >0$  such that for every $k \in \Z^{2n}$,  \begin{equation}\label{eq:Pk_decay}
\|P_k\|_{L^2 \rightarrow L^2} \leq Ch^{\delta'}.
\end{equation}
\end{lemma}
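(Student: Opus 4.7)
The plan is to show that $P_k$ is essentially a composition of two pseudodifferential operators whose symbols have supports concentrated in complementary (small) directions, and then apply the basic uncertainty principle Lemma~\ref{lem:aandbbounds}.

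First I would reduce to a product of two quantizations. Writing $\op_h(\psi_k^2) = \op_h(\psi_k)\op_h(\psi_k) + O(h)_{L^2\to L^2}$ and then absorbing the outer $\psi_k$ factors via Lemma~\ref{lem:symbolproperties}(\ref{product}), we obtain
\begin{equation*}
P_k = \op_h(\tilde{b}_-\psi_k)\,\op_h(\psi_k\tilde{b}_+) + O(h^{1-\rho-2\varepsilon})_{L^2\to L^2}.
\end{equation*}
Next I would invoke Darboux's theorem together with the metaplectic covariance relation \eqref{eq:MA} to reduce to symplectic coordinates in which $W_+=\mathbb{R}^d_{x'}$, $W_-=\mathbb{R}^d_{\xi'}$, and $W^{\perp\sigma}=\mathbb{R}^{2(n-d)}_{(x'',\xi'')}$. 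This is possible since $W_\pm$ are transverse Lagrangian subspaces of the symplectic space $W$ with the dual basis $\{u_i^\pm\}$ already constructed in \S\ref{subsection:proof_groundwork}. Since a metaplectic conjugation preserves $L^2$ operator norms, we may work in these coordinates.

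In this setup, the subspaces $L_a \coloneqq W_-\oplus W^{\perp\sigma} = \{x'=0\}$ and $L_b \coloneqq W_+\oplus W^{\perp\sigma}=\{\xi'=0\}$ are both coisotropic. Following the argument in the proof of Lemma~\ref{lem:bpm_symbol_class}, but using the sharper bounds $\|A^{\pm k}|_{W_\mp}\|=O(\Lambda^{-k})$ from \eqref{eq:v_u_gelfand} together with the exact $W^{\perp\sigma}$-invariance of each $\tilde{b}_i$, the symbols satisfy $\tilde{b}_-\psi_k\in S_{L_a,\rho+\varepsilon,\varepsilon}$ and $\psi_k\tilde{b}_+\in S_{L_b,\rho+\varepsilon,\varepsilon}$ uniformly in $k$ and $\mathrm{z}_m$.

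The key technical step is the support analysis. Each factor $\tilde{b}_{z_j^-}\circ A^j$ appearing in $\tilde{b}_-$ has support whose $W_+$-projection has diameter at most $\|A^{-j}|_{W_+}\|\cdot Pr\sqrt{n} = O(\Lambda^{-j})$. Taking $j=T_1-1$ and intersecting over all $j$, the $W_+$-projection of $\supp \tilde{b}_-$ is contained in a countable union of tubes, each of $x'$-width $O(\Lambda^{-T_1})=O(h^{\rho-\varepsilon})$ by \eqref{eq:original_T_Def}. The choice of $r$ in \eqref{eq:first_r_condition}, together with the lattice-separation constant $D$ of \eqref{eq:D_def}, guarantees that no two of these tubes come within distance $2r$ of one another, so the ball $\supp\psi_k$ of diameter $r$ meets at most one tube. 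Consequently $\supp(\tilde{b}_-\psi_k)\subset\{|x'-x'_0|\leq h^{\delta}\}$ for some $\delta > 1/2$ slightly less than $\rho$, where $x'_0$ depends on $k$ and $\mathrm{z}_m$. The symmetric argument for $\tilde{b}_+$, using $\|A^j|_{W_-}\|=O(\Lambda^{-j})$ and the analogous contraction after $A^j$, gives $\supp(\psi_k\tilde{b}_+)\subset\{|\xi'-\xi'_0|\leq h^{\delta}\}$.

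With these ingredients in place, Lemma~\ref{lem:aandbbounds} yields $\|\op_h(\tilde{b}_-\psi_k)\op_h(\psi_k\tilde{b}_+)\|_{L^2\to L^2}=O(h^{\delta'})$ with $\delta' = \min\{1-\rho-2\varepsilon,\, d(2\delta-1)/2\}>0$ (using $\rho>1/2$), which combined with step one completes the proof. The main obstacle is the support analysis of the third paragraph: one must carefully track how the $\mathbb{Z}^{2n}$-lattice structure interacts with the eigenspace decomposition $W_+\oplus W_- \oplus W^{\perp\sigma} \oplus W_0$, in order to justify that the constant $r$ chosen via \eqref{eq:first_r_condition} and $D$ actually isolates a single tube inside $\supp\psi_k$; everything else consists of bookkeeping with the symbol calculus of \S\ref{subsection:symbol_calculus}.
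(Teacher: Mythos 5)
Your overall architecture matches the paper's proof exactly: reduce $P_k$ to a product $\op_h(\tilde b_\pm\psi_k)\op_h(\tilde b_\mp\psi_k)$ modulo $\cO(h^{1-\rho-2\varepsilon})$, verify symbol-class membership, localize the supports in the $W_+$ and $W_-$ directions to scale $h^{\rho-\varepsilon}$, straighten $W_\pm$ by a linear Darboux map and its metaplectic quantization, and conclude with Lemma~\ref{lem:aandbbounds}. However, the step you yourself flag as the main obstacle — the support analysis — is resolved incorrectly as written. You describe the $W_+$-projection of $\supp \tilde b_-$ as a union of tubes of width $\cO(\Lambda^{-T_1})$ that are mutually separated by at least $2r$, so that $\supp\psi_k$ meets only one. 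But the tubes coming from the constraint $A^{-j}\supp\tilde b_{z_j^-}$ are the images under $A^{-j}$ of a $((W^{\perp\sigma}+\Z^{2n})\cap W)$-periodic family, and $A^{-j}$ contracts the $W_+$-direction \emph{separations} between lattice translates by the same factor $\Lambda^{-j}$ by which it contracts the tube widths. For $j$ comparable to $T_1$ the spacing is $\cO(\Lambda^{-j}D)\ll r$, so the claimed $2r$-separation fails and a single ball $\supp\psi_k$ meets many tubes of the $j$-th family.

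The paper circumvents this by running the argument forward in time rather than statically: for $\omega_1,\omega_2\in\pi_+(\supp(\tilde b_-\psi_k))$ one proves by induction on $0\le l\le T_1-1$ that $|A^l\omega_1-A^l\omega_2|\le Pr\sqrt n$ — at each step the points land in $\tilde\pi_+(\supp\tilde\phi_{z_{l+1}^-})$ up to a lattice vector $\vec j\in(W^{\perp\sigma}+\Z^{2n})\cap W_+$, and the condition $Pr\sqrt n(1+\|A\|)<D$ from~\eqref{eq:first_r_condition} forces $\vec j=0$ — and only at the end applies $\|A^{-(T_1-1)}|_{W_+}\|=\cO(\Lambda^{-T_1+1})$ to get $|\omega_1-\omega_2|=\cO(\Lambda^{-T_1})=\cO(h^{\rho})$. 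This gives directly the local diameter bound you need without any global separation claim. Two smaller points: the subspaces $W_-\oplus W^{\perp\sigma}$ and $W_+\oplus W^{\perp\sigma}$ you propose for the second-microlocal classes are not coisotropic when $W_0\neq\{0\}$ (their symplectic complements are $W_\mp\oplus W_0$, which are not contained in them), so one should instead use the global stable/unstable Lagrangians $L_\pm$ as in Lemma~\ref{lem:bpm_symbol_class}; and under the Darboux map the complementary coordinates span $W^{\perp\sigma}\oplus W_0$, not $W^{\perp\sigma}$ alone.
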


The proof of Lemma~\ref{lem:close_together} exploits the fact that the supports of $\psi_k$ and $\psi_l$ are sufficiently disjoint and requires no modifications from its original argument in~\cite{dyatlov2021semiclassical}. 

However, in Lemma~\ref{lem:P_k}, although the statement is the same as~\cite{dyatlov2021semiclassical}*{Lemma 4.6}, the proof is different. In~\cite{dyatlov2021semiclassical}, the authors assume a specific condition on the complements of the supports of $b_1$ and $b_2$. The authors use this condition to establish the porosity required to apply the 1-dimensional fractal uncertainty principle. However, in our argument, we picked $b_2$ to be supported in a neighborhood of $\Sigma'$. Thus, we cannot assume the same condition on $b_1$ and $b_2$ as in~\cite{dyatlov2021semiclassical}. As a result, we use a different uncertainty argument to show decay. 

4. We show Lemma~\ref{lem:P_k}.

\begin{proof}[Proof of Lemma~\ref{lem:P_k}]
1. Recall that each $\psi_k$ belongs to $S_{L_\pm, 0, 0}$ uniformly in $k$, while $\tilde{b}_\pm$ are in the larger symbol class $S_{L_\pm, \rho + \varepsilon, \varepsilon}$. Then $\tilde{b}_\pm \psi_k \in S_{L_\pm, \rho + \varepsilon, \varepsilon}(\R^{2n})$, so by Lemma~\ref{lem:symbolproperties}, 
$$P_k = \op_h(\tilde{b}_+ \psi_k) \op_h(\tilde{b}_- \psi_k) + \cO(h^{1-\rho  - 2\varepsilon})_{L^2 \rightarrow L^2}.$$  
We will show~\eqref{eq:Pk_decay}  via Lemma~\ref{lem:aandbbounds}, so we first prove the supports of $\tilde{b}_\pm \psi_k$ are sufficiently small.
We first focus on $\tilde{b}_- \psi_k$. Since $\supp \psi_k \subset B_{\frac{r\sqrt{n}}{2}}(\frac{rk}{2})$, we have 
\begin{equation}\label{eq:b_tilde_support}
\supp \tilde{b}_- \psi_k \subset B_{\frac{r\sqrt{n}}{2}}\left(\frac{rk}{2}\right)
\cap \left(\bigcap_{l=0}^{T_1-1} A^{-l} \supp \tilde{b}_{\mathfrak{z}_l^-} \right).
\end{equation}

Recall that $\tilde{b}_j \coloneqq \pi_{W^{\perp \sigma}}^* \tilde{\phi}_j$, where $\pi_{W^{\perp \sigma}}: \bT^{2n} \rightarrow \bT^{2n}/ \bT_{W^{\perp \sigma}}$ is the natural projection and $\bT_{W^{\perp \sigma}} = {W^{\perp \sigma}} / (\Z^{2n} \cap {W^{\perp \sigma}})$.
We have $\bT^{2n} / \bT_{W^{\perp \sigma}} = \R^{2n} /({W^{\perp \sigma}}+ \Z^{2n}) = (\R^{2n} / {W^{\perp \sigma}})/ \Gamma$, where $\Gamma \coloneqq ({W^{\perp \sigma}}+ \Z^{2n})/{W^{\perp \sigma}}$. As $W$ is transverse to ${W^{\perp \sigma}}$, $W$ intersects each element of $\Gamma$ exactly once.  
We can thus view each $\tilde{\phi}_i \in C^{\infty}(W)$ as a $(({W^{\perp \sigma}}+ \Z^{2n}) \cap W)$-periodic function given by the projection of $\tilde{b}_j$ onto $W$ with kernel ${W^{\perp \sigma}}$. Note that   
\begin{equation}\label{eq:diam_supp}
\diam \left(\supp \tilde{\phi}_i \bmod ((W^{\perp \sigma}+ \Z^{2n})\cap W)\right) \leq r\sqrt{n}.
\end{equation}

Recall that $\pi_\pm: \R^{2n} \rightarrow W_\pm$ is the projection onto $W_\pm$ with kernel $W^{\perp \sigma} \oplus W_\mp \oplus W_0$ and $P = \max\{\|\pi_+\|, \|\pi_-\|\}$. Set $\tilde{\pi}_\pm \coloneqq \pi_\pm |_{W}$. Then from~\eqref{eq:b_tilde_support},
$$\pi_+ \left(\supp \left(\tilde{b}_- \psi_k\right)\right) \subset B_{\frac{Pr\sqrt{n}}{2}}\left(\pi_+\left(\frac{rk}{2}\right)\right)
\cap \left(\bigcap_{l=0}^{T_1-1} A^{-l} \tilde{\pi}_+ \left(\supp \tilde{\phi}_{\mathfrak{z}_l^-}\right)\right).$$

2. Let $\omega_1, \omega_2 \in  \pi_+(\supp (\tilde{b}_- \psi_k))$. We show by induction that $|A^l \omega_1 - A^l \omega_2 | \leq Pr\sqrt{n}$ for each $0 \leq l \leq T_1 -1$. Clearly this holds for $l=0$. Now suppose $|A^l \omega_1 - A^l \omega_2 | \leq Pr\sqrt{n}$ for some $0 \leq l < T_1 -1$. We know that $A^{l+1} \omega_1, A^{l+1} \omega_2 \in \tilde{\pi}_+(\supp \tilde{\phi}_{\mathfrak{z}_{l+1}^-})$. From~\eqref{eq:diam_supp}, there exists a $\Vec{j} \in (W^{\perp \sigma}+ \Z^{2n})\cap W_+$ such that 
$$|A^{l+1} \omega_1 - A^{l+1} \omega_2 - \Vec{j} | \leq Pr \sqrt{n}.$$
If $(W^{\perp \sigma}+ \Z^{2n})\cap W_+= \{0\}$, then clearly $|A^{l+1} \omega_1 - A^{l+1} \omega_2| \leq Pr \sqrt{n}$.
Else, by the choice of $r$ in~\eqref{eq:first_r_condition}, we have 
$$|\Vec{j}| \leq Pr \sqrt{n} + |A^{l+1} \omega_1 - A^{l+1} \omega_2| \leq Pr \sqrt{n}(1+ \|A\|)<D.$$ 
From~\eqref{eq:D_def}, $\Vec{j}=0$, which gives $|A^l \omega_1 - A^l \omega_2| \leq Pr \sqrt{n}$. 

From~\eqref{eq:v_u_gelfand}, there exists some $C_0>0$ such that for all $0 \leq l \leq T-1$, we have $|\omega_1 -\omega_2| \leq C_0 \Lambda^{-l} Pr\sqrt{n}$.
Importantly, 
\begin{equation}\label{eq:z_bounds}
|\omega_1 - \omega_2| \leq C_0 \Lambda^{-T_1 + 1} Pr \sqrt{n},
\end{equation}
for any $\omega_1, \omega_2 \in \pi_+(\supp (\tilde{b}_- \psi_k))$.

By a similar argument, we can conclude 
for $\omega_1, \omega_2 \in  \pi_- (\supp (\tilde{b}_+ \psi_k))$,
$$|\omega_1 - \omega_2| \leq C_0 \Lambda^{-T_1 + 1} Pr \sqrt{n}.$$

3. Recall that  $\{u^\pm_1, \ldots, u^\pm_d\}$ are bases for $W_\pm \subset L_\pm$ such that $\sigma(u^\pm_i, u^\pm_j) =0$ and $\sigma(u^-_i, u^+_j) = \delta_{ij}$. 
Thus, using the linear version of Darboux's theorem, there exists a symplectic matrix $Q \in \Sp(2n, \R)$ such that
\begin{itemize}
    \item $Q \partial_{x_i} = u_i^+$ for $1 \leq i \leq d$;
    \item $Q \partial_{\xi_i} = u_i^-$ for $1 \leq i \leq d$; 
    \item $Q \spn ( \partial_{x_{d+1}}, \ldots, \partial_{x_n}, \partial_{\xi_{d+1}}, \ldots, \partial_{\xi_n}) = W^{\perp \sigma} \oplus W_0$.
\end{itemize}
Let $M_Q$ denote a metaplectic transformation such that $M^{-1}_Q \op_h(a) M_Q = \op_h(a \circ Q)$ for all $a \in S(1)$.

For $x \in \R^n$, we use the notation $x = (x', x'')$, where $x' \in \R^d$ and $x'' \in \R^{n-d}$. Then $L'_\pm \coloneqq Q L_\pm$ are Lagrangian subspaces with $L'_+ \subset \{\xi' =0\}$ and $L'_- \subset \{x' =0\}$ such that $\tilde{b}_\pm \psi_k \circ Q \in S_{L'_\pm, \rho + \varepsilon, \varepsilon}$.

Thus by~\eqref{eq:z_bounds}, letting $\pi_{x'}:\R^{2n} \rightarrow \R^d$ denote the orthogonal projection onto $x'$, for $\omega_1, \omega_2 \in \supp \tilde{b}_- \psi_k$, 
$$|\pi_{x'}(Q \omega_1) - \pi_{x'}(Q \omega_2) | < C_0 \Lambda^{-T_1+1}  Pr \sqrt{n}.$$

Since $T_1 \geq \frac{\rho \log N}{\log \Lambda}-1$, we have  $\Lambda^{-T_1 +1} \leq \Lambda^2(2\pi h)^{\rho}.$ Therefore, $$|\pi_{x'}(Q \omega_1) - \pi_{x'}(Q \omega_2) | < \left(C_0 \Lambda^2 (2 \pi)^\rho 
 P r \sqrt{n}\right) h^\rho.$$ We conclude that there exists some $x'_0 \in \R^d$ such that for $h$ sufficiently small
 $$\supp \tilde{b}_- \psi_k \circ Q \subset \left\{|x' - x'_0| \leq h^{\rho -\varepsilon}\right\}.$$ A similar argument shows that there exists some $\xi'_0 \in \R^d$ such that $$\supp \tilde{b}_+ \psi_k \circ Q   \subset \left\{|\xi' - \xi'_0| \leq h^{\rho -\varepsilon}\right\}.$$

4. We use the fact that $M_{Q}$ is unitary and apply Lemma~\ref{lem:aandbbounds} and the exact Egorov's Theorem~\eqref{eq:MA} to know that
\begin{align*}
\|\op_h(\tilde{b}_+\psi_k)\op_h(\tilde{b}_- \psi_k)\|_{L^2 \rightarrow L^2} &= \|M_Q^{-1}\op_h(\tilde{b}_+\psi_k)M_Q M_Q^{-1}\op_h(\tilde{b}_- \psi_k)M_Q\|_{L^2 \rightarrow L^2}\\
&=\|\op_h(\tilde{b}_+ \psi_k \circ Q)\op_h(\tilde{b}_- \psi_k \circ Q)\|_{L^2 \rightarrow L^2}\\
&\leq \cO(h^{\delta'}),
\end{align*}
for $\delta' = \min\left\{1- \rho-2\varepsilon, d\left( \rho-1/2 -\varepsilon\right)\right\}$. By our choice of $1/2 < \rho <1$, for $\varepsilon$ sufficiently small, $\delta' >0$.

We conclude that $\|P_k\|_{L^2 \rightarrow L^2} \leq \cO(h^{\delta'})$.
\end{proof}

5. Using Lemma~\ref{lem:close_together} and Lemma~\ref{lem:P_k} to apply the Cotlar--Stein Theorem, we know \linebreak
$\|\op_h(\tilde{b}_+) \op_h(\tilde{b}_-)\|_{L^2 \rightarrow L^2} \leq Ch^{\gamma}$ for some $\gamma>0$.
Then by~\eqref{eq:B_w_split},  $\|\tilde{B}_{\z_m}\|_{\Hj \rightarrow \Hj} \leq Ch^{2\beta}$ for $\beta = \frac{1}{2} \min \{\gamma, 1- \rho - \varepsilon\}$.

Thus, from our choice of $\alpha$ in~\eqref{eq:alpha_1.3} and from the inequality~\eqref{eq:preliminary_Bw_bound}, we have 
$$\left\|B_\w\right\|_{\Hj \rightarrow \Hj} \leq \sum_{\z_m \in \bM}\left\|\tilde{B}_{\z_m} \right\|_{\Hj \rightarrow \Hj} \leq  C  N^{\frac{2 \alpha \rho \log SR}{\log \Lambda}} h^{2\beta} + \cO(h^\infty) \leq C h^\beta,$$
which finishes our proof of Lemma~\ref{lem:Bw_decay} and therefore Theorem~\ref{thm:coisotropic}.

\section{Proof of Lemma~\ref{lem:Bw_decay} for Theorem~\ref{thm:full_support}~\eqref{item:K}}\label{section:thm1.4} 
From \S\ref{section:groundwork}, to prove Theorem~\ref{thm:full_support}~\eqref{item:K}, it remains to prove Lemma~\ref{lem:Bw_decay}.
As previously noted, our proof strategy generalizes that of~\cite{dyatlov2021semiclassical}.  

The differences between the two  argument stem from the fact that we do not assume a spectral gap. In ~\cite{dyatlov2021semiclassical}, the authors use the eigenspaces of the unique largest and smallest eigenvalues, each of which is 1-dimensional. The analogue in our proof is the sum of the generalized eigenspaces of the largest and smallest eigenvalues. As these spaces can be higher-dimensional, we prove multidimensional versions of the results from~\cite{dyatlov2021semiclassical}. Our argument picks up some additional small technicalities as we must consider generalized eigenvectors and complex eigenvectors. 
 
\subsection{Theorem~\ref{thm:full_support} setting}

Fix $A \in \Sp(2n, \Z)$ such that $A$ has a non-unit length eigenvalue. We label the eigenvalues of $A$ as
$\lambda_1, \ldots, \lambda_n, \lambda_1^{-1}, \ldots,  \lambda_n^{-1}$ with
$|\lambda_1| \geq \cdots \geq |\lambda_n| \geq 1$. Suppose we have
$$|\lambda_1| = |\lambda_2| = \cdots = |\lambda_l| > |\lambda_{l+1}|.$$ 
Let $F_+$ be the sum of generalized eigenspaces of~$A$ corresponding to $\lambda_1, \ldots, \lambda_l$,  $F_-$  the sum of eigenspaces corresponding to $\lambda_1^{-1}, \ldots, \lambda_l^{-1}$, and  $F_0$ the sum of generalized eigenspaces of~$A$ corresponding to $\lambda_{l+1}, \ldots, \lambda_n, \lambda_{l+1}^{-1}, \ldots, \lambda_n^{-1}$.
Now set $E_+$, $E_-$, and $E_0$ to be the real part of these spaces, i.e., 
$$E_\pm \coloneqq \{v \in F_\pm : \overline{v} = v\}, \quad E_0 \coloneqq \{v \in F_0 : \overline{v} = v\}.$$
We further assume that $A|_{E_\pm}$ is diagonalizable over $\C$. This assumption will later control the growth of elements of $E_\pm$ under the action of $A$. It also implies that $F_\pm$ are the sums of standard eigenspaces.

Let 
\begin{equation}\label{eq:L_pm_def}
L_\pm \coloneqq E_\pm \oplus E_0
\end{equation}
and note that $\R^{2n} = E_+ \oplus L_- = E_- \oplus L_+$.
Define 
\begin{equation}\label{eq:Lambda2}
\Lambda \coloneqq |\lambda_1| \quad \text{and} \quad \gamma  \coloneqq |\lambda_{l+1}|.
\end{equation}
If $l = n$, instead set $\gamma \coloneqq \Lambda^{-1}$.

Take sequences $N_j \in \N$  and $\theta_j \in \bT^{2n}$ such that $N_j \rightarrow \infty$ and $N_j, \theta_j$ satisfy the quantization condition~\eqref{eq:domainrange}. Set $h=(2\pi N)^{-1}$. Let $u_j \in \cH_{N_j}(\theta_j)$ be a sequence of normalized eigenfunctions of $M_{N_j, \theta_j}$ that weakly converges to the semiclassical measure $\mu$.

Then fix two numbers $\rho, \rho' \in (0,1)$ such that 
\begin{equation}\label{eq:rho}
\rho + \rho' <1, \quad \rho \frac{\log \gamma}{\log \Lambda} < \rho' < \frac{1}{2} < \rho.
\end{equation}
We set $\Lambda_+ \coloneqq \Lambda$ and use $\Lambda$, $\Lambda_+$,  
and $\rho$ to define $J$, $T_0$, and $T_1$ given by~\eqref{eq:J_bounds} and~\eqref{eq:original_T_Def}.

Using Gelfand's formula, for  $\varepsilon >0$ and $k$ sufficiently large depending on $\varepsilon$,
\begin{equation}\label{eq:Gelfand}
\|A^k\| \leq  \Lambda^{k(1+\varepsilon)}, \quad \|A^k|_{L_-}\| \leq \gamma^{k(1+\varepsilon)}. 
\end{equation}
As $A^{-1}$ has the same eigenvalues as $A$, we also know for $k$ sufficiently large 
\begin{equation*}
\|A^{-k}\|  \leq \Lambda^{k(1+\varepsilon)}, \quad  \|A^{-k}|_{L_+}\| \leq \gamma^{k(1+\varepsilon)}. 
\end{equation*}

We need the following lemma to prove symplectic properties of $E_\pm$ and $L_\pm$.

\begin{lemma}\label{lem:inverse_eigenvalues}
Suppose $v$ is an eigenvector with eigenvalue $\lambda_v$ and $u$ is a generalized eigenvector with eigenvalue $\lambda_u$. If $\sigma(v, u) \neq 0$, then $\lambda_u = \lambda_v^{-1}$.   
\end{lemma}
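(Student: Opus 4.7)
The plan is to exploit the symplectic invariance of $\sigma$ under $A$ together with the annihilation relation $(A - \gamma I)^k u = 0$ that defines a generalized eigenvector. The key identity to extract first is a recursion: since $A \in \Sp(2n,\R)$, we have $\sigma(v,u) = \sigma(Av, Au) = \lambda\, \sigma(v, Au)$, so $\sigma(v, Au) = \lambda^{-1} \sigma(v,u)$. Iterating gives $\sigma(v, A^j u) = \lambda^{-j} \sigma(v,u)$ for every $j \ge 0$.

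Next I would pair $v$ symplectically against the annihilation relation. Pick $k \ge 1$ minimal so that $(A - \gamma I)^k u = 0$; expanding by the binomial theorem and applying $\sigma(v, \cdot)$ gives
\begin{equation*}
0 = \sigma\bigl(v,(A - \gamma I)^k u\bigr) = \sum_{j=0}^{k} \binom{k}{j} (-\gamma)^{k-j} \sigma(v, A^j u) = \sigma(v,u)\sum_{j=0}^{k} \binom{k}{j} (-\gamma)^{k-j} \lambda^{-j}.
\end{equation*}
The inner sum collapses by the binomial theorem to $(\lambda^{-1} - \gamma)^k$. Since $\sigma(v,u) \ne 0$ by hypothesis, we conclude $(\lambda^{-1} - \gamma)^k = 0$, hence $\gamma = \lambda^{-1}$.

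There is essentially no obstacle here: the whole argument is three lines once one observes that the symplectic pairing turns the Jordan block at $\gamma$ into a polynomial identity in $\lambda^{-1} - \gamma$. The only mild subtlety is that $u$ is merely a generalized eigenvector, so the one-step identity $\sigma(v, u) = \lambda\gamma\, \sigma(v,u)$ (which would suffice if $u$ were a genuine eigenvector) must be upgraded to the $k$-th order version above.
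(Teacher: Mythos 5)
Your proof is correct. The one-step identity $\sigma(v,Au)=\lambda^{-1}\sigma(v,u)$ is valid (it follows from $\sigma(v,u)=\sigma(Av,Au)=\lambda\,\sigma(v,Au)$ and the invertibility of $A$), and once you observe that the linear functional $w\mapsto\sigma(v,w)$ intertwines $A$ with multiplication by $\lambda^{-1}$, the computation $\sigma\bigl(v,(A-\gamma I)^k u\bigr)=(\lambda^{-1}-\gamma)^k\,\sigma(v,u)$ is exactly the binomial collapse you describe, and the conclusion follows. Your route differs from the paper's in how the generalized eigenvector is handled: the paper descends through the Jordan chain $(A-\gamma I)u=u_1$, $(A-\gamma I)u_1=u_2,\dots$, applying the one-step identity at each level and running a case analysis ("if $\sigma(v,u_1)=0$ stop, else repeat") until it bottoms out at a genuine eigenvector. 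Your version packages that entire descent into a single polynomial identity, which is shorter, avoids the somewhat informal "continue to run this argument" step, and makes transparent why the rank of the generalized eigenvector is irrelevant. Both proofs rest on the same underlying mechanism (symplectic invariance of $\sigma$ under $A$), so the difference is one of bookkeeping, but yours is the cleaner bookkeeping.
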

\begin{proof}
First, suppose $v$ and $u$ are standard eigenvectors. As $A$ is symplectic,
$$\sigma(v, u) = \sigma(Av, Au) = \lambda_v \lambda_u \sigma(v, u).$$
Therefore, $\lambda_u = \lambda_v^{-1}$. 
More generally, suppose $u$ is a generalized eigenvector of rank $\beta +1$. Then there exists generalized eigenvectors $u_1, \ldots, u_\beta$ such that
$$(A - \lambda_u I) u = u_1, \quad (A - \lambda_u I) u_1 = u_2, \quad \ldots, \quad (A - \lambda_u I) u_{\beta-1} = u_\beta, \quad (A- \lambda_v I) u_\beta =0.$$
We see
$$\sigma(v, u) = \sigma(Av, Au)= \lambda_v \sigma(v, u_1) + \lambda_v \lambda_u \sigma(v, u).$$
If $\lambda_v \sigma(v, u_1) =0$, we have $\lambda_u = \lambda_v^{-1}$. If not, we repeat the above argument with $\sigma(v, u)$ replaced by $\sigma(v, u_1)$. We continue to run this argument, stopping and concluding $\lambda_u = \lambda_v^{-1}$ if possible. If not, then we eventually conclude $\sigma(v, u_\beta) \neq 0$. As $u_\beta$ is a standard eigenvector, we have $\lambda_u = \lambda_v^{-1}$.
\end{proof}
Following from the definitions of $E_\pm$ and $E_0$ and from Lemma~\ref{lem:inverse_eigenvalues}, we see that $E_\pm \subset E_0^{\perp \sigma}$ and $E_0 \subset E_\pm^{\perp \sigma}$. Thus, $L_\pm^{\perp \sigma} \subset L_\pm$; in other words, $L_\pm$ are coisotropic. Additionally, we see $E_+ \oplus E_-$ is symplectic and $E_\pm$ are Lagrangian subspaces of $E_+ \oplus E_-$ .

As we assumed that $A|_{E_\pm}$ is diagonalizable, we can choose  eigenbases for $E_\pm \otimes \C$. Select respective inner products that make these eigenbases orthonormal. We denote the real part of these inner products by $\lrang{\cdot, \cdot}_\pm$. Let $\{v_1^{\pm}, \ldots, v_l^{\pm}\}$ be orthonormal bases for $E_\pm$ with respect to $\lrang{\cdot, \cdot}_\pm$. There exists matrices $B_\pm$ such that $B_\pm$ is an orthogonal matrix with respect to $\lrang{\cdot, \cdot}_\pm$ and $A|_{E_\pm} = \Lambda^{\pm 1} B_\pm$. Clearly, $\lrang{A v_i^\pm, A v_i^\pm}_\pm = \Lambda^{\pm 2}$. 

Let $\pi_\pm : E_+ \oplus E_- \rightarrow E_\pm$ be the projection onto $E_\pm$ with kernel $E_\mp$. We define a norm $\|\cdot \|_E$ on $E_+ \oplus E_-$ by 
 $\|v \|_E^2 = \lrang{\pi_+ v, \pi_+ v}_+ + \lrang{\pi_- v, \pi_- v}_-$. Then $\|A v_i^\pm\|_E = \Lambda^{\pm 1}$.

 We use the coordinates given by $v_1^+, \ldots, v_l^+, v_1^-, \ldots, v_l^-$ to define the symplectic form $\sigma_E$ on $E_+ \oplus E_-$. Specifically, 
 \begin{equation}\label{eq:darboux_conditions}
\sigma_E(v_i^+, v_j^+) = 0, \quad \sigma_E(v_i^-, v_j^-)=0, \quad \sigma_E(v_i^-, v_j^+) =\delta_{ij}. 
\end{equation}

We now see how elements of $E_\pm$ scale under $A^j$ for $j \in \Z$. Let $t \in \R^l$. As $E_\pm$  is $A$-invariant, we know that $A^j \sum_{i=1}^l t_i v_i^\pm \in E_\pm$. Therefore, for some $s^\pm \in \R^l$, $A^j \sum_{i=1}^l t_i v_i^\pm = \sum_{i=1}^l s^\pm_i v_i^\pm$.

As $v_i^\pm$ are pairwise orthogonal,  
$\|\sum_{i=1}^l s^\pm_i v_i^\pm \|_E^2 = \sum_{i=1}^l |s_i^\pm|^2 \|v_i^\pm\|_E^2 = |s^\pm|^2$.
We also see
$\|A^j \sum_{i=1}^l t_i v_i^\pm \|_E^2 = \Lambda^{\pm 2 j} |t|^2$.

Then for some $s^\pm \in \R^l$, 
\begin{equation}\label{eq:Aj_rescaling}
A^j \sum_{i =1}^l t_i v_i^\pm = \sum_{i=1}^l s^\pm_i v^\pm_i \quad \text{with} \quad \Lambda^{\pm j} |t| = |s^\pm|.
\end{equation}

\subsubsection{Safe sets}
We generalize the definition of \emph{safe sets} from~\cite{dyatlov2021semiclassical}*{Definition 3.3}.

\begin{definition}\label{def:safe} A set $\cU \subset \bT^{2n}$ is \emph{safe} if, for each $z \in \bT^{2n}$ and $v \in E_+ \cup E_-$ with $|v| = 1$, there exists some $c \in \R$ such that $z + cv \bmod \Z^{2n} \in \cU$. 
\end{definition}

Recall that in \S\ref{subsection:definition}, we assumed that $\bT^{2n} \setminus \supp \mu$ intersects $z + \R v \bmod \Z^{2n}$ for all $z \in \bT^{2n}$ and $v \in E_+ \cup E_-$. In other words, $\bT^{2n} \setminus \supp \mu$ is safe.

In the next two lemmas, we respectively generalize~\cite{dyatlov2021semiclassical}*{Lemma 3.5} and~\cite{dyatlov2021semiclassical}*{Lemma 3.6} to construct a particular partition of unity. 

\begin{lemma}\label{lem:safe_compact}
If $\cU \subset \bT^{2n}$ is open and safe, then there exists a compact and safe $K \subset \cU$. 
\end{lemma}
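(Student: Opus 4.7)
The plan is to produce a uniform positive ``safety width'' by combining lower semicontinuity with compactness, then take $K$ to be the closed subset of $\cU$ lying at a fixed positive distance from its complement. If $\cU = \bT^{2n}$ one can simply take $K = \bT^{2n}$; assume henceforth that $\bT^{2n} \setminus \cU$ is nonempty. Set $S^\pm \coloneqq \{v \in E_\pm : |v| = 1\}$; since $E_+ \cap E_- = \{0\}$ these unit spheres are disjoint compact sets, and the product $X \coloneqq \bT^{2n} \times (S^+ \cup S^-)$ is compact.

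Define
$$\varepsilon : X \to (0, \infty), \qquad \varepsilon(z, v) \coloneqq \sup_{c \in \R} \mathrm{dist}\bigl(z + cv \bmod \Z^{2n},\, \bT^{2n} \setminus \cU\bigr).$$
Safety of $\cU$ guarantees that for each $(z, v) \in X$ some orbit point lies in the open set $\cU$, so $\varepsilon(z, v) > 0$. The key step is to verify that $\varepsilon$ is lower semicontinuous on $X$: given $\delta < \varepsilon(z, v)$, pick $c \in \R$ with $\mathrm{dist}(z + cv, \bT^{2n} \setminus \cU) > \delta$; joint continuity of $(z', v') \mapsto z' + cv' \bmod \Z^{2n}$ together with continuity of distance-to-a-closed-set then yields $\varepsilon(z', v') > \delta$ for $(z', v')$ in a neighborhood of $(z, v)$. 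Lower semicontinuity on the compact space $X$ forces $\varepsilon_0 \coloneqq \inf_X \varepsilon > 0$.

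Finally, put $K \coloneqq \{z \in \bT^{2n} : \mathrm{dist}(z, \bT^{2n} \setminus \cU) \geq \varepsilon_0 / 2\}$. This is closed in $\bT^{2n}$ and disjoint from $\bT^{2n} \setminus \cU$, hence a compact subset of $\cU$. For any $(z, v) \in X$, pick $c \in \R$ with $\mathrm{dist}(z + cv, \bT^{2n} \setminus \cU) > \tfrac{2}{3} \varepsilon(z, v) \geq \tfrac{2}{3} \varepsilon_0 > \varepsilon_0 / 2$; then $z + cv \bmod \Z^{2n} \in K$, proving $K$ is safe. The only step requiring real care is the lower semicontinuity of $\varepsilon$, which is an immediate one-line consequence of the sup definition plus continuity of translation and distance-to-a-closed-set, so I do not anticipate any genuine obstacle.
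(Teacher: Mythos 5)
Your proof is correct, and it takes a different route from the paper's. The paper proves the lemma by contradiction: it takes a compact exhaustion $\cU = \bigcup_j K_j$ with $K_j \subset K_{j+1}^\circ$, assumes no $K_j$ is safe, extracts witnesses $(z_j, v_j)$, and passes to a convergent subsequence to produce a line $z_\infty + \R v_\infty$ missing every $K_l^\circ$, contradicting safety of $\cU$. You instead make the argument quantitative: the function $\varepsilon(z,v) = \sup_c \mathrm{dist}(z+cv \bmod \Z^{2n}, \bT^{2n}\setminus\cU)$ is positive by safety and openness, lower semicontinuous because each fixed $c$ gives a continuous lower bound, and hence bounded below by some $\varepsilon_0 > 0$ on the compact space $\bT^{2n}\times(S^+\cup S^-)$; the set of points at distance at least $\varepsilon_0/2$ from the complement is then an explicit compact safe subset. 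Both arguments ultimately rest on the same compactness (the paper's subsequence extraction of $(z_j,v_j)$ is your compactness of $X$ in disguise), but yours yields a concrete $K$ together with a uniform ``safety margin,'' whereas the paper's is purely existential. All the steps you flag check out: the sup is finite since the torus is bounded, $E_\pm$ are closed subspaces so $S^+\cup S^-$ is compact, and a lower semicontinuous positive function on a compact set has positive infimum. Either proof is acceptable for the role the lemma plays in the paper.
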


\begin{proof}
We first take a compact exhaustion of $\cU$. Specifically, let $\cU = \bigcup_{j \in \N} K_j$, where $K_j$ are compact and $K_j \subset K^\circ_{j+1}$. Suppose none of the $K_j$'s are safe. In other words, for each $j$ there exists $z_j \in \bT^{2n}$ and $v_j \in E_+ \cup E_-$ with $|v_j| =1$ such that  $z_j + \R v_j \bmod \Z^{2n} \cap K_l =\emptyset$ for all $l \leq j$. Pass to convergent subsequences $z_j \rightarrow z_\infty$, $v_j \rightarrow v_\infty$, where $v_j, v_\infty \in E_\varsigma$ for $\varsigma \in \{+, -\}$. Then for all $c \in \R$ and $l \in \N$, $z_\infty + cv_\infty \bmod \Z^{2n} \notin K^\circ_l$, a contradiction.
\end{proof}

\begin{lemma}\label{lem:construct_b1_b2}
Let $\cU$ be an open, safe subset of $\bT^{2n}$. Then there exist $b_1, b_2 \in C^\infty(\bT^{2n})$ such that
$$b_1, b_2 \geq 0, \quad b_1 + b_2 = 1, \quad \supp b_1 \subset \cU$$
and the complements $\bT^{2n} \setminus \supp b_1$, $\bT^{2n} \setminus \supp b_2$ are both safe.
\end{lemma}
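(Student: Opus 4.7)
The plan is to produce two disjoint open safe sets $\cV_1 \subset \cU$ and $\cV_2 \subset \bT^{2n}$ with $\overline{\cV_1} \cap \overline{\cV_2} = \emptyset$ and $\overline{\cV_1} \subset \cU$, and then construct $b_1$ as a smooth Urysohn function separating $\overline{\cV_1}$ from $\overline{\cV_2}$ together with a small thickening of $\bT^{2n} \setminus \cU$; setting $b_2 \coloneqq 1 - b_1$ will yield the desired partition. The key point is that safety passes to supersets, so once $\cV_1$ and $\cV_2$ are produced, the safety of $\bT^{2n} \setminus \supp b_i$ is automatic.

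First I would apply Lemma~\ref{lem:safe_compact} to the open safe set $\cU$ to obtain a compact safe $K_1 \subset \cU$, choose $\delta > 0$ small enough that $K_1[2\delta] \subset \cU$, and set $\cV_1 \coloneqq K_1(\delta)$. Then $\cV_1$ is open, safe (as a superset of the safe set $K_1$), and $\overline{\cV_1} \subset K_1[\delta] \subset \cU$. To construct $\cV_2$, I would like to apply Lemma~\ref{lem:safe_compact} to the open set $\bT^{2n} \setminus K_1[\delta]$, obtain a compact safe subset $K_2$, and then set $\cV_2 \coloneqq K_2(\delta')$ for $\delta' > 0$ small enough that $\overline{\cV_2}$ is disjoint from $\overline{\cV_1}$.

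The hard part is ensuring $\bT^{2n} \setminus K_1[\delta]$ is safe in the first place. Since the closure of a line $z + \R v$ for $v \in E_+ \cup E_-$ of unit length is the subtorus translate $z + \bT_v$, safety of $\bT^{2n} \setminus K_1[\delta]$ is equivalent to the statement that no translate $z + \bT_v$ is entirely contained in $K_1[\delta]$. The compact safe set produced by Lemma~\ref{lem:safe_compact} need not satisfy this a priori, so one refines the choice of $K_1$ to be ``thin'' inside $\cU$ — for instance, by taking the image of a continuous surjective selection along fibers of the projections to the relevant subtori, so that $K_1$ itself has lower dimension than $\bT^{2n}$. Because the family of subtori $\bT_v$ with $\bT_v \subsetneq \bT^{2n}$ is countable (each being a rational subtorus), the obstructing translates can be excluded one at a time by iterative local perturbation, preserving safety at each step. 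This refinement is the main obstacle in the proof.

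Once $\cV_1$ and $\cV_2$ are in hand, the disjoint closed sets $\overline{\cV_1}$ and $\overline{\cV_2} \cup (\bT^{2n} \setminus K_1[2\delta])$ satisfy the hypothesis of smooth Urysohn's lemma; pick $b_1 \in C^\infty(\bT^{2n}; [0,1])$ with $b_1 \equiv 1$ on $\overline{\cV_1}$ and $b_1 \equiv 0$ on $\overline{\cV_2} \cup (\bT^{2n} \setminus K_1[2\delta])$, and set $b_2 \coloneqq 1 - b_1$. Then $\supp b_1 \subset K_1[2\delta] \subset \cU$, the open set $\bT^{2n} \setminus \supp b_1$ contains the safe set $\cV_2$ and is therefore safe, and $\bT^{2n} \setminus \supp b_2 = \{b_1 = 1\}^\circ$ contains the safe set $\cV_1$ and is therefore safe.
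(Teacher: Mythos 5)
You have the right overall architecture (two disjoint safe sets, one compactly inside $\cU$, then a smooth Urysohn/partition-of-unity step, using that safety is inherited by supersets), and your final Urysohn step is fine. But the step you yourself flag as "the main obstacle" — arranging that the complement of (a neighborhood of) $K_1$ is safe, so that Lemma~\ref{lem:safe_compact} can be applied a second time — is a genuine gap, not a detail. The proposed fix ("take $K_1$ to be the image of a continuous surjective selection along fibers, so it has lower dimension, then exclude the obstructing translates one at a time by iterative local perturbation") does not constitute an argument: for each of the countably many tangent subtori $\bT_v$ the family of translates $z+\bT_v$ is uncountable, so there is no enumeration of "obstructing translates" to handle one at a time; each perturbation must simultaneously preserve compactness, safety, and containment in $\cU$; and no limiting procedure is described that would keep safety in the limit. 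Moreover, lower-dimensionality of $K_1$ by itself does not prevent a full-dimensional closed $\delta$-neighborhood of $K_1$ from containing a whole translate $z+\bT_v$.

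The paper closes exactly this gap with a concrete construction that you should compare against. Fix $R>\sqrt{2n}$ and let $D_R=\pi(S_R)$ be the projection to $\bT^{2n}$ of the sphere of radius $R$ in $\R^{2n}$. Then $D_R$ is safe (any $z\in[0,1]^{2n}$ and unit $v$ give some $c$ with $z+cv\in S_R$), yet $D_R$ meets each projected line $\pi(z+\R v)$ in an at most countable, hence nowhere dense, subset; therefore $\cU\setminus D_R$ is still open and safe. Applying Lemma~\ref{lem:safe_compact} to $\cU\setminus D_R$ yields a compact safe $K_1\subset\cU$ whose complement automatically contains the safe set $D_R$, so $\bT^{2n}\setminus K_1$ is open and safe and Lemma~\ref{lem:safe_compact} applies again to produce $K_2$. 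This single auxiliary set replaces your entire "thinning and iterative perturbation" scheme. (The paper then avoids $\delta$-neighborhoods altogether by taking a partition of unity subordinate to the cover $\{\cU\setminus K_2,\ \bT^{2n}\setminus K_1\}$, which is slightly cleaner than the thickening bookkeeping in your write-up, but that part of your argument is correct.)
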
 

\begin{proof}
1. We first show that there exist two compact sets 
$K_1, K_2 \subset \bT^{2n}$ such that $K_1 \cap K_2 = \emptyset$, $K_1 \subset \cU$, and $K_1$, $K_2$ are both safe.

Let $R> \sqrt{2n}$ and let $S_R$ be the $(2n-1)$-sphere of radius $R$ centered at the origin.  Denote by $\pi : \R^{2n} \to \bT^{2n}$ the projection map onto the torus.
 Define 
$$D_R \coloneqq \pi(S_R) \subset \bT^{2n}.$$

Let $z \in [0,1]^{2n}$ and $v \in E_+ \cup E_-$ with $|v| =1$. Since $|z| \leq \sqrt{2n}$, there exists $c \in \R$ such that $z + c v \in S_R$. Therefore, $D_R$ is safe. 

However, $D_R$ has at most countably many intersections with any line on the torus. Thus, for all $v \in E_+ \cup E_-$ with $|v|=1$, the intersection $D_R \cap \pi(z+ \R v)$ has empty interior in $\pi(z + \R v)$. This implies the open set $\cU \setminus D_R$ is safe. Then, by Lemma~\ref{lem:safe_compact}, there exists a safe compact set $K_1 \subset \cU \setminus D_R$. The complement $\bT^{2n} \setminus K_1$ contains $D_R$ and thus is an open safe set. Again by Lemma~\ref{lem:safe_compact}, we can find $K_2$, a compact safe subset of $\bT^{2n} \setminus K_1$. We see that $K_1, K_2 \subset \bT^{2n}$ are both safe and satisfy  $K_1 \cap K_2 = \emptyset$ and $K_1 \subset \cU$.

2. Using a partition of unity subordinate to the cover of $\bT^{2n}$ by the sets $\cU \setminus K_2$, $\bT^{2n} \setminus K_1$, we choose $b_1, b_2 \in C^\infty(\bT^{2n})$ such that 
$$b_1, b_2 \geq 0, \quad b_1 + b_2 =1, \quad \supp b_1 \subset \cU \setminus K_2, \quad \supp b_2 \subset \bT^{2n} \setminus K_1.$$
The complements of $\supp b_1, \supp b_2$ respectively contain the sets $K_2, K_1$ and thus are both safe. 
\end{proof}

\subsection{Proof of Lemma~\ref{lem:Bw_decay}} \label{subsection:Bwdecay1.4}
Setting $\cU = \bT^{2n} \setminus \supp \mu$, we choose $b_1$, $b_2 \in C^\infty (\bT^{2n})$ from Lemma~\ref{lem:construct_b1_b2}.  Importantly, $\bT^{2n} \setminus \supp b_1$ and $\bT^{2n} \setminus \supp b_2$ are safe. We use $b_1, b_2$ to construct $B_\w$ from Lemma~\ref{lem:Bw_decay}.

First, we find a reformulation of~\eqref{eq:Bw_bound}. 
Decompose the word $\w$ into two words of length $T_1$:
$\w =\w_+ \w_-$. Then we relabel $\w_+$ and $\w_-$ as
$$\w_+ = w^+_{T_1} \cdots w_1^+, \quad \w_- = w_0^- \cdots w_{T_1-1}^-.$$
Now set 
$$b_+ =\prod_{k=1}^{T_1} b_{w_k^+} \circ A^{-k} \quad \text{and} \quad b_- =\prod_{k=0}^{T_1-1} b_{w_k^-} \circ A^{k}.$$
To work with $b_\pm$, we need to show they lie in an appropriate symbol class. We adapt~\cite{dyatlov2021semiclassical}*{Lemma 3.7}.
\begin{lemma}
For all $\varepsilon>0$, $b_\pm \in S_{L_\mp, \rho + \varepsilon, \rho' + \varepsilon} (\bT^{2n})$ with bounds on the semi-norms that do not depend on $N$ or $\w$. 
Moreover,
\begin{align}\label{eq:b+-}
\begin{split}
B_{\w_+}(-T_1) &= \op_{N, \theta}(b_+) + \cO\left(N_j^{\rho+\rho' +2\varepsilon-1}\right)_{\cH_j \rightarrow \cH_j},\\
B_{\w_-} &= \op_{N, \theta}(b_-) + \cO\left(N_j^{\rho+\rho' +2\varepsilon-1}\right)_{\cH_j \rightarrow \cH_j},
\end{split}
\end{align}
where the constants in $\cO(\cdot)$ are uniform in $N$ and $\w$.
\end{lemma}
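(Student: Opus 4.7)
The plan is to verify the hypotheses of Lemma~\ref{lem:manymultiplication} applied to the product decomposition $b_- = \prod_{k=0}^{T_1-1} b_{w_k^-}\circ A^k$; the argument for $b_+$ is symmetric, using the iterates $A^{-k}$ and the parallel Gelfand bound on $\|A^{-k}|_{L_+}\|$. I will show each factor $b_i\circ A^k$ lies uniformly in a symbol class $S_{L_\mp,\rho_0,\rho_0'}(\bT^{2n})$ with $\rho_0$ slightly larger than $\rho$ and $\rho_0'$ slightly larger than $\rho\log\gamma/\log\Lambda$; parts \eqref{manyproduct} and \eqref{manysymbol} of Lemma~\ref{lem:manymultiplication} will then produce the claimed symbol class membership and operator identity, after the $\varepsilon$-losses are absorbed.

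For the uniform factor estimates, pick constant vector fields $X_1,\ldots,X_p$ on $\R^{2n}$ and $Y_1,\ldots,Y_q$ tangent to the appropriate coisotropic subspace. By the chain rule and linearity of $A$,
\begin{equation*}
X_1\cdots X_p\, Y_1\cdots Y_q\,(b_i\circ A^k)(z) = D^{p+q}b_i(A^k z)\cdot(A^k X_1,\ldots,A^k X_p,A^k Y_1,\ldots,A^k Y_q),
\end{equation*}
and $\|D^m b_i\|_\infty$ is uniformly bounded since there are only finitely many functions $b_1,b_2\in C^\infty(\bT^{2n})$. By Gelfand's formula~\eqref{eq:Gelfand}, for any $\varepsilon_1>0$ one has $|A^k X|\leq C\Lambda^{k(1+\varepsilon_1)}$ for general $X$ and $|A^k Y|\leq C\gamma^{k(1+\varepsilon_1)}$ for $Y$ tangent to the relevant $L$ direction, once $k$ is large enough (which is automatic since $k\leq T_1\to\infty$). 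Because $T_1\leq\rho\log N/\log\Lambda$, these estimates become $\Lambda^{k(1+\varepsilon_1)}\leq N^{\rho(1+\varepsilon_1)}$ and $\gamma^{k(1+\varepsilon_1)}\leq N^{\rho(1+\varepsilon_1)\log\gamma/\log\Lambda}$, uniformly in $0\leq k\leq T_1-1$ and in $i\in\{1,2\}$. Setting $\rho_0:=\rho(1+\varepsilon_1)$ and $\rho_0':=\rho(1+\varepsilon_1)\log\gamma/\log\Lambda$, each factor belongs to $S_{L_\mp,\rho_0,\rho_0'}(\bT^{2n})$ with uniform seminorms.

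Now apply Lemma~\ref{lem:manymultiplication}~\eqref{manyproduct} with $M=T_1\leq C\log N$: the product $b_-$ lies in $S_{L_\mp,\rho_0+\varepsilon_2,\rho_0'+\varepsilon_2}(\bT^{2n})$ for any small $\varepsilon_2>0$. Given the target $\varepsilon>0$, the spectral inequality $\rho\log\gamma/\log\Lambda<\rho'$ in~\eqref{eq:rho} lets me choose $\varepsilon_1,\varepsilon_2$ so small that $\rho_0+\varepsilon_2\leq\rho+\varepsilon$ and $\rho_0'+\varepsilon_2\leq\rho'+\varepsilon$, yielding $b_-\in S_{L_\mp,\rho+\varepsilon,\rho'+\varepsilon}$. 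For the operator identity, the exact Egorov relation~\eqref{eq:exactegorov} rewrites $B_{\w_-}=\op_{N,\theta}(b_{w_{T_1-1}^-}\circ A^{T_1-1})\cdots\op_{N,\theta}(b_{w_0^-})$, a product of $T_1$ operators whose symbols satisfy the hypotheses of Lemma~\ref{lem:manymultiplication}~\eqref{manysymbol}. That lemma then gives
\begin{equation*}
B_{\w_-}=\op_{N,\theta}(b_-)+\cO\!\left(N^{\rho_0+\rho_0'-1+\varepsilon_2}\right)_{\cH_j\to\cH_j},
\end{equation*}
and the exponent is bounded by $\rho+\rho'+2\varepsilon-1$ under the same choice of $\varepsilon_1,\varepsilon_2$, since $\rho_0+\rho_0'=\rho(1+\varepsilon_1)(1+\log\gamma/\log\Lambda)$ is strictly less than $\rho+\rho'$ in the limit $\varepsilon_1\to 0$.

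The main technical obstacle is the bookkeeping in the last step: one must check that the combination of the $\varepsilon_1$-loss from Gelfand's formula and the $\varepsilon_2$-loss from multiplying $T_1\asymp\log N$ near-borderline factors both fit inside the margin provided by the spectral hypothesis~\eqref{eq:rho}. This is precisely why $\rho'$ is required to strictly exceed $\rho\log\gamma/\log\Lambda$ and why $T_1$ is calibrated to $\rho\log N/\log\Lambda$; with those choices in hand, the symbol calculus does the rest.
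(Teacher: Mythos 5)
Your proposal is correct and follows essentially the same route as the paper's proof: reduce to uniform seminorm bounds on each factor $b_i\circ A^k$ via Lemma~\ref{lem:manymultiplication}, compute the mixed derivatives by linearity of $A$, and control them with Gelfand's formula together with $T_1\leq \rho\log N/\log\Lambda$ and the spectral inequality $\rho\log\gamma/\log\Lambda<\rho'$ from~\eqref{eq:rho}. The only cosmetic difference is that you carry the exponents $\rho_0,\rho_0'$ explicitly and absorb the losses at the end, whereas the paper bounds $\gamma^{T_1(1+\varepsilon')m}$ by $N^{\rho'(1+\varepsilon')m}$ directly.
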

\begin{proof}
We give only the proof for $b_-$ as the proof for $b_+$ follows similarly. By Lemma~\ref{lem:manymultiplication}, it suffices to show for $i=1,2$ and $0 \leq k \leq T_1-1$, each  $S_{L_-, \rho + \frac{\varepsilon}{2}, \rho' +\frac{\varepsilon}{2}}$-seminorm of $b_i \circ A^k$ is bounded uniformly in $k$ and $N$.  

Let $X_1, \ldots, X_l, Y_1, \ldots, Y_m$ be constant vector fields on $\R^{2n}$ with $Y_j$ tangent to $L_-$.
Using the fact that $A$ is a linear map, we calculate 
$$X_1\cdots X_l Y_1 \cdots Y_m \left(b_i \circ A^k\right)(z) = D^{l+m}b_i\left(A^k z\right) \cdot \left(A^k X_1, \ldots, A^k X_l, A^k Y_1, \ldots, A^k Y_m\right),$$ where  $D^{l+m}b_i$ denotes the $(k + m)$-th derivative of $b_i$, a $(k + m)$-linear form, uniformly bounded in $N$. Therefore, using the bounds on powers of $A$ given by~\eqref{eq:Gelfand},
\begin{align*}
\sup_{\R^{2n}} \left|X_1\cdots X_l Y_1 \cdots Y_m \left(b_i \circ A^k\right)\right| &\leq C \left|A^kX_1\right| \cdots \left|A^k X_l\right|\left|A^k Y_1\right|\cdots \left|A^k Y_m\right|\\
&\leq C \Lambda^{k(1+\varepsilon')l} \gamma^{k(1+\varepsilon') m}\\
&\leq C \Lambda^{T_1(1+\varepsilon') l} \gamma^{T_1(1+\varepsilon')m}\\
&\leq CN^{\rho (1+\varepsilon') l + \rho'(1+\varepsilon') m}.
\end{align*} The final inequality follows from~\eqref{eq:original_T_Def} and~\eqref{eq:rho}. Choosing $\varepsilon' < \frac{\varepsilon}{2 \rho}$, the right-hand-side is bounded by $CN^{(\rho+\frac{\varepsilon}{2}) l + (\rho'+\frac{\varepsilon}{2})m}$, which finishes the proof. 
\end{proof}

We note that $B_\w = M^{-T_1}_{N, \theta} B_{\w_-} B_{\w_+} M^{T_1}_{N, \theta}$. Therefore, by~\eqref{eq:b+-}, we know there exists $C>0$ such that $\|B_\w\|_{\cH_j \rightarrow \cH_j} \leq \|\op_{N, \theta}(b_-) \op_{N, \theta}(b_+)\|_{\cH_j \rightarrow \cH_j} + CN_j^{\rho + \rho' + \varepsilon-1}$. Thus, to show~\eqref{eq:Bw_bound}, it suffices to instead show 
\begin{equation}\label{eq:B+-_N_bound}
\|\op_{N, \theta}(b_-) \op_{N, \theta}(b_+)\|_{\cH_j \rightarrow \cH_j} \leq Ch^\beta.
\end{equation}

By~\cite{dyatlov2021semiclassical}*{(2.45)}, viewing $b_\pm \in C^\infty(\bT^{2n})$ as a $\Z^{2n}$-periodic function in $C^\infty(\R^{2n})$, 
$$\max_{\theta \in \bT^{2n}} \|\op_{N, \theta}(b_-) \op_{N, \theta}(b_+) \|_{\cH_N(\theta) \rightarrow \cH_N(\theta)} = \|\op_h(b_-) \op_h(b_+)\|_{L^2(\R^n) \rightarrow L^2(\R^n)}.$$

Therefore,~\eqref{eq:B+-_N_bound} reduces to 
\begin{equation}\label{eq:B+-_L2_bound}
\|\op_h(b_-) \op_h(b_+)\|_{L^2(\R^n) \rightarrow L^2(\R^n)} \leq Ch^\beta.
\end{equation}

\subsubsection{Porosity}
For $z \in \bT^{2n}$ and $t=(t_1, \ldots, t_l) \in \R^l$, define
$$\phi_\pm^t(z) \coloneqq z + \sum_{i=1}^l t_i v^\pm_i \mod \Z^{2n}.$$

To show~\eqref{eq:B+-_L2_bound}, we will employ the fractal uncertainty principle of Proposition~\ref{prop:fractal_uncertainty}. This requires a discussion on porosity. 

Recall the definitions of $L_\mp$ and $\rho'$ from~\eqref{eq:L_pm_def} and~\eqref{eq:rho}, respectively. 
Then define 
\begin{equation}\label{eq:Omega_def}
\Omega_\pm(z) \coloneqq \{t \in \R^l : \exists v \in L_\mp \text{ such that } |v| \leq h^{\rho'} \text{ and } \phi_\pm^t(z) +v \in \supp b_\mp \}, \quad z \in \bT^{2n}.
\end{equation}
Intuitively, to construct $\Omega_\pm(z)$, first lift $z$ to a point in $\R^{2n}$ and $\supp b_\mp$ to a subset of $\R^{2n}$. Then intersect $\supp b_\mp$ with the set 
\begin{equation}\label{eq:Omega_set}
\{\phi^t_\pm(z) +v : t \in \R^l, v \in L_\mp, |v| \leq h^{\rho '}\},
\end{equation}
which is illustrated in Figure~\ref{fig:Omega}. The set $\Omega_\pm(z)$ is given by the set of all $t \in \R^l$ in this intersection. We restrict to $|v| \leq h^{\rho '}$  since we will later apply a partition of unity to split up the support of $b_\mp$.

\begin{figure}
    \centering
\begin{tikzpicture}
\draw[draw=white, fill=black!10] (-4,1) rectangle (4,3); 
\draw[thick] (-4,0) -- (4,0) node[anchor=west]{$E_\pm$};
\draw[thick] (0,-2) -- (0,4) (0,4.2) node{$L_\mp$};
\draw (-4, 1) -- (4, 1);
\draw (-4, 3) -- (4, 3);
\filldraw (1,2) circle (0.05cm) node[anchor=west]{$z$};
\draw (1,1) -- (1,3);
\draw[<->] (1.2, 2.15) -- (1.2, 2.95);
\path (1.5, 2.6) node{$h^{\rho'}$};
\end{tikzpicture}
    \caption{The shaded region is an illustration of the set~\eqref{eq:Omega_set}.}
    \label{fig:Omega}
\end{figure}

The following lemma generalizes~\cite{dyatlov2021semiclassical}*{Lemma 4.4} to higher dimensions to show that $\Omega_\pm$ is porous on lines. Note that porosity on lines implies porosity on balls.

\begin{lemma}\label{lem:porous_lines}
Let $\varrho \in (0, \rho)$. Then there exists $\nu, h_0 \in (0,1)$, independent of $N$ and $\w$, such that if $0 < h \leq h_0$, for every $z \in \bT^{2n}$, $\Omega_\pm(z)$ is $\nu$-porous on lines from scales $h^\varrho$ to $1$.
\end{lemma}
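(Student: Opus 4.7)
I sketch the case $\Omega_+(z)$; the case $\Omega_-(z)$ follows symmetrically by exchanging $A^k \leftrightarrow A^{-k}$ and $E_+, L_-, b_- \leftrightarrow E_-, L_+, b_+$. The strategy is to rescale the short given line segment $\tau$ to order-one length in $\bT^{2n}$ by applying a suitable power $A^k$, and then invoke a uniform quantitative version of safety to produce a gap.

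The key ingredient is the following uniform safety statement: there exist constants $L_0, \nu_0 > 0$, independent of $N$ and $\w$, such that for every $z_0 \in \bT^{2n}$, every unit vector $v_0 \in E_+ \cup E_-$, and every $i \in \{1,2\}$, there is some $c \in [0, L_0]$ with $B_{\nu_0}(z_0 + cv_0) \cap \supp b_i = \emptyset$. I prove this by compactness and contradiction using Lemma~\ref{lem:construct_b1_b2}: a failing sequence $(z_n, v_n, i_n)$ with $L_n \to \infty$ and $\nu_n \to 0$ has a subsequential limit $(z_*, v_*, i_*)$ satisfying $z_* + cv_* \in \supp b_{i_*}$ for all $c \geq 0$, and since on $\bT^{2n}$ the forward-orbit and full-orbit closures of a linear flow agree, this forces the whole line $z_* + \R v_*$ into $\supp b_{i_*}$, contradicting safety of its complement. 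Now fix $\tau = \{t_0 + su : s \in [0, R]\}$ with $|u| = 1$ and $h^\varrho \leq R \leq 1$. Its image under $t \mapsto \phi_+^t(z)$ is a length-$R$ segment in direction $v' = \sum_i u_i v_i^+ \in E_+$ starting at $z' = \phi_+^{t_0}(z)$. I choose $k \in \N$ with $\Lambda^k R$ of order $L_0$; since $\varrho < \rho$, this forces $k \leq T_1$ for $h$ small. Using $|A^k v_i^+| = \Lambda^k$, applying $A^k$ rescales the image to a segment of length $\Lambda^k R \geq L_0$ in a unit direction of $E_+$ starting at $A^k z'$. The uniform safety then produces $c^* \in [0, L_0]$, and hence $t^* = t_0 + (c^*/\Lambda^k) u \in \tau$, with $B_{\nu_0}(A^k \phi_+^{t^*}(z)) \cap \supp b_{w_k^-} = \emptyset$. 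Pulling back by $A^{-k}$ and using the inclusion $\supp b_- \subset A^{-k}(\supp b_{w_k^-})$ (from the product formula for $b_-$) yields
\begin{equation*}
\bigl(\phi_+^{t^*}(z) + A^{-k}(B_{\nu_0}(0))\bigr) \cap \supp b_- = \emptyset.
\end{equation*}

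To upgrade this to $B_{\nu R}(t^*) \cap \Omega_+(z) = \emptyset$ for an appropriate constant $\nu > 0$, it suffices to verify that for $|r| \leq \nu R$ and every $w \in L_-$ with $|w| \leq h^{\rho'}$, the vector $\sum_i r_i v_i^+ + w$ lies in $A^{-k}(B_{\nu_0}(0))$. Using the $A$-invariant splitting $\R^{2n} = E_+ \oplus L_-$, the $E_+$-part satisfies $|A^k \sum_i r_i v_i^+| \leq \Lambda^k |r| = O(\nu L_0)$, while the $L_-$-part satisfies $|A^k w| \leq C\gamma^{k(1+\varepsilon)} h^{\rho'}$. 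The key spectral condition $\rho' > \rho \log \gamma / \log \Lambda$ from~\eqref{eq:rho}, combined with $\varrho < \rho$ and $k \leq \varrho \log(1/h)/\log \Lambda + O(1)$, makes the $L_-$-contribution vanish as $h \to 0$ for $\varepsilon$ sufficiently small; a then suitable choice of $\nu$ (small relative to $L_0$, $\Lambda$, $\nu_0$ and the norm of the projection onto $E_+$ along $L_-$) controls the $E_+$-contribution and gives the desired inclusion. I expect the main obstacles to be the half-orbit versus full-orbit closure issue in the compactness argument for uniform safety, and the spectral balance in the final step: the transverse fuzz of radius $h^{\rho'}$ in $L_-$ must not blow up under $A^k$, which is precisely what the constraint $\rho' > \rho \log \gamma / \log \Lambda$ guarantees.
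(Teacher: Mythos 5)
Your proposal is correct and follows essentially the same route as the paper: a compactness argument yielding a uniform quantitative safety constant, rescaling the segment by the power $A^k$ that brings its length to order one, and using the spectral condition $\rho' > \rho\log\gamma/\log\Lambda$ to kill the transverse $L_-$-fuzz of radius $h^{\rho'}$ under $A^k$. The only notable (and harmless) variation is in the compactness step: the paper centers its limiting segments at the origin so the limit is automatically a full line, whereas you work with forward half-intervals and correctly invoke the fact that the closure of a one-parameter subsemigroup of the compact group $\bT^{2n}$ is a subgroup, so the forward-orbit closure already contains the full line.
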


\begin{proof}
We just examine $\Omega_+(z)$; the proof of $\Omega_-(z)$ can be handled similarly by reversing the direction of time. The $\nu$ in the lemma statement is the minimum of the $\nu$ for $\Omega_+(z)$ and $\Omega_-(z)$

1. As the complements $\bT^{2n} \setminus \supp b_1$ and $\bT^{2n} \setminus \supp b_2$ are safe, by Lemma~\ref{lem:safe_compact}, there exists compact subsets $K_1, K_2 \subset \bT^{2n}$ such that the interiors $K_1^\circ$, $K_2^\circ$ are safe and $$K_1 \cap \supp b_1=K_2 \cap \supp b_2 = \emptyset.$$

We claim that there exist constants $R>0$, $\nu_0 >0$ such that for $j=1, 2$, if $I \subset \R^l$ is a line segment of length $R$, then for each $z \in \bT^{2n}$, there exists $\tau \in I$ such that $\{\phi^t_+ (\phi^\tau_+ (z)) : |t| \leq \nu_0R\} \subset K_j$.

As we shift by $z$, it suffices to only consider $I$ centered at the origin. Suppose towards a contradiction that $R$ and $\nu_0$ do not exist. Then there exists a sequence of points $z_m \in \bT^{2n}$ and lines $I_m$ of length $m$ centered at the origin such that for all $\tau \in I_m$, $K_j$ does not contain $\{\phi^t_+ (\phi^\tau_+ (z_m)) : |t| \leq 1/m \}$.
We pass to convergent subsequences and assume $z_m \rightarrow z_\infty$ and  $I_m$ converges to $\R \tau$ for some $\tau \in \R^l$, i.e, the direction of $I_m$ converges to the direction of $\R \tau$. Rescale  $\tau$ such that for $v_\infty = \sum_{i=1}^l \tau_i v_i^+$, $|v_\infty|=1$. As $K_j^\circ$ is safe, there exists $c \in \R$ such that $z_\infty + cv_\infty \mod \Z^{2n} \in K^\circ_j$. In other words, $\phi_+^{c \tau}(z_\infty) \in K_j^\circ$. Thus, for sufficiently large $m$, there exists $\tau_m \in I_m$ such that $\{\phi^t_+(\phi^{\tau_m}_+(z_m)) : |t| \leq 1/m\} \subset K_j^\circ$, a contradiction.

2. 
Let $z \in \bT^{2n}$. We set $\nu = \Lambda^{-1} \nu_0$. We want to show that $\Omega_+(z)$ is $\nu$-porous on lines from scales $h^\varrho$ to 1. Let $I \subset \R^l$ be a line segment of length $h^\varrho \leq |I| \leq 1$. Let $j$ denote the smallest integer such that $\Lambda^{j} |I| \geq R$.  Using the definition of $T_1$ in~\eqref{eq:original_T_Def} and $N = (2 \pi h)^{-1}$, we have 
$$
\Lambda^{T_1 -1} |I| \geq \Lambda^{-1} (2 \pi)^{-\rho} h^{\varrho - \rho}.
$$
As the right-hand side converges to $+\infty$ as $h \rightarrow 0$, by taking sufficiently small $h$, we make the assumption that $0 < j < T_1$. 

We know that $A^j (\{\phi^t_+(z) : t \in I \}) = \{\phi^t_+(A^jz) : t \in \tilde{I} \}$ for some line segment $\tilde{I} \subset \R^l$. By~\eqref{eq:Aj_rescaling}, $\tilde{I}$ is a line segment of length  $\Lambda^{j} |I| \geq R$. 
We conclude that there exists $\tau \in \tilde{I}$ such that $\{\phi^t_+(\phi^\tau_+(A^j z)) : |t| \leq \nu_0 R \} \subset K_{w^-_j}$.

By~\eqref{eq:Aj_rescaling}, we know that 
$$ \left\{A^j\sum_{i=1}^l t_i v_i^+ : |t| \leq  \Lambda^{-j} \nu_0 R\right\} \subset  \left\{\sum_{i=1}^l t_i v_i^+ : |t| \leq \nu_0 R\right\}.$$

Let $\tau' \in \R^l$ be given by $A^{-j} \sum_{i=1}^l \tau_i v_i^+ = \sum_{i=1}^l \tau_i' v_i^+$. Note that $\tau' \in I$. Set $B$ to be the ball of radius $\Lambda^{-j}\nu_0 R$ centered at $\tau'$. Then, 
\begin{equation}\label{eq:propagate_forward_line}
A^j \phi^t_+(z) \in K_{w_j^-} \quad \text{ for all } t \in B.
\end{equation} 
By our choice of $j$, we have $\Lambda^{-j}\nu_0 R \geq  \Lambda^{-1} |I| \nu_0 = |I| \nu$.

It remains to show that $B \cap \Omega_+(z) = \emptyset$, in other words, we need to show that for each $t \in B$, 
$$\phi^t_+(z) + v \notin \supp b_-  \text{ for all } v \in L_-  \text{ such that }  |v| \leq h^{\rho'}.$$

By~\eqref{eq:rho} and~\eqref{eq:Gelfand}, there exists some $C>0$ depending on $\varepsilon$ such that for $v \in L_-$ with $|v| \leq h^{\rho'}$ and $j \leq T_1$,
$$|A^j v| \leq C \gamma^{j(1+\varepsilon)} h^{\rho'} \leq C \gamma^{T_1(1+\varepsilon)} h^{\rho'} \leq C(2 \pi)^{-\rho \frac{\log \gamma}{\log \Lambda}(1+\varepsilon)}h^{\rho'-\rho \frac{\log \gamma}{\log \Lambda}(1+\varepsilon)}.$$
For sufficiently small $\varepsilon$, the right-hand-side converges to $0$.
Using~\eqref{eq:propagate_forward_line} and the fact that $K_{w_j^-} \cap \supp b_{w_j^-} = \emptyset$, we know $A^j(\phi^t_+(z) +v) \notin \supp b_{w_j^-}$ for sufficiently small $h$. From the definition of $b_-$, it follows that $\phi^t_+(z) +v \notin \supp b_-$.
\end{proof}

\subsubsection{Application of fractal uncertainty principle}
Recall that we have reduced the estimate $\|B_\w\|_{\Hj \rightarrow \Hj} \leq Ch^\beta$ to~\eqref{eq:B+-_L2_bound}. We prove~\eqref{eq:B+-_L2_bound} via the Cotlar--Stein Theorem~\cite{z12semiclassical}*{Theorem~C.5} and the higher-dimensional fractal uncertainty principle.  We begin by constructing a partition of unity.

Fix a function
$$\tilde{\psi} \in C^\infty_c\left(B_{\sqrt{n}}(0); \R\right), \quad \sum_{k \in \Z^{2n}} \tilde{\psi}(z-k)^2 =1 \quad \text{for all} \quad z \in \R^{2n}.$$

Now, we consider the partition of unity $\sum_{k \in \Z^{2n}} \psi_k^2 = 1$, where the $h$-dependent symbol $\psi_k \in C^\infty_c(\R^{2n})$ is given by 
$$\psi_k(z) := \tilde{\psi}\left(\frac{\sqrt{n}}{h^{\rho'}}z -k\right), \quad k \in \Z^{2n}.$$
Note that for any coisotropic subspace $L$, $\psi_k$ lies in $S_{L, \rho, \rho'}(\R^{2n})$ uniformly in $h$ and $k$.

Then set $$P_k := \op_h(b_-) \op_h(\psi_k^2) \op_h(b_+).$$
We see that $$\op_h(b_-)\op_h(b_+) = \sum_{k \in \Z^{2n}} P_k,$$ where the series converges in the strong operator topology as an operator $L^2(\R^n) \rightarrow L^2(\R^n)$.  
Via the Cotlar--Stein Theorem, to show~\eqref{eq:B+-_L2_bound}, it suffices to show that 
\begin{equation}\label{eq:Cotlar_Stein2}
\sup_{k \in \Z^{2n}} \sum_{l \in \Z^{2n}} \|P^*_k P_l\|^{\frac{1}{2}}_{L^2 \rightarrow L^2} \leq Ch^\beta \quad \text{and} \quad \sup_{k \in \Z^{2n}} \sum_{l \in \Z^{2n}} \|P_k P_l^*\|^{\frac{1}{2}}_{L^2 \rightarrow L^2} \leq Ch^\beta.
\end{equation}

We can deduce~\eqref{eq:Cotlar_Stein2} from~\cite{dyatlov2021semiclassical}*{Lemma 4.5}, stated in this paper Lemma~\ref{lem:close_together}, and the following lemma.

\begin{lemma}\label{lem:P_k_decay}
There exists constants $C, \beta>0$ such that for every $k \in \Z^{2n}$, we have 
\begin{equation}\label{eq:P_k_decay}
\|P_k\|_{L^2 \rightarrow L^2} \leq Ch^\beta
\end{equation} where $\beta$ only depends on $\nu$ from Lemma~\ref{lem:porous_lines} and on $\rho$. 
\end{lemma}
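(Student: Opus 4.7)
The plan is to generalize the proof of~\cite{dyatlov2021semiclassical}*{Lemma 4.6} by replacing the one-dimensional Bourgain--Dyatlov fractal uncertainty principle with Cohen's higher-dimensional version, Proposition~\ref{prop:fractal_uncertainty}. As a first reduction, writing $\psi_k^2 = \psi_k \cdot \psi_k$ and applying Lemma~\ref{lem:symbolproperties}\eqref{product}, together with the fact that $\psi_k$ lies in $S_{L, \rho', \rho'}$ uniformly in $h$ and $k$ for any coisotropic $L$, yields
\[
P_k = \op_h(b_- \psi_k)\,\op_h(b_+ \psi_k) + \cO\!\left(h^{1-\rho-\rho'-2\varepsilon}\right)_{L^2 \to L^2},
\]
so it is enough to bound the first term.

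Next, I will apply the linear Darboux theorem to the basis $\{v_1^\pm,\ldots,v_l^\pm\}$ of $E_+ \oplus E_-$ from~\eqref{eq:darboux_conditions}, extended by any symplectic basis of $E_0$, to produce $Q \in \Sp(2n,\R)$ with $Q\partial_{x_i} = v_i^+$ and $Q\partial_{\xi_i} = v_i^-$ for $1 \leq i \leq l$. Writing $x = (x',x'') \in \R^l \times \R^{n-l}$ and similarly for $\xi$, this forces $Q^{-1}(L_+) = \{\xi'=0\}$ and $Q^{-1}(L_-) = \{x'=0\}$. Choosing any metaplectic quantization $M_Q \in \cM_Q$ and using unitarity together with the exact Egorov theorem~\eqref{eq:MA}, the task reduces to bounding $\|\op_h(\tilde b_-)\,\op_h(\tilde b_+)\|_{L^2(\R^n)\to L^2(\R^n)}$, where $\tilde b_\pm \coloneqq (b_\pm \psi_k) \circ Q$. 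Here $\tilde b_+ \in S_{\{x'=0\},\rho+\varepsilon,\rho'+\varepsilon}$ and $\tilde b_- \in S_{\{\xi'=0\},\rho+\varepsilon,\rho'+\varepsilon}$, and each is supported in a ball of radius $\cO(h^{\rho'})$ about $\tilde z_k \coloneqq Q^{-1}(h^{\rho'}k/\sqrt n)$. Unpacking the product $b_- = \prod_{j=0}^{T_1-1} b_{w_j^-} \circ A^j$ together with~\eqref{eq:Omega_def}, any point of $\supp \tilde b_-$ within $\cO(h^{\rho'})$ of $\tilde z_k$ has its $x'$-coordinate in a fixed translate of $\Omega_+(h^{\rho'}k/\sqrt n)$; symmetrically, the $\xi'$-coordinate of any such point in $\supp \tilde b_+$ lies in a translate of $\Omega_-(h^{\rho'}k/\sqrt n)$. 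By Lemma~\ref{lem:porous_lines}, both $\Omega_\pm$ are $\nu$-porous on lines from scale $h^\varrho$ to $1$ for any fixed $\varrho \in (0,\rho)$, uniformly in $k$.

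The final step converts this support structure into an operator estimate via Proposition~\ref{prop:fractal_uncertainty}. Up to $\cO(h^\infty)$ errors arising from the rapid decay of $\tilde b_\pm$ off their supports (via the nonintersecting support property~\eqref{eq:nonintersecting}), $\op_h(\tilde b_-)$ can be replaced by $\1_{X_-}(x') \cdot \op_h(\tilde b_-)$ and $\op_h(\tilde b_+)$ by $\op_h(\tilde b_+) \cdot (\cF_h^{x'})^{-1} \,\1_{X_+}(\xi')\, \cF_h^{x'}$, where $\cF_h^{x'}$ denotes the partial semiclassical Fourier transform in $x'$ and $X_\pm$ are $\cO(h^{\rho'})$-neighborhoods of the relevant translates of $\Omega_\mp(h^{\rho'}k/\sqrt n) \cap B_{Ch^{\rho'}}(0)$. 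Rescaling $(x',\xi') = \tilde z_k + h^{\rho'}(w,\eta)$ to unit scale converts $\cF_h^{x'}$ into $\cF_{h_{\mathrm{eff}}}$ with effective parameter $h_{\mathrm{eff}} \coloneqq h^{1-2\rho'} \in (0,1]$, while the rescaled sets $Y_\pm \subset [-1,1]^l$ are $\tfrac{\nu}{2}$-porous on lines from scale $h_{\mathrm{eff}}^{(\varrho-\rho')/(1-2\rho')}$ to $1$ by Lemma~\ref{lem:gen_porous_lines}. Choosing $\varrho \in (\tfrac12, \rho)$, the exponent $(\varrho-\rho')/(1-2\rho')$ is strictly greater than $1/2$ (using $\rho > \tfrac12 > \rho'$ from~\eqref{eq:rho}), so Proposition~\ref{prop:fractal_uncertainty} yields $\|\1_{Y_-}\cF_{h_{\mathrm{eff}}}\1_{Y_+}\|_{L^2\to L^2} \leq Ch_{\mathrm{eff}}^{\beta_0}$, giving~\eqref{eq:P_k_decay} with $\beta = (1-2\rho')\beta_0$.

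The main technical obstacle lies in this last reduction: although the idea of peeling $\op_h(\tilde b_-)\op_h(\tilde b_+)$ down to $\1_{Y_-}\cF_{h_{\mathrm{eff}}}\1_{Y_+}$ is clean, the symbols $\tilde b_\pm$ are smooth cutoffs carrying two distinct regularity scales ($h^\rho$ transversely and $h^{\rho'}$ along $L_\mp$), so passing cleanly between smooth symbolic bounds and indicators of the porous sets while rescaling to the finer $h^{\rho'}$-scale is likely to require an additional Cotlar--Stein decomposition at that scale---analogous to the outer Cotlar--Stein argument leading to~\eqref{eq:Cotlar_Stein2}---together with careful tracking of the accumulated symbolic errors to ensure that the porosity of $\Omega_\pm$ is faithfully inherited by the final sets $Y_\pm$.
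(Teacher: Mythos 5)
Your proposal follows the same architecture as the paper's proof — reduce $P_k$ to $\op_h(b_-\psi_k)\op_h(b_+\psi_k)$, straighten $E_\pm$ and $L_\mp$ by a linear Darboux map, locate the supports in translates of $\Omega_\pm$, and invoke the fractal uncertainty principle — but the final step is executed differently, and the difficulty you flag at the end is resolved in the paper by a simpler device than the one you anticipate. The paper does not rescale to the $h^{\rho'}$ scale or introduce an effective parameter $h_{\mathrm{eff}}=h^{1-2\rho'}$: it applies Proposition~\ref{prop:fractal_uncertainty} directly at scale $h$ to the (unbounded) sets $\tilde\Omega_\pm(h^\rho)\subset\R^l$, using porosity from scales $h^\varrho$ to $1$ with $\varrho\in(\tfrac12,\rho)$; the $\varrho$-version of the FUP already absorbs the rescaling you carry out by hand, so your route re-derives that extension inline (your arithmetic $(\varrho-\rho')/(1-2\rho')>\tfrac12\iff\varrho>\tfrac12$ is consistent, so this is a valid alternative, just redundant given Proposition~\ref{prop:fractal_uncertainty}). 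More importantly, the passage from smooth symbols to indicators does not require an additional Cotlar--Stein decomposition at scale $h^{\rho'}$, nor is it an $\cO(h^\infty)$ nonintersecting-support argument: the paper mollifies the indicators of $\tilde\Omega_\pm(\tfrac12 h^\rho)$ at scale $h^\rho$ to get $\chi_\mp$ with $\chi_\mp=1$ on $\tilde\Omega_\pm$, $\supp\chi_\mp\subset\tilde\Omega_\pm(h^\rho)$, and derivative bounds $h^{-\rho|\alpha|}$; since these depend only on $x'$ (resp.\ $\xi'$), $\op_h(\tilde\chi_+)$ is a genuine multiplication operator and $\op_h(\tilde\chi_-)$ a genuine Fourier multiplier, the insertion costs only $\cO(h^{1-\rho-\rho'-2\varepsilon})$ by the product formula, and the tensor factorization $L^2(\R^n)=L^2(\R^l)\otimes L^2(\R^{n-l})$ plus $|\chi_\pm|\le 1$, $\chi_\pm=\chi_\pm\1_{\tilde\Omega_\mp(h^\rho)}$ reduces everything to $\|\1_{\tilde\Omega_+(h^\rho)}\cF_h\1_{\tilde\Omega_-(h^\rho)}\|_{L^2(\R^l)\to L^2(\R^l)}$. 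Your plan would work if completed along the lines you sketch, but the smooth-cutoff/Fourier-multiplier trick is the cleaner and intended resolution of the obstacle you identify.
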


Although $P_k$ is defined differently from \S\ref{section:thm1.3} in this section, the original proof of~\cite{dyatlov2021semiclassical}*{Lemma~4.5} holds for both definitions.
On the other hand, although the statement of Lemma~\ref{lem:P_k_decay} is the same as~\cite{dyatlov2021semiclassical}*{Lemma 4.6}, the key ingredient at the end of the proof is different. In~\cite{dyatlov2021semiclassical}, the authors assume a spectral gap condition for $A$. As a result, their lemma follows from an application of the 1-dimensional fractal uncertainty principle. However, as we made no such assumption, our lemma necessitates the higher-dimensional fractal uncertainty principle.

\begin{proof}[Proof of Lemma~\ref{lem:P_k_decay}]
1. We know that the $\psi_k$'s belong uniformly to the symbol classes $S_{L_\pm, \rho, \rho'}$ and that $b_\pm$ belong to the symbol classes $S_{L_\pm, \rho + \varepsilon, \rho' + \varepsilon}$ for all $\varepsilon>0$. 
Since $S_{L_\pm, \rho,\rho'} \subset S_{L_\pm, \rho + \varepsilon, \rho' + \varepsilon}$, 
$$P_k =\op_h(b_- \psi_k) \op_h(b_+ \psi_k) + \cO(h^{1- \rho -\rho' -2\varepsilon})_{L^2 \rightarrow L^2}.$$

Therefore,~\eqref{eq:P_k_decay} reduces to 
\begin{equation}\label{eq:P_k_decay1}
\|\op_h(b_- \psi_k) \op_h(b_+ \psi_k) \|_{L^2 \rightarrow L^2} \leq C h^\beta.
\end{equation}

2. We next study the supports of the symbols $b_\pm \psi_k$. We have that
$$
\supp \psi_k \subset  B_{ h^{\rho'}}\left(\frac{h^{\rho'}}{\sqrt{n}}k \right).
$$
As $\R^{2n} = E_\pm \oplus L_\mp$, any $z \in \supp \psi_k$ can be written as $z = h^{\rho'}n^{-\frac{1}{2}}k + \sum_{i =1}^l t_i v_i^\pm +u_\mp$, where $u_\mp \in L_{\mp}$ with $|u_\mp| \leq C_0 h^{\rho'}$ for some constant $C_0$ depending only on the matrix $A$. 

Choose $s^{(k_\pm)} \in \R^l$ such that $h^{\rho'}n^{-\frac{1}{2}}k \in \sum_{i=1}^l s^{(k_\pm)}_i v_i^\pm + L_\mp$. We set $z^{(k)} :=  h^{\rho'} n^{-\frac{1}{2}}k$ mod $\Z^{2n} \in \bT^{2n}$. Using~\eqref{eq:Omega_def}, the definition of $\Omega_\pm(z)$, we have
\begin{equation}\label{eq:supp_b_psi}
\supp(b_\mp \psi_k) \subset \bigcup_{t \in \tilde{\Omega}_\pm} \left(\sum_{i=1}^l t_i v^\pm_i  + L_\mp \right), \quad \text{where} \quad  \tilde{\Omega}_\pm :=s^{(k_\pm)} + \Omega_\pm (z^{(k)}) \subset \R^l.
\end{equation}

3. We now want to apply Darboux's theorem to ``straighten out" $E_\pm$ and $L_\mp$. Recall  from~\eqref{eq:darboux_conditions} that $\sigma_E(v_i^\pm, v_j^\pm) =0$ and $\sigma_E(v_i^-, v_j^+)= \delta_{ij}$. Therefore, using the linear version of Darboux's theorem, we construct a symplectic matrix $Q \in \Sp(2n, \R)$ such that  
\begin{itemize}
\item $Q \partial_{x_i} = v^-_i$ for $1 \leq i \leq l$; 
\item $Q \partial_{\xi_i} = v^+_i$ for $1 \leq i \leq l$;
\item $Q \spn(\partial_{x_1}, \ldots, \partial_{x_n}, \partial_{\xi_{l+1}}, \ldots, \partial_{\xi_n}) = L_-$;
\item $Q \spn(\partial_{x_{l+1}}, \ldots, \partial_{x_n}, \partial_{\xi_{1}}, \ldots, \partial_{\xi_n}) = L_+$.
\end{itemize}

Now, let $M_Q \in \cM_Q$ be a metaplectic transformation satisfying $M^{-1}_Q \op_h(a) M_Q = \op_h(a \circ Q)$ for all $a \in S(1)$. Therefore,
\begin{align*}
\|\op_h(b_- \psi_k) \op_h(b_+ \psi_k)\|_{L^2 \to L^2} &=\left\|M_Q^{-1}\op_h(b_- \psi_k)  M_Q M_Q^{-1} \op_h(b_+ \psi_k)M_Q\right\|_{L^2 \to L^2} \\
&=\left\|\op_h\left((b_- \psi_k) \circ Q\right) \op_h\left((b_+ \psi_k) \circ Q\right) \right\|_{L^2 \rightarrow L^2}.
\end{align*}
Thus, it suffices to prove
\begin{equation}\label{eq:P_k_decay2}
\left\|\op_h\left((b_- \psi_k) \circ Q\right) \op_h\left((b_+ \psi_k) \circ Q\right) \right\|_{L^2(\R^n) \rightarrow L^2(\R^n)} \leq Ch^\beta
\end{equation}
in lieu of~\eqref{eq:P_k_decay1}. 

As $\supp \psi_k \subset  B_{h^{\rho'}}(h^{\rho'}n^{-\frac{1}{2}}k ),$ we know that $$\supp \psi_k \circ Q \subset B_{Ch^{\rho'}}(Q^{-1}h^{\rho'} n^{-\frac{1}{2}}k),$$
where $C$ depends only on $A$. 
We use a splitting of coordinates $x=(x', x'')$ where $x'=(x_1, \ldots, x_l)$ and $x''=(x_{l+1}, \ldots, x_n)$. Set $\partial_{x'}=(\partial_{x_1}, \ldots, \partial_{x_l})$. 
The support condition~\eqref{eq:supp_b_psi} is now 
$$\supp \left((b_- \psi_k) \circ Q\right) \subset \left\{(x, \xi) \in  B_{Ch^{\rho'}}(Q^{-1}h^{\rho'}n^{-\frac{1}{2}}k) \mid \xi' \in \tilde{\Omega}_+\right\},$$
$$\supp \left((b_+ \psi_k) \circ Q\right) \subset \left\{(x, \xi) \in  B_{Ch^{\rho'}}(Q^{-1}h^{\rho'}n^{-\frac{1}{2}}k) \mid x' \in \tilde{\Omega}_-\right\}.$$

4. For $\delta>0$, using the notation of~\eqref{eq:neighborhood_def}, denote the $\delta$-neighborhood of $\tilde{\Omega}_\pm$ by $$\tilde{\Omega}_\pm(\delta) \coloneqq \tilde{\Omega}_\pm + B_\delta(0).$$

We will replace~\eqref{eq:P_k_decay2} with an estimate on the norm of two cutoff functions, akin to the argument in Lemma~\ref{lem:aandbbounds}.
Define $\chi_\mp$ to be the convolutions of the indicator functions of $\tilde{\Omega}_\pm(\frac{1}{2}h^\rho)$ with the function $h^{-\rho}\chi(h^{-\rho}t):\R^l \rightarrow \R$, where $\chi \in C_c^\infty(B_{1/2}(0))$ is a nonnegative function integrating to 1. Then, 
$$\chi_\mp \in C^\infty(\R^l, [0,1]), \quad \supp \chi_\mp \subset \tilde{\Omega}_\pm(h^\rho), \quad \chi_\mp =1 \text{ on } \tilde{\Omega}_\pm,$$
and for each $\alpha \in \N^l$, there exists a constant $C_\alpha$ (depending only on $\alpha$ and the choice of $\chi$) such that 
$$\sup_{t \in \R^l} |\partial_t^\alpha \chi_\pm (t) | \leq C_\alpha h^{-\rho |\alpha|}.$$

We define the symbols $\tilde{\chi}_\pm \in C^\infty(\R^{2n})$ by 
$$\tilde{\chi}_-(x, \xi) = \chi_-(\xi'), \quad \tilde{\chi}_+(x, \xi)=\chi_+(x').$$
Then $\tilde{\chi}_\pm$ lie in the symbol class $S_{Q^{-1}L_\pm, \rho, 0}(\R^{2n})$ uniformly in $h$. On the other hand,  $(b_\pm \psi_k) \circ Q \in S_{Q^{-1}L_\pm, \rho + \varepsilon, \rho' + \varepsilon}(\R^{2n})$, a larger symbol class.  Using the fact that $(b_\pm \psi_k) \circ Q = ((b_\pm \psi_k)\circ Q)\tilde{\chi}_\pm $, we have
\begin{align}\label{eq:chi_mult}
\begin{split}
\op_h\left((b_-\psi_k) \circ Q \right)=\op_h\left((b_-\psi_k) \circ Q \right)\op_h\left(\tilde{\chi}_- \right) + \cO(h^{1-\rho-\rho'-2\varepsilon})_{L^2 \to L^2},\\
\op_h\left((b_+\psi_k) \circ Q \right)=\op_h\left(\tilde{\chi}_+ \right)\op_h\left((b_+\psi_k) \circ Q \right) + \cO(h^{1-\rho-\rho'-2\varepsilon})_{L^2 \to L^2}.
\end{split}
\end{align}

We see that $\op_h(\tilde{\chi}_+) = \chi_+(x')$ is a multiplication operator and $\op_h(\tilde{\chi}_-)= \chi_-(-ih\partial_{x'})$ is a Fourier multiplier. Using the estimates in~\eqref{eq:chi_mult} and the fact that $\op_h((b_\pm \psi_k) \circ Q)$ are bounded uniformly as operators on $L^2(\R^n)$,~\eqref{eq:P_k_decay2} becomes 
\begin{equation}\label{eq:P_k_decay3}
\|\chi_-(-ih\partial_{x'}) \chi_+(x')\|_{L^2(\R^n) \to L^2(\R^n)} \leq C h^\beta.
\end{equation}

Now we consider $L^2(\R^n)$ as the Hilbert tensor product $L^2(\R^l) \otimes L^2(\R^{n-l})$ corresponding to the decomposition $x=(x',x'')$. Then the operators $\chi_+(x')$ and $\chi_-(-ih\partial_{x'})$ are the tensor products of the same operators in $l$-variables with the identity operator on $L^2(\R^{n-l})$. Thus,~\eqref{eq:P_k_decay3} is equivalent to 
\begin{equation}\label{eq:P_k_decay4}
\|\chi_-(-ih\partial_{x'}) \chi_+(x')\|_{L^2(\R^l) \to L^2(\R^l)} \leq Ch^\beta,
\end{equation}
where we now treat the factors in the product as operators on $L^2(\R^l)$. Using Definition~\ref{def:semiclassical_FT} of the unitary semiclassical Fourier transform $\cF_h: L^2(\R^l) \to L^2(\R^l)$, we see that $\chi_-(-ih\partial_{x'}) = \cF_h^{-1} \chi_-(x') \cF_h$. Therefore, 
$$\|\chi_-(-ih\partial_{x'}) \chi_+(x')\|_{L^2(\R^l) \to L^2(\R^l)}=\|\chi_-(x')\cF_h \chi_+(x')\|_{L^2(\R^l) \to L^2(\R^l)}.$$
Finally, as $\chi_\pm = \chi_\pm \1_{\tilde{\Omega}_\mp(h^\rho)}$ and $|\chi_\pm| \leq 1$,~\eqref{eq:P_k_decay4} reduces to
\begin{equation}\label{eq:P_k_decay5} 
\left\|\1_{\tilde{\Omega}_+(h^\rho)} \cF_h \1_{\tilde{\Omega}_-(h^\rho)} \right\|_{L^2(\R^l) \rightarrow L^2(\R^l)} \leq Ch^\beta. 
\end{equation}

5. The last step of the proof is to apply the fractal uncertainty principle of Proposition~\ref{prop:fractal_uncertainty}. 

Pick $\varrho \in (\frac12, \rho)$.
 By Lemma~\ref{lem:porous_lines}, $\tilde{\Omega}_+$ and $\tilde{\Omega}_-$ are $\nu$-porous on lines from scales $h^\varrho$ to $1$. Thus, $\tilde{\Omega}_+$ is $\nu$-porous on balls from scales $h^\varrho$ to $1$. Applying Lemma~\ref{lem:gen_porous_balls} and Lemma~\ref{lem:gen_porous_lines} for sufficiently small $h$ with $\alpha_0 = h^\varrho$, $\alpha_1 =1$, and $\alpha_2 = h^\rho$, we know $\tilde{\Omega}_+(h^\rho)$ is $\nu/2$-porous on balls from scales $h^\varrho$ to 1 and $\tilde{\Omega}_-(h^\rho)$ is $\nu/2$-porous on lines from scales $h^\varrho$ to 1. 
Then~\eqref{eq:P_k_decay5} follows from Proposition~\ref{prop:fractal_uncertainty}, completing the proof of Lemma~\ref{lem:Bw_decay}. 
\end{proof}

\section{Proof of Theorem~\ref{thm:counterexample}}\label{section:counterexample}
Recall that in Theorem~\ref{thm:coisotropic}, we showed that if hyperbolic $A \in \Sp(2n, \Z)$ is diagonalizable over $\C$ and has semiclassical measure $\mu$ supported on the union of finitely many subtori with the same tangent space, then the tangent space must be coisotropic. However, in this section, we prove Theorem~\ref{thm:counterexample}, which shows that this does not characterize all cases. In fact, in Appendix~\ref{appendix1}, we use Theorem~\ref{thm:counterexample} to show that Theorem~\ref{thm:full_support}~\eqref{item:K} is tight in one of the cases without full support.

Using the short periods of~\cite{FNdB}, we find a semiclassical measure that is supported on the union of two symplectic transversal subtori. In this section, we use the ordering of coordinates $(x_1, \xi_1, x_2, \xi_2)$.


\subsection{Short Periods}\label{subsection:short_periods}
Let $A \in \Sp(2, \Z)$. 
For $N \in \N$, let $P(N) = \min \{k\geq 1 : A^k \equiv I \mod 2N\}$. 
From~\cite{FNdB}*{Lemma 4}, we know 
$$M_{N, \theta}^{P(N)} = e^{i \phi(N)} I \quad \text{for some} \quad \phi(N) \in [-\pi, \pi).$$ Thus, we call $P(N)$ the \emph{period} of $M_{N, \theta}$.

In~\cite{BdB}*{Proposition 3.4}, the authors show that for the sequence
\begin{equation}\label{eq:tildeNk}
N_k = \frac{\lambda^k -\lambda^{-k}}{\lambda - \lambda^{-1}} \quad \text{and} \quad \theta = \begin{cases} (0,0) & \text{ for } N_k \text{ even,}\\ (\pi, \pi) & \text{ for } N_k \text{ odd,}
\end{cases}
\end{equation}
$P(N_k) = 2k$ and therefore
$$P(N_k) =\frac{2 \log N_k}{\log \lambda} + \cO(1).$$

\subsection{Assumptions and setup for Theorem~\ref{thm:counterexample}}\label{subsection:assumptions}
Using $N_k$ from~\eqref{eq:tildeNk} and the relationship $N_{k+1}= \Tr(A)N_k + N_{k-1}$, it can be proven inductively that for odd $\Tr(A)$, $N_{3k}$ is even, while $N_{3k+1}$ and $N_{3k+2}$ are odd. For even $\Tr(A)$, $N_{2k}$ is even, while $N_{2k+1}$ is odd. We restrict to 
$$
\begin{aligned}
k=0\bmod 6,&\quad \Tr(A)\text{ odd};\\
k=0\bmod 2,&\quad \Tr(A)\text{ even}.
\end{aligned}
$$
Then $N_k$ is even and $P(N_k)=2k$ is always divisible by $4$. Furthermore, $\theta=(0,0)$ satisfies the quantization condition~\eqref{eq:domainrange}. Therefore, we set $\theta=(0,0)$.

Let $A =\begin{bmatrix} a & b \\ c & d \end{bmatrix} \in \Sp(2, \Z)$ be a hyperbolic matrix with all positive entries. We further suppose that $A$ is symmetric, i.e., $b=c$. An infinite number of matrices still satisfy these conditions, for example, pick $A= \begin{bmatrix} 2 & 1 \\ 1 & 1 \end{bmatrix}$. The conditions that all entries of $A$ are positive and $b=c$ are used to simplify calculations; we suspect that they are not necessary for Theorem~\ref{thm:counterexample}. 

We label the eigenvalues of $A$ by $\lambda, \lambda^{-1}$, where $\lambda >1$.

We have the following explicit formula for a metaplectic quantization of $A$ (this does not yet use that $b=c$):
\begin{equation}
\label{e:M-A-def}
M_Af(x) = \frac{e^{-\frac{i \pi}{4}}}{\sqrt{2 \pi h b}} \int_\R e^{\frac{i}{h}\left(\frac{d}{2b}x^2 - \frac{xy}{b} + \frac{a}{2b}y^2 \right)} f(y) dy.
\end{equation}
The above formula comes from~\cite{HB1980}*{(9)}, with a change in the phase factor which simplifies~Lemma~\ref{lem:support} below
and also gives the following multiplication formula:
for any two matrices $A,B\in\Sp(2,\Z)$ with positive entries, we have
\begin{equation}
\label{e:multiplicativity}
M_AM_B=M_{AB}.
\end{equation}

Define the $L^2$-normalized Gaussian
$$G_h(x) \coloneqq  (\pi h)^{-\frac{1}{4}} e^{-\frac{x^2}{2h}}.$$ 

Now recall the definition of the projector $\Pi_N(0): \mathscr S(\mathbb R^n)\to\mathcal H_N(0)$ from~\eqref{eq:Pi_N_def}. For $N$ even, from~\eqref{eq:S_N}, we have $\Pi_N( 0) f= \Pi_N(0)^* \Pi_N(0) f= \sum_{l \in \Z^2} U_l f$. We then set
\begin{equation}\label{eq:G_N}
G_N \coloneqq \Pi_N(0) G_h =\sum_{l \in \Z^2} U_l G_h \quad\in \mathcal H_N(0).
\end{equation}

Using the ordering of coordinates $(x_1, \xi_1, x_2, \xi_2)$, we know if $A \in \Sp(2, \Z)$, then $A \oplus A \in \Sp(4, \Z)$. 
Since 
$$(M_A \otimes M_A)^{-1} \op_{h} (b \otimes c) (M_A \otimes M_A)= \op_{h} ((b \otimes c) \circ (A \oplus A)),$$
we see $M_{A \oplus A} = M_A \otimes M_A :L^2(\mathbb R^2)\to L^2(\mathbb R^2)$ is a quantization of $A\oplus A$. Note that $A \oplus A$ is diagonalizable.

We denote the restriction of $M_A$ to $\cH_{N}(0)$ by $M_{N}$. Since we have selected $\theta=(0,0)$, we drop $\theta$ from our notation for the rest of this section. Recall that $M_N^{P(N)} = e^{i \phi(N)}I$ for some $\phi(N) \in [-\pi, \pi)$.  Then define
\begin{equation}\label{eq:u_def}
u^{(k)}(x_1, x_2) \coloneqq \frac{1}{\sqrt{P}}\sum_{t=-\frac{P}{2}}^{\frac{P}{2}-1} \left(e^{-i\frac{\phi}{ P}t}M_{N}^tG_{N}\right) \otimes \left( e^{-i\frac{\phi}{ P}\left(t+\frac{P}{2}\right)}M_{N}^{t+\frac{P}{2}}G_{N} \right),
\end{equation}
where $P=P(N_k)$, $\phi=\phi(N_k)$, $M_N =M_{N_k}$, and $G_{N}=G_{N_k}$.
We hereon refer to the above function as $u$. Later on, when we take the semiclassical limit $h \rightarrow 0$, we mean $\lim_{k \rightarrow \infty} h_k$ for $h_k = (2 \pi N_k)^{-1}$. 

A simple calculation using that $\left(e^{-i\frac{\phi}{ P}t}M_{N}^tG_{N}\right)$ is periodic in $t$ with period $P$ verifies that each $u$ is an eigenfunction of $M_A \otimes M_A$ with eigenvalue $e^{2i \frac{\phi}{P}}$. Therefore, Theorem~\ref{thm:counterexample} follows from the following proposition. 

\begin{proposition}\label{prop:counterexample}
After normalization, $u$ (defined in~\eqref{eq:u_def}) is a  sequence of eigenfunctions  for $M_A \otimes M_A$ that weakly converges to the
semiclassical measure 
\begin{equation}\label{eq:semiclassical_measure}
\mu \coloneqq \frac{1}{2}\left(\delta(x_1, \xi_1) \otimes dx_2 d\xi_2 + dx_1 d\xi_1  \otimes \delta(x_2, \xi_2)\right).
\end{equation}
\end{proposition}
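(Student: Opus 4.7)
The plan is to verify the eigenfunction property directly, then analyze the weak convergence by expanding into diagonal and off-diagonal contributions. Setting $v_t=e^{-i\phi t/P}M_N^tG_N$ and $w_t=e^{-i\phi(t+P/2)/P}M_N^{t+P/2}G_N$, the periodicity $M_N^P=e^{i\phi}I$ gives $v_{t+P}=v_t$, $w_{t+P}=w_t$, together with $M_Av_t=e^{i\phi/P}v_{t+1}$ and $M_Aw_t=e^{i\phi/P}w_{t+1}$. Re-indexing the $t$-sum then yields $(M_A\otimes M_A)u = e^{2i\phi/P}u$, so $u$ is an eigenfunction.

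For the weak convergence, by density it suffices to test against tensors $a\otimes b$ with $a,b\in C^\infty(\bT^2)$. Using the phase factors in $v_t,w_t$,
\begin{equation*}
\lrang{\op_N(a\otimes b)u, u} = \frac{1}{P}\sum_{s,t=-P/2}^{P/2-1}e^{-2i\phi(t-s)/P}\lrang{\op_N(a)M_N^tG_N, M_N^sG_N}\lrang{\op_N(b)M_N^{t+P/2}G_N, M_N^{s+P/2}G_N},
\end{equation*}
which I would split into the diagonal $s=t$ and the off-diagonal $s\neq t$.

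The diagonal terms are Husimi-type expectations $\lrang{\op_N(a)M_N^tG_N, M_N^tG_N}$ (and similarly for $b$ at time $t+P/2$). I would invoke the two-dimensional analysis of Faure--Nonnenmacher--De Bi\`evre (recalled in \S\ref{subsection:previousresults} and \S\ref{subsection:proofoutline}), which says that this expectation converges to $a(0)$ for $|t|\leq P/4$ and to $\int_{\bT^2}a\,dx\,d\xi$ for $P/4<|t|\leq P/2$, uniformly in $t$. Because shifting the time by a half-period interchanges the two regimes, the two factors lie in opposite regimes for every $t$; averaging over $t$ splits evenly and produces $\tfrac12\bigl[a(0)\textstyle\int_{\bT^2}b+\int_{\bT^2}a\cdot b(0)\bigr]=\int_{\bT^4}(a\otimes b)\,d\mu$. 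Applying the same argument with $a=b=1$ gives $\|u\|^2\to 1$, so normalization does not affect the limit.

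The technical heart of the proof, and the main obstacle, is showing that the off-diagonal terms contribute $o(1)$. After the change of variable $r=t-s$ and conjugation by $M_N^s$ via exact Egorov, each off-diagonal term reduces to a matrix element of the form $\lrang{\op_N(a\circ A^s)M_N^rG_N, G_N}$. The key estimate I would prove is that this matrix element decays in $|r|$ at rate at least $\lambda^{-|r|/2}$: the squeezed Gaussian $M_N^rG_N$ is elongated by $\lambda^{|r|}\sqrt{h}$ in the unstable direction, and its Wigner overlap with the round Gaussian $G_N$ has total mass of that order, forcing the matrix element to decay accordingly. Using the explicit formula~\eqref{e:M-A-def} together with the symmetric assumption $b=c$ on the entries of $A$, this overlap can be evaluated as an explicit one-dimensional Gaussian integral. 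The difficulty is that for $|s|$ of order $P/2\sim\log N$ the symbol $a\circ A^s$ has $C^m$-norms growing like $\lambda^{m|s|}$, so the na\"ive bound on $\op_N(a\circ A^s)$ degrades; this forces the argument to exploit the spatial localization of $G_N$ near the origin, so that only the values of $a$ along the unstable manifold through the origin enter the estimate. Summing the resulting geometric series in $r$ and dividing by $P$ would then yield the required $o(1)$.
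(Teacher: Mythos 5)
There is a genuine gap, and it sits exactly where you locate ``the technical heart'': the claim that the off-diagonal terms $s\neq t$ contribute $o(1)$ is false. You can see this already in the normalization: writing $r=t-s$ and using Lemma~\ref{lem:support}, each off-diagonal term is of size $\langle M_N^{r}G_N,G_N\rangle_{\cH}^2\approx 2/(\lambda^{r}+\lambda^{-r})$, so the off-diagonal contribution to $\langle u,u\rangle_\cH$ is $\frac1P\sum_{s}\sum_{r\neq 0}\frac{2}{\lambda^{r}+\lambda^{-r}}+o(1)$. Your geometric series in $r$ indeed converges, but the remaining sum over the $P$ values of $s$ exactly cancels the prefactor $1/P$, so the off-diagonal part converges to the \emph{positive constant} $\sum_{r\neq0}2/(\lambda^{r}+\lambda^{-r})$. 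Consequently $\|u\|_\cH^2\to S_1(\lambda)=\sum_{r\in\Z}2/(\lambda^r+\lambda^{-r})>1$ (Lemma~\ref{lem:size_u}), not $1$ as you assert. Discarding the off-diagonal terms therefore computes neither the numerator nor the denominator correctly; that your diagonal-only ratio still lands on $1/2$ is an artifact of the off-diagonal contributions being (asymptotically) proportional in numerator and denominator, which is precisely what must be proved and cannot be assumed.

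The paper's proof keeps the full double sum in both $\langle(U_{j/N}\otimes U_{k/N})u,u\rangle_\cH$ and $\langle u,u\rangle_\cH$. After reducing to quantum translations $U_{j/N}\otimes U_{k/N}$ (rather than general $a\otimes b$), the $t$-sum is split into the local window $I_L$, the ergodic window $I_E$, and transition windows $I_0$ of width $\Theta(h)$ around $\pm P/4$ and $\pm P/2$; the $I_0$ contribution is controlled not termwise but by the Cauchy--Schwarz bound $\|u_{I}\|_\cH=o(1)$ for short intervals $I$ (Lemma~\ref{lem:size_u} again). On $I_L$ one shows $\langle U_{j/N}M_N^tG_N,M_N^sG_N\rangle_\cH=\langle G_N,M_N^{s-t}G_N\rangle_\cH+o(P^{-1})$ uniformly, i.e.\ the translation by $A^{-t}j/N$ is negligible because $|A^{-t}j/N|\to0$ for $|t|\le P/4-\Theta(h)$ --- this recovers \emph{half} of the full double sum defining $\langle u,u\rangle_\cH$, off-diagonal terms included. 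On $I_E$ with $j\neq0$ the Diophantine property of the eigendirections (Lemma~\ref{lem:diophantine}, via Lemma~\ref{lem:sum_over_integers}) forces decay. You would need to restructure your argument along these lines: the uniform-in-$t$ Husimi limits you invoke cannot be applied term by term near $|t|=P/4$, and the diagonal/off-diagonal dichotomy must be replaced by the local/ergodic dichotomy in $t$ applied to the entire double sum.
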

Clearly, $\mu$  is supported on two symplectic transversal subtori.

Our construction of $u$ is inspired by the work of Faure, Nonnenmacher, and De Bi\`{e}vre  in~\cite{FNdB}.
In~\cite{FNdB}*{Theorem 1}, the authors show that
\begin{equation}\label{eq:original_eigenfunction}
\frac{1}{\sqrt{P}}\sum_{t=-\frac{P}{2}}^{\frac{P}{2}-1} e^{-i \frac{\phi}{P} t} M_{N}^t G_{N}
\end{equation}
is a sequence of eigenfunctions for $M_A$ 
that (after normalization) weakly converges to \linebreak  $\tfrac{1}{2}(\delta(x_1, \xi_1) + dx_1 d\xi_1)$.

We give a brief summary of their argument. Each summand in~\eqref{eq:original_eigenfunction} with $|t| \leq P/4 - o(P)$ behaves ``locally,"  concentrating at the origin in the semiclassical limit. On the other hand, the summands with $P/4 +o(P) \leq |t| \leq P/2$ behave ``ergodically," equidistributing on the torus in the semiclassical limit. Finally, the terms close to $\pm P/4$ are between local and ergodic, but contribute negligibly in the semiclassical limit. Thus, the sum over $|t| \leq P/4$ weakly converges to $\tfrac{1}{2}\delta(x_1, \xi_1)$, while the sum over $P/4 < |t| \leq P/2$ weakly converges to $\tfrac{1}{2}dx_1 d\xi_1$. 

For our eigenfunctions~\eqref{eq:u_def}, all summands except those with $|t|$ close to $P/4$ are the tensor product of one local and one ergodic summand from~\eqref{eq:original_eigenfunction}. Therefore, we expect the sum over $|t| \leq P/4$ to weakly converge to $\frac{1}{2}(\delta(x_1, \xi_1) \otimes dx_2 d\xi_2)$ and the sum over $P/4 < |t| \leq P/2$ to weakly converge to $\frac{1}{2}(dx_1 d\xi_1  \otimes \delta(x_2, \xi_2))$.

\subsection{Preliminary calculations}\label{subsection:preliminary_calculations}
We first apply the quantization of $A$ to the normalized Gaussian:
\begin{align}\label{eq:gaussian}
\begin{split}
M_A G_h(x) &=(\pi h)^{-\frac{1}{4}} \frac{e^{-\frac{i \pi}{4}}}{\sqrt{2 \pi h b}}  e^{\frac{i}{2h}\left( \frac{d}{b}x^2\right)}\int_\R e^{\frac{i}{2h} \left(y^2 \left(\frac{a}{b} + i \right) - \frac{2 xy}{b}\right)} dy\\
&=(\pi h)^{-\frac{1}{4}}\frac{e^{-\frac{i \pi}{4}}}{\sqrt{2 \pi h b}}  e^{\frac{i}{2h}x^2 \left(\frac{d}{b} - \frac{1}{b(a+ib)}\right)} \int_\R e^{\frac{i}{2h}y^2 \left( \frac{a}{b} +i\right)} dy\\
&=(\pi h)^{-\frac{1}{4}} \frac{e^{-\frac{i \pi}{4}}}{\sqrt{b-ai}} e^{\frac{i}{2h}x^2 \left(\frac{c+id}{a+ib}\right)}.
\end{split}
\end{align}

Let $\omega = (y, \eta) \in \R^2$. Recall the definition of the quantum translation $U_\omega \coloneqq \op_h(a_\omega)$, where $a_w(z) \coloneqq \exp(\frac{i}{h} \sigma(\omega, z))$. We know from~\eqref{e:U-omega-def} that $U_\omega f(x) = e^{\frac{i}{h} \eta x - \frac{i}{2h} y \eta} f(x-y)$. Therefore, 
\begin{equation}\label{eq:quantum_translation}
U_\omega e^{-\frac{x^2}{2h}} = e^{\frac{i}{h} \eta x - \frac{i}{2h} y \eta -\frac{1}{2h}(x-y)^2}.
\end{equation}

Next, we show the following technical lemma.

\begin{lemma}\label{lem:support}
Let $A\in\Sp(2,\mathbb Z)$ be a symmetric matrix with all positive entries and let $M_A$ be defined in~\eqref{e:M-A-def}.
We have
$$\lrang{U_\omega G_{h}, M_A G_h}_{L^2}=\frac{\sqrt{2}}{\sqrt{\Tr(A)}} e^{-\frac{1}{2h\Tr(A)}\lrang{A^{-1}\omega, \omega}}e^{\frac{i}{2h\Tr(A)}\lrang{AJ\omega, \omega}},$$
where $J = \begin{bmatrix} 0 & 1 \\ -1 & 0 \end{bmatrix}$.
\end{lemma}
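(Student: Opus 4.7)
The plan is a direct Gaussian computation: substitute the explicit formulas \eqref{eq:gaussian} for $M_A G_h$ and \eqref{eq:quantum_translation} for $U_\omega G_h$ into
\[
\lrang{U_\omega G_h, M_A G_h}_{L^2} = \int_\R U_\omega G_h(x)\,\overline{M_A G_h(x)}\,dx,
\]
and evaluate the resulting one-variable Gaussian integral. Writing $\omega=(y,\eta)$ and using $b=c$, the integrand has quadratic-in-$x$ coefficient $-\tfrac{1}{2h}\bigl(1+i\tfrac{b-id}{a-ib}\bigr)$, linear coefficient $\tfrac{y+i\eta}{h}$, and an $x$-independent piece $-\tfrac{y^2+iy\eta}{2h}$; the overall constant prefactor is $(\pi h)^{-1/2} e^{i\pi/4}/\sqrt{b+ai}$ coming from $G_h$ and $\overline{M_A G_h}$.

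The first key step is the algebraic identity
\[
1 + i\,\tfrac{b-id}{a-ib} \;=\; \tfrac{(a-ib)+i(b-id)}{a-ib} \;=\; \tfrac{a+d}{a-ib} \;=\; \tfrac{\Tr(A)}{a-ib}.
\]
Applying $\int e^{-\alpha x^2+\beta x}\,dx = \sqrt{\pi/\alpha}\,e^{\beta^2/(4\alpha)}$ with $\alpha=\Tr(A)/(2h(a-ib))$ and $\beta=(y+i\eta)/h$ produces the Gaussian $\sqrt{2\pi h(a-ib)/\Tr(A)}\;e^{(y+i\eta)^2(a-ib)/(2h\Tr(A))}$. Next I would clean up the prefactor using $b+ai = i(a-ib)$, so that $\sqrt{b+ai}=e^{i\pi/4}\sqrt{a-ib}$; this cancels both the $e^{i\pi/4}$ phase and the $\sqrt{a-ib}$ factor, leaving exactly $\sqrt{2}/\sqrt{\Tr(A)}$.

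It remains to read off the exponent. Expanding $(y+i\eta)^2(a-ib)/(2h\Tr(A))$ and combining with $-(y^2+iy\eta)/(2h) = -(y^2+iy\eta)\Tr(A)/(2h\Tr(A))$, the real part collects to $-(dy^2-2by\eta+a\eta^2)/(2h\Tr(A))$ and the imaginary part to $((a-d)y\eta + b(\eta^2-y^2))/(2h\Tr(A))$. Since $\det A=1$ and $b=c$, one has $A^{-1}=\bigl(\begin{smallmatrix}d&-b\\-b&a\end{smallmatrix}\bigr)$, whence $\lrang{A^{-1}\omega,\omega}=dy^2-2by\eta+a\eta^2$, matching the real part. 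Similarly, $AR=\bigl(\begin{smallmatrix}-b&a\\-d&b\end{smallmatrix}\bigr)$ gives $\lrang{AR\omega,\omega}=(a-d)y\eta+b(\eta^2-y^2)$, matching the imaginary part. There is no serious obstacle here — it is a bookkeeping calculation made clean by the symmetry $b=c$ and the identity $a+d=\Tr(A)$ — but the one nonobvious move is recognizing $AR$ (rather than $RA$ or $A^TR$) as the correct matrix for the imaginary quadratic form.
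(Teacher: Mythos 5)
Your proposal is correct and is essentially the computation the paper itself carries out in~\eqref{eq:U_M_innerproduct}: the same substitution of~\eqref{eq:gaussian} and~\eqref{eq:quantum_translation}, the same identity $1+i\frac{b-id}{a-ib}=\frac{a+d}{a-ib}$ producing the quadratic coefficient $\frac{\Tr(A)}{a-ib}$, the same cancellation of the phase via $b+ai=i(a-ib)$, and the same identification of the real and imaginary quadratic forms with $\lrang{A^{-1}\omega,\omega}$ and $\lrang{AR\omega,\omega}$. No differences worth noting.
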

\begin{proof}
Using~\eqref{eq:gaussian},~\eqref{eq:quantum_translation}, and the fact that $b=c$, for $\omega = (y, \eta)$ we calculate 
\begin{equation}\label{eq:U_M_innerproduct}
\begin{split}
\lrang{U_\omega G_{h}, M_A G_h}_{L^2}&= e^{\frac{i \pi}{4}} \frac{(\pi h)^{-\frac{1}{2}}}{\sqrt{ai +b}} e^{-\frac{i}{2h}y \eta}\int_\R e^{\frac{i}{h} \eta x  -\frac{(x-y)^2}{2h}} \overline{e^{\frac{i}{2h}x^2 \left(\frac{c+id}{a+ib}\right)}}dx\\
&=e^{\frac{i \pi}{4}} \frac{(\pi h)^{-\frac{1}{2}}}{\sqrt{ai +b}} e^{-\frac{i}{2h} y \eta} e^{-\frac{y^2}{2h}} \int_\R e^{-\frac{x^2}{2h} \left(\frac{a+d}{a-ib}\right) +\frac{x}{h}(i \eta +y)}dx\\
&=e^{\frac{i \pi}{4}} \frac{(\pi h)^{-\frac{1}{2}}}{\sqrt{ai +b}} e^{-\frac{i}{2h} y \eta} e^{-\frac{y^2}{2h}} e^{\frac{(i\eta + y)^2}{2h} \frac{a-ib}{a+d}}\int_\R e^{-\frac{x^2}{2h} \frac{a+d}{a-ib}}dx\\
&=\frac{\sqrt{2}}{\sqrt{a+d}}e^{-\frac{i}{2h} y \eta} e^{-\frac{y^2}{2h}} e^{\frac{(i\eta + y)^2}{2h} \frac{a-ib}{a+d}}\\
&=\frac{\sqrt{2}}{\sqrt{a+d}} e^{-\frac{1}{2h(a+d)}\left(a\eta^2 -2b\eta y +  dy^2\right)}e^{\frac{i}{2h(a+d)}(b\eta^2 -by^2 +(a-d)\eta y)}\\
&=\frac{\sqrt{2}}{\sqrt{\Tr(A)}} e^{-\frac{1}{2h\Tr(A)}\lrang{A^{-1}\omega, \omega}}e^{\frac{i}{2h\Tr(A)}\lrang{AJ\omega, \omega}},
\end{split}
\end{equation}
for $J = \begin{bmatrix} 0 & 1 \\ -1 & 0 \end{bmatrix}$.
\end{proof}

We will often apply Lemma~\ref{lem:support} to a power of~$A$. Note for $t>0$, the power $A^t$ is still self-adjoint
with all positive entries and $M_A^t=M_{A^t}$ by~\eqref{e:multiplicativity}. For negative powers, we can use an identity following from~\cite[(2.18)]{dyatlov2021semiclassical}, the unitarity of $M_A$, and the fact that $U_\omega^*=U_\omega^{-1}=U_{-\omega}$:
$$
\lrang{U_\omega G_h,M_A^{-1}G_h}_{L^2}=\lrang{U_{A\omega}M_AG_h,G_h}_{L^2}=\overline{\lrang{U_{-A\omega}G_h,M_AG_h}_{L^2}},
$$
which shows that Lemma~\ref{lem:support} applies with $A,M_A$ replaced by $A^{-1},M_A^{-1}$.

Thus, from Lemma~\ref{lem:support}, we have for all $t\in\mathbb Z$,
\begin{equation}\label{eq:M^t_otimes}
\lrang{M^t_A G_h, G_h}_{L^2}=\frac{\sqrt{2}}{\sqrt{\lambda^t +\lambda^{-t}}}.
\end{equation}

For each $z \in \R^2$, we have the decomposition 
\begin{equation}\label{eq:stable_unstable_decomp}
z = a_+(z)v_+ + a_-(z) v_-,
\end{equation}
where $v_+$ is a unit length eigenvector of $A$ with eigenvalue $\lambda$ and $v_-$ is a unit length eigenvector of $A$ with eigenvalue $\lambda^{-1}$.
The slopes of $v_\pm$ are quadratic irrationals, which are well-known to be poorly approximated by rationals. 
This fact is exploited in~\cite{FNdB} to prove the following.
\begin{lemma}[\cite{FNdB}*{Lemma 2}]\label{lem:diophantine}
There exists $0<c_0 <1$, depending only on $A$, such that for all $l \in \Z^2 \setminus 0$,
\begin{equation*}
|a_+(l)| \geq \frac{c_0}{|a_-(l)|}.
\end{equation*}
\end{lemma}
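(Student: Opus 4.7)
The plan is to show the stronger fact that $|a_+(l) a_-(l)|$ is bounded below by a positive constant depending only on $A$; the desired inequality is then immediate. To set up, I would use the unit eigenvectors $v_\pm = (v_{\pm,1}, v_{\pm,2})$ and write $s_\pm = v_{\pm,2}/v_{\pm,1}$ for their slopes (which are well-defined and irrational, as will be seen). Solving the linear system $l = a_+(l) v_+ + a_-(l) v_-$ for $l = (l_1,l_2) \in \mathbb Z^2$ by Cramer's rule gives
$$a_+(l) = \frac{v_{-,1}(l_2 - s_- l_1)}{\det V}, \qquad a_-(l) = \frac{v_{+,1}(l_2 - s_+ l_1)}{\det V},$$
where $V = [v_+\,|\,v_-]$ and $\det V = v_{+,1} v_{-,1}(s_- - s_+)$. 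Hence
$$a_+(l)\, a_-(l) = \frac{(l_2 - s_+ l_1)(l_2 - s_- l_1)}{v_{+,1} v_{-,1}(s_+ - s_-)^2}.$$

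The key step is to recognize the numerator (up to the constant $b$) as an integer quadratic form. Using $A v_\pm = \lambda^{\pm 1} v_\pm$ with $A = \begin{pmatrix} a & b \\ b & d \end{pmatrix}$ symmetric and $\det A = 1$, I get $s_\pm = (\lambda^{\pm 1} - a)/b$, hence $s_+ + s_- = (d-a)/b$ and $s_+ s_- = (1 - ad)/b^2 = -1$ (using $ad - b^2 = 1$). Therefore
$$b\,(l_2 - s_+ l_1)(l_2 - s_- l_1) = b l_2^2 - (d-a) l_1 l_2 - b l_1^2 =: q(l_1, l_2) \in \mathbb{Z}.$$

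Now I need to verify $q(l_1, l_2) \neq 0$ for every $(l_1, l_2) \in \mathbb{Z}^2 \setminus \{0\}$. The discriminant of $q$ is $(d-a)^2 + 4b^2 = \operatorname{Tr}(A)^2 - 4$, which is positive (because $A$ is hyperbolic) but not a perfect square, since $\lambda = (\operatorname{Tr}(A) + \sqrt{\operatorname{Tr}(A)^2 - 4})/2$ is irrational. An indefinite integer binary quadratic form with non-square discriminant represents $0$ only trivially: if $q(l_1, l_2) = 0$ with $l_1 \neq 0$, then $l_2/l_1$ would equal $\tfrac{(d-a) \pm \sqrt{\operatorname{Tr}(A)^2 - 4}}{2b}$, forcing $\sqrt{\operatorname{Tr}(A)^2 - 4} \in \mathbb{Q}$, a contradiction; the case $l_1 = 0$ reduces to $b l_2^2 = 0$ and hence $l_2 = 0$ (recall $b > 0$). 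Since $q$ is integer-valued and nonzero off the origin, $|q(l_1, l_2)| \geq 1$ on $\mathbb{Z}^2 \setminus \{0\}$.

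Combining the estimates,
$$|a_+(l)\, a_-(l)| \geq \frac{1}{|b| \cdot |v_{+,1} v_{-,1}|\,(s_+ - s_-)^2} =: c_0 > 0,$$
and shrinking $c_0$ if necessary to lie in $(0,1)$ gives the claim. The only subtle point is the non-representation of $0$ by $q$; this is an elementary consequence of $\operatorname{Tr}(A)^2 - 4$ not being a square, so I do not anticipate a real obstacle. This argument is the classical Liouville-type bound for quadratic irrationals, specialized to the simultaneous approximation by both eigendirections.
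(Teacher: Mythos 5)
Your proof is correct. Note that the paper does not actually prove this lemma --- it is quoted verbatim from \cite{FNdB}*{Lemma 2} --- so there is no in-paper argument to compare against; your quadratic-form argument is the standard (essentially the same) mechanism underlying the cited proof: in the eigenbasis coordinates the product $a_+(l)a_-(l)$ is, up to an $A$-dependent constant, the value of the integral binary quadratic form $q(l_1,l_2)=bl_2^2-(d-a)l_1l_2-bl_1^2$ of non-square discriminant $\Tr(A)^2-4$, which therefore cannot vanish on $\Z^2\setminus\{0\}$ and hence has absolute value at least $1$ there. All the identities check out ($s_++s_-=(d-a)/b$, $s_+s_-=-1$ using $ad-b^2=1$, and the non-squareness of $\Tr(A)^2-4$ from the irrationality of $\lambda$), and your use of $b=c>0$ is legitimate since the lemma is invoked only in the setting of \S5.2 where $A$ is assumed symmetric with positive entries. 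One trivial slip: Cramer's rule gives $a_+(l)=-v_{-,1}(l_2-s_-l_1)/\det V$ (your formula is off by a sign), but since only $|a_+(l)a_-(l)|$ enters the final bound this has no effect on the argument.
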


We set the following notational convention:
\begin{notation}
We say that $f(h) \ll g(h)$ if $\frac{f(h)}{g(h)} \rightarrow 0$ as $h \rightarrow 0$.
\end{notation}

Using the above lemma, we can prove the following. The result  generalizes~\cite{FNdB}*{Proposition 1} to allow for a perturbation $r$. 
\begin{lemma}\label{lem:sum_over_integers}
Suppose $r \in \R^2$. Then for all $q \in \Z$,
\begin{equation}\label{eq:result_1}
\sum_{l \in \Z^2} \left|\lrang{M^q_A G_h, U_{l+r}  G_h}_{L^2}\right| \leq C(h\lambda^{|q|/2} + \lambda^{-|q|/2}).
\end{equation}
For  $|r| \ll \min (1, \lambda^{-|q|} (h\log h^{-1})^{-\frac{1}{2}})$ and $q \in \Z$ with $h \log h^{-1} \ll \lambda^{-|q|}$,
\begin{equation}\label{eq:result_2}
\sum_{l \in \Z^2 \setminus 0} \left|\lrang{M^q_A G_h, U_{l+r}  G_h}_{L^2}\right| \leq Ch\lambda^{|q|/2} .
\end{equation}
Finally, for $|r| \ll 1$ and all $q \in \Z$,
\begin{equation}\label{eq:result_3}
\sum_{l \in \Z^2 \setminus 0} \left|\lrang{M^q_A G_h, U_{l+r}  G_h}_{L^2}\right| \leq C(h\lambda^{|q|/2} +(h \log h^{-1})^{\frac{1}{4}}).
\end{equation}
For each inequality, $C$ does not depend on $r$ or $q$.
\end{lemma}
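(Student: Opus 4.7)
The plan is to reduce the matrix element $\lrang{M_A^q G_h, U_{l+c}G_h}_{L^2}$ to an explicit Gaussian via Lemma~\ref{lem:support} and then estimate the resulting lattice sum, invoking the Diophantine Lemma~\ref{lem:diophantine} whenever the $l=0$ contribution has been removed. I would first apply Lemma~\ref{lem:support} with $A$ replaced by $A^q$: for $q\geq 0$ this is legal because $A^q$ is symmetric with positive entries (a short induction using that $A$ is symmetric with all positive entries), and $M_{A^q}=M_A^q$ by~\eqref{e:multiplicativity}; for $q<0$ I would use the adjoint identity displayed just before~\eqref{eq:M^t_otimes}, which reduces the modulus $|\lrang{M_A^q G_h, U_\omega G_h}|$ to the $|q|$-case with $\omega$ replaced by $-A^{|q|}\omega$. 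Decomposing $l+c = a_+(l+c)v_+ + a_-(l+c)v_-$ in the orthonormal eigenbasis (orthonormal because $A$ is symmetric and $\lambda\neq\lambda^{-1}$), and using $A^{-|q|}v_\pm = \lambda^{\mp|q|}v_\pm$, this yields, after assuming WLOG $q\geq 0$,
$$|\lrang{M_A^q G_h, U_{l+c}G_h}_{L^2}| \leq \frac{C}{\sqrt{\lambda^q + \lambda^{-q}}}\exp\left(-\frac{\lambda^{-2q}a_+(l+c)^2 + a_-(l+c)^2}{4h}\right).$$

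For \eqref{eq:result_1} the plan is a straightforward sum-to-integral comparison. The Jacobian of $l\mapsto (a_+(l),a_-(l))$ is $1$ by orthonormality of $v_\pm$, and the substitution $u=\lambda^{-q}a_+$, $v=a_-$ turns the integral into $\lambda^q\iint e^{-(u^2+v^2)/(4h)}\,du\,dv = 4\pi h\lambda^q$. The standard sum-to-integral error is $O(\sup|f|+\|\nabla f\|_{L^1})=O(1)$, so $\sum_l|\lrang{M_A^q G_h, U_{l+c}G_h}|\lesssim \lambda^{-q/2}(h\lambda^q+1)$, giving~\eqref{eq:result_1}.

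For \eqref{eq:result_2} and \eqref{eq:result_3} the key point is that removing $l=0$ removes the unique peak of the Gaussian near the origin, so the remaining sum should scale like $h\lambda^{q/2}$ rather than picking up an $O(\lambda^{-q/2})$ boundary contribution. I would split $\Z^2\setminus\{0\}$ into the pieces $|l|\leq L_0$ and $|l|>L_0$. On the bounded part Lemma~\ref{lem:diophantine} gives $|a_-(l)|\geq c_0/|a_+(l)|\geq c_0/|l|$, which together with $|c|$ small enough implies $|a_-(l+c)|\geq c_0/(2|l|)$; applying AM--GM in the exponent then gives $\lambda^{-2q}a_+(l+c)^2 + a_-(l+c)^2 \geq 2\lambda^{-q}|a_+(l+c)a_-(l+c)|\geq c\lambda^{-q}$ for some uniform $c>0$, so each such term is at most $e^{-c\lambda^{-q}/(2h)}$ and there are $O(L_0^2)$ of them. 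On the unbounded part $|l|>L_0$ I would run the integral comparison on $\{|l+c|>L_0/2\}$, which genuinely misses the Gaussian peak and is therefore $\lesssim h\lambda^q$. Under the hypothesis $h\log h^{-1}\ll\lambda^{-|q|}$ of \eqref{eq:result_2}, the choice $L_0\sim\sqrt{\lambda^{-q}/h}$ makes both pieces $O(h\lambda^q)$; in \eqref{eq:result_3}, where no such constraint on $q$ is available, balancing the two contributions produces the subleading $(h\log h^{-1})^{1/4}$ term.

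The hardest part will be the interplay between the Diophantine lower bound and the perturbation by $c$: the constraint $|c|\ll\lambda^{-|q|}(h\log h^{-1})^{-1/2}$ in \eqref{eq:result_2} is precisely what guarantees that the cross terms in $a_\pm(l+c) = a_\pm(l)+a_\pm(c)$ do not eclipse the Diophantine lower bound $c_0$ on $|a_+(l)a_-(l)|$ across the range $|l|\leq L_0$. Tracking the $L_0$-dependence carefully in both the Diophantine tail and the integral tail is also delicate, since the right-hand side of \eqref{eq:result_3} pins down the optimal balance to be exactly the fourth-root power $(h\log h^{-1})^{1/4}$.
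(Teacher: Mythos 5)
Your reduction to an explicit Gaussian via Lemma~\ref{lem:support}, including the treatment of negative $q$, matches the paper, and your outline for \eqref{eq:result_2}--\eqref{eq:result_3} is close in spirit to what the paper does. However, there is a genuine gap in your proof of \eqref{eq:result_1}: the sum-to-integral comparison with error $O(\sup|f|+\|\nabla f\|_{L^1})=O(1)$ is not valid here. The summand is a Gaussian concentrated on a tube of width $\sim\sqrt h$ in the $v_-$ direction and length $\sim\lambda^{|q|}\sqrt h$ in the $v_+$ direction; its integral is $\sim h\lambda^{|q|}$, but the number of lattice points at which it is of order $1$ is a lattice-point count in a thin anisotropic box, which is \emph{not} controlled by the area plus an $O(1)$ boundary term without arithmetic input --- if $v_+$ had rational slope the tube could contain $\sim\lambda^{|q|}\sqrt h$ lattice points, a factor $h^{-1/2}$ more than the area. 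Moreover $\|\nabla f\|_{L^1}$ is actually of order $\lambda^{|q|}\sqrt h$, not $O(1)$, and even granting your error bound this would contribute $\sqrt h\,\lambda^{|q|/2}$ after the prefactor, which is not $\leq C(h\lambda^{|q|/2}+\lambda^{-|q|/2})$ when $\lambda^{|q|}\gg h^{-1/2}$. The paper uses Lemma~\ref{lem:diophantine} precisely at this point, and for the full sum including $l=0$: it decomposes into level sets $S_k$ on which the exponent lies in $[k-1,k)$, shows via the Diophantine bound that two distinct lattice points of $S_k$ differ by at least $c_0/(4\sqrt{kh})$ in the $a_+$ coordinate, concludes $\#(S_k\cap\Z^2)\leq 16\lambda^{|q|}kh/c_0+1$, and sums against $e^{-k}$. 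You need this spacing argument (or an equivalent) for \eqref{eq:result_1}, and the same issue infects the ``unbounded part'' $\{|l|>L_0\}$ of your arguments for \eqref{eq:result_2} and \eqref{eq:result_3}, where you again invoke an integral comparison without Diophantine control of the lattice points in the far portion of the tube.

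Once the counting is repaired, the remainder of your plan is workable and parallels the paper: for \eqref{eq:result_2} the paper shows the slightly enlarged level sets contain no nonzero lattice point for $k\leq 2\log h^{-1}$ under the stated hypotheses on $c$ and $q$ (this is where the constraint $|c|\ll\lambda^{-|q|}(h\log h^{-1})^{-1/2}$ enters, much as you anticipate), and it deduces \eqref{eq:result_3} by a simple case split between \eqref{eq:result_1} and \eqref{eq:result_2} according to whether $\lambda^{-|q|/2}$ is smaller or larger than $(h\log h^{-1})^{1/4}$ --- considerably simpler than re-optimizing a threshold $L_0$.
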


\begin{proof} 
1. From Lemma~\ref{lem:support} and~\eqref{eq:stable_unstable_decomp}, we know 
\begin{equation*}
\left|\lrang{M_A^q G_h, U_{l+r} G_{h}}_{L^2} \right| =\frac{\sqrt{2}}{\sqrt{\lambda^q + \lambda^{-q}}} e^{-\frac{a_+(l+r)^2 \lambda^{-q} + a_-(l+r)^2 \lambda^q}{2h (\lambda^q + \lambda^{-q})}}.
\end{equation*}
We assume that $q\geq 0$. If instead $q< 0$, the following argument is the same, up to exchanging $a_+(\cdot)$ and $a_-(\cdot)$  and replacing $q$ with $-q$. 

Note that
\begin{align*}
&\left\{z \in \R^2 : k-1 \leq \frac{a_+(z+r)^2 \lambda^{-q} + a_-(z+r)^2 \lambda^q}{2h (\lambda^q + \lambda^{-q})} <  k \right\}\\
&\quad \subset \left\{z \in \R^2 : \lambda^{-2q} a_+(z+r)^2 + a_-(z+r)^2 < 4kh\right\}\\
&\quad \subset \left\{z \in \R^2  : | a_+(z+r)| < 2\lambda^q \sqrt{kh} \text{ and } |a_-(z+r)| < 2 \sqrt{kh}\right\} \eqqcolon S_k.
\end{align*}
Then,
\begin{equation}\label{eq:bound1}
\sum_{l \in \Z^2} \left|\lrang{M^q_A G_h, U_{l+r}  G_h}_{L^2}\right|  \leq C\lambda^{-q/2} \sum_{k=1}^\infty \# (S_k \cap \Z^2) \cdot e^{-k}.
\end{equation}
The same inequality holds if we replace $\Z^2$ by $\Z^2\setminus 0$ on both sides.

To prove~\eqref{eq:result_1},~\eqref{eq:result_2}, and~\eqref{eq:result_3}, we will bound either $\# (S_k \cap \Z^2)$ or $ \# (S_k \cap \Z^2 \setminus 0)$.

2. We first show~\eqref{eq:result_1}. Suppose $l, l' \in S_k \cap \Z^2$. Then $|a_-(l-l')| \leq |a_-(l+r)| + |a_-(l'+r)| \leq 4 \sqrt{kh}$. Thus, if $l \neq l'$, from Lemma~\ref{lem:diophantine}, 
$|a_+(l+r) - a_+(l'+r)| = |a_+(l-l')| \geq \frac{c_0}{4 \sqrt{kh}}.$
Combining this with $|a_+ (l+r)|, |a_+ (l'+r)| \leq 2 \lambda^q \sqrt{kh}$ and $|a_- (l+r)| \leq 2  \sqrt{kh}$, we know 
\begin{equation}\label{eq:S_k_bound}
\# (S_k \cap \Z^2) \leq \left(2(2\lambda^q \sqrt{kh}) \left(\frac{4\sqrt{kh}}{c_0} \right) +1\right) \cdot \left(2(2  \sqrt{kh}) +1\right) \leq C \left(k^{3/2}h \lambda^q + \sqrt{k}\right).
\end{equation}

Therefore, 
\begin{equation}\label{eq:bound3}
\sum_{k=1}^\infty \# (S_k \cap \Z^2) \cdot e^{-k} \leq C \lambda^q h \sum_{k=1}^\infty k^{3/2} e^{-k} + \sum_{k=1}^\infty \sqrt{k} e^{-k}  \leq C (\lambda^q h +1).
\end{equation}
Combining~\eqref{eq:bound1} and~\eqref{eq:bound3}, we conclude~\eqref{eq:result_1}.

3. Now we prove~\eqref{eq:result_2}. Assume that $|r| \ll \min\{1, \lambda^{-q} (h \log h^{-1})^{-\frac{1}{2}} \}$ and $h \log h^{-1} \ll \lambda^{-q}$. Note that
$$S_k \subset \left\{z \in \R^2: |a_+(z)| \leq 2 \lambda^q \sqrt{kh} +|r|,\ |a_-(z)| \leq 2 \sqrt{kh} + |r| \right\} \eqqcolon \tilde{S}_k.$$

We claim that $\#(\tilde{S}_k \cap \Z^2) <2$ for $k \leq 2 \log h^{-1}$. 
Indeed, for $l, l' \in \tilde{S}_k \cap \Z^2$, $|a_-(l-l')| \leq 4 \sqrt{kh} + 2|r|$. Then if $l \neq l'$, by Lemma~\ref{lem:diophantine}, $|a_+(l) - a_+(l')| \geq \frac{c_0}{4 \sqrt{kh} + 2|r|}$. Since $|a_+(l)|, |a_+(l')| \leq 2 \lambda^q \sqrt{kh} +|r|$ and $|a_+(l)| \leq 2\sqrt{kh} +r \ll 1$, we see
$$\# (\tilde{S}_k \cap \Z^2) \leq
2\frac{4 \sqrt{kh} +2|r|}{c_0}(2 \lambda^q \sqrt{kh} + |r|) + 1 \leq C(\lambda^q kh +|r| \lambda^q \sqrt{kh} + |r|^2) + 1.$$
As $|r| \ll 1$, to show $\# (\tilde{S}_k \cap \Z^2)<2$, it suffices to prove $kh +|r|  \sqrt{kh} \ll \lambda^{-q}$. 
Since $|r| \ll \lambda^{-q}(h \log h^{-1})^{-\frac{1}{2}}$ and $h \log h^{-1} \ll \lambda^{-q}$, for $k \leq 2 \log h^{-1}$,
$$kh + |r| \sqrt{kh} \leq 2 h \log h^{-1} + |r| \sqrt{2 h \log h^{-1}} \ll \lambda^{-q}.$$ 
Thus, $\# (\tilde{S}_k \cap \Z^2) < 2$ for $k \leq 2 \log h^{-1}$.

Noting that $0 \in \tilde{S}_k$, see $\#(S_k \cap \Z^2 \setminus 0) \leq \#(\tilde{S}_k \cap \Z^2 \setminus 0) = 0$ for $k \leq 2 \log h^{-1}$. 
Therefore, using~\eqref{eq:S_k_bound},

\begin{equation}\label{eq:bound4}
\sum_{k=1}^\infty\# (S_k \cap \Z^2 \setminus 0) \cdot e^{-k} \leq \sum_{2 \log \frac{1}{h}< k}\# (S_k \cap \Z^2) \cdot e^{-k}    \leq C \lambda^q h \sum_{2 \log  \frac{1}{h} < k} e^{-k/2} + \sum_{2 \log  \frac{1}{h} < k} e^{-k/2} \leq  C\lambda^{q} h.
\end{equation}
Combining~\eqref{eq:bound1} and~\eqref{eq:bound4}, we conclude~\eqref{eq:result_2}.

4. Finally, we show~\eqref{eq:result_3}. Assume $|r| \ll 1$. If $1 \leq \lambda^{-q} (h \log h^{-1})^{-1/2}$, then $|r| \ll \min\{1, \lambda^{-q} (h \log h^{-1})^{-\frac{1}{2}} \}$ and $h \log h^{-1} \ll \lambda^{-q}$. Thus we have~\eqref{eq:result_2}, a stronger statement than $\eqref{eq:result_3}$.

Otherwise,
$\lambda^{-q} (h \log h^{-1})^{-\frac{1}{2}} < 1$, which gives $\lambda^{-q/2} < (h \log h^{-1})^{\frac{1}{4}}$. Then,~\eqref{eq:result_3} follows from~\eqref{eq:result_1}. 
\end{proof}

 Next we prove the following lemma which we later will use to show the contribution of terms with $|t|$ close to $P/4$ is negligible in the semiclassical limit.  The argument is adapted from~\cite{FNdB}*{Proposition 2}. 

\begin{lemma}\label{lem:size_u}
For $\alpha =\alpha(h), \beta=\beta(h)$, set $I =[\beta, \beta +\alpha] \subset [-\frac{P}{2}, \frac{P}{2}]$, where $\alpha \rightarrow \infty$ as $h \rightarrow 0$. Set
\begin{equation}\label{eq:uI_definition}
u_{I}  \coloneqq \frac{1}{\sqrt{P}} \sum_{t =\beta}^{\beta+\alpha-1} \left(e^{-i \frac{\phi}{P}t} M^t_N G_N \right) \otimes \left(e^{-i \frac{\phi}{P}\left(t + \frac{P}{2}\right)} M^{t+ \frac{P}{2} }_N G_N \right).
\end{equation}
Then as $h \rightarrow 0$, $$\lrang{u_I, u_I}_\cH = \frac{\alpha}{P} S_1(\lambda) + o(1),$$
where $S_1(\lambda)$ is a strictly positive smooth function.

\end{lemma}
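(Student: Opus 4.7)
The plan is to expand $\lrang{u_I, u_I}$ via the tensor product structure, reduce to matrix coefficients $c_q \coloneqq \lrang{M_N^q G_N, G_N}_{\cH_N(0)}$, and then identify the limit. Using the tensor-product inner-product identity, unitarity of $M_N$ on $\cH_N(0)$, and the fact that the phase factors on the two tensor slots combine identically to $e^{-i\phi(t-s)/P}$, a direct computation gives
\begin{equation*}
\lrang{u_I, u_I}_{\cH} \;=\; \frac{1}{P} \sum_{t,s\in I} e^{-2i\phi(t-s)/P} c_{t-s}^{2} \;=\; \frac{1}{P} \sum_{|q| < \alpha}(\alpha - |q|)\, e^{-2i\phi q/P}\, c_q^{2}.
\end{equation*}

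To analyze $c_q$ asymptotically, I would use $G_N = \sum_{l \in \Z^2} U_l G_h$ from~\eqref{eq:G_N}, the intertwining $M_A U_l = U_{Al} M_A$, and the identity $\lrang{\Pi_N(0) f, g}_{\cH_N(0)} = \lrang{f, g}_{L^2}$ for $g \in \cH_N(0)$ to rewrite $c_q = \sum_{l \in \Z^2} \lrang{M_A^q G_h, U_l G_h}_{L^2}$. By Lemma~\ref{lem:support} with $\omega=0$, the $l=0$ term is the positive real number $c_q^{(0)} \coloneqq \sqrt{2}/\sqrt{\lambda^q + \lambda^{-q}}$, while the remainder $r_q \coloneqq c_q - c_q^{(0)} = \sum_{l\neq 0}\lrang{M_A^q G_h, U_l G_h}_{L^2}$ is controlled by Lemma~\ref{lem:sum_over_integers}.

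I would then show that $c_q^2 - (c_q^{(0)})^2 = 2 c_q^{(0)} r_q + r_q^2$ contributes $o(P)$ to the sum above. Split $|q| < \alpha \le P$ at a threshold $q_*$ chosen so that $\lambda^{q_*} \sim h^{-1}(\log h^{-1})^{-2}$, which makes~\eqref{eq:result_2} applicable for $|q|\le q_*$ with the strong bound $|r_q|\le Ch\lambda^{|q|/2}$; then $|c_q^{(0)} r_q|\le Ch$ and $|r_q|^2\le Ch^2\lambda^{|q|}$, so the contribution from $|q|\le q_*$ is $o(P)$. For $q_* < |q| < \alpha$, the range has length only $O(\log\log h^{-1})$, and both $(c_q^{(0)})^2\le 2\lambda^{-q_*}$ and $|r_q|^2\le C(h\log h^{-1})^{1/2}$ (using~\eqref{eq:result_3} together with $\lambda^{|q|/2}\le \lambda^{P/4} = O(h^{-1/2})$) are $o(1)$, so this range also contributes $o(P)$.

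It then remains to extract the limit from $\frac{1}{P}\sum_{|q|<\alpha}(\alpha-|q|)\, e^{-2i\phi q/P}(c_q^{(0)})^2$. Splitting into $\frac{\alpha}{P}\sum e^{-2i\phi q/P}(c_q^{(0)})^2$ and $-\frac{1}{P}\sum |q|\,e^{-2i\phi q/P}(c_q^{(0)})^2$, the latter is $O(1/P)=o(1)$ since $\sum_q |q|(c_q^{(0)})^2<\infty$; for the former, dominated convergence as $\alpha, P\to\infty$ (with summable majorant $(c_q^{(0)})^2 = 2/(\lambda^{|q|}+\lambda^{-|q|})$) yields the limit $S_1(\lambda)\coloneqq\sum_{q\in\Z} 2/(\lambda^{|q|}+\lambda^{-|q|}) = 1 + 4\sum_{q\ge 1}\lambda^q/(\lambda^{2q}+1)$, a strictly positive smooth function of $\lambda>1$. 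Combining gives $\lrang{u_I,u_I}_\cH = \frac{\alpha}{P}S_1(\lambda) + o(1)$. The main obstacle is the error analysis, since~\eqref{eq:result_3} alone only gives $|r_q|\le C(h\log h^{-1})^{1/4}$, and this squared, summed naively over $|q|\le P/2$, is not $o(P)$; the intermediate cutoff $q_*$ and the much sharper estimate~\eqref{eq:result_2} below it are what make the argument work.
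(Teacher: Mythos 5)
Your proposal follows essentially the same route as the paper's proof: expand $\lrang{u_I,u_I}_{\cH}$ through the tensor structure to $\frac{1}{P}\sum_{|q|<\alpha}(\alpha-|q|)e^{-2i\phi q/P}c_q^2$ with $c_q=\sum_{l\in\Z^2}\lrang{U_lM_A^qG_h,G_h}_{L^2}$, isolate the $l=0$ contribution via Lemma~\ref{lem:support}, control the remainder with Lemma~\ref{lem:sum_over_integers}, and pass to the limit $S_1(\lambda)=\sum_{q\in\Z}2/(\lambda^{|q|}+\lambda^{-|q|})$. The only divergence is in the error bookkeeping, and there your write-up has two slips. First, the hypothesis $I\subset[-\tfrac P2,\tfrac P2]$ only forces $\alpha\le P$, so $q=t-s$ ranges over $|q|<\alpha\le P$ and $\lambda^{|q|/2}$ can reach $\lambda^{P/2}=O(h^{-1})$, not $\lambda^{P/4}=O(h^{-1/2})$ as you claim; hence~\eqref{eq:result_3} only gives $|r_q|=O(1)$ near the top of the range, and your uniform bound $|r_q|^2\le C(h\log h^{-1})^{1/2}$ there is unjustified. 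Second, the range $q_*<|q|<\alpha$ can have length comparable to $\log_\lambda h^{-1}$, i.e.\ $O(P)$, not $O(\log\log h^{-1})$. Moreover, your self-diagnosed ``main obstacle'' is a non-issue: the $(h\log h^{-1})^{1/4}$ term of~\eqref{eq:result_3}, squared and summed over the $O(P^2)$ weighted terms, contributes $O(P^2(hP)^{1/2})=o(1)$, so no intermediate cutoff $q_*$ and no appeal to~\eqref{eq:result_2} are needed. The term that genuinely requires care is $h\lambda^{|q|/2}$, and the paper dispatches it by exploiting the weight: $\sum_{|q|<\alpha}(\alpha-|q|)\lambda^{|q|/2}\le C\lambda^{\alpha/2}$, so that piece contributes $O(h\lambda^{\alpha/2}/P)=O(1/P)$, and the whole off-diagonal error is $O(P^{-1}+h^{1/4}P^{9/4})=o(1)$. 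With that correction (or simply by adopting the paper's one-line weighted estimate) your argument closes; the extraction of the limit via dominated convergence and the $O(1/P)$ bound on the $|q|$-weighted sum match the paper's $S_1,S_2$ decomposition.
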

\begin{proof} 
Using~\eqref{eq:Pi_adjoint},~\eqref{eq:adjoint_rule}, and~\eqref{eq:G_N},  we have
\begin{align}
&\lrang{u_I, u_I}_\cH \nonumber\\
&=\frac{1}{P} \sum_{t, s =\beta}^{\beta+\alpha-1} \lrang{e^{-i \frac{\phi}{P} t} M_N^t G_N,  e^{-i \frac{\phi}{P} s} M_N^s G_N}_\cH\lrang{e^{-i \frac{\phi}{P} \left(t+\frac{P}{2}\right)} M_N^{t + \frac{P}{2}} G_N , e^{-i \frac{\phi}{P} \left(s+\frac{P}{2}\right)} M_N^{s + \frac{P}{2}} G_N}_\cH \nonumber\\
&=\frac{1}{P} \sum_{t, s=0}^{\alpha-1}e^{-\frac{i \phi}{P}(2t -2s)}  \lrang{  M_N^{t-s} G_N,   G_N}_\cH^2 \nonumber\\
&=\frac{1}{P} \sum_{t, s=0}^{\alpha-1}e^{-\frac{i \phi}{P}(2t -2s)}  \lrang{\Pi_N(0)  M_A^{t-s} G_h,  \Pi_N(0)  G_h}_\cH^2 \nonumber\\
&=\frac{1}{P}  \sum_{t, s=0}^{\alpha-1} e^{-\frac{i \phi}{P}(2t -2s)} \left(\sum_{l \in \Z^2}  \lrang{ U_l M^{t-s}_A G_h ,   G_h}_{L^2}\right)^2  \nonumber\\
&=\frac{1}{P} \sum_{t=-\alpha +1}^{\alpha-1} \left(\alpha-|t| \right)  e^{-\frac{i \phi}{P}2t} \sum_{l, m \in \Z^2}  \lrang{ U_l M^{t }_A G_h,   G_h}_{L^2}  \lrang{U_m M^{t }_A G_h,   G_h }_{L^2}. \label{eq:uI_reduction}
\end{align}
We split~\eqref{eq:uI_reduction} into the term when $l=m =0$ and a sum over  the remaining terms, i.e., $(l, m) \in \Z^4 \setminus 0$. We first show that the sum over $\Z^4 \setminus 0$ decays to zero.

We apply~\eqref{eq:result_1} and~\eqref{eq:result_3} from Lemma~\ref{lem:sum_over_integers} with $r=0$ to know
\begin{align*}
&\left|\frac{1}{P} \sum_{t=-\alpha +1}^{\alpha-1} \left(\alpha-|t| \right)  e^{-\frac{i \phi}{P}2t} \sum_{(l, m) \in \Z^4 \setminus 0} \lrang{U_l M^{t}_A G_h ,   G_h  }_{L^2} \lrang{ U_m M^{t}_A G_h ,    G_h }_{L^2}\right|\\
&\leq \frac{1}{P} \sum_{t=-\alpha +1}^{\alpha-1} \left(\alpha -|t| \right)  \left| \sum_{l \in \Z^2 \setminus 0}  \lrang{U_l M^{t}_A G_h  ,   G_h }_{L^2} \right| \left| \sum_{m \in \Z^2} \lrang{U_m M^{t}_A G_h ,  G_h }_{L^2}\right|\\
&\quad + \frac{1}{P}\sum_{t=-\alpha +1}^{\alpha-1} \left(\alpha - |t| \right)  \left| \sum_{l \in \Z^2 \setminus 0}\lrang{ U_l M^{t}_A G_h,   G_h  }_{L^2}\right| \left| \lrang{ M^{t}_A G_h ,   G_h }_{L^2}\right|\\
&\leq \frac{C}{P} \sum_{t=-\alpha+1}^{\alpha-1} \left(\alpha-|t| \right)  \sum_{l \in \Z^2 \setminus 0} \left| \lrang{U_l M^{t}_A G_h,   G_h}_{L^2} \right|\\
&\leq \frac{C}{P} \sum_{t=-\alpha+1}^{\alpha-1} \left(\alpha-|t| \right) \left(h\lambda^{\frac{|t|}{2}} +(h \log h^{-1})^{\frac{1}{4}} \right) \\
& \leq \frac{C}{P}\left(h\frac{\lambda^{\frac{\alpha+1}{2}} -\alpha(\lambda^{\frac{1}{2}}-1) -\lambda^{\frac{1}{2}}}{(\lambda^{\frac{1}{2}} -1)^2}+ (h P)^{\frac{1}{4}}\alpha^2\right)\\
& \leq \frac{C}{P}\left(h\lambda^{\frac{\alpha}{2}} + (h P)^{\frac{1}{4}}\alpha^2\right) \leq \frac{C}{P}\left(1 + h^{\frac{1}{4}}P^{\frac{9}{4}}\right) \rightarrow 0,
\end{align*} 
where we used the fact that $\alpha \leq P = \frac{2 \log h^{-1}}{\log \lambda} + \cO(1)$.

Therefore, using~\eqref{eq:M^t_otimes}, 
\begin{align*}
\eqref{eq:uI_reduction} &= \frac{1}{P} \sum_{t=-\alpha+1}^{\alpha -1} \left(\alpha-|t| \right)  e^{-\frac{i \phi}{P}2t}  \lrang{M^{t}_A G_h,   G_h }_{L^2}^{2} +o(1) \\
&= \frac{1}{P} \sum_{t=-\alpha+1}^{\alpha -1} \left(\alpha-|t| \right)  e^{-\frac{2i \phi}{P}t}  \frac{2}{\lambda^{t} + \lambda^{-t}} +o(1)\\
&= \frac{\alpha}{P} S_1(\lambda, \alpha, \phi, P) - \frac{1}{P} S_2(\lambda, \alpha, \phi, P) +o(1), 
\end{align*}
where
$$
S_1(\lambda, \alpha, \phi, P) \coloneqq \sum_{t=-\alpha+1}^{\alpha -1} e^{-\frac{2 i  \phi}{P}t}  \frac{2}{\lambda^{t} + \lambda^{-t}}  \quad \text{and} \quad S_2(\lambda, \alpha, \phi, P) \coloneqq  \sum_{t=-\alpha+1}^{\alpha -1} |t| e^{-\frac{2i \phi}{P}t}  \frac{2}{\lambda^{t} + \lambda^{-t}}.
$$
Recall that $\phi \in  [-\pi, \pi)$; in particular, $\phi/P$ converges to 0.
From these formulas, it is clear that $S_1(\lambda) \coloneqq \lim_{h \rightarrow 0} S_1(\lambda, \alpha, \phi, P) = \sum_{t= -\infty}^\infty \frac{2}{\lambda^t + \lambda^{-t}}$ and $S_2(\lambda) \coloneqq
\lim_{h \rightarrow 0} S_2(\lambda, \alpha, \phi, P)$ exist and  $S_1$ is strictly positive. 
\end{proof}

\subsection{Proof of Proposition~\ref{prop:counterexample}}

In this section, we show Proposition~\ref{prop:counterexample}: after normalization, the eigenfunctions $u$ (defined in~\eqref{eq:u_def}) weakly converge to the semiclassical measure $\mu$ (defined in~\eqref{eq:semiclassical_measure}). This, in turn, proves Theorem~\ref{thm:counterexample}. The position space concentration of the eigenfunctions is demonstrated numerically in Figure~\ref{fig:eigenfunction}. 

\begin{figure}
    \centering
    \includegraphics[width=6cm, height=6cm]{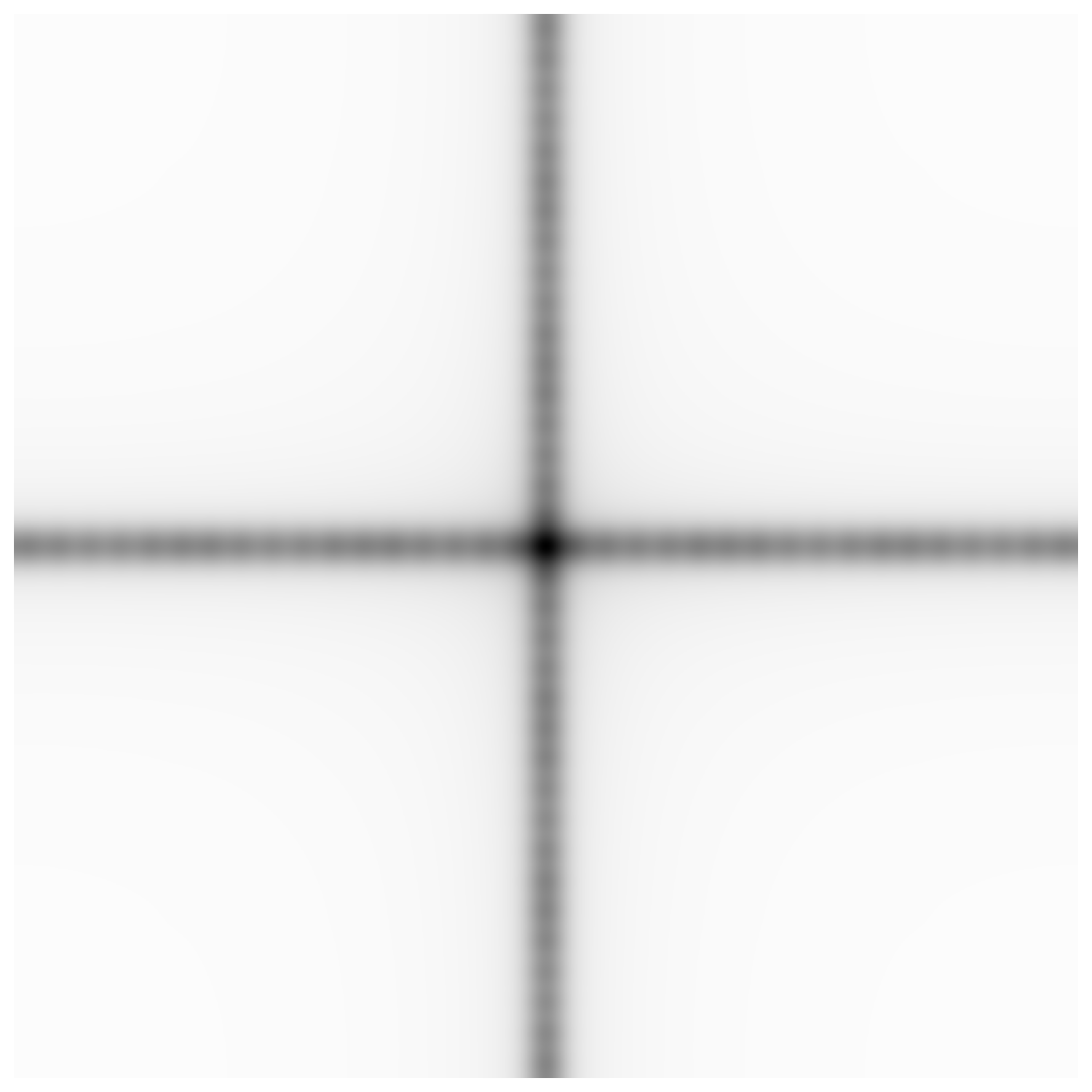}
    \caption{Plot of the concentration in position space of the eigenfunction defined in~\eqref{eq:u_def}
corresponding to $A = \begin{bmatrix} 5 & 2 \\ 2 & 1 \end{bmatrix}$, $k=6$, and $N_k= 6930$. Specifically, we  blurred the squared absolute value of the eigenfunction. This blurring corresponds to taking the macroscopic limit, as we can only integrate against slowly varying functions. To create a more intelligible picture, we have shifted the eigenfunction to be centered at $(1/2, 1/2)$, instead of the origin. 
}
    \label{fig:eigenfunction}
\end{figure}

Choose an integer-valued function $\Theta(h)$ such that 
\begin{equation}\label{eq:Theta_definition}
1 \ll \Theta(h) \ll P \ll \lambda^{\frac{\Theta(h)}{4}}.
\end{equation}
One can for instance put $\Theta(h)=\lfloor\sqrt{\log(1/h)}\rfloor$.

We first state the following two key lemmas, reserving their proofs for after we use them to prove Theorem~\ref{thm:counterexample}. In both lemmas, we use $\Theta(h)$ to bound away from $|t| = P/4$ and $|t| = P/2$, an idea taken from~\cite{FNdB}*{Propositions 5--6}.  Specifically, we use the disjoint union: 
$$\left[-\frac{P}{2}, \frac{P}{2}-1\right] = I_L \sqcup I_E \sqcup I_0,$$
$$I_0 \coloneqq  \left\{\frac{P}{4} - \Theta(h) < |t| < \frac{P}{4}+\Theta(h)\right\} \cup  \left\{-\frac{P}{2} \leq t < -\frac{P}{2} + \Theta(h)\right\} \cup  \left\{\frac{P}{2} -\Theta(h)  < t < \frac{P}{2} \right\},$$
$$I_L\coloneqq \left\{|t| \leq  \frac{P}{4}-\Theta(h)\right\}, \quad I_E\coloneqq \left\{ \frac{P}{4} + \Theta(h) \leq |t| \leq \frac{P}{2} -\Theta(h) \right\}.$$
Note that for $j \in \Z^2$, the quantum translation $U_{j/N}$ descends to an operator on $\mathcal H_N(0)$. It is given by 
\begin{equation}\label{eq:trig_polynomial}
U_{j/N} = \op_N(e^{2 \pi i \sigma(j, z)}).
\end{equation}

\begin{lemma}\label{lem:locbehavior}
Suppose $j \in \Z^2$. 
Then, 
\begin{align*}
&\frac{1}{P}\sum_{s= - \frac{P}{2}}^{\frac{P}{2}-1} \sum_{t \in I_L} e^{-2i\frac{\phi}{P}(t-s)}\lrang{U_{j/N}M^t_N G_N, M_N^s G_N}_\cH \lrang{M_N^tG_N, M^s_N G_N}_\cH\\
&=\frac{1}{P}\sum_{s= - \frac{P}{2}}^{\frac{P}{2}-1} \sum_{t \in I_L} e^{-2i\frac{\phi}{P}(t-s)} \lrang{G_N,  M_N^{s-t} G_N}_\cH^2  +o(1).
\end{align*}
\end{lemma}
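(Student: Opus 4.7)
My plan is to use the exact intertwining~\eqref{eq:intertwining}, namely $U_{j/N}M_N^t = M_N^t U_{c_t}$ with $c_t \coloneqq A^{-t}(j/N)\in \tfrac{1}{N}\Z^2$, to rewrite
$$\lrang{U_{j/N}M_N^t G_N, M_N^s G_N}_\cH = \lrang{U_{c_t} G_N, M_N^{s-t}G_N}_\cH,$$
while $\lrang{M_N^t G_N, M_N^s G_N}_\cH = \lrang{G_N, M_N^{s-t}G_N}_\cH$ by unitarity of $M_N$. Thus the difference between the two sides of the lemma equals
$$E \coloneqq \frac{1}{P}\sum_{s=-P/2}^{P/2-1}\sum_{t\in I_L} e^{-2i\phi(t-s)/P}\lrang{(U_{c_t}-I)G_N, M_N^{s-t}G_N}_\cH \lrang{G_N, M_N^{s-t}G_N}_\cH,$$
and I want to prove $E=o(1)$. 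Since $M_N^P=e^{i\phi}I$, a direct check will verify that the summand is $P$-periodic in $s$, so I will change variable to $q=s-t$ running over a single period $[-P/2,P/2-1]$. Using $|I_L|/P \leq 1/2$, it will suffice to establish
$$\Big(\max_{t\in I_L,\,|q|\leq P/2}\big|\lrang{(U_{c_t}-I)G_N, M_N^q G_N}_\cH\big|\Big)\cdot\sum_{|q|\leq P/2}\big|\lrang{G_N, M_N^q G_N}_\cH\big| = o(1).$$

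The sum factor will be $O(1)$: expanding via $G_N=\Pi_N(0)G_h$ and~\eqref{eq:Pi_adjoint} gives $\lrang{G_N, M_N^q G_N}_\cH = \sum_\ell \lrang{G_h, U_\ell M_A^q G_h}_{L^2}$, and Lemma~\ref{lem:sum_over_integers}~\eqref{eq:result_1} will bound this by $C(h\lambda^{|q|/2}+\lambda^{-|q|/2})$; the summation over $|q|\leq P/2$ is then controlled since $\sum_q h\lambda^{|q|/2}\leq Ch\lambda^{P/4}\sim 1/\sqrt N$ and $\sum_q\lambda^{-|q|/2}\leq C$. For the maximum factor, the crucial input is $|c_t|^2/h \leq C\lambda^{-2\Theta(h)}\to 0$, which follows from $\|A^{-t}\|\leq C\lambda^{|t|}$, the restriction $|t|\leq P/4-\Theta(h)$, the asymptotic $\lambda^{P/2}\sim N$, and~\eqref{eq:Theta_definition}. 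Since $c_t\in\tfrac{1}{N}\Z^2$, a short computation with~\eqref{eq:commutator} (using $\sigma(\ell,Nc_t)\in \Z$ for $\ell\in\Z^2$) will show $\Pi_N(0)U_{c_t}=U_{c_t}\Pi_N(0)$, so $(U_{c_t}-I)G_N = \Pi_N(0)(U_{c_t}-I)G_h$, and by~\eqref{eq:Pi_adjoint},
$$\lrang{(U_{c_t}-I)G_N, M_N^q G_N}_\cH = \sum_\ell\lrang{(U_{c_t}-I)G_h, U_\ell M_A^q G_h}_{L^2}.$$

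I will split this sum into $\ell=0$ and $\ell\neq 0$. Lemma~\ref{lem:support}, applied with $\omega=c_t$ to $A^{|q|}$ (reducing to this case via the adjoint identity noted after that lemma when $q<0$), will evaluate the $\ell=0$ term as $\tfrac{\sqrt{2}}{\sqrt{\lambda^q+\lambda^{-q}}}(e^{-\alpha_q+i\beta_q}-1)$ with $|\alpha_q|, |\beta_q|\leq C|c_t|^2/h\leq C\lambda^{-2\Theta(h)}$, yielding a pointwise bound of $C\lambda^{-2\Theta(h)}$. For $\ell\neq 0$, the triangle inequality together with Lemma~\ref{lem:sum_over_integers}~\eqref{eq:result_3} applied separately to $\sum_{\ell\neq 0}|\lrang{U_{c_t}G_h, U_\ell M_A^q G_h}|$ and $\sum_{\ell\neq 0}|\lrang{G_h, U_\ell M_A^q G_h}|$ will give $C(h\lambda^{|q|/2}+(h\log h^{-1})^{1/4})=O(1/\sqrt N + (h\log h^{-1})^{1/4})=o(1)$ uniformly for $|q|\leq P/2$. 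The main obstacle will be handling this $\ell\neq 0$ tail uniformly across the whole range of $q$: the sharper~\eqref{eq:result_2} would demand $|c_t|\ll \lambda^{-|q|}(h\log h^{-1})^{-1/2}$, which fails once $|q|$ approaches $P/2$, so I must rely on the weaker~\eqref{eq:result_3} together with the critical cancellation $h\lambda^{P/4}\sim 1/\sqrt N$. Combining the $o(1)$ maximum with the $O(1)$ sum will then yield $E=o(1)$.
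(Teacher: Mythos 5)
Your argument is correct and follows essentially the same route as the paper: intertwining $U_{j/N}$ past $M_N^t$ to produce $U_{A^{-t}j/N}$, expanding $G_N=\sum_l U_l G_h$, handling the $\ell=0$ term via Lemma~\ref{lem:support} together with the bound $|A^{-t}j/N|^2/h\le C\lambda^{-2\Theta(h)}$ on $I_L$, and handling $\ell\ne 0$ via Lemma~\ref{lem:sum_over_integers}~\eqref{eq:result_3} plus $h\lambda^{P/4}\sim N^{-1/2}$. The only (harmless) difference is the closing bookkeeping: the paper bounds the factor $\lrang{G_N,M_N^{s-t}G_N}_\cH$ by a constant and therefore needs the maximum of the difference term to be $o(P^{-1})$, whereas you sum that factor over a period (getting $O(1)$ from the $\lambda^{-|q|/2}$ decay) and only need the maximum to be $o(1)$.
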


\begin{lemma}\label{lem:ergbehavior}
Suppose $j \in \Z^2\setminus 0$, $k \in \Z^2$. Then,
\begin{align*}
\frac{1}{P}\sum_{s =-\frac{P}{2}}^{\frac{P}{2}-1} \sum_{t \in I_E} e^{-2i\frac{\phi}{P}(t-s)} \Big\langle U_{j/N} M_N^t G_N, M_N^s G_N\Big \rangle_\cH \Big\langle U_{k/N}M_N^{t+\frac{P}{2}} G_N, M_N^{s+\frac{P}{2}}G_N\Big \rangle_\cH
\end{align*}
converges to 0 as $h \rightarrow 0$.
\end{lemma}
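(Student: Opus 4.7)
The plan is to reduce the double sum via the intertwining $M_N^{-s}U_{\omega/N}M_N^s = U_{A^{-s}\omega/N}$ from~\eqref{eq:intertwining} and the unitarity of $M_N^s$, so that each inner product depends only on $\tau = t-s$:
\begin{equation*}
\langle U_{j/N}M_N^tG_N, M_N^sG_N\rangle_\cH = \langle U_{A^{-s}j/N}M_N^\tau G_N, G_N\rangle_\cH,
\end{equation*}
and analogously with $A^{-s-P/2}k$ replacing $A^{-s}j$ for the second factor.

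I would then write $G_N = \Pi_N(0)G_h$ and use that $U_{\omega/N}$ commutes with $\Pi_N(0)$ for $\omega\in\Z^2$ (the relevant phase from~\eqref{eq:commutator} is $e^{\frac{i}{h}\sigma(\omega/N,l)} = e^{2\pi i\sigma(\omega,l)} = 1$), converting each factor to a sum over $l \in \Z^2$ of Gaussian amplitudes via Lemma~\ref{lem:support}. Separating the $l = 0$ term and bounding the $l \neq 0$ tail by Lemma~\ref{lem:sum_over_integers}~\eqref{eq:result_2} (applicable whenever $|A^{-s}j|/N$ and $\lambda^{-|\tau|}$ are sufficiently small, which covers the generic range of $(s,\tau)$) yields
\begin{equation*}
|\langle U_{A^{-s}j/N}M_N^\tau G_N, G_N\rangle_\cH| \leq \frac{\sqrt 2}{\sqrt{\Tr(A^\tau)}}\,\exp\!\left(-\frac{\lambda^{-(s+t)}a_+(j)^2 + \lambda^{s+t}a_-(j)^2}{2hN^2\Tr(A^\tau)}\right) + Ch\lambda^{|\tau|/2},
\end{equation*}
with the analogous bound for the second factor ($j \to k$, $s+t \to s+t+P$; the Gaussian factor is absent when $k = 0$).

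Multiplying the two bounds, restoring the phase $e^{-2i\phi\tau/P}$, and summing over $(s,t) \in [-P/2, P/2-1] \times I_E$, the tail$\times$tail contribution $O(h^2\lambda^{|\tau|})$, together with $|\tau| \leq P$ and $h\lambda^P = O(1)$, sums to $O(h\log h^{-1}) = o(1)$ after the $\tfrac{1}{P}$ averaging. For the Gaussian$\times$Gaussian product with $k \neq 0$, the hypothesis $t \in I_E$ forces $|s + t + P| \geq 3P/4 + \Theta$, so the second-factor Gaussian is exponentially small unless $|\tau| \gtrsim P/2$, in which case the product of prefactors $\Tr(A^\tau)^{-1} \lesssim \lambda^{-P/2}$ already supplies the required decay. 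For $k = 0$ only the prefactor $\lambda^{-|\tau|/2}$ survives from the second factor, pairing with the first factor's Gaussian and prefactor to give $\lambda^{-|\tau|} e^{-Q_j/(2hN^2\Tr(A^\tau))}$ with $Q_j = \lambda^{-(s+t)}a_+(j)^2 + \lambda^{s+t}a_-(j)^2$. The main obstacle is the intermediate regime where $|\tau| \approx P/2$ and the $j$-Gaussian exponent is of order unity (i.e.\ $|s+t|$ is close to $|\tau| + P/2$); this is handled using Lemma~\ref{lem:diophantine} together with the period-$P$ bijectivity of $s \mapsto A^{-s}j \bmod N$, which imply that only $o(P)$ values of $s$ produce a non-negligible Gaussian, so the exceptional contribution still averages to $o(1)$, in the spirit of the proof of Lemma~\ref{lem:size_u}.
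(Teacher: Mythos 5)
Your overall architecture (intertwine the translations through the metaplectic operators, unfold $G_N=\Pi_N(0)G_h$ into a sum over $\Z^2$, isolate the $l=0$ Gaussian and control the tail by Lemma~\ref{lem:sum_over_integers}) is the right family of ideas, but as written the argument has several genuine gaps. First, you never reduce $\tau=t-s$ modulo $P$. The identity $M_N^{P}=e^{i\phi}I$ holds only on $\cH_N(0)$, so before passing to $L^2(\R)$ one must replace $s-t$ by a representative $q_{s,t}$ with $|q_{s,t}|\le P/2$ (as the paper does); otherwise $|\tau|$ ranges up to $P$ and the tail bound $Ch\lambda^{|\tau|/2}$ from Lemma~\ref{lem:sum_over_integers} is $O(1)$ rather than $o(1)$. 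Relatedly, your claim ``$h\lambda^{P}=O(1)$'' is false: since $N_k\sim \lambda^{P/2}/(\lambda-\lambda^{-1})$ and $h=(2\pi N)^{-1}$, one has $h\lambda^{P}\sim cN\to\infty$; only $h\lambda^{P/2}=O(1)$ holds. Second, conjugating by $M_N^{-s}$ rather than $M_N^{-t}$ leaves the translation parameter $c=A^{-s}j/N$, and for $|s|$ near $P/2$ one has $|c|\asymp 1$, so the hypotheses $|c|\ll 1$ of~\eqref{eq:result_2} and~\eqref{eq:result_3} fail; your parenthetical ``which covers the generic range of $(s,\tau)$'' leaves the remaining range unhandled. (Conjugating by $M_N^{-t}$ instead uses $t\in I_E$, hence $|t|\le P/2-\Theta(h)$, to force $|A^{-t}j/N|\le C\lambda^{-\Theta(h)}\to 0$.) Third, the claim that $t\in I_E$ forces $|s+t+P|\ge 3P/4+\Theta$ is false (take $s=-P/2$, $t=-P/2+\Theta$), and the final paragraph's treatment of the ``intermediate regime'' via an unproved ``period-$P$ bijectivity of $s\mapsto A^{-s}j\bmod N$'' is not an argument.

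The paper's route is considerably simpler and you should note why: it never multiplies two Gaussian bounds. It bounds the second inner product crudely by $\|G_N\|_\cH^2=O(1)$, conjugates the first by $M_N^{-t}$, reduces $s-t$ to $q_{s,t}$ with $|q_{s,t}|\le P/2$, and then splits on $|q|$: for $|q|\le\Theta(h)/2$ the $l=0$ Gaussian is $e^{-C\lambda^{\Theta(h)}}$ because $j\ne 0$ forces $a_\pm(j)\ne 0$ and $|2t+q|\ge P/2+2\Theta(h)-|q|$ for $t\in I_E$; for $|q|>\Theta(h)/2$ the prefactor $\lambda^{-|q|/2}\le\lambda^{-\Theta(h)/4}=o(P^{-1})$ suffices; and the $l\ne 0$ tail is $o(P^{-1})$ by~\eqref{eq:result_3} since $|A^{-t}j/N|\to 0$. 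If you discard the second factor and adopt the mod-$P$ reduction and the $M_N^{-t}$ conjugation, your proof collapses onto the paper's.
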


Assuming the above two lemmas, we can prove Proposition~\ref{prop:counterexample} which implies Theorem~\ref{thm:counterexample}.

\begin{proof}[Proof of Proposition~\ref{prop:counterexample}]
1. Let $A \in \Sp(2, \Z)$ be a hyperbolic, symmetric matrix with positive entries and eigenvalues $\lambda >1$, $\lambda^{-1}$. From Lemma~\ref{lem:size_u}, we recall that $\langle u,u\rangle_{\mathcal H}\to S_1(\lambda)>0$, where $u$ (defined in~\eqref{eq:u_def}) is a sequence of eigenfunctions for the quantization of $A \oplus A$. 

By~\eqref{eq:trig_polynomial} and the density of trigonometric polynomials and tensor products, it suffices to examine $U_{j/N} \otimes U_{k/N}$ for $j,k \in \Z^2$ in lieu of a general $a \in C^\infty(\bT^4)$. Thus, it is enough to show
\begin{equation*}
\lim_{h \rightarrow 0} \frac{\lrang{\left(U_{j/N} \otimes U_{k/N}\right) u, u}_\cH}{\lrang{u, u}_\cH} = \begin{cases} 
1 & \text{ if } j=k =0\\
\frac{1}{2} & \text{ if } j =0, k \neq 0\\
\frac{1}{2} & \text{ if } j \neq 0, k = 0\\
0 & \text{ if } j, k \neq 0.
\end{cases}
\end{equation*}

First note that when $j=0$, $U_{j/N}=I$. Therefore, when $j=k=0$, $$\lrang{\left(U_{j/N} \otimes U_{k/N}\right) u, u}_\cH= \lrang{u, u}_\cH,$$ which proves the first case. 

2. We now focus on the other cases. We examine
\begin{align}
&\lrang{\left(U_{j/N} \otimes U_{k/N}\right) u, u}_\cH \nonumber \\
&= \frac{1}{P}\sum_{s = -\frac{P}{2}}^{\frac{P}{2}-1} \sum_{t = -\frac{P}{2}}^{\frac{P}{2}-1} e^{-2i\frac{\phi}{P}(t-s)}\Big\langle U_{j/N} M^t_N G_N, M_N^s G_N\Big\rangle_\cH\Big\langle U_{k/N} M^{t+\frac{P}{2}}_N G_N, M_N^{s+\frac{P}{2}} G_N\Big\rangle_\cH. \label{eq:summands}
\end{align}

Suppose $j \neq 0$, $k=0$. 
As discussed at the end of \S\ref{subsection:assumptions}, $e^{-i\frac{\phi}{P}t} M^t_N G_N$ localizes at the origin for $t \in I_L$ and  equidistributes  for $t \in I_E$. In between, for $P/4 -\Theta(h) \leq |t| \leq P/4 + \Theta(h)$, $e^{-i\frac{\phi}{P}t} M^t_N G_N$ transitions between these behaviors. We also cut away from $|t| = P/2$ in $I_E$, which simplifies the proof of Lemma~\ref{lem:ergbehavior}.
We will use Lemma~\ref{lem:locbehavior} to estimate the terms in~\eqref{eq:summands} for  $t \in I_L$ and  Lemma~\ref{lem:ergbehavior} to estimate the terms in~\eqref{eq:summands} for $t \in I_E$. However, we require a different strategy  for $t \in I_0$;  we claim these terms are negligible.

First, recall the definition of $u_I$ from~\eqref{eq:uI_definition}. If $I$ is an interval with  $I \subset I_0$, then $|I| \leq 2\Theta(h)$. Then by  Lemma~\ref{lem:size_u} and \eqref{eq:Theta_definition}, we know
$\|u_I\|_{\mathcal H}=o(1)$. Therefore for all $j, k \in \Z^2$ (including $j =k =0$) and $I \subset I_0$,
\begin{equation}\label{eq:cuttheedges}
\left| \lrang{(U_{j/N} \otimes U_{k/N})u_I, u}_\cH\right| \leq \|u_I\|_\cH \|u\|_\cH =o(1).   
\end{equation}

Thus by Lemma~\ref{lem:locbehavior}, Lemma~\ref{lem:ergbehavior}, and~\eqref{eq:cuttheedges},
\begin{align*}
\eqref{eq:summands} = \frac{1}{P}\sum_{s =-\frac{P}{2}}^{\frac{P}{2}-1} \sum_{t = -\frac{P}{4}}^{\frac{P}{4}-1}e^{-i\frac{\phi}{P}(2t-2s)}\Big\langle  G_N, M_N^{s-t} G_N\Big\rangle_\cH^2  +o(1).
\end{align*}
Note that the terms in the sum only depend on the value of $(t-s) \bmod P$.

As 
\begin{align*}
\lrang{u, u}_\cH &= \frac{1}{P} \sum_{s =-\frac{P}{2}}^{\frac{P}{2}-1} \sum_{t = -\frac{P}{2}}^{\frac{P}{2}-1} e^{-i \frac{\phi}{P}(2t -2s)} \Big\langle  G_N, M^{s-t}_N G_N\Big\rangle_\cH^2\\
&= \frac{2}{P} \sum_{s =-\frac{P}{2}}^{\frac{P}{2}-1} \sum_{t = -\frac{P}{4}}^{\frac{P}{4}-1} e^{-i \frac{\phi}{P}(2t -2s)} \Big\langle  G_N, M^{s-t}_N G_N\Big\rangle_\cH^2,
\end{align*}
we see $\lrang{(U_{j/N} \otimes U_{k/ N}) u, u}_\cH/\lrang{u, u}_\cH \rightarrow 1/2$. Again, the terms in the sum only depend on the value of $(t-s) \bmod P$.

Using the fact that $M_{N}^{P} = e^{i \phi} I$, a similar argument shows when $j\neq 0$ and $k =0$,  $\lrang{(U_{j/N} \otimes U_{k/ N}) u, u}_\cH/\lrang{u, u}_\cH \rightarrow 1/2$. 

3. Finally, we examine the case where $j, k \neq 0$. Let $T_* \coloneqq [-\frac{P}{2}, -\frac{P}{4}-1] \cup [\frac{P}{4}, \frac{P}{2}-1]$. We note that $[-P/2, P/2 - 1] \bmod P = T_* \cup (T_* + P/2) \bmod P$. Using this decomposition to split up the sum over $t$ in~\eqref{eq:summands} and exploiting the $P$-periodicity of $M_N$, we have 
\begin{align*}
\eqref{eq:summands} &=\frac{1}{P}\sum_{s = -\frac{P}{2}}^{\frac{P}{2}-1} \sum_{t \in T_*} e^{-i\frac{\phi}{P}(2t-2s)}\Big\langle U_{j/N} M^t_N G_N, M_N^s G_N\Big\rangle_\cH\Big\langle U_{k/N} M^{t+\frac{P}{2}}_N G_N, M_N^{s+\frac{P}{2}} G_N\Big\rangle_\cH\\
&\quad + \frac{1}{P}\sum_{s = -\frac{P}{2}}^{\frac{P}{2}-1}  \sum_{t \in T_*} e^{-i\frac{\phi}{P}(2t-2s)}\Big\langle U_{k/N} M^t_N G_N, M_N^s G_N\Big\rangle_\cH\Big\langle U_{j/N} M^{t+\frac{P}{2}}_N G_N, M_N^{s+\frac{P}{2}}G_N \Big\rangle_\cH.
\end{align*}
Thus by~\eqref{eq:cuttheedges} and Lemma~\ref{lem:ergbehavior}, we conclude  $\lrang{(U_{j/N} \otimes U_{k/ N}) u, u}_\cH \rightarrow 0.$
\end{proof}

\medskip

\begin{proof}[Proof of Lemma~\ref{lem:locbehavior}]
1. First note that it suffices to show that
\begin{equation}\label{eq:locconvergence}
\frac{1}{P}\sum_{|s| \leq \frac{P}{2}} \sum_{t \in I_L} \left| \left(\lrang{U_{j/N}M^t_N G_N, M_N^s G_N}_\cH -\lrang{ G_N, M^{s-t}_N G_N}_\cH \right)\lrang{G_N, M^{s-t}_N G_N}_\cH\right|
\end{equation}
converges to 0 for $j \in \Z^2$. 

For each $s, t$, choose $q_{s,t} \in s-t + P\Z$ such that $|q_{s,t}|\leq P/2$. As $M^P_N =e^{i \phi}I$, we know that $M_N^{s-t} = e^{i C_{s,t}} M_N^{q_{s,t}}$, where $C_{s,t} \in \{0, \pm \phi\}$. 
Using~\eqref{eq:intertwining},
\begin{align*}
\left|\lrang{U_{j/N}M^t_N G_N, M_N^s G_N} -\lrang{G_N, M^{s-t}_N G_N}  \right|=\left|  \big\langle U_{A^{-t}j/N} G_N, M_N^{q_{s,t}} G_N\big\rangle - \big\langle G_N, M_N^{q_{s,t}} G_N\big\rangle  \right|.
\end{align*}

By~\eqref{eq:G_N}, ~\eqref{eq:Pi_adjoint}, ~\eqref{eq:S_N}, and the fact that $U_\omega U_{\omega'} = e^{\frac{i}{2h} \sigma(\omega, \omega')} U_{\omega + \omega'}$, we know
\begin{align}
\eqref{eq:locconvergence} &\leq \frac{C}{P}\sum_{|s| \leq \frac{P}{2}} \sum_{t \in I_L} \left|\lrang{U_{A^{-t}j/N}G_N, M_N^{q_{s,t}}G_N}_\cH -\lrang{G_N, M^{q_{s,t}}_N G_N}_\cH \right| \nonumber\\
&=\frac{C}{P}\sum_{|s| \leq \frac{P}{2}} \sum_{t \in I_L} \left|\lrang{\Pi_N(0) U_{A^{-t}j/N}G_h, \Pi_N(0) M_A^{q_{s,t}}G_h}_\cH -\lrang{\Pi_N(0) G_h, \Pi_N(0) M^{q_{s,t}}_A G_h}_\cH \right| \nonumber\\
&\leq \frac{C}{P}\sum_{|s| \leq \frac{P}{2}} \sum_{t \in I_L} \left|\lrang{U_{A^{-t}j/N}G_h, M_A^{q_{s,t}}G_h}_{L^2} -\lrang{G_h, M_A^{q_{s,t}} G_h}_{L^2} \right| \nonumber \\
&\quad+\frac{C}{P}\sum_{|s| \leq \frac{P}{2}} \sum_{t \in I_L}  \left(\sum_{l \in \Z^2\setminus 0} \left|\lrang{U_{l+A^{-t}j/N}G_h, M_A^{q_{s,t}}G_h}_{L^2}\right|+ \sum_{l \in \Z^2\setminus 0}\left|\lrang{U_l G_h, M_A^{q_{s,t}}G_h}_{L^2}\right| \right). \nonumber 
\end{align}

Thus using the triangle inequality, to prove Lemma~\ref{lem:locbehavior}, it suffices to show that 
\begin{equation}\label{eq:l=0_estimate}
\max_{|s| \leq \frac{P}{2},t \in I_L}  \left|\lrang{U_{A^{-t}j/N}G_h, M_A^{q_{s,t}}G_h}_{L^2} -\lrang{G_h, M_A^{q_{s,t}} G_h}_{L^2} \right| = o(P^{-1})
\end{equation}
and
\begin{equation}\label{eq:lneq0_estimate}
\max_{|s| \leq \frac{P}{2},t \in I_L}   \left(\sum_{l \in \Z^2\setminus 0} \left|\lrang{U_{l+A^{-t}j/N}G_h, M_A^{q_{s,t}}G_h}_{L^2}\right|+ \sum_{l \in \Z^2\setminus 0}\left|\lrang{U_l G_h, M_A^{q_{s,t}}G_h}_{L^2}\right| \right) = o(P^{-1}).
\end{equation} 

2. We begin by proving~\eqref{eq:l=0_estimate}. From Lemma~\ref{lem:support}, recalling that $J = \begin{bmatrix} 0 & 1 \\ -1 & 0 \end{bmatrix}$,
\begin{equation*}
\lrang{U_\omega G_h, M_A^q G_h}_{L^2} = \frac{\sqrt{2}}{\sqrt{\lambda^q + \lambda^{-q}}} e^{-\frac{\lrang{A^{-q} \omega, \omega}}{2h(\lambda^q + \lambda^{-q})}}e^{i \frac{\lrang{A^qJ \omega, \omega}}{2h(\lambda^q + \lambda^{-q})}}.
\end{equation*}

Therefore, setting $\omega = A^{-t}j/N$,
\begin{equation}\label{eq:inner_product_difference}
\left|\lrang{U_{A^{-t}j/N}G_h, M_A^{q}G_h}_{L^2} -\lrang{G_h, M_A^{q} G_h}_{L^2} \right| \leq  \left|1 - e^{-\frac{\lrang{A^{-q} \omega, \omega}}{2h(\lambda^q + \lambda^{-q})}+i \frac{\lrang{A^qJ \omega, \omega}}{2h(\lambda^q + \lambda^{-q})}}\right|.
\end{equation}
Since $|1-e^z| \leq C|z|$ for $|z| \leq 1$, to further bound~\eqref{eq:inner_product_difference}, we want to show $|\frac{\lrang{A^{-q} \omega, \omega}}{2h(\lambda^q + \lambda^{-q})}-i \frac{\lrang{A^qJ \omega, \omega}}{2h(\lambda^q + \lambda^{-q})}| = o(P^{-1}) \leq 1$. 
In fact, we prove the stronger claim: for any fixed, $t$-independent matrix $B$  and for all $t \in I_L$, $\frac{|\lrang{A^{\pm q} B \omega, \omega}|}{2h(\lambda^q +\lambda^{-q})} =o(P^{-1})$.  We see this in the following calculation: 
\begin{equation*}\label{eq:generalized_bound}
\frac{|\lrang{A^{\pm q} B \omega, \omega}|}{2h(\lambda^q +\lambda^{-q})} \leq \frac{ |A^{\pm q} B \omega| |\omega|}{2h\lambda^{|q|}} \leq \frac{C|A^{-t}j/N|^2}{h} \leq \frac{C\lambda^{2|t|}}{N} \leq \frac{C\lambda^{\frac{P}{2}-2 \Theta(h)}}{N} \leq C \lambda^{-2 \Theta(h)}=o(P^{-1}).
\end{equation*}

Thus setting $B=I$ and $B=J$, from~\eqref{eq:inner_product_difference} we have
\begin{align*}
\left|\lrang{U_{A^{-t}j/N}G_h, M_A^{q}G_h}_{L^2} -\lrang{G_h, M_A^{q} G_h}_{L^2} \right|  &\leq C \left|\frac{\lrang{A^{-q} \omega, \omega}}{2h(\lambda^q + \lambda^{-q})}-i \frac{\lrang{A^qJ \omega, \omega}}{2h(\lambda^q + \lambda^{-q})}\right|\\
&=o(P^{-1}),    
\end{align*}
which proves~\eqref{eq:l=0_estimate}.

3. We now show~\eqref{eq:lneq0_estimate}. 
As $t \in I_L$, we know
$$|A^{-t}j/N| \leq \frac{C}{N} \lambda^{|t|} \leq \frac{C}{N} \lambda^{\frac{P}{4}-\Theta(h)}\leq C \sqrt{h} \lambda^{-\Theta(h)} \rightarrow 0.$$

Therefore, we apply~\eqref{eq:result_3} from Lemma~\ref{lem:sum_over_integers} to know for all  $|s| \leq \frac{P}{2}$ and $t \in I_L$,
\begin{align*}
\sum_{l \in \Z^2\setminus 0} \left( \left|\lrang{U_{l+A^{-t}j/N}G_h, M_A^{q}G_h}_{L^2}\right|+  \left|\lrang{U_l G_h, M_A^{q}G_h}_{L^2}\right| \right)& \leq C\left(h\lambda^{\frac{|q|}{2}} + (h \log h^{-1})^{\frac{1}{4}}\right)\\
&\leq C \left(\sqrt{h} + (h \log h^{-1})^{\frac{1}{4}}\right)\\
&= o(P^{-1}),
\end{align*}
which concludes the proof of~\eqref{eq:lneq0_estimate}. 
\end{proof}

\medskip

\begin{proof}[Proof of Lemma~\ref{lem:ergbehavior}]
Our proof uses many ideas from~\cite{FNdB}*{Proposition 6}. 

1. First note that it suffices to show 
\begin{equation}\label{eq:ergsum}
\frac{1}{P}\sum_{|s| \leq \frac{P}{2}} \sum_{t \in I_E}\left|\Big\langle U_{j/N} M_N^t G_N, M_N^s G_N\Big \rangle_\cH \Big\langle U_{k/N}M_N^{t+\frac{P}{2}} G_N, M_N^{s+\frac{P}{2}}G_N\Big \rangle_\cH\right|
\end{equation}
converges to zero for $j \in \Z^2 \setminus 0$, $k \in \Z^2$.

For each $s$ and $t$, choose $q_{s,t} \in s-t + P\Z$ such that $|q_{s,t}|\leq P/2$. As $M^P_N =e^{i \phi}I$, we know that $M_N^{s-t} = e^{i C_{s,t}} M_N^{q_{s,t}}$, where $C_{s,t} \in \{0, \pm \phi\}$.

From~\eqref{eq:MA},~\eqref{eq:G_N}, and the facts that $U_\omega U_{\omega'} = e^{\frac{i}{2h} \sigma(\omega, \omega')} U_{\omega + \omega'}$ and $\|G_N\|_{\cH} \leq C$ uniformly in $N$, we have 
\begin{align}
\eqref{eq:ergsum} &=\frac{1}{P}\sum_{|s| \leq \frac{P}{2}} \sum_{t \in I_E}\left|\Big \langle U_{j/N} M_N^t G_N, M_N^s G_N\Big \rangle_\cH \Big \langle U_{k/N}M_N^{t+\frac{P}{2}} G_N, M_N^{s+\frac{P}{2}}G_N\Big \rangle_\cH\right| \nonumber\\
&\leq \frac{C}{P}\sum_{|s| \leq \frac{P}{2}} \sum_{t \in I_E} \left|\lrang{U_{A^{-t}j/N} G_N, M_N^{q_{s,t}} G_N}_\cH \right| \nonumber\\
&= \frac{C}{P}\sum_{|s| \leq \frac{P}{2}} \sum_{t \in I_E} \left|\lrang{\Pi_N(0) U_{A^{-t}j/N} G_h, \Pi_N(0) M_A^{q_{s,t}} G_h}_\cH \right| \nonumber\\
& \leq \frac{C}{P}\sum_{|s| \leq \frac{P}{2}} \sum_{t \in I_E} \left| \lrang{U_{A^{-t}j/N} G_h, M_A^{q_{s,t}} G_h}_{L^2} \right|\label{eq:ergodicequation_l=0}\\
&\quad + \frac{C}{P}\sum_{|s| \leq \frac{P}{2}} \sum_{t \in I_E} \sum_{l \in \Z^2 \setminus 0} \left| \lrang{U_{l+A^{-t}j/N} G_h, M_A^{q_{s,t}} G_h}_{L^2} \right|. \label{eq:ergodicequation_sum}
\end{align}

2. We will show that~\eqref{eq:ergodicequation_l=0} and~\eqref{eq:ergodicequation_sum} decay to zero, starting with~\eqref{eq:ergodicequation_l=0}. For fixed $t$, $\{q_{s,t} : |s| \leq P/2\} = \{q : |q| \leq P/2\}$. Thus, we can sum over $q$ and $t$ instead of over $s$ and $t$. 
We split into two sums: 
$$
\eqref{eq:ergodicequation_l=0} = \frac{C}{P} \sum_{|q| \leq \frac{\Theta(h)}{2 }} \sum_{t \in I_E} \left|\lrang{U_{A^{-t}j/N} G_h, M_A^q G_h}_{L^2}\right| +  \frac{C}{P} \sum_{|q|= \frac{\Theta(h)}{2 }+1}^{\frac{P}{2}} \sum_{t \in I_E} \left|\lrang{U_{A^{-t}j/N} G_h, M_A^q G_h}_{L^2}\right|.
$$
We first analyze the sum over  $|q| \leq \frac{\Theta(h)}{2 }$. By the triangle inequality, it suffices to show $\max_{t \in I_E} \sum_{|q| \leq \frac{\Theta(h)}{2 }} \left|\lrang{U_{A^{-t}j/N} G_h, M_A^q G_h}_{L^2}\right| =o(1)$. Recall that $v_+$ is a unit length eigenvector of $A$ with eigenvalue $\lambda$ and  $v_-$ is a unit length eigenvector of $A$ with eigenvalue $\lambda^{-1}$. For each $z \in \R^2$, we have the decomposition $z= a_+(z) v_+ + a_-(z) v_-$. Thus from Lemma~\ref{lem:support}, we know  
$$\left|\lrang{U_{A^{-t} j/N}G_h, M_A^q G_h}_{L^2} \right| = \sqrt{\frac{2}{\lambda^q+\lambda^{-q}}} e^{-\frac{1}{2}\left(\frac{a_+(j)^2 \lambda^{-2t-q}}{h(\lambda^q + \lambda^{-q})N^2} +\frac{a_-(j)^2 \lambda^{2t+q}}{h(\lambda^q + \lambda^{-q})N^2} \right)}.$$

Importantly, as $j \in \Z^2 \setminus 0$ and the slopes of $v_+, v_-$ are irrational, we know $a_+(j) , a_-(j)\neq 0$.
Thus, letting $C$ and  $C'$ be constants that may change from line to line, for $t \in I_E$:

\begin{align}
\sum_{|q| \leq \frac{\Theta(h)}{2}}  \left|\lrang{U_{A^{-t}j/N} G_h, M_A^q G_h}_{L^2}\right| \nonumber 
&\leq C \sum_{|q| \leq \frac{\Theta(h)}{2}}   \lambda^{-\frac{|q|}{2}} e^{-\frac{C' h \lambda^{|2t+q|}}{\lambda^{|q|}}} \nonumber \\
&\leq Ce^{-C'h \lambda^{\frac{P}{2} + \Theta(h)}} \sum_{|q| \leq \frac{\Theta(h)}{2}}  \lambda^{-\frac{|q|}{2}}  \nonumber \\
&\leq C e^{-C' \lambda^{\Theta(h)} } =o(1). \label{eq:leq0}
\end{align}

We now focus on the sum over  $\Theta(h)/2 \leq |q| \leq P/2$. To finish showing the decay of~\eqref{eq:ergodicequation_l=0}, by the triangle inequality, we need to show 
$\max_{\frac{\Theta(h)}{2 } < |q| \leq  \frac{P}{2}, t \in I_E} \left|\lrang{U_{A^{-t}j/N} G_h, M_A^q G_h}_{L^2}\right| =o(P^{-1})$.
We see for $\frac{\Theta(h)}{2 } < |q| \leq  \frac{P}{2},  t \in I_E$,
\begin{equation}\label{eq:q_large_estimate}
\left|\lrang{U_{A^{-t}j/N} G_h, M_A^q G_h}_{L^2}\right| \leq \frac{\sqrt{2}}{\sqrt{\lambda^q + \lambda^{-q}}} \leq \sqrt{2} \lambda^{-\frac{|q|}{2}} \leq \sqrt{2} \lambda^{-\frac{\Theta(h)}{4}} =o(P^{-1}). 
\end{equation}

 Using~\eqref{eq:leq0} and~\eqref{eq:q_large_estimate}, we conclude
\eqref{eq:ergodicequation_l=0} converges to 0.

3. Finally, we show~\eqref{eq:ergodicequation_sum} converges to 0.

For $t \in I_E$, note that $A^{-t}j/N \leq C\lambda^{|t|}/N \leq C \lambda^{\frac{P}{2} -\Theta(h)}/N \leq C \lambda^{-\Theta(h)} \rightarrow 0$. Therefore, we can apply~\eqref{eq:result_3} from Lemma~\ref{lem:sum_over_integers} to know for all $|s| \leq P/2$ and $t \in I_E$,
\begin{align*}
\sum_{l \in \Z^2 \setminus 0} \left| \lrang{U_{l+A^{-t}j/N} G_h, M_A^{q} G_h}_{L^2} \right|  &\leq C \left(h \lambda^{\frac{|q|}{2}} + (h \log h^{-1})^{\frac{1}{4}}\right)\\
&\leq C\left(\sqrt{h} + (h \log h^{-1})^{\frac{1}{4}}\right) = o(P^{-1}).
\end{align*}
Thus, by the triangle inequality,~\eqref{eq:ergodicequation_sum} decays to 0, which finishes the proof. 
\end{proof}

\appendix 
\section{Examining Orbit Closures}\label{appendix1}

In Theorem~\ref{thm:full_support}~\eqref{item:K}, we showed if $A \in \Sp(2n, \Z)$ has a non-unit length eigenvalue and $A|_{E_\pm}$ is diagonalizable, then for some $v \in E_+ \cup E_- \setminus \{0\}$ and $z \in \bT^{2n}$, $$K \coloneqq \overline{\{A^k (z+\bT_v) : k \in \Z\}} \subset \supp \mu.$$

In this appendix, we characterize $K$ using the characteristic polynomial of $A$. 
Recall that $V_v \subset \Q^{2n}$ is the smallest rational subspace such that $\R v \subset V_v \otimes \R$ and that $\bT_v \subset \bT^{2n}$ is the subtorus given by the projection of $V_v \otimes \R$ onto $\bT^{2n}$. 
We first characterize $\bT_v$. 
\begin{lemma}\label{lem:T_v}
$\bT_v = \overline{\R v \mod \Z^{2n}}$. 
\end{lemma}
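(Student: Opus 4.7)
The plan is to prove both inclusions, with the easy direction being $\overline{\R v \bmod \Z^{2n}} \subset \bT_v$ and the substance lying in the reverse inclusion.

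For the forward inclusion, I would first observe that $\bT_v$ is in fact a closed subtorus. Since $V_v \subset \Q^{2n}$ is a rational subspace, the lattice $(V_v \otimes \R) \cap \Z^{2n}$ has full rank inside $V_v \otimes \R$, so $\bT_v$ is canonically isomorphic to $(V_v \otimes \R)/((V_v \otimes \R) \cap \Z^{2n})$, which is a compact torus and in particular closed in $\bT^{2n}$. Because $\R v \subset V_v \otimes \R$, the image $\R v \bmod \Z^{2n}$ lies in $\bT_v$, and passing to closures gives $\overline{\R v \bmod \Z^{2n}} \subset \bT_v$.

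For the reverse inclusion, let $W \coloneqq \overline{\R v \bmod \Z^{2n}}$. Since $\R v \bmod \Z^{2n}$ is a connected subgroup of $\bT^{2n}$ containing $0$, and taking closures commutes with the continuous group operations on $\bT^{2n}$, $W$ is also a closed connected subgroup of $\bT^{2n}$ containing $0$. By the structure theorem for closed connected subgroups of tori, $W$ is a subtorus whose tangent space at $0$ is a rational subspace: there exists $U_W \subset \Q^{2n}$ such that $W = (U_W \otimes \R)/((U_W \otimes \R) \cap \Z^{2n})$. Since $\R v \bmod \Z^{2n}$ is a connected curve in $W$ starting at $0$, it lifts uniquely to $\R v \subset \R^{2n}$, and this lift must lie in the tangent space $U_W \otimes \R$. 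By the minimality defining $V_v$, we conclude $V_v \subset U_W$, and therefore $\bT_v \subset W$.

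The only nontrivial ingredient is the structure theorem identifying closed connected subgroups of $\bT^{2n}$ with rational subspaces of $\R^{2n}$; this is standard but is the essential fact. Everything else is routine manipulation with the quotient $\R^{2n} \to \bT^{2n}$ and the minimality property built into the definition of $V_v$.
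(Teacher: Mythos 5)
Your proof is correct and follows essentially the same route as the paper: the easy inclusion via rationality of $V_v$, and the reverse inclusion by identifying $\overline{\R v \bmod \Z^{2n}}$ as a closed connected subgroup of the torus whose tangent space is a rational subspace containing $v$, then invoking the minimality of $V_v$. The ``structure theorem'' you cite is precisely what the paper establishes in-line via the Lie algebra, the surjectivity of the exponential map, and the full-rank lattice argument.
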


\begin{proof}
The argument is exactly the same as ~\cite{dyatlov2021semiclassical}*{Lemma 4.3} and needs no adaptation to higher dimensions. 
\end{proof}


By Lemma~\ref{lem:T_v}, we have an alternative characterization of $K$: $$K =\overline{\{A^k(z+ \R v) \bmod \Z^{2n} : k \in \Z\}}.$$

Define $$k_0 \coloneqq \min \{k \in \N : \text{ the characteristic polynomial of } A^k \text{ is reducible over } \Q  \}.$$
If the characteristic polynomial of $A^k$ is irreducible over $\Q$ for all $k \in \N$, set $k_0 = \infty$.
If $k_0 < \infty$, it is the smallest value of $k$ such that $A^k$ has a nontrivial proper rational invariant subspace. 

\begin{proposition}\label{prop:K}
Suppose $A|_{E_\pm}$ is diagonalizable over $\C$. If $k_0< \infty$, then $K$ contains the union of $k_0$ non-parallel tori of dimension at least 1. If $k_0 = \infty$, then $K =\bT^{2n}$.
\end{proposition}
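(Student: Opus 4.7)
The plan is to analyze the $A$-orbit of the rational subspace $V_v$ inside the Grassmannian of rational subspaces of $\Q^{2n}$ and transfer its combinatorial structure to $K$. The key observation I would establish first is: if $A^iV_v = A^jV_v$ for some $0 \le i < j$, then $V_v$ is $A^{j-i}$-invariant; assuming moreover that $V_v$ is a proper nonzero rational subspace of $\Q^{2n}$, this yields a nontrivial proper rational $A^{j-i}$-invariant subspace, forcing the characteristic polynomial of $A^{j-i}$ to be reducible over $\Q$. By minimality of $k_0$, this forces $j - i \ge k_0$.

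For the case $k_0 < \infty$, I would first dispose of the situation $V_v = \Q^{2n}$, in which $\bT_v = \bT^{2n}$ and $K = \bT^{2n}$ contains any number of non-parallel subtori trivially. Otherwise $V_v$ is proper, and by the key observation the subspaces $V_v, AV_v, \ldots, A^{k_0 - 1}V_v$ are pairwise distinct. The translates $A^j(z + \bT_v)$ for $j = 0, 1, \ldots, k_0 - 1$ are then pairwise non-parallel subtori of dimension $\dim V_v \ge 1$, all contained in $K$ by definition.

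For the case $k_0 = \infty$, I would again dispose of $V_v = \Q^{2n}$ trivially and otherwise assume $V_v$ is proper. By the key observation, the orbit $\{A^kV_v\}_{k \in \Z}$ is infinite, and the subspace $V_v^\star := \sum_{k \in \Z} A^kV_v$ is a rational, $A$-invariant, nonzero subspace. Since $k_0 = \infty$ implies that the characteristic polynomial of $A$ itself is irreducible, $A$ has no proper nonzero rational invariant subspace, so $V_v^\star = \Q^{2n}$. To conclude $K = \bT^{2n}$, I would then show that $\bigcup_k A^k(z + \bT_v)$ is dense in $\bT^{2n}$: since $v \in E_+ \cup E_-$, say $v \in E_+$, the iterates $A^k$ expand along $E_+$, so the lattice $A^k(V_v \cap \Z^{2n}) = A^kV_v \cap \Z^{2n}$ contains integer vectors of norm $\gtrsim \Lambda^k$; combined with $V_v^\star = \Q^{2n}$, which ensures the tangent spaces $A^kV_v$ jointly span $\R^{2n}$ after finitely many terms, this will force the quotient lattices $\pi_k(\Z^{2n}) \subset \R^{2n}/A^kV_v$ to become fine enough in all directions that the $\varepsilon$-neighborhoods of $A^k\bT_v$ eventually cover $\bT^{2n}$ for any fixed $\varepsilon > 0$. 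The shifts $A^kz$ translate each subtorus but do not affect this covering property.

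The main obstacle will be this last density argument. While the algebraic input ($V_v^\star = \Q^{2n}$ plus expansion along $E_+$) is clean, turning it into covering of $\bT^{2n}$ by $\varepsilon$-neighborhoods of rational subtori requires quantitative lattice estimates: controlling the successive minima of $\pi_k(\Z^{2n})$ via Minkowski's theorem, and leveraging the precise interaction between the growth of vectors in $A^k(V_v \cap \Z^{2n})$ under expansion along $E_+$ and the $\Q$-spanning property from $V_v^\star = \Q^{2n}$. This bookkeeping is where I expect the main technical difficulty to lie.
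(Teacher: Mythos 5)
Your treatment of the case $k_0 < \infty$ is correct and is essentially identical to the paper's: minimality of $k_0$ forces $V_v, AV_v, \ldots, A^{k_0-1}V_v$ to be pairwise distinct once the trivial case $V_v = \Q^{2n}$ is discarded, giving $k_0$ non-parallel subtori of dimension $\geq 1$ inside $K$.

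The case $k_0 = \infty$ has a genuine gap. The inputs you isolate --- $\sum_{k}A^kV_v = \Q^{2n}$ together with the growth of vectors in $A^k(V_v\cap\Z^{2n})$ along $E_+$ --- do not deliver the covering statement. First, the observation that finitely many of the tangent spaces $A^kV_v$ jointly span $\R^{2n}$ only says that a finite union of proper closed subtori has spanning tangent spaces; such a union is still closed and nowhere dense, so spanning is far from density. Second, whether a single subtorus $A^k\bT_v$ is $\varepsilon$-dense in $\bT^{2n}$ is governed by the absence of \emph{short} vectors in the annihilator lattice $(A^kV_v)^{\perp}\cap\Z^{2n}$, not by the presence of \emph{long} vectors in $A^kV_v\cap\Z^{2n}$. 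A fixed nonzero $\xi\in\Z^{2n}$ with $\xi\perp A^kV_v$ for infinitely many $k$ is compatible with both of your inputs (the sum over all $k$ can be all of $\Q^{2n}$ while infinitely many individual $A^kV_v$ sit inside the fixed hyperplane $\xi^{\perp}$), and ruling out exactly this recurrence is the real content of the proposition; your Minkowski/successive-minima bookkeeping cannot start before it is ruled out. The paper handles it softly: diagonalizability of $A|_{E_+}$ lets one write $A|_{E_+} = \Lambda B$ with $B$ in a compact group, so the renormalized orbit $\{\Lambda^{-k}A^kv\}$ has closure a coset union of a torus $H_0v\subset E_+$; each rational hyperplane either contains $H_0v$ (which would force $k_0<\infty$) or meets it in a null set, so by countability there is a limit direction $w\in H_0v$ lying in no rational hyperplane. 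Then $V_w=\Q^{2n}$, and Lemma~\ref{lem:T_v} gives $\overline{z_\infty+\R w \bmod \Z^{2n}}=\bT^{2n}\subset K$. If you insist on the quantitative route, excluding the recurring annihilator $\xi$ amounts to a vanishing statement for the generalized power sum $\langle\xi, A^kv\rangle$ on an infinite set of $k$, i.e.\ an input of Skolem--Mahler--Lech type combined with the separability of the characteristic polynomials of all powers $A^d$; at that point the compactness argument is both shorter and already forced on you.
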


\begin{proof}
We assume that $v \in E_+ \setminus \{0\}$; a similar argument holds if instead $v \in E_- \setminus \{0\}$.

1. Set $l \coloneqq \dim E_+$. Let $\Lambda$ be the largest absolute value of an eigenvalue of $A$. 

As $A|_{E_+}$ is diagonalizable over $\C$, we choose an eigenbasis for $E_+ \otimes \C$ and select an inner product that makes this eigenbasis orthonormal. Under the real part of this inner product, $B \coloneqq \Lambda^{-1}A\mid_{E_+} \in \ortho(l)$, where $\ortho(l)$ is the compact Lie group of orthogonal transformations on $E_+$.
Now set $$H \coloneqq \overline{\{B^k : k \in \Z\}},$$
where the closure is taken in $\ortho(l)$.
Clearly, $H$ is an abelian subgroup. As $H$ is a closed subgroup of a compact Lie group, it is a compact Lie group. Thus, $H$ has a finite number of connected components. 

Now set $H_0$ to be the connected component of the identity. As multiplication by $g \in H$ is a homomorphism, the cosets of $H_0$ are the connected components of $H$. 
Via the exponential map, all connected compact abelian Lie groups are isomorphic to tori. Therefore, for some $m \leq l$, $H_0 \simeq \bT^m = \underbrace{S^1 \times \cdots \times S^1}_m$.

Let
$$R \coloneqq \begin{bmatrix} 0 & 1 \\ -1 & 0 \end{bmatrix},$$ then define the block diagonal matrices:
$$X_1 \coloneqq \begin{bmatrix} 
R & & & \\
& 0 & & \\
& & \ddots & \\
& &  &  0\\
\end{bmatrix}, \quad X_2 \coloneqq \begin{bmatrix} 
0 & & & \\
& R & & \\
& & \ddots & \\
& &  &  0\\
\end{bmatrix}, \ldots, \quad X_m \coloneqq \begin{bmatrix} 
0 & & & \\
& 0 & & \\
& & \ddots & \\
& &  &  R\\
\end{bmatrix}.$$
Set $Z_i \coloneqq X_i \oplus [0]_{l-m}$.
As $[Z_i, Z_j] =0$ and $\exp(sR) = \begin{bmatrix} \cos s & \sin s \\ -\sin s & \cos s \end{bmatrix}$, we see that 
$H_0 \simeq \{e^{s_1 Z_1 + \cdots + s_m Z_m} : s_1, \ldots, s_m \in \R\}$, where the isomorphism is given by a linear map.
Thus, we see that $H_0 v$ is isomorphic to a torus of dimension at most $m$.  

3. Let $W$ be a rational hyperplane in $E_+$. Specifically, we mean that $W$ is given by a basis of  $l-1$ elements of $\Q^{2n} \cap E_+$. We examine the intersection $W \cap H_0 v$. As $H_0 v$ is isomorphic via a linear transformation to a  torus, either $H_0 v \subset W$ or $W \cap H_0 v$ has measure 0 in $H_0 v$.

4. First suppose $H_0 v \subset W$. By the definition of $H$, there exists some $L \geq 1$ for which $B^L \in H_0$. We assume that $L$ is the smallest such natural number for which this holds. Then, $A^{kL} v \in W$ for all $k \in \Z$. Let $W'$ be the minimal rational subspace such that $A^{kL}v \subset W'$ for all $k \in \Z$. 
Clearly, $W'$ is $A^L$-invariant. As $W'$ is a nontrivial proper rational subspace of $\R^{2n}$, $A^L$ must have reducible characteristic polynomial over $\Q$. Thus, $k_0 \leq L$. 

If $V_v =\Q^{2n}$, then $K= \bT^{2n}$ and trivially  $K$ contains the union of $k_0$ non-parallel tori of dimension at least 1.

Now assume that $V_v  \neq \Q^{2n}$. In this case, $1 \leq \dim V_v \otimes \R \leq 2n-1$.  By the definition of $k_0$, for $0 < k < k_0$, $A^k$ has no proper, nontrivial rational invariant subspaces. Therefore, $V_v \otimes \R$ cannot be $A^k$-invariant. In other words, $A^k (V_v \otimes \R) \neq A^j(V_v \otimes \R)$ for $0 \leq k < j < k_0$. Then,  for $0 \leq k < k_0$, each torus $A^k \bT_v$ has a different tangent space. We conclude
$$\bigcup_{k=0}^{k_0 -1} \left\{A^k(z +\bT_v) \right\} \subset K,$$
i.e., $K$ contains the union of $k_0$ non-parallel tori of dimension at least 1. 

5. Now suppose that $W \cap H_0 v$  has measure $0$ in $H_0v$. From the above argument, we see $k_0 = \infty$.
As the set of all rational hyperplanes in $E_+$ is countable, there exists $w \in H_0 v$ that is not contained in any rational hyperplane.
For some subsequence $k_l$, we have $B^{k_l} v \rightarrow w$. Passing to a further subsequence, there exists $z_\infty \in \bT^{2n}$ such that $A^{k_l} z \bmod \Z^{2n} \rightarrow z_\infty$. Thus,
$$\bT^{2n} = \overline{\{z_\infty + \R w \bmod \Z^{2n}\}} \subset \overline{\{A^{k_l}(z + \R v) \bmod \Z^{2n} : l \in \Z\}} \subset K,$$
which finishes the proof.
\end{proof}
We can then conclude Theorem~\ref{thm:full_support}~\eqref{item:full_support}.

From Proposition~\ref{prop:K}, for $A \in \Sp(2n, \Z)$ with $k_0=2$, the support of any semiclassical measure corresponding to $A$ must contain the union of two non-parallel tori. 
The following example proves this characterization of $\supp \mu$ is tight for $n=2$. Specifically, we exploit Theorem~\ref{thm:counterexample} to show there exists hyperbolic $A \in \Sp(4, \Z)$ with $k_0=2$ and a semiclassical measure supported exactly on the union of two tori. 
As in \S\ref{section:counterexample}, we use the ordering of coordinates $(x_1, \xi_1, x_2, \xi_2)$.
In order to distinguish between the quantizations of different matrices, we use the notation $M_A$ to denote the quantization of $A$.

\begin{proposition}\label{prop:tight_example}
Set $B \in \Sp(2, \Z)$ to be hyperbolic and symmetric, with all positive entries. 
For 
$$A= \begin{bmatrix}
0 & B \\ -B & 0
\end{bmatrix},$$ 
$A$ is a hyperbolic element of $\Sp(4, \Z)$ with $k_0=2$ and has an associated semiclassical measure supported exactly on the union of two tori.  
\end{proposition}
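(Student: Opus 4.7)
The plan is first to verify that $A\in\Sp(4,\Z)$ is hyperbolic with $k_0=2$, and then to promote the eigenfunctions produced by Theorem~\ref{thm:counterexample} applied to $B^2$ into eigenfunctions of $M_A$ whose semiclassical measure has support exactly $T_1\cup T_2$, where $T_i:=\{z_i=0\}\subset\bT^4$. Symplecticity of $A$ follows from $B\in\Sp(2,\Z)$, and the identity $A^2=-B^2\oplus -B^2$ gives the eigenvalues of $A$ as $\pm i\lambda_B^{\pm 1}$ (none on the unit circle, so $A$ is hyperbolic) and shows the characteristic polynomial of $A^2$ is $(x^2+\Tr(B^2)x+1)^2$, already reducible, so $k_0\leq 2$. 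For irreducibility of the characteristic polynomial of $A$, namely $x^4+\Tr(B^2)x^2+1$, I would rule out both possible biquadratic factorizations over $\Q$: the splitting $(x^2-c)(x^2-d)$ with $c+d,cd\in\Q$ requires $\Tr(B)^2(\Tr(B)^2-4)$ to be a rational square and hence $\Tr(B)^2-4$ to be a perfect square, which is impossible for any integer $\Tr(B)>2$; and the splitting $(x^2+bx+\varepsilon)(x^2-bx+\varepsilon)$ with $\varepsilon=\pm 1$ forces $\Tr(B^2)\in\{2-b^2,-2-b^2\}$, contradicting $\Tr(B^2)=\Tr(B)^2-2\geq 7$.

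Next, set $C:=B^2\in\Sp(2,\Z)$, which inherits hyperbolicity, symmetry, and positivity of entries from $B$, so Theorem~\ref{thm:counterexample} produces a sequence of eigenfunctions $u_j$ of $M_C\otimes M_C$ whose normalizations weakly converge to $\mu:=\tfrac12[\delta(z_1)\otimes dz_2+dz_1\otimes\delta(z_2)]$. By the explicit formula~\eqref{e:M-A-def}, $M_C$ preserves parity-even functions, and $G_{N_j}$ is itself parity-even, so $u_j$ is parity-even in each slot. Writing $A^2=(-I)(B^2\oplus B^2)$ and using that $M_{-I}$ is the parity operator up to a phase, $u_j$ is simultaneously an eigenfunction of $M_A^2$ with some eigenvalue $\omega_j^2$. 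I then set
\[
v_{j,\pm}:=\tfrac12\bigl(u_j\pm\omega_j^{-1}M_A u_j\bigr),
\]
which are eigenfunctions of $M_A$ for eigenvalues $\pm\omega_j$; they are orthogonal in $\cH_{N_j}(0)\otimes\cH_{N_j}(0)$ and satisfy $u_j=v_{j,+}+v_{j,-}$ with $\|v_{j,+}\|^2+\|v_{j,-}\|^2=\|u_j\|^2$.

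The central calculation is that the real part of the cross term vanishes in the limit: using $M_A^{-1}u_j=\omega_j^{-2}M_A u_j$ together with Egorov's theorem, a direct expansion yields, for real $a\in C^\infty(\bT^4)$,
\[
\Re\langle\op_h(a)v_{j,+},v_{j,-}\rangle\;=\;\tfrac14\bigl(\langle\op_h(a)u_j,u_j\rangle-\langle\op_h(a\circ A)u_j,u_j\rangle\bigr)\;\longrightarrow\;0
\]
by the $A$-invariance of $\mu$. After passing to subsequences so that $v_{j,\pm}/\|v_{j,\pm}\|$ converges to $A$-invariant semiclassical measures $\nu_\pm$ with relative weights $\|v_{j,\pm}\|^2/\|u_j\|^2\to c_\pm^2$, expanding $\langle\op_h(a)u_j,u_j\rangle$ via $u_j=v_{j,+}+v_{j,-}$ yields the decomposition $c_+^2\nu_++c_-^2\nu_-=\mu$ with $c_+^2+c_-^2=1$. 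Separately, for any $\chi\in C^\infty(\bT^4)$ with $\supp\chi\cap(T_1\cup T_2)=\emptyset$, we have $\|\op_h(\chi)u_j\|\to 0$ (as $\mu(\supp\chi)=0$), and the same holds for $M_A u_j$ via $\|\op_h(\chi)M_A u_j\|^2=\langle\op_h(|\chi|^2\circ A)u_j,u_j\rangle+O(h)\to\int|\chi|^2 d(A_*\mu)=0$; hence $\|\op_h(\chi)v_{j,\pm}\|\to 0$ and $\supp\nu_\pm\subset T_1\cup T_2$.

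Finally, $A$ swaps $T_1$ and $T_2$, so the $A$-invariance of $\nu_\pm$ forces each to place mass $\tfrac12$ on each torus; under the identification $T_1\cong\bT^2$ via the $z_2$-coordinate, the restriction $\nu_\pm|_{T_1}$ is invariant under $A^2|_{T_1}=-B^2$, which is an ergodic hyperbolic toral automorphism of $\bT^2$. The identity $c_+^2\nu_+|_{T_1}+c_-^2\nu_-|_{T_1}=\tfrac12\,dz_2$ combined with Lebesgue decomposition forces the singular parts of $\nu_\pm|_{T_1}$ to vanish whenever $c_\pm>0$, and ergodicity of $-B^2$ identifies the absolutely continuous part as $\tfrac12\,dz_2$; hence $\nu_\pm=\mu$ whenever $c_\pm>0$. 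Since $c_+^2+c_-^2=1$ at least one of $c_\pm$ is nonzero, producing a sequence of $M_A$-eigenfunctions converging to the semiclassical measure $\mu$, whose support is exactly $T_1\cup T_2$. The main technical hurdle is the cross-term identity above, which crucially depends only on the $A$-invariance of $\mu$ and avoids any appeal to deeper ergodic information for $A$ on $\bT^4$; a secondary obstacle is checking that the quantization conditions $N\phi_A/2\in\Z^4$ and $N\phi_{B^2}/2\in\Z^2$ at $\theta=0$ hold simultaneously along a common sequence, which reduces to an elementary parity computation and is satisfied along the even $N_k$ from~\S\ref{subsection:short_periods}.
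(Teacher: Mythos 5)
Your proposal is correct, but it takes a genuinely different route from the paper's for the main construction. The paper applies Theorem~\ref{thm:counterexample} to $B$ itself and observes that the eigenfunctions $u^{(k)}$ of $M_{B\oplus B}$ defined in~\eqref{eq:u_def} are \emph{already} eigenfunctions of $M_A$: writing $A=(B\oplus B)R$ with $M_Ru(x_1,x_2)=u(-x_2,x_1)$, one application of $M_A$ shifts the summation index by one and swaps the two tensor slots, and the built-in $P/2$ offset between the slots plus the evenness of $M_B^tG_N$ and the $P$-periodicity make the sum reproduce itself up to the phase $e^{2i\phi/P}$ — a three-line computation needing no new limit measures. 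You instead apply the theorem to $B^2$, note that the resulting $u_j$ are eigenfunctions of $M_A^2$ (via $A^2=-(B^2\oplus B^2)$, parity, and uniqueness of metaplectic quantization up to phase), symmetrize to get $M_A$-eigenfunctions $v_{j,\pm}$, kill the cross term using only the $A$-invariance of $\mu$, and then identify $\nu_\pm=\mu$ by a Lebesgue-decomposition-plus-ergodicity argument on each torus. Your route costs more machinery (extraction of subsequences, ergodicity of the hyperbolic automorphism $-B^2$ on $\bT^2$, which the paper never needs) but is more robust: it would produce $M_A$-eigenfunctions with limit $\mu$ from \emph{any} sequence of $M_A^2$-eigenfunctions whose semiclassical measure is $A$-invariant and of this product form, without relying on the exact algebraic symmetry of the explicit sum. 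Your alternative irreducibility argument for $x^4+\Tr(B^2)x^2+1$ (ruling out both rational quadratic factorizations directly) is also valid and replaces the paper's appeal to Gauss's lemma. Two small points you should make explicit if writing this up: the mass-$\tfrac12$ claim for $\nu_\pm$ on each of $T_1,T_2$ needs $\nu_\pm(\{0\})=0$ (which follows from $c_+^2\nu_++c_-^2\nu_-=\mu$ and $\mu(\{0\})=0$ when $c_\pm>0$), and the deduction $\supp\nu_\pm\subset T_1\cup T_2$ from $\|\op_h(\chi)v_{j,\pm}\|\to0$ requires $\|v_{j,\pm}\|$ bounded below, which again is only available when $c_\pm>0$ — consistent with your final ``whenever $c_\pm>0$'' caveat, so no actual gap.
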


\begin{proof}
First off, a calculation shows that $A$ is in $\Sp(4, \Z)$. 

Let $\gamma^{\pm 1}$ be the eigenvalues of $B$, where $\gamma >1$. Therefore the eigenvalues of $A^2 = -(B^2 \oplus B^2)$ are $-\gamma^{\pm 2}$. Thus, $i \gamma$, $-i \gamma$, $i \gamma^{-1}$, and $-i \gamma^{-1}$ are the eigenvalues of $A$, which implies that $A$ is hyperbolic.

The characteristic polynomial of $A$ is  $(\lambda - i \gamma) (\lambda + i \gamma)(\lambda -i \gamma^{-1})(\lambda +i \gamma^{-1})$. Clearly, there is no linear factor in $\Q[x]$. Therefore, if the characteristic polynomial is reducible over $\Q$, it factors into $(\lambda^2 + \gamma^2)(\lambda^2 + \gamma^{-2})$. 
Via the Gauss lemma,  if a polynomial in $\Z[x]$ factors in $\Q[x]$, it also factors in $\Z[x]$. As $\gamma >1$, $\gamma^{-2} \notin \Z$. We conclude that the characteristic polynomial of $A$ is always irreducible over $\Q$.

However, as the characteristic polynomial of $A^2=-(B^2  \oplus B^2)$ is the square of the characteristic polynomial of $-B^2$, it is clearly reducible over $\Q$. Therefore, $k_0 = 2$.

For $N_k = \frac{\lambda^{k} -\lambda^{-k}}{\lambda - \lambda^{-1}}$, recall from \S\ref{subsection:short_periods} that $P({N}_k) =2k$ is the period of $M_B$, i.e., $M^{P(N_k)}_B |_{\cH_{N_k}(0)}  = e^{i \phi(N_k)} I$.
As in \S\ref{subsection:assumptions}, we restrict to $k=0 \mod 2$ when $\Tr(B)$ is odd and to 
$k=0 \bmod 6$ when $\Tr(B)$ is even. 

Set $P= P(N_k), \phi =\phi(N_k), M_B = M_B|_{\cH_{N_k}(0)}$, and $G_N = G_{N_k}$. From Proposition~\ref{prop:counterexample}, we know that after normalization, 
$$u^{(k)}(x_1, x_2) \coloneqq \frac{1}{\sqrt{P}}\sum_{t=-\frac{P}{2}}^{\frac{P}{2}-1} \left(e^{-i \frac{ \phi}{P}t} M_{B}^{t} G_{N}\right) \otimes \left(e^{-i \frac{ \phi}{P}\left(t+\frac{P}{2}\right)} M_{B}^{t+\frac{P}{2}} G_{N} \right)$$
is a  sequence of eigenfunctions for $M_{B \oplus B}$ that weakly converge to the semiclassical measure  $\mu = \frac{1}{2}[\delta(x_1, \xi_1)   \otimes dx_2 d\xi_2 + dx_1 d\xi_1 \otimes \delta(x_2, \xi_2)]$. 
The support of $\mu$ is exactly the union of two tori. 
Thus, to finish the proof, it suffices to show  that $u^{(k)}$ are also eigenfunctions for $M_A$. 

Note that  $A = (B \oplus B) R$, where $R=\begin{bmatrix} 0 & I \\ -I & 0 \end{bmatrix}$.  Therefore, $M_A = (M_B \otimes M_B) M_R$, with $M_R u(x_1, x_2) = u(-x_2, x_1)$. A calculation verifies that $M_B^{t} G_N(-x) = M_B^{t} G_N(x)$.
Therefore,
\begin{align*}
M_A\left(M_B^t G_N \otimes M_B^{\frac{P}{2} +t} G_N\right)(x_1, x_2) &= (M_B \otimes M_B)\left(M_B^{\frac{P}{2} +t} G_N \otimes M_B^t G_N\right)(x_1, -x_2)\\
&= \left(M_B^{\frac{P}{2} +t +1} G_N \otimes M_B^{t +1} G_N\right)(x_1, -x_2)\\
&= \left(M_B^{\frac{P}{2} +t +1} G_N \otimes M_B^{t +1} G_N\right)(x_1, x_2).
\end{align*}
Therefore, using the $P$-periodicity of $M_B$,
\begin{align*}
M_A u^{(k)} &=\frac{1}{\sqrt{P}}\sum_{t=-\frac{P}{2}}^{\frac{P}{2}-1} \left(e^{-i \frac{ \phi}{P}\left(t+\frac{P}{2}\right)} M_{B}^{t+\frac{P}{2}+1} G_{N} \right) \otimes \left(e^{-i \frac{ \phi}{P}t} M_{B}^{t+1} G_{N}\right)\\
&=e^{2i \frac{\phi}{P}}\frac{1}{\sqrt{P}}\sum_{t=-\frac{P}{2}}^{\frac{P}{2}-1} \left(e^{-i \frac{ \phi}{P}\left(t+\frac{P}{2} +1 \right)} M_{B}^{t+\frac{P}{2}+1} G_{N} \right) \otimes \left(e^{-i \frac{ \phi}{P}(t+1)} M_{B}^{t+1} G_{N}\right)\\
&=e^{2i \frac{\phi}{P}}\frac{1}{\sqrt{P}}\sum_{t=-\frac{P}{2}}^{\frac{P}{2}-1} \left(e^{-i \frac{ \phi}{P}t} M_{B}^{t} G_{N}\right) \otimes \left(e^{-i \frac{ \phi}{P}\left(t+\frac{P}{2} \right)} M_{B}^{t+\frac{P}{2}} G_{N} \right) =  e^{2i \frac{\phi}{P}}u^{(k)}.
\end{align*}

We conclude that $A$ has a semiclassical measure that is supported on the union of two transversal tori.
\end{proof}

In Lemma~\ref{lem:first-power-suffices}, it is proven that if the characteristic polynomial of $A$ is irreducible over the rationals with Galois group $S_2 \wr S_n$, then the characteristic polynomial of $A^k$ is also irreducible over the rationals with Galois group $S_2 \wr S_n$. For the definition of the Galois group of a polynomial, see the start of Appendix~\ref{appendix2}.

Clearly, the characteristic polynomial of the above example in Proposition~\ref{prop:tight_example} cannot have Galois group $S_2 \wr S_n$.
In fact, its Galois group is the Klein four group $V_4 \simeq Z_2 \times Z_2$.  As noted earlier, the eigenvalues of $A$ are $i \gamma$, $-i \gamma$, $i \gamma^{-1}$, and $-i\gamma^{-1}$. Each of the maps $i \gamma \mapsto -i \gamma$,  $i \gamma \mapsto i \gamma^{-1}$,  and $i \gamma \mapsto -i\gamma^{-1}$ is an involution. Moreover, each of these maps swaps the other two roots; respectively, $i\gamma^{-1}$ swaps with $-i\gamma^{-1}$, $-i\gamma$ swaps with $-i\gamma^{-1}$, and $-i\gamma$ swaps with $i \gamma^{-1}$.  Thus, every nontrivial element of the Galois group is of order two and a product of two disjoint transpositions. This is exactly the definition of the Klein four group as a subgroup of the symmetric group $S_4$.

\section{Galois groups of random symplectic matrices \\ by Theresa C. Anderson and Robert J. Lemke Oliver} \label{appendix2}
The purpose of this appendix is to provide a proof of the following:

\begin{theorem}\label{thm:galois-group-powers}
    For any fixed natural number $n \geq 1$, there exists a subset $\mathcal{S}$ of $\mathrm{Sp}(2n,\mathbb{Z})$ such that
        \[
            \lim_{H \to \infty} \frac{ \#\{ A \in \mathcal{S} : \| A\| \leq H\}}{\# \{A \in \mathrm{Sp}(2n,\mathbb{Z}) : \|A\| \leq H\}}
                = 1,
        \]
    every $A \in \mathcal{S}$ has a non-unit length eigenvalue, and so that for every $A \in \mathcal{S}$ and every integer $m \geq 1$, the characteristic polynomial of $A^m$ is irreducible with Galois group $S_2 \wr S_n$.  
    (Here, $\|\cdot \|$ denotes any norm on the space of $2n \times 2n$ matrices; for example, we may take $\| A \| = \max\{|a_{i,j}| : 1 \leq i,j \leq 2n\}$, the maximum absolute values of the entries of $A$.)
\end{theorem}

We briefly recall some basic necessary notions, particularly related to Galois groups and wreath products.  First, for any $n$, we let $S_n$ denote the symmetric group on $n$ symbols, that is, the full group of permutations of $\{1,\dots,n\}$  -- or, as is more instructive for our purposes, on $\{\alpha_1,\dots,\alpha_n\}$.  The splitting field of a polynomial $f$ of degree $n$ is the extension $\mathbb{Q}(\alpha_1,\dots,\alpha_n)$ obtained by adjoining to $\mathbb{Q}$ the $n$ roots $\alpha_1,\dots,\alpha_n$ of $f$ over $\mathbb{C}$.  The Galois group of the splitting field $\mathbb{Q}(\alpha_1,\dots,\alpha_n)$ is the set of its field automorphisms, each of which must fix $\mathbb{Q}$ and induce a permutation on the set of roots $\{\alpha_1,\dots,\alpha_n\}$.  In fact, any automorphism of $\mathbb{Q}(\alpha_1,\dots,\alpha_n)$ is determined by this permutation, so we may regard the Galois group as a subgroup of the permutation group on $\{\alpha_1,\dots,\alpha_n\}$.  We refer to this permutation group as the Galois group of $f$, which we denote by $\mathrm{Gal}(f)$.  It is worth noting that $100\%$ of monic, irreducible, degree $n$ polynomials $f\in \mathbb{Z}[x]$ have $\mathrm{Gal}(f)$ isomorphic to $S_n$ (for a more detailed introduction to the information mentioned above, see~\cite{DummitFoote}*{\S 14.6}).   

To define the wreath product $S_2 \wr S_n$, which is a subgroup of $S_{2n}$, we first view $S_{2n}$ as the permutation group of the $2n$ symbols $\alpha_1,\beta_1, \dots, \alpha_n, \beta_n$.  The wreath product is the subgroup that preserves the set of unordered pairs $\{\alpha_1,\beta_1\}, \dots, \{\alpha_n,\beta_n\}$.  In particular, in $S_2 \wr S_n$, any pair $\{\alpha_i, \beta_i\}$ may be sent to any other $\{\alpha_j,\beta_j\}$ (either by $\alpha_i \mapsto \alpha_j$ and $\beta_i \mapsto \beta_j$ or by $\alpha_i \mapsto \beta_j$ and $\beta_i \mapsto \alpha_j$), and the elements of any pair $\{\alpha_i, \beta_i\}$ may be swapped (that is, $\alpha_i \leftrightarrow \beta_i$), but it is not possible to send $\alpha_i,\beta_i$ to elements with unequal indices.  For a more formal introduction to the wreath product, see~\cite{DixonMortimer}.

The Galois group of the characteristic polynomial of any $A \in \mathrm{Sp}(2n,\mathbb{Z})$ is naturally a subgroup of $S_2 \wr S_n$, as follows from the reciprocal structure of its roots, so Theorem~\ref{thm:galois-group-powers} shows that a ``random'' element of $\mathrm{Sp}(2n,\mathbb{Z})$ has Galois group as large as possible.  To view this inclusion more concretely, if $\lambda$ is an eigenvalue of $A$, then $\frac{1}{\lambda}$ is as well, so the roots of the characteristic polynomial naturally come in pairs $\{\lambda,\frac{1}{\lambda}\}$.  Moreover, if under the action of some element of the Galois group $\lambda$ is sent to some $\lambda^\prime$, say, then $\frac{1}{\lambda}$ must be sent to $\frac{1}{\lambda^\prime}$.  In particular, the Galois group must preserve this pair structure, so must be a subgroup of $S_2 \wr S_n$.  Building on this, for any eigenvalue $\lambda$, we note that the minimal polynomial of $\lambda + \frac{1}{\lambda}$ will have degree at most $n$, corresponding to the $n$ choices of the pair $\{\lambda,\frac{1}{\lambda}\}$.  Writing $f_\lambda$ and $f_{\lambda+\frac{1}{\lambda}}$ for the associated minimal polynomials, we then find $\mathrm{Gal}(f_{\lambda+\frac{1}{\lambda}}) \leq S_n$ and $\mathrm{Gal}(f_\lambda) \leq S_2 \wr \mathrm{Gal}(f_{\lambda+\frac{1}{\lambda}})$, where the $S_2$ term accounts for the Galois group of the at most quadratic extension $\mathbb{Q}(\lambda)/\mathbb{Q}(\lambda + \frac{1}{\lambda})$. 

We now continue with the proof by first noting that it suffices to prove Theorem~\ref{thm:galois-group-powers} only for the case $m=1$, as the following lemma makes clear.

\begin{lemma} \label{lem:first-power-suffices}
    Let $A \in \mathrm{Sp}(2n,\mathbb{Z})$ be such that the characteristic polynomial of $A$ is irreducible with Galois group $S_2 \wr S_n$.  If $n \geq 2$, then for every integer $m \geq 1$, the characteristic polynomial of $A^m$ is irreducible with Galois group $S_2 \wr S_n$ and $A$ has a non-unit length eigenvalue.  If $n = 1$, this same conclusion holds as well, provided that the characteristic polynomial of $A$ is not $x^2+1$, $x^2+x+1$, or $x^2-x+1$.
\end{lemma}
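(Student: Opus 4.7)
The plan is to work with the splitting field $K$ of the characteristic polynomial $f_A$, whose Galois group $G = \mathrm{Gal}(K/\mathbb{Q})$ is assumed to be $S_2 \wr S_n$ acting on the $2n$ distinct roots $\lambda_1^{\pm 1},\dots,\lambda_n^{\pm 1}$ of $A$. The characteristic polynomial of $A^m$ is $f_{A^m}(x)=\prod_{i=1}^n(x-\lambda_i^m)(x-\lambda_i^{-m})$ and its splitting field $K_m=\mathbb{Q}(\lambda_1^m,\dots,\lambda_n^m)$ sits inside $K$. To prove the lemma I need to show three things: (i) the $2n$ values $\lambda_i^{\pm m}$ are pairwise distinct, so $f_{A^m}$ is separable; (ii) the natural restriction $G\to\mathrm{Gal}(K_m/\mathbb{Q})$ is injective, which together with $G\cong S_2\wr S_n$ identifies the Galois group of $f_{A^m}$ with $S_2\wr S_n$; and (iii) $G$ acts transitively on the $\lambda_i^{\pm m}$ (preserving the pair structure $\{\lambda_i^m,\lambda_i^{-m}\}$), which will give the irreducibility.

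The crucial preliminary step is to show that no $\lambda_i$ is a root of unity. If all of the $\lambda_i$ were roots of unity then $f_A$ would be an irreducible cyclotomic polynomial of degree $2n$ and $G$ would be abelian. For $n\geq 2$ this is impossible because $S_2\wr S_n$ is nonabelian (it contains $S_n$, nonabelian for $n\geq 3$, and equals the dihedral group of order $8$ when $n=2$). For $n=1$, the cyclotomic polynomials of degree $2$ are exactly $\Phi_3(x)=x^2+x+1$, $\Phi_4(x)=x^2+1$, and $\Phi_6(x)=x^2-x+1$, which are precisely the excluded cases in the statement.

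With this in hand I will rule out any coincidence $\lambda_i^{am}=\lambda_j^{bm}$ with $a,b\in\{\pm1\}$ and $(i,a)\neq(j,b)$. Equivalently, I will show that no monomial $\lambda_i^a\lambda_j^{-b}$ with $(i,a)\neq (j,b)$ is a root of unity. The $j=i$ case is immediate because then $\lambda_i^{2a}$ would be a root of unity. For $j\neq i$, I invoke the within-pair swap $\tau_i\in G$ that sends $\lambda_i\leftrightarrow\lambda_i^{-1}$ and fixes every other pair; this element belongs to $G$ because the base group $S_2^n$ sits inside $S_2\wr S_n$. If $\zeta=\lambda_i^a\lambda_j^{-b}$ is a root of unity, then so is $\tau_i(\zeta)=\lambda_i^{-a}\lambda_j^{-b}$, and the product $\zeta\cdot\tau_i(\zeta)=\lambda_j^{-2b}$ is a root of unity, contradicting the previous step. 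This establishes (i) and will feed directly into the faithfulness argument.

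For (ii), suppose $\sigma\in G$ fixes every $\lambda_i^m$. Applying $\sigma$ to $\lambda_i$ gives some $\lambda_k^{\epsilon}$ with $\epsilon=\pm1$, and the fixing condition forces $\lambda_k^{\epsilon m}=\lambda_i^m$; by the previous paragraph this is only possible when $k=i$ and $\epsilon=+1$, so $\sigma(\lambda_i)=\lambda_i$ for every $i$, hence $\sigma=\mathrm{id}$ on $K$. Thus $K_m=K$ and the Galois group of $f_{A^m}$ is $S_2\wr S_n$, and since $S_2\wr S_n$ acts transitively on the $2n$ roots $\lambda_i^{\pm m}$ the polynomial $f_{A^m}$ is irreducible. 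The main delicate point is the root-of-unity elimination, which is where the wreath-product structure (specifically the presence of independent within-pair swaps) does the essential work.
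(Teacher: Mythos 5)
Your proposal is correct, but it follows a genuinely different route from the paper's. The paper works with a single root $\lambda$ and the subfield lattice of $\mathbb{Q}(\lambda)$: since $\mathrm{Gal}(f_A)\simeq S_2\wr S_n$ acts imprimitively with point stabilizer contained only in the pair stabilizer and the full group, the only proper subfields of $\mathbb{Q}(\lambda)$ are $\mathbb{Q}$ and $F=\mathbb{Q}(\lambda+\tfrac{1}{\lambda})$; if $\mathbb{Q}(\lambda^m)\neq\mathbb{Q}(\lambda)$ then $\lambda^m\in F$, and applying the conjugation $\lambda\mapsto\tfrac{1}{\lambda}$ of the quadratic extension $\mathbb{Q}(\lambda)/F$ forces $\lambda^m=\pm1$, i.e.\ $\lambda$ is a root of unity. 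The degree equality $[\mathbb{Q}(\lambda^m):\mathbb{Q}]=2n$ then delivers irreducibility and the Galois group in one stroke, with no need to compare different roots. You instead work globally in the splitting field: you establish pairwise distinctness of the $\lambda_i^{\pm m}$ by showing no $\lambda_i^a\lambda_j^{-b}$ is a root of unity (the within-pair swap $\tau_i$ from the base group $S_2^n$ handling the cross-pair case via $\zeta\cdot\tau_i(\zeta)=\lambda_j^{-2b}$), then deduce faithfulness of $G$ on $K_m$ and transitivity on the power roots. Both arguments hinge on the same root-of-unity dichotomy, with the identical case split between $n\geq2$ (nonabelianness of $S_2\wr S_n$) and $n=1$ (the three excluded cyclotomic quadratics). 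What each buys: the paper's version is shorter and sidesteps any analysis of coincidences between distinct roots; yours makes the separability of $f_{A^m}$ and the identification of the permutation action (via the $G$-equivariant bijection $\mu\mapsto\mu^m$) explicit, at the cost of the extra multiplicative-relation lemma. Your argument is complete as stated; if you wanted to tighten it, the only point worth spelling out is that a Galois conjugate of a root of unity is again a root of unity, which you use twice implicitly.
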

\begin{proof}
    Let $f_A$ denote the characteristic polynomial of $A$, and let $\lambda_1,\dots,\lambda_{2n}$ be its roots inside $\mathbb{C}$.  Similarly, if we let $f_{A^m}$ denote the characteristic polynomial of $A^m$ for some $m \geq 1$, then roots of $f_{A^m}$ are $\lambda_1^m, \dots, \lambda_{2n}^m$.  Now, let $\lambda$ denote an arbitrary root of $f_A$, and consider the field $\mathbb{Q}(\lambda)$.  By the assumption that $f_A$ is irreducible, we see that $\mathbb{Q}(\lambda)$ has degree $2n$ over $\mathbb{Q}$.  Moreover, we have $\lambda^m \in \mathbb{Q}(\lambda)$, and the claim will follow if $\mathbb{Q}(\lambda^m) = \mathbb{Q}(\lambda)$ for each $m$, since this implies that $f_{A^m}$ is irreducible and that $\mathrm{Gal}(f_{A^m}) = \mathrm{Gal}(f_A)$.

    By the assumption that $\mathrm{Gal}(f_A) \simeq S_2 \wr S_n$, we see that $\mathbb{Q}(\lambda)$ admits only two proper subextensions, namely $\mathbb{Q}$ and $\mathbb{Q}(\lambda + \frac{1}{\lambda}) =: F$, the latter of which is a degree $n$ extension of $\mathbb{Q}$ whose normal closure over $\mathbb{Q}$ has Galois group $S_n$.  It therefore suffices to show that $\lambda^m \not\in F$.  Note that this rules out both $\mathbb{Q}$ and $F$ as candidates for $\mathbb{Q}(\lambda^m)$. 
    
    Suppose to the contrary that $\lambda^m = \alpha$ for some $\alpha \in F$.  The extension $\mathbb{Q}(\lambda)/F$ is Galois (it is degree $2$), with the nontrivial automorphism sending $\lambda$ to $\frac{1}{\lambda}$.  Applying this automorphism (which fixes $\alpha$) and simplifying, we also find that $\lambda^m = 1/\alpha$.  This implies that $\alpha = \pm 1$ and that $\lambda$ is a root of unity.  This cannot be the case if $n \geq 2$, since the extension $\mathbb{Q}(\lambda)$ is not abelian (the group $S_2 \wr S_n$ is not abelian for any $n \geq 2$), nor can it be the case if $n=1$ unless $f_A$ is one of the listed polynomials (the listed polynomials are the only irreducible quadratics whose roots are roots of unity). 
    
    Finally, we note that if an algebraic integer is not a root of unity, it must have a Galois conjugate off the unit circle. As $A$ cannot have eigenvalues that are roots of unity, it must have a non-unit length eigenvalue. 
    This completes the proof.
\end{proof}

To prove Theorem~\ref{thm:galois-group-powers} in the case $m=1$, we will employ a strategy of Davis, Duke, and Sun~\cite{DDS}, incorporating results of Chavdarov~\cite{Chavdarov} and Gorodnik and Nevo~\cite{GorodnikNevo}.  Namely, this involves extending statistics for reciprocal polynomials to the case of symplectic matrices via equidistribution results in ergodic theory.

We begin with the following lemma from~\cite{DDS}.

\begin{lemma}{\cite{DDS}*{Lemma 2}} \label{lem:sufficient-cycles}
    Let $n \geq 2$ and let $G$ be a subgroup of $S_2 \wr S_n$ containing a $2$-cycle, $4$-cycle, $(2n-2)$-cycle, and a $2n$-cycle.  Then $G = S_2 \wr S_n$.
\end{lemma}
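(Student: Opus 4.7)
The plan is to exploit the natural projection $\pi \colon S_2 \wr S_n \to S_n$ onto the induced action on the $n$ pairs, whose kernel $K := (S_2)^n$ has order $2^n$. I will show separately that $\pi(G) = S_n$ and that $G \cap K = K$, after which $G = S_2 \wr S_n$ follows from a straightforward order count, since $|K|\cdot|S_n| = 2^n n! = |S_2 \wr S_n|$.

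The first task is to analyze the projections of the given cycles. For any $g \in S_2 \wr S_n$, each cycle of $\pi(g)$ of length $c$ lifts to either a single $2c$-cycle or a pair of disjoint $c$-cycles in $g$, while each fixed pair of $\pi(g)$ contributes either two fixed points or a transposition swapping that pair. Moreover, a $2$-cycle of $S_{2n}$ lying in $S_2 \wr S_n$ must swap two elements of a single pair, since otherwise it would fail to preserve the pair structure. Applying these observations case-by-case: the given $2$-cycle lies in $K$ and is a generator of one $S_2$-factor; the $4$-cycle projects to a transposition in $S_n$; the $(2n-2)$-cycle must fix a single pair and therefore projects to an $(n-1)$-cycle in $S_n$; and the $2n$-cycle must project to a full $n$-cycle (since otherwise its lift would split into two disjoint cycles).

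For $n \geq 3$, the $n$-cycle makes $\pi(G)$ transitive on the $n$ pairs, and conjugating the $(n-1)$-cycle by suitable elements of $\pi(G)$ shows that the stabilizer of any pair contains an $(n-1)$-cycle acting transitively on the remaining pairs; hence $\pi(G)$ is $2$-transitive, and in particular primitive. A classical theorem of Jordan asserts that a primitive subgroup of $S_n$ containing a transposition equals $S_n$, so $\pi(G) = S_n$. (For $n = 2$, the transposition alone forces $\pi(G) = S_2$ and the argument simplifies accordingly.) Finally, let $g_2 \in G$ be the given $2$-cycle, swapping a specific pair $P_{i_0}$. Since $\pi$ surjects onto $S_n$, for each $i \in \{1, \dots, n\}$ one can choose $h_i \in G$ with $\pi(h_i)(P_{i_0}) = P_i$, and then $h_i g_2 h_i^{-1} \in G \cap K$ swaps $P_i$. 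The resulting $n$ involutions generate $K$, so $G \cap K = K$, completing the argument. The only non-formal step is the verification that $\pi(G) = S_n$ via the Jordan-type argument; given the cycle analysis, the remaining steps are essentially bookkeeping.
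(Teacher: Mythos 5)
Your proposal is correct. Note that the paper does not prove this lemma at all --- it simply quotes it as \cite{DDS}*{Lemma 2} --- so there is no in-paper argument to compare against; your write-up supplies a complete, self-contained proof. The route you take (observing that the support of any element of $S_2\wr S_n$ is a union of blocks, so the $2$-cycle lies in the base group $K=(S_2)^n$ while the $4$-, $(2n-2)$-, and $2n$-cycles project to a transposition, an $(n-1)$-cycle, and an $n$-cycle in $S_n$; concluding $\pi(G)=S_n$ from transitivity, a transitive point stabilizer, and Jordan's theorem; then conjugating the block transposition around to generate all of $K$ and counting orders) is the standard argument for this kind of statement and is essentially the one given in the cited source. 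All the individual steps check out, including the two-transitivity claim (a transitive group whose one-point stabilizer contains an $(n-1)$-cycle on the remaining points is $2$-transitive) and the final order count $|G|=|G\cap K|\cdot|\pi(G)|$.
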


Lemma~\ref{lem:sufficient-cycles} suggests that sieving is a natural approach to Theorem~\ref{thm:galois-group-powers}.  Specifically, the characteristic polynomial $f_A(x)$ of any element $A \in \mathrm{Sp}(2n,\mathbb{Z})$ must be reciprocal (i.e., palindromic), that is, it satisfies $f_A(x) = x^{2n} f_A(\frac{1}{x})$; let $\mathcal{P}_{2n}^\mathrm{rec}(\mathbb{Z})$ be the set of monic reciprocal polynomials in $\mathbb{Z}[x]$ of degree $2n$, so that $f_A \in \mathcal{P}_{2n}^\mathrm{rec}(\mathbb{Z})$.  Moreover, the factorization type of $f_A \pmod{\ell}$ for primes $\ell$ not dividing the discriminant of $f_A$ corresponds to the cycle type of an element in $\mathrm{Gal}(f_A)$ (namely, the Frobenius element at $\ell$).  Thus, for any prime $\ell$, let $\mathcal{P}_{2n}^\mathrm{rec}(\mathbb{F}_\ell)$ be the set of monic reciprocal polynomials of degree $2n$ over $\mathbb{F}_\ell$, and, motivated by Lemma~\ref{lem:sufficient-cycles}, for any $1 \leq k \leq n$ we let $\mathcal{P}_{2n}^\mathrm{rec}(F_\ell ; (2k))$ be the subset of $\mathcal{P}_{2n}^\mathrm{rec}(\mathbb{F}_\ell)$ consisting of polynomials factoring as an irreducible polynomial of degree $2k$ times a product of $2n-2k$ distinct linear polynomials.  Observe that $\# \mathcal{P}_{2n}^\mathrm{rec}(\mathbb{F}_\ell) = \ell^n$.

\begin{lemma} \label{lem:reciprocal-cycles}
    For any integer $1 \leq k \leq n$, we have 
        \[
            \#\mathcal{P}_{2n}^\mathrm{rec}(\mathbb{F}_\ell ; (2k) )
            = \frac{ 1 } {2^{n-k+1}  \cdot k \cdot (n-k)!} \ell^{n} + O(\ell^{n-1}).
        \]
\end{lemma}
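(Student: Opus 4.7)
The plan is to factor each polynomial in $\mathcal{P}_{2n}^\mathrm{rec}(\mathbb{F}_\ell; (2k))$ as $f = g \cdot L$, where $g$ is a monic irreducible reciprocal polynomial of degree $2k$ and $L$ is a squarefree monic reciprocal polynomial of degree $2(n-k)$ that splits completely into distinct linear factors over $\mathbb{F}_\ell$. Both $g$ and $L$ must individually be reciprocal: since $g$ is the unique degree-$2k$ irreducible factor of $f$, unique factorization forces $g$ to equal its own reciprocal, which in turn forces $L$ to be reciprocal as well. The root sets of $g$ and $L$ are automatically disjoint, since an irreducible factor of degree $2k \geq 2$ has no roots in $\mathbb{F}_\ell$. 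Thus the total count factors as a product, and I would estimate the two factors separately.

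For the number of valid $g$'s, the classical bijection $g(x) \leftrightarrow h(y)$ given by $g(x) = x^k h(x + x^{-1})$ yields a correspondence between monic reciprocal polynomials of degree $2k$ and monic polynomials of degree $k$ over $\mathbb{F}_\ell$. The key claim is that $g$ is irreducible over $\mathbb{F}_\ell$ if and only if $h$ is irreducible of degree $k$ and, for a (hence any) root $\gamma \in \mathbb{F}_{\ell^k}$ of $h$, the element $\gamma^2 - 4$ is a non-square in $\mathbb{F}_{\ell^k}$. The reason is that the roots of $g$ are the roots of the quadratics $x^2 - \gamma_i x + 1 = 0$ as $\gamma_i$ ranges over the roots of $h$, and these roots lie in $\mathbb{F}_{\ell^{2k}} \setminus \mathbb{F}_{\ell^k}$ precisely when the discriminant is a non-square. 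A direct parametrization of the affine conic $\gamma^2 - \delta^2 = 4$ shows that exactly $(\ell^k - 1)/2$ values of $\gamma \in \mathbb{F}_{\ell^k}$ satisfy the non-square condition. Since all but $O(\ell^{k/2})$ elements of $\mathbb{F}_{\ell^k}$ generate the field over $\mathbb{F}_\ell$, and each irreducible $h$ contributes exactly $k$ Galois-conjugate roots, the count of valid $g$'s is $\frac{\ell^k}{2k} + O(\ell^{k-1})$.

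For the number of valid $L$'s, the root set is a reciprocal subset $S \subset \mathbb{F}_\ell^*$ of size $2(n-k)$, consisting of reciprocal pairs $\{\alpha, \alpha^{-1}\}$ (with $\alpha \neq \pm 1$) together with a subset of the self-reciprocal points $\{\pm 1\}$. However, both $\pm 1$ cannot simultaneously appear, because the factor $(x-1)(x+1) = x^2 - 1$ is anti-reciprocal and no other anti-reciprocal factor is available in a squarefree linear product; a parity check on $|S|$ also rules out including exactly one of $\pm 1$. Therefore $S$ consists of exactly $n-k$ pairs drawn from the $(\ell-3)/2$ available pairs in $\mathbb{F}_\ell^* \setminus \{\pm 1\}$, giving $\binom{(\ell-3)/2}{n-k} = \frac{\ell^{n-k}}{2^{n-k}(n-k)!} + O(\ell^{n-k-1})$ valid $L$'s.

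Multiplying the two asymptotics yields the claimed $\frac{\ell^n}{2^{n-k+1} k (n-k)!} + O(\ell^{n-1})$. The main obstacle is establishing the irreducibility criterion for $g$ under the substitution $y = x + x^{-1}$ and carrying out the non-square count in $\mathbb{F}_{\ell^k}$ with sufficiently sharp error control; the combinatorial analysis of the linear part and the handling of small $\ell$ (which is absorbed into the error term) are routine.
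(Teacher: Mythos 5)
Your argument is correct, and it essentially reconstructs from scratch the content that the paper outsources to the citation of Davis--Duke--Sun (their Lemma~1 is precisely the substitution $y=x+x^{-1}$ relating reciprocal polynomials of degree $2k$ to polynomials of degree $k$, and their Lemma~3 is the resulting factorization-type count). All the key points check out: the unique degree-$2k$ irreducible factor of a reciprocal $f$ must itself be palindromic (anti-palindromic is impossible for an irreducible of degree $\geq 2$ since such a polynomial vanishes at $1$), the quotient $L$ is then reciprocal with root set closed under inversion and disjoint from $\{\pm1\}$ by your parity and anti-reciprocity observations, the conic parametrization gives exactly $(\ell^k-1)/2$ values of $\gamma$ with $\gamma^2-4$ a nonzero non-square, and the product of the two asymptotics yields $\frac{1}{2^{n-k+1}\,k\,(n-k)!}\ell^n$ with all cross error terms bounded by $O(\ell^{n-1})$ (for $k\ge 2$ the subfield correction is $O(\ell^{k/2})\le O(\ell^{k-1})$, and for $k=1$ it vanishes). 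So the proposal is a correct, self-contained proof following the same route as the cited source.
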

\begin{proof}
    This follows on combining~\cite{DDS}*{Lemma 1} and~\cite{DDS}*{Lemma 3}.
\end{proof}

Lemmas~\ref{lem:sufficient-cycles} and \ref{lem:reciprocal-cycles} are the main ingredients leading to the main principle of~\cite{DDS}, which concerns the Galois groups of random integral reciprocal polynomials.  For our purposes, however, we need to relate this back to elements of $\mathrm{Sp}(2n,\mathbb{Z})$.  We begin with the following result of Chavdarov~\cite{Chavdarov}.

\begin{lemma} \label{lem:matrices-with-given-poly}
    For any prime $\ell \geq 5$ and any $f \in \mathcal{P}_{2n}^\mathrm{rec}(\mathbb{F}_\ell)$,  there holds
        \[
            \# \{ A \in \mathrm{Sp}(2n,\mathbb{F}_\ell) : f_A = f \}
                = \ell^{2n^2} + O(\ell^{2n^2-1}),
        \]
    where, for any $A \in \mathrm{Sp}(2n,\mathbb{F}_\ell)$, $f_A$ denotes its characteristic polynomial.
\end{lemma}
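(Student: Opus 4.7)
The plan is to study the characteristic polynomial map $\chi\colon \Sp_{2n} \to \mathcal{P}_{2n}^{\mathrm{rec}}$, $A \mapsto f_A$, as the Steinberg--Chevalley quotient of the reductive group $G = \Sp_{2n}$. Identifying $\mathcal{P}_{2n}^{\mathrm{rec}}$ with $\mathbb{A}^n$ via the $n$ free coefficients of a reciprocal polynomial, $\chi$ becomes the adjoint quotient $G \to G/\!\!/G \cong T/W$, where $T$ is a maximal torus and $W$ the Weyl group. Steinberg's theorem on this quotient tells us that $\chi$ is flat with geometric fibers all of dimension $\dim G - \operatorname{rank} G = (2n^2 + n) - n = 2n^2$; this already pins down the main term $\ell^{2n^2}$ and the task is to make the error uniform across all $f$.

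First I would reduce to the regular semisimple locus $\mathcal{P}_{2n}^{\mathrm{rs}}(\mathbb{F}_\ell)$, consisting of $f$ with $\operatorname{disc}(f) \neq 0$ and $f(\pm 1) \neq 0$; its complement is carved out by nontrivial polynomial conditions on the $n$ coefficients and so has size $O(\ell^{n-1})$. For $f \in \mathcal{P}_{2n}^{\mathrm{rs}}$, any $A \in G(\mathbb{F}_\ell)$ with $f_A = f$ is regular semisimple and its centralizer $T_A$ is an $\mathbb{F}_\ell$-form of a maximal torus. Since $T_A$ is connected, Lang's theorem implies that every geometric conjugacy class in the fiber over $f$ descends to a single $G(\mathbb{F}_\ell)$-orbit, so orbit--stabilizer gives the fiber size as $|G(\mathbb{F}_\ell)|/|T_A(\mathbb{F}_\ell)|$.

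Next I would compute $|T_A(\mathbb{F}_\ell)|$ explicitly. Factor $f = \prod_j g_j$ into distinct monic irreducibles over $\mathbb{F}_\ell$, and group these into reciprocal pairs $\{g_j, g_j^\ast\}$ with $g_j^\ast(x) = x^{\deg g_j} g_j(1/x)/g_j(0)$ together with self-reciprocal factors. The centralizer decomposes as a direct product of groups of the form $L^\times$ (for each reciprocal pair, with $L = \mathbb{F}_\ell[x]/(g_j)$) and norm-one subgroups $\{\lambda \in L^\times : N_{L/K}(\lambda) = 1\}$ (for each self-reciprocal factor, where $L/K$ is the quadratic extension of the fixed field of $x \mapsto x^{-1}$). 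Each factor has $\ell^{m_j}(1 + O(\ell^{-1}))$ rational points with $\sum_j m_j = n$, so $|T_A(\mathbb{F}_\ell)| = \ell^n(1 + O(\ell^{-1}))$ uniformly for $f \in \mathcal{P}_{2n}^{\mathrm{rs}}$. Combined with the standard formula $|\Sp(2n,\mathbb{F}_\ell)| = \ell^{n^2}\prod_{i=1}^n (\ell^{2i}-1) = \ell^{2n^2+n}(1+O(\ell^{-1}))$, this yields the stated $\ell^{2n^2} + O(\ell^{2n^2-1})$ for all regular $f$.

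The main obstacle is pushing this bound through for the exceptional locus $f \notin \mathcal{P}_{2n}^{\mathrm{rs}}$, where $A$ may have repeated eigenvalues or eigenvalues $\pm 1$: the centralizer can fail to be connected, Lang's theorem no longer forces a single rational orbit, and the fiber splits into several conjugacy classes of possibly different sizes. The cleanest route is to appeal to Steinberg's equidimensionality and use Lang--Weil to obtain $\ell^{2n^2} + O(\ell^{2n^2-1/2})$ for every fiber; sharpening this to $O(\ell^{2n^2-1})$ requires either a case analysis over Jordan types of $\Sp_{2n}$ (equivalently, a stratification of the non-regular fibers by $G$-orbits of the corresponding unipotent parts) with explicit centralizer orders, or an application of Deligne's refinements of Lang--Weil for flat families with smooth total space. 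I expect this uniform bookkeeping over the non-regular locus to be the most technical part of the argument, and the only place where the choice $\ell \geq 5$ enters, to ensure separability of the centralizer structure and to avoid small-characteristic pathologies in $\Sp_{2n}$.
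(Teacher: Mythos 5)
The paper's entire proof of this lemma is a one-line citation to Chavdarov's Theorem~3.5 (with $\gamma=1$), so any self-contained argument is by definition a different route; yours is, in substance, the conjugacy-class-plus-centralizer computation that underlies Chavdarov's own proof, recast in the language of the adjoint quotient $\chi\colon G\to T/W$. Your treatment of the regular semisimple locus is correct and complete in outline: for separable $f$ the geometric fiber is a single class, the centralizer is a connected maximal torus, Lang's theorem kills $H^1$ so the fiber is one rational orbit of size $|G(\mathbb{F}_\ell)|/|T_A(\mathbb{F}_\ell)|$, and your factorization of $T_A(\mathbb{F}_\ell)$ into $L^\times$ and norm-one factors gives $\ell^n(1+O(\ell^{-1}))$ uniformly. (You should also record surjectivity of $\chi$ on $\mathbb{F}_\ell$-points — e.g.\ via the Steinberg cross-section, which is defined over $\mathbb{F}_\ell$ since $\mathrm{Sp}_{2n}$ is simply connected — since the lemma implicitly asserts every reciprocal $f$ is realized.)

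The one genuine shortfall is the one you flag yourself: the lemma claims the error $O(\ell^{2n^2-1})$ for \emph{every} $f$, and on the $O(\ell^{n-1})$ non-regular fibers your Lang--Weil fallback yields only $O(\ell^{2n^2-1/2})$, with the sharpening deferred to an unexecuted case analysis. As written, then, you have proved a slightly weaker statement. Two remarks soften this. First, the weaker exceptional-fiber bound is entirely adequate for the only use of the lemma in the paper: in the proof of Theorem~\ref{thm:galois-group-powers} one sums the fiber counts over subsets of the $\ell^n$ reciprocal polynomials, so replacing $O(\ell^{2n^2-1})$ by $O(\ell^{2n^2-1/2})$ on an $O(\ell^{n-1})$-element exceptional set changes nothing in the sieve. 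Second, to recover the full statement one stratifies each non-regular fiber by its finitely many $G$-orbits (indexed by Jordan/Levi data, with centralizer orders computed à la Springer--Steinberg); this is in effect what Chavdarov does, and it is also where the hypothesis $\ell\geq 5$ enters — a restriction the paper explicitly notes is immaterial for the application.
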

\begin{proof}
    This follows from~\cite{Chavdarov}*{Theorem 3.5}, by taking $\gamma = 1$.  Note that the exclusion of $\ell < 4$ is unimportant in the sieve application that we will employ.
\end{proof}

In particular, Lemma~\ref{lem:matrices-with-given-poly} shows that the characteristic polynomials of elements of $\mathrm{Sp}(2n, \mathbb{F}_\ell)$ are approximately equidistributed in $\mathcal{P}^\mathrm{rec}_{2n}(\mathbb{F}_\ell)$.  (Recall that $|\mathrm{Sp}(2n,\mathbb{F}_\ell)| = \ell^{2n^2+n} +  \linebreak   O(\ell^{2n^2+n-1})$ and $\#\mathcal{P}_{2n}^\mathrm{rec}(\mathbb{F}_\ell) = \ell^n$.)  However, to obtain Theorem~\ref{thm:galois-group-powers}, it is necessary to know that elements of $\mathrm{Sp}(2n,\mathbb{Z})$ are essentially equidistributed when reduced $\pmod{\ell}$, and that these reductions are essentially independent for different primes $\ell$.  In other words, we are asking for an effective form of approximation for the group $\mathrm{Sp}(2n, \mathbb{Z})$.  For this, we have the following consequence of work of Gorodnik and Nevo~\cite{GorodnikNevo}.  

\begin{lemma}\label{lem:effective-approximation}
    Let $n \geq 1$.  There is a constant $\delta >0$ such that for any squarefree integer $q$ and any $2n \times 2n$ integer matrix $A_0$ that lies in $\mathrm{Sp}(2n,\mathbb{F}_\ell)$ when reduced modulo $\ell$ for each prime $\ell \mid q$, there holds for any $H \geq 1$
        \begin{align*}
            & \#\{ A \in \mathrm{Sp}(2n,\mathbb{Z}) : \| A\| \leq H, A \equiv A_0 \pmod{q} \} \\
                 & \quad = \prod_{\ell \mid q} |\mathrm{Sp}(2n,\mathbb{F}_\ell)|^{-1} \cdot \#\{ A \in \mathrm{Sp}(2n,\mathbb{Z}) : \|A\| \leq H\} \cdot ( 1 + O( q^{2n^2+n} H^{-\delta})),
        \end{align*}
    where $\| A\|$ is any norm on the set of $2n \times 2n$ matrices (for example, $\| A\| := \max\{ |a_{i,j}| \}$, the maximum absolute value of the entries of $A$).
\end{lemma}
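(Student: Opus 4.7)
The plan is to obtain this equidistribution statement from the effective lattice point counting theorem of Gorodnik and Nevo~\cite{GorodnikNevo}, applied to the arithmetic lattice $\mathrm{Sp}(2n,\mathbb Z)$ inside the connected semisimple real Lie group $G \coloneqq \mathrm{Sp}(2n,\mathbb R)$. The first step is to set up the congruence structure. For a squarefree integer $q$, let $\Gamma(q)$ denote the principal congruence subgroup of $\Gamma \coloneqq \mathrm{Sp}(2n,\mathbb Z)$, i.e.\ the kernel of the reduction map $\Gamma \to \mathrm{Sp}(2n,\mathbb Z/q\mathbb Z)$. By strong approximation for the $\mathbb Q$-simple, simply connected group $\mathrm{Sp}(2n)$, this reduction map is surjective, and the Chinese Remainder Theorem gives
\begin{equation*}
[\Gamma : \Gamma(q)] \;=\; |\mathrm{Sp}(2n,\mathbb Z/q\mathbb Z)| \;=\; \prod_{\ell \mid q} |\mathrm{Sp}(2n,\mathbb F_\ell)|.
\end{equation*}
Under the hypothesis on $A_0$, the congruence class $A_0 \Gamma(q)$ is therefore one of $[\Gamma:\Gamma(q)]$ equally-sized cosets, so the statement reduces to establishing effective equidistribution of norm-ball lattice points among these cosets.

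For this equidistribution, I would invoke the effective counting theorem for congruence subgroups of semisimple arithmetic lattices proved in~\cite{GorodnikNevo}. The group $G$ is connected, semisimple, with no compact factors, and $\Gamma$ is an irreducible lattice, so the hypotheses of their effective mean ergodic theorem are met; the relevant input is a positive spectral gap for the action of $G$ on $L^2_0(G/\Gamma(q))$ uniform in $q$ (property $(\tau)$ with respect to congruence subgroups), which is known for $\mathrm{Sp}(2n)$. Let $B_H \coloneqq \{g \in G : \|g\| \leq H\}$, where $\|\cdot\|$ is the chosen norm; $B_H$ is a well-rounded family in the sense of Gorodnik--Nevo, and $\mathrm{vol}(B_H) \asymp H^a$ for some explicit exponent $a = a(n) > 0$. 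Their main counting result then yields an estimate of the shape
\begin{equation*}
\#(\Gamma(q) \cap g_0^{-1} B_H) \;=\; \frac{\mathrm{vol}(B_H)}{\mathrm{vol}(G/\Gamma(q))}\,\bigl(1 + O([\Gamma:\Gamma(q)]^{\kappa_1} \mathrm{vol}(B_H)^{-\kappa_2})\bigr)
\end{equation*}
for some absolute constants $\kappa_1,\kappa_2 > 0$, uniformly over $g_0 \in \Gamma$ representing a coset.

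Combining these two ingredients gives exactly the lemma. Writing the congruence condition $A \equiv A_0 \pmod q$ as $A \in A_0 \Gamma(q)$ and applying the counting estimate with $g_0 = A_0$ produces the main term $|\mathrm{Sp}(2n,\mathbb F_\ell)|^{-1}$ per prime $\ell \mid q$ times $\#\{A \in \Gamma : \|A\| \leq H\}$, while the error term becomes $O(q^C H^{-\delta})$ for some constants $C, \delta > 0$ depending only on $n$; since $[\Gamma:\Gamma(q)] \leq q^{2n^2+n}$, absorbing the constant $\kappa_1$ into the exponent and enlarging if necessary allows us to write the error as $O(q^{2n^2+n} H^{-\delta})$ as claimed. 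The only substantive obstacle is lining up the precise statement in~\cite{GorodnikNevo} with our normalizations; in particular, verifying that the chosen norm $\|\cdot\|$ on $2n \times 2n$ matrices gives a well-rounded family (which is standard for any norm, since all norms on a finite-dimensional space are equivalent), and checking that the uniform spectral gap is applied with the correct exponent $\kappa_2$ to produce a positive $\delta$ independent of $q$.
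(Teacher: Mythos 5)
Your proposal is correct and follows essentially the same route as the paper: both reduce the congruence condition to counting lattice points in a coset of the principal congruence subgroup (using strong approximation to produce a representative) and then invoke the effective counting theorem of Gorodnik--Nevo for well-rounded norm balls, with the uniform-in-$q$ spectral input supplying the power saving. The paper simply pins this down to a specific citation (admissibility of the norm balls from the top of p.~79 and Corollary 5.2 of Gorodnik--Nevo, applied with $y$ the identity and $x \equiv A_0 \pmod q$), which is exactly the "lining up" step you flag at the end.
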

\begin{proof}
    As indicated, this follows from the work of Gorodnik and Nevo~\cite{GorodnikNevo}.  Specifically, in their terminology, the family of sets
        \[
            \{ A \in \mathrm{Sp}(2n,\mathbb{R}) : \| A \| \leq H\}
        \]
    is admissible~\cite{GorodnikNevo}*{top of p. 79}, hence H\"older well-rounded.  The claim then follows from~\cite{GorodnikNevo}*{Corollary 5.2} on taking $y$ to be the identity and $x$ to be an element of $\mathrm{Sp}(2n,\mathbb{Z})$ congruent to $A_0 \pmod{q}$ (which exists, by strong approximation for the group $\mathrm{Sp}(2n,\mathbb{Z})$; see~\cite{PR}*{Theorem 7.12}, or~\cite{NewmanSmart}*{Theorem 1} for an elementary proof of existence).  See also~\cite{GorodnikNevo}*{Equation (6.9)}.
\end{proof}

With this, we're now ready to prove Theorem~\ref{thm:galois-group-powers}.

\begin{proof}[Proof of Theorem~\ref{thm:galois-group-powers}]
    Let $A \in \mathrm{Sp}(2n,\mathbb{Z})$ and let $f_A$ denote its characteristic polynomial.  Since $f_A$ is reciprocal, its Galois group is a subgroup of $S_2 \wr S_n$.  It then follows from Lemma~\ref{lem:sufficient-cycles} that if $\mathrm{Gal}(f_A) \ne S_2 \wr S_n$, then $f_A$ must lie in one of the four ``exceptional'' sets $\mathcal{E}_2$, $\mathcal{E}_4$, $\mathcal{E}_{2n-2}$, and $\mathcal{E}_{2n}$, where for any $1 \leq k \leq n$, we define
        \[
            \mathcal{E}_{2k} := \{ f \in \mathcal{P}_{2n}^\mathrm{rec}(\mathbb{Z}) : f \pmod{\ell} \not\in \mathcal{P}_{2n}^\mathrm{rec}(\mathbb{F}_\ell;(2k)) \text{ for all primes $\ell$}\}.
        \]
    (For example, $\mathcal{E}_{2n}$ is the set of monic reciprocal polynomials that are reducible $\pmod{\ell}$ for every prime $\ell$.)  To obtain the conclusion of the theorem for $m=1$ (which suffices by Lemma~\ref{lem:first-power-suffices}, at least provided $n \geq 2$) it is therefore enough to show that
        \begin{equation} \label{eqn:goal}
            \#\{ A \in \mathrm{Sp}(2n,\mathbb{Z}) : \| A \| \leq H, f_A \in \mathcal{E}_{2k}\}
                = o( \#\{ A \in \mathrm{Sp}(2n,\mathbb{Z}) : \| A \| \leq H\} ).
        \end{equation}
    For any squarefree $q$, let $\mathcal{E}_{2k}(q)$ be defined by
        \[
             \mathcal{E}_{2k}(q) := \{ f \in \mathcal{P}_{2n}^\mathrm{rec}(\mathbb{Z}) : f \pmod{\ell} \not\in \mathcal{P}_{2n}^\mathrm{rec}(\mathbb{F}_\ell;(2k)) \text{ for all primes $\ell \mid q$}\},
        \]
    and observe that $\mathcal{E}_{2k}$ is a subset of $\mathcal{E}_{2k}(q)$.  If $q$ is coprime to $6$, then by combining Lemmas~\ref{lem:reciprocal-cycles}-\ref{lem:effective-approximation}, we find
        \begin{align*}
            &\#\{ A \in \mathrm{Sp}(2n,\mathbb{Z}) : \| A \| \leq H, f_A \in \mathcal{E}_{2k}(q)\}\\
                &\quad= \prod_{\ell \mid q} \left( 1 - \frac{1}{2^{n-k+1} \cdot k \cdot (n-k)!} + O(\ell^{-1})\right) \cdot \#\{ A \in \mathrm{Sp}(2n,\mathbb{Z}) : \| A \| \leq H\} \cdot (1 + O(q^{2n^2+n} H^{-\delta})).
        \end{align*}
    Each term in the product above may be uniformly bounded away from $1$ for $\ell$ sufficiently large, so by taking $q$ to be a product of $N$ primes, we obtain~\eqref{eqn:goal} as $N \to \infty$.  This completes the proof if $n \geq 2$.  If $n=1$, then we have shown that $\mathrm{Gal}(f_A) = S_2$ for $100\%$ of elements $A \in \mathrm{Sp}(2n,\mathbb{Z})$, and it remains only to show that for $100\%$ of $A$, $f_A$ is not one of the three exceptional characteristic polynomials in Lemma~\ref{lem:first-power-suffices}.  But this follows exactly as above, for example, by bounding the number of matrices whose characteristic polynomial is one of the exceptional polynomials $\pmod{\ell}$ for every prime $\ell$.  This completes the proof.
\end{proof}

\begin{remark}
    While we have presented the proof of Theorem~\ref{thm:galois-group-powers} in a soft form (i.e., with a little-oh), it is essentially no harder to use Lemmas~\ref{lem:reciprocal-cycles}--\ref{lem:effective-approximation} in concert with a more sophisticated sieve (e.g. the large sieve or the Selberg sieve; see, e.g.,~\cite{FI}) to obtain a power saving bound on the number of non-generic matrices.
\end{remark}

\begin{remark}
    We can arrive at the weaker result that $100\%$ of $A \in \mathrm{Sp}(2n,\mathbb{Z})$ are such that the characteristic polynomial of $A$ is irreducible by replacing Lemma~\ref{lem:reciprocal-cycles} with~\cite{Chavdarov}*{Lemma 3.2} and following the same argument, but the stronger result takes only slightly more effort.  It also more easily affords the conclusion about powers $A^m$ and illuminates the structure of the polynomials considered.  Additionally, Rivin~\cite{Rivin} proved the irreducibility result for characteristic polynomials (but not powers) for an ordering different from matrix height.
\end{remark}

\medskip\noindent\textbf{Acknowledgements.} EK is supported by MathWorks and by NSF GRFP under grant No. 1745302. She would like to thank Semyon Dyatlov, who is  partially supported by NSF CAREER grants DMS-1749858 and DMS-2400090, for suggesting and advising this project. She  would also like to thank Bjorn Poonen and Vijay Srinivasan for their help with Proposition \ref{prop:tight_example}. She is grateful to RJLO for his help with the Galois group explanation after Proposition \ref{prop:tight_example}. She thanks Zeev Rudnick for his helpful comments on the first version of this paper. TCA is supported by NSF DMS-2231990 and NSF CAREER
DMS-2237937. She thanks Peter Sarnak for introducing a variant of this problem to her.
RJLO is supported by NSF DMS-2200760 and a Simons Fellowship in Mathematics. We are grateful to the anonymous referee for their numerous helpful comments on a previous version of this paper.

\begin{bibdiv}
\begin{biblist}
\bib{anantharaman}{article}{
  author = {N. Anantharaman},
  journal = {Annals of Mathematics},
  pages = {435--475},
  title = {Entropy and the localization of eigenfunctions},
  volume = {168},
  number = {2},
  year = {2008}
}

\bib{AKN}{book}{
  author = {N. Anantharaman},
  author = {H. Koch},
  author = {S. Nonnenmacher},
  journal = {New Trends in Mathematical Physics},
  pages = {1--22},
  title = {Entropy of eigenfunctions},
  year = {2009},
  publisher = {Springer Netherlands},
  editor = {Vladas Sidoravi\v{c}ius}
}

\bib{AN}{article}{
  author = {N. Anantharaman},
  author = {S. Nonnenmacher},
  journal = {Ann. Inst. Fourier},
  pages = {2465--2523},
  title = {Half-delocalization of eigenfunctions for the Laplacian on an Anosov manifold},
  volume = {57},
  number = {7},
  year = {2007}
}

\bib{AS}{article}{
  author = {N. Anantharaman},
  author = {L. Silberman},
  journal = {Israel J. Math.},
  pages = {393--447},
  title = {A Haar component for quantum limits on
locally symmetric spaces.},
  volume = {195},
  number = {1},
  year = {2013},
}

\bib{ADM24}{article}{
  author = {J. Athreya}, 
  author = {S. Dyatlov},
  author = {N. Miller},
  title = {Semiclassical measures for complex hyperbolic quotients},
  year = {2025},
  journal = {Geom. Funct. Anal.},
  pages = {979--1050},
  volume = {35}
}

\bib{Bateman}{book}{
  place={New York}, 
  title={Tables of Integral Transforms}, 
  ISBN={9780070195493},
  publisher={McGraw-Hill Book Company}, 
  author={Bateman, H.},
  author={Bateman Manuscript Project},
  year={1954} 
}

\bib{HB1980}{article}{
  title={Quantization of linear maps-Fresnel diffraction by a periodic grating},
  author={ M.V. Berry},
  author={J.H. Hannay},
  volume={267},
  number={1},
  year={1980},
  publisher={Physica D},
  journal = {Physica D: Nonlinear Phenomena}
}

\bib{BdB}{article}{
  title={Exponential mixing and {$|\log \hbar|$} time scales  in quantized hyperbolic maps on the torus},
  author={F. Bonechi},
  author={S. {De Bi\'{e}vre}},
  journal={Comm. Math. Phys.},
  volume={211},
  number={3},
  pages={659--686},
  year={2000},
  publisher={Springer}
}

\bib{BD}{article}{
  title={Spectral gaps without the pressure condition},
  author={J. Bourgain},
  author={S. Dyatlov},
  volume={187},
  number={3},
  pages={825--867},
  year={2018},
  journal={Ann. of Math. }
}

\bib{BL03}{article}{
  title = {Entropy of Quantum Limits},
  author = {J. Bourgain},
  author = {E. Lindenstrauss},
  journal = {Commun. Math. Phys.},
  volume = {233},
  pages = {153--171},
  year = {2003}
}

\bib{Bouzouina-deBievre}{article}{
  title={Equipartition of the eigenfunctions of quantized ergodic maps on the torus},
  author={A. Bouzouina},
  author={S. {De Bi\`{e}vre}},
  volume={179},
  number={1},
  pages={83--105},
  year={1996},
  journal={Commun. Math. Phys.}
}

\bib{Bo10}{article}{
  author = {S. Brooks},
  title = {On the entropy of quantum limits for 2-dimensional cat maps},
  volume = {293},
  journal = {Commun. Math. Phys.},
  year = {2010}, 
  pages = {231--255}
}

\bib{Chavdarov}{article}{
  author = {Chavdarov, N.},
  journal = {Duke Math. J.},
  pages = {151--180},
  title = {The generic irreducibility of the numerator of the zeta function in a family of curves with large monodromy},
  volume = {87},
  number = {1}, 
  year = {1997}
}

\bib{Cohen}{article}{
  title={Fractal uncertainty in higher dimensions},
  author={A. Cohen},
  journal = {Ann. of Math.},
  year = {2025},
  pages = {267--307},
  volume = {202},
  number = {1}
}

\bib{colindeverdiere}{article}{
  author = {Y. Colin de Verdiere},
  journal = {Chaos and Quantum Physics (Les-Houches)},
  pages = {307--329},
  title = {Hyperbolic geometry in two dimensions and trace formulas},
  year = {1989}
}

\bib{DDS}{article}{
  author = {Davis, S.},
  author = {Duke, W.},
  author = {Sun, X.},
  journal = {Expo. Math.},
  pages = {263--270},
  title = {Probabilistic Galois theory of reciprocal polynomials},
  volume = {16},
  number = {3}, 
  year = {1998}
}

\bib{FNdB}{article}{
  author = {S. De Bi\`{e}vre},
  author = {F. Faure},
  author = {S. Nonnenmacher},
  journal = {Comm. Math. Phys.},
  pages = {449--492},
  title = {Scarred eigenstates for quantum cat maps of minimal periods},
  volume = {239},
  year = {2003}
}

\bib{DixonMortimer}{book}{
  title={Permutation groups},
  volume={163},
  author={Dixon, J. D.},
  author={Mortimer, B.},
  year={1996},
  address={New York},
  pages={xii+346},
  publisher={Springer-Verlag},
  series={Grad. Texts in Math.}
}

\bib{DummitFoote}{book}{
  title={Abstract algebra},
  edition={3},
  author={Dummit, D. S.},
  author={Foote, R. M.},
  year={2004},
  address={Hoboken, NJ},
  pages={xii+932},
  publisher={John Wiley \& Sons Inc.}
}

\bib{Dy}{article}{
  author = {S. Dyatlov},
  journal = {Ann. Math. du Qu\'{e}bec},
  pages = {11–-26},
  volume = {46},
  title = {Around quantum ergodicity},
  year = {2021}
}

\bib{dyatlov2021semiclassical}{article}{
  title={Semiclassical measures for higher dimensional quantum cat maps},
  author={S. Dyatlov},
  author={M. J{\'e}z{\'e}quel},
  journal={Ann. Henri Poincar{\'e}},
  year={2023}
}

\bib{DJi}{article}{
  author = {S. Dyatlov},
  author = {L. Jin},
  journal = {Acta Math.},
  pages = {297--339},
  title = {Semiclassical measures on hyperbolic surfaces have full
support},
  volume = {220},
  number = {2},
  year = {2018}
}

\bib{DJN}{article}{
  author = {S. Dyatlov},
  author = {L. Jin},
  author = {S. Nonnenmacher},
  journal = {J. Amer. Math},
  pages = {361--465},
  title = {Control of eigenfunctions on surfaces of variable curvature},
  volume = {35},
  number = {2},
  year = {2022}
}

\bib{DZ16}{article}{
  author = {S. Dyatlov},
  author = {J. Zahl},
  journal = {Geom. Funct. Anal.},
  pages = {1011--1094 },
  title = {Spectral gaps, additive energy, and a fractal uncertainty principle},
  volume = {26},
  year = {2016}
}

\bib{FN04}{article}{
  author = {F. Faure},
  author = {S. Nonnenmacher},
  journal = {Commun. Math. Phys.},
  year = {2004},
  title = {On the maximal scarring for quantum cat map eigenstates},
  volume = {245},
  pages = {201-214}
}

\bib{FI}{book}{
  title={Opera de Cribro.},
  volume={57},
  author={Friedlander, J.},
  author={Iwaniec, H.},
  year={2010},
  address={Providence, RI},
  pages={xx+527},
  publisher={Amer. Math. Soc.},
  series={Amer. Math. Soc. Colloq. Publ.}
}

\bib{GZ21}{article}{
  title = {Lower bounds for Cauchy data on curves in a negatively curved surface},
  author = {J. Galkowski},
  author = {S. Zelditch},
  journal = {Isr. J. Math.},
  volume = {244},
  pages = {971-1000},
  year = {2021}
}

\bib{GorodnikNevo}{article}{
  author = {Gorodnik, A.},
  author = {Nevo, A.},
  journal = {Bull. Amer. Math. Soc. (N.S.)},
  pages = {65--113},
  title = {Quantitative ergodic theorems and their number-theoretic applications},
  volume = {52},
  number = {1}, 
  year = {2015}
}

\bib{GH}{article}{
  author = {S. Gurevich},
  author = {R. Hadani},
  journal = {Ann. of Math.},
  pages = {1--54},
  title = {Proof of the Kurlberg-Rudnick rate conjecture},
  volume = {174},
  number = {1},
  edition = {2},
  year = {2011}
}

\bib{Ji18}{article}{
  author = {L. Jin},
  journal = {Math. Res. Lett.},
  pages = {1865 -1877},
  title = {Control for Schrodinger equation on hyperbolic surfaces},
  year = {2018},
  volume = {24},
  number = {6},
  label = {Ji18}
}

\bib{Jin}{article}{
  author = {L. Jin},
  journal = {Comm. in Math. Phys.},
  pages = {815--879},
  title = {Damped wave equations on compact hyperbolic surfaces},
  volume = {373},
  number = {3}, 
  year = {2020}
}

\bib{Ke}{article}{
  author = {J.P. Keating},
  journal = {Nonlinearity},
  pages = {277--307},
  title = {Asymptotic properties of the periodic orbits of the cat maps},
  volume = {4},
  year = {1990}
}

\bib{Kel}{article}{
  author = {D. Kelmer},
  journal = {Ann. of Math.},
  pages = {815--879},
  title = {Arithmetic quantum unique ergodicity for symplectic linear maps of the
multidimensional torus},
  volume = {171},
  number = {2},
  edition = {2},
  year = {2010}
}

\bib{Kow08}{book}{
  place={Cambridge},
  author = {E. Kowalski},
  title = {The large sieve and its applications: arithmetic Geometry, random walks and discrete Groups},
  volume = {175},
  DOI={10.1017/CBO9780511542947},
  publisher={Cambridge University Press},
  year={2008},
  series={Cambridge Tracts in Math.}
}

\bib{KORS24}{article}{
 author = {P. Kurlberg},
 author = {A. Ostafe},
 author = {Z. Rudnick},
 author = {I. Shparlinski},
 title = {On quantum ergodicity for higher dimensional cat maps},
 journal = {Comm. in Math. Phys.},
 year = {2025},
 volume = {406},
 number = {174}
}

\bib{KR}{article}{
  author = {P. Kurlberg},
  author = {Z. Rudnick},
  journal = {Duke Math. J.},
  pages = {47--77},
  title = {Hecke theory and equidistribution for the quantization of linear maps of the torus},
  volume = {103},
  number = {1},
  year = {2000}
}

\bib{KR2}{article}{
  author = {P. Kurlberg},
  author = {Z. Rudnick},
  journal = {Comm. in Math. Phys.},
  pages = {201--227},
  title = {On quantum ergodicity for linear maps of the torus},
  volume = {222},
  year = {2001}
}

\bib{NewmanSmart}{article}{
  author = {Newman, M.},
  author = {Smart, J. R.},
  journal = {Acta Arith.},
  pages = {83--89},
  title = {Symplectic modulary groups},
  volume = {9},
  year = {1964}
}

\bib{PR}{book}{
  title={Algebraic Groups and Number Theory},
  volume={139},
  author={Platonov, V.},
  author={Rapinchuk, A.},
  year={1994},
  series={Pure Appl. Math.}
}

\bib{R1}{article}{
  author = {G. Rivi\`{e}re},
  journal = {Ann. Henri Poincar\'{e}},
  pages = {1085--1116},
  title = {Entropy of semiclassical measures for nonpositively curved surfaces},
  volume = {11},
  number = {6}, 
  year = {2010}
}

\bib{R2}{article}{
  author = {G. Rivi\`{e}re},
  journal = {Duke Math. J},
  pages = {271--336},
  title = {Entropy of semiclassical measures in dimension 2},
  volume = {155},
  number = {2}, 
  year = {2010}
}

\bib{Ri11}{article}{
  author = {G. Rivi\`{e}re},
  year = {2011},
  volume = {2011},
  number = {11},
  pages = {2396--2443},
  title = {Entropy of semiclassical measures for symplectic linear maps of the multidimensional torus},
  journal = {Int. Math. Res. Not.}
}

\bib{RW25}{article}{
    author = {G. Rivi\`{e}re},
    author = {L. L. Wolf},
    title = {On the {L}ebesgue Component of Semiclassical Measures for Abelian Quantum Actions},
    journal = {arXiv:2505.16472},
    year = {2025}
}

\bib{Rivin}{article}{
  author = {I. Rivin},
  journal = {Duke Math. J},
  pages = {353--379},
  title = {Walks on groups, counting reducible matrices, polynomials, and surface and free group automorphisms},
  volume = {142},
  number = {2}, 
  year = {2008}
}

\bib{RS}{article}{
  author = {Z. Rudnick},
  author = {P. Sarnak},
  journal = {Comm. in Math. Phys.},
  pages = {195--213},
  title = {The behaviour of eigenstates of arithmetic hyperbolic
manifolds},
  volume = {161},
  number = {1}, 
  year = {1994}
}

\bib{Sch}{article}{
  author = {N. Schwartz},
  journal = {Pure Appl. Anal.},
  title = {The full delocalization of eigenstates for the quantized cat map},
  year = {2024},
  volume = {2024},
  number = {4},
  pages = {1017--1053}
}

\bib{Shn}{article}{
  author = {A. Shnirelman},
  journal = {Uspehi Mat. Nauk.},
  pages = {181--182},
  title = {Ergodic properties of eigenfunctions},
  volume = {29},
  edition = {6},
  number = {180}, 
  year = {1974}
}

\bib{Ze}{article}{
  author = {S. Zelditch},
  journal = {Duke Math. J.},
  pages = {919--941},
  title = {Uniform distribution of eigenfunctions on compact hyperbolic surfaces},
  volume = {55},
  number = {4}, 
  year = {1987}
}

\bib{z12semiclassical}{book}{
  title={Semiclassical analysis},
  author={M. Zworski},
  volume={138},
  year={2012},
  publisher={Amer. Math. Soc.}
}

\end{biblist}
\end{bibdiv}
\end{document}